\tikzset{cross/.style={cross out, draw=black, fill=none, minimum size=2*(#1-\pgflinewidth), inner sep=0pt, outer sep=0pt}, cross/.default={2pt}}
\newtheorem{theorem}{Theorem}
\numberwithin{theorem}{section}
\newtheorem{lemma}[theorem]{Lemma}
\newtheorem*{claim*}{Claim}
\newtheorem{proposition}[theorem]{Proposition}
\newtheorem{corollary}[theorem]{Corollary}
\newtheorem*{question*}{Question}
\newtheorem{thmintro}{Theorem}
\theoremstyle{remark}
\newtheorem{remark}[theorem]{Remark}
\newtheorem*{remark*}{Remark}
\theoremstyle{definition}
\newtheorem{definition}[theorem]{Definition}
\newtheorem{construction}[theorem]{Construction}
\newtheorem{defcon}[theorem]{Definition/Construction}
\newtheorem{example}[theorem]{Example}
\newtheorem{notation}[theorem]{Notation}
\newtheorem{convention}[theorem]{Convention}
\newtheorem*{warning*}{Warning}
\newtheorem*{convention*}{Convention}
\newtheorem*{example*}{Example}
\newtheorem{defintronum}{defintronum}
\newtheorem{defintronumber}[defintronum]{Definition}
\newtheorem*{notationintro}{Notation}
\newcounter{mcomments}
\newcommand{\Id}{\text{Id}}
\newcommand{\Aut}{\text{Aut}}
\newcommand{\id}{\text{id}}
\newcommand{\lk}{\mathrm{lk}}
\newcommand{\st}{\text{st}}
\newcommand{\reduc}{\mathrm{red}}
\newcommand{\Fix}{\mathrm{Fix}}
\newcommand{\pre}[2]{\prescript{}{#1 \cdot}{ #2}}
\DeclareMathOperator{\Stab}{Stab}
\DeclareMathOperator{\CAT}{CAT(0)}
\title[CAT(0) cube complexes and simplicity]{Automorphism groups of cocompact CAT(0) cube complexes and simplicity}
\author{Tobias Hartnick}
\address{Institut f\"ur Algebra und Geometrie, KIT, Germany}
\email{tobias.hartnick@kit.edu}
\author{Merlin Incerti--Medici}
\address{Universit\"at Wien, Austria}
\email{merlin.medici@gmail.com}
\begin{document}
\maketitle

\begin{abstract}
We provide a systematic description of the automorphism groups of specially cocompact $\CAT$ cube complexes. We show that these groups are topologically finitely generated, present a method to explicitly obtain generating sets, and prove a dichotomy on their size. Furthermore, we show that, under some extra assumptions, the normal subgroup known as $\Aut^{+}$ is simple, non-discrete, and tdlc. In particular, we obtain a new class of simple, non-discrete, tdlc groups that are accessible to further study. Finally, we study the relative size of $\Aut^{+} < \Aut$, providing a sufficient condition for its closure to be finite index and presenting a common example where it is not even cocompact.
\end{abstract}

\tableofcontents




\section{Introduction} \label{sec:Introduction}

J.\,Tits showed in \cite{Tits70} that the (inversion-free) automorphism group of a regular tree with valence at least three is locally compact and uncountable, and that the subgroup generated by edge-fixators is simple. Following up on these ideas, Haglund and Paulin studied several types of negatively curved complexes in \cite{HaglundPaulin98}, including some negatively curved buildings, certain Coxeter systems, and polyhedral complexes that admit a geometric action by a hyperbolic group. They showed that, similarly, the automorphism groups of these complexes are locally compact and uncountable. Furthermore, they generalised the group generated by edge-fixators to a group denoted $\Aut^{+}$ and showed that, under the right assumptions, it is simple and has finite index. For right-angled buildings, the study of the automorphism group has yielded similar results and more \cite{Caprace14, MedtsSilva19, BossaertMedts21, BossaertMedts23, BerlaiFerov23} and in the case of trees, \cite{MollerVonk12} has expanded on several results regarding subgroups of the automorphism group.

In this paper, we consider the automorphism groups $\Aut(X)$ of $\CAT$ cube complexes that are universal coverings of compact, non-positively curved special cube complexes. Particular instances of such cube complexes appear already in the work of Haglund and Paulin. Since then, automorphisms of $\CAT$ cube complexes have been studied implicitly in the work on automorphisms of right-angled buildings mentioned above and in \cite{Lazarovich18}, where Lazarovich studied regular $\CAT$ cube complexes (that is, all vertices have the same link) and in particular provided a method to determine simplicity of certain subgroups of $\Aut(X)$.

We will show that, for these cube complexes, $\Aut(X)$ satisfies both analogous and distinct properties to the cases studied by Tits, Haglund, Paulin, and Caprace. Specifically, we will show that these automorphism groups are topologically finitely generated and we present a characterisation as to when they are uncountable. Furthermore, we will show that, under some mild assumptions, the subgroup $\Aut^{+}(X)$ (see Section \ref{subsecintro:aut+.and.simplicity}) is a simple, non-discrete tdlc (totally disconnected and locally compact) group and we will study the size of $\Aut^{+}(X)$ in $\Aut(X)$.\\

An additional motivation for our work arises as follows: Let $S$ be a compact, non-positively curved special cube complex, $\Gamma$ its fundamental group and $X$ its universal covering. In this case, $X$ is a $\CAT$ cube complex and $\Gamma$ acts cubically, freely, properly, and cocompactly on $X$. Such group actions have attracted significant attention over the last two decades, leading to significant results in group theory and topology \cite{NibloReeves98, SageevWise05, Wright12, BergeronWise12, Agol13}. Since $\Gamma$ acts cubically, properly, and cocompactly and $X$ is locally finite, $\Gamma$ embeds as a uniform lattice in $\Aut(X)$. A systematic understanding of $\Aut(X)$ may thus allow us to study cubulable groups using the theory of lattices in locally compact groups.

\subsection{The key tool: admissible edge labelings}

Our description of $\Aut(X)$ relies on the choice of a good edge labeling on $X$. Let $\vec{\mathcal{E}}(X)$ denote the set of oriented edges in $X$ and let $\vec{\mathcal{E}}(X)_v$ denote the set of outgoing edges at a vertex $v$. Our labelings are going to be as follows:

\begin{defintronumber}
    Let $\Sigma_0$ be a finite set with a fixpoint free involution $\cdot^{-1} : \Sigma_0 \rightarrow \Sigma_0$. A map $\ell : \vec{\mathcal{E}}(X) \rightarrow \Sigma_0$ is called an {\it admissible edge labeling}, if the following hold:
    \begin{enumerate}
        \item[(A1)] If $e, e'$ are in the same parallel class (as oriented edges), then $\ell(e) = \ell(e')$. If $e, e'$ are the same geometric edge with opposite orientation, then $\ell(e') = \ell(e)^{-1}$.

        \item[(A2)] The restriction $\ell \vert_{\vec{\mathcal{E}}(X)_v}$ is injective for every $v \in X^{(0)}$.

        \item[(A3)] For all $v, w \in X^{(0)}$, $e_1, e_2 \in \vec{\mathcal{E}}(X)_v$, $e'_1, e'_2 \in \vec{\mathcal{E}}(X)_w$, we have the following: If $\ell(e_1) = \ell(e'_1)$ and $\ell(e_2) = \ell(e'_2)$, then $e_1$ and $e_2$ span a square if and only if $e'_1$ and $e'_2$ span a square.
    \end{enumerate}

    An admissible edge labeling is called {\it $\Gamma$-invariant}, if for all $e \in \vec{\mathcal{E}}(X)$ and all $g \in \Gamma$, we have $\ell(g(e)) = \ell(e)$.
\end{defintronumber}

\begin{example*}
    The Salvetti complex of a right-angled Artin group admits an admissible edge labeling which is invariant under the action of the given right-angled Artin group. The labeling is the one coming from the Cayley-graph of the standard presentation of the right-angled Artin group, where oriented edges are labeled by the standard generators and their inverses.
\end{example*}

It turns out that the existence of $\Gamma$-invariant admissible edge labelings is closely related to the existence of special cocompact actions. This is the content of our first result, which characterises the existence of $\Gamma$-invariant admissible edge labelings.

\begin{thmintro} \label{thmintro:characterisation.of.special.cube.complexes}
    Let $S$ be a compact, non-positively curved special cube complex, $\Gamma$ its fundamental group, and $X$ its universal covering. Then $X$ admits a $\Gamma$-invariant admissible edge labeling.

    Conversely, let $X$ be a locally finite $\CAT$ cube complex and $\Gamma$ a group acting cubically and cocompactly on $X$ such that there exists a $\Gamma$-invariant admissible edge labeling. Then the quotient $\faktor{X}{\Gamma}$ is a compact, non-positively curved special cube complex, $\Gamma$ acts freely on $X$, and $X$ is the universal covering of $\faktor{X}{\Gamma}$.
\end{thmintro}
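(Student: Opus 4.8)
The plan is to treat both directions as a dictionary between admissible edge labelings and combinatorial local isometries to Salvetti complexes of right-angled Artin groups, and to invoke the Haglund--Wise characterisation: a connected non-positively curved cube complex is special if and only if it admits such a local isometry. I will use repeatedly two elementary consequences of (A1) and (A2): in any square the two opposite oriented edges carry equal labels, while two co-initial edges of a square carry labels in different pairs $\{s,s^{-1}\}$; hence the hyperplanes dual to a common cube have pairwise distinct label-pairs (take a $2$-face of the cube crossed by any two of them), and two co-initial edges whose labels form a single pair $\{s,s^{-1}\}$ can never span a square.

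For the forward direction, apply Haglund--Wise to $S$ to obtain a combinatorial local isometry $f\colon S\to R_\Lambda$, and precompose with the universal covering $p\colon X\to S$; since $p$ is itself a local isometry, $f\circ p\colon X\to R_\Lambda$ is a combinatorial local isometry. Pull back the tautological labeling of $R_\Lambda$ by $\Sigma_0=\{s^{\pm1}:s\text{ a generator}\}$ from the Example above, i.e.\ let $\ell(e)$ be the label of the image of $e$. Then $\ell$ is $\Gamma$-invariant because the elements of $\Gamma$ are deck transformations of $p$. Property (A1) holds because combinatorial maps send parallel classes to parallel classes and reversed orientations to reversed orientations, and in $R_\Lambda$ parallel oriented edges carry equal labels while opposite orientations carry inverse labels; (A2) holds because a local isometry is injective on links and distinct oriented edges at the unique vertex of $R_\Lambda$ carry distinct labels; and (A3) holds because whether two co-initial edges at the vertex of $R_\Lambda$ span a square depends only on their labels (one implication) and because the image of each link is a full subcomplex, which is part of being a local isometry (the other implication).

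For the converse, I first show the action is free. If $g\in\Gamma$ fixes a point, it preserves the minimal cube $C$ containing that point in its interior (such a cube is unique and $g$ is an automorphism), hence fixes the barycentre of $C$; since the hyperplanes dual to $C$ have pairwise distinct label-pairs, $\Gamma$-invariance of $\ell$ forces $g$ to fix each of them setwise together with its co-orientation (the co-orientation being recorded by which of $s,s^{-1}$ labels the positively crossing edges), so $g$ restricts on $C$ to an affine automorphism fixing every coordinate mid-hyperplane and its transverse orientation, i.e.\ $g|_C=\mathrm{id}$. Then $g$ fixes a vertex $v$; injectivity of $\ell|_{\vec{\mathcal E}(X)_v}$ together with $\Gamma$-invariance forces $g$ to fix every outgoing edge at $v$, hence every neighbour, and connectedness of $X^{(1)}$ propagates this to $g=\mathrm{id}$. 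In particular no nontrivial element preserves a cube, so $X/\Gamma$ is a genuine cube complex; a free, cellular, cocompact action on a locally finite complex is properly discontinuous, so $p\colon X\to X/\Gamma$ is a covering; as $X$ is $\CAT$ it is simply connected, hence the universal covering; $X/\Gamma$ is compact by cocompactness and local finiteness; and $X/\Gamma$ is non-positively curved because $p$ is a local isometry and the links of $X$ are flag.

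It remains to see $X/\Gamma$ is special. The labeling descends along $p$ to an admissible labeling on $X/\Gamma$. Pick representatives $\Sigma_+$ of the pairs $\{s,s^{-1}\}$ and let $\Lambda$ be the graph on $\Sigma_+$ joining $s$ to $s'$ iff some pair of co-initial edges carrying labels in $\{s^{\pm1}\}$ and $\{s'^{\pm1}\}$ spans a square; this is independent of the chosen edges by (A3), after using (A1) to walk around a single square and match signs. Sending every vertex to the vertex of $R_\Lambda$ and every oriented edge to its labeled loop defines a combinatorial map $\phi\colon X/\Gamma\to R_\Lambda$ (it sends cubes to cubes, since the hyperplanes of a cube have pairwise distinct, pairwise $\Lambda$-adjacent label-pairs and so form a clique), and $\phi$ is a local isometry: injectivity on each link is exactly (A2), and the image is full because co-initial edges that pairwise span squares and whose labels span a clique in $\Lambda$ themselves span a cube — pairwise this is (A3) plus the definition of $\Lambda$, and assembling the pairs into a single cube uses that the links of $X/\Gamma$ are flag. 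By Haglund--Wise, $X/\Gamma$ is special. (Alternatively one verifies the four hyperplane conditions directly: no self-crossing and two-sidedness from the single-square observations, no self-osculation from (A2), no inter-osculation from (A3).) The step I expect to be most delicate is the freeness argument, specifically excluding a cubical automorphism that fixes a cube's centre while acting as a coordinate reflection — this is exactly what co-orientation-preservation forced by (A1) rules out — together with the sign bookkeeping around squares needed before (A3) applies, which is complicated by the fact that distinct hyperplanes may share a label-pair, so all statements about ``spanning a square'' must be phrased locally at a vertex.
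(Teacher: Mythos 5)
Your proposal is correct, and it takes a genuinely different route from the paper on the two ``specialness'' steps. For the forward direction the paper does not invoke the Haglund--Wise local-isometry characterisation; it simply chooses a large alphabet $\Sigma_0$, picks an \emph{injective} map $\mathfrak h(S)\to\Sigma_0$ with $\ell(h^c)=\ell(h)^{-1}$, and then checks (A1)--(A3) on $S$ directly from the absence of one-sided hyperplanes, self-osculation and inter-osculation, after which lifting to $X$ is routine. You instead pull back the tautological labeling of $R_\Lambda$ along the local isometry $f\circ p\colon X\to R_\Lambda$ that Haglund--Wise supplies; this is a cleaner conceptual explanation of where admissible labelings ``come from'' (and clarifies that injectivity on parallel classes is not needed), at the cost of invoking the full characterisation theorem rather than just the definition of special. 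For the converse, the paper's Theorem \ref{thm:label.preserving.actions.are.special} verifies the four hyperplane pathologies in the quotient directly from (A1)--(A3); you instead build the graph $\Lambda$ from the commutation data of the labels and exhibit a local isometry $X/\Gamma\to R_\Lambda$ (with the sign bookkeeping via (A1), which the paper packages as Example \ref{rem:commutation.extends.to.inverses}, and fullness via (A3) plus flagness), again closing with Haglund--Wise; you note the direct check as an alternative. The freeness argument is essentially identical to the paper's Lemma \ref{lem:label.preserving.actions.are.free}: the minimal cube $C(p)$ has dual hyperplanes with pairwise distinct label-pairs, label- and orientation-preservation forces $g|_C=\mathrm{id}$, hence $g$ fixes a vertex, and then (A2) plus $\Gamma$-invariance propagates $g=\mathrm{id}$ over the $1$-skeleton.
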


Given an admissible edge labeling, we can also label oriented diagonals. Namely, we label an oriented diagonal by the collection of labels of the corresponding oriented edges (see figure \ref{fig:diagonal.labels} or Construction \ref{con:labeling.oriented.diagonals}). This expands our alphabet $\Sigma_0$ to an alphabet $\Sigma \subset 2^{\Sigma_0}$ and the involution $\cdot^{-1}$ extends in the natural way.

\begin{figure}
   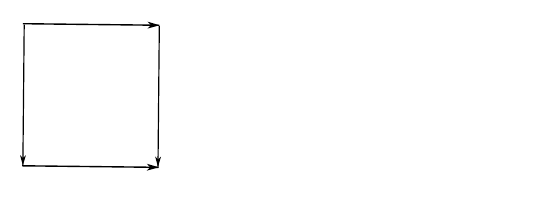
    \caption{Extending an admissible edge labeling to oriented diagonals. The indicated diagonals are labeled by the elements $\{ s, t \}, \{ s, t^{-1} \} \in \Sigma$ respectively.}
     \label{fig:diagonal.labels}
\end{figure}

\begin{notationintro}
    Given a vertex $v \in X^{(0)}$, we write $\Sigma_v \subset \Sigma$ for the set of labels that the outgoing edges and diagonals at $v$ are labeled by. We write $\Sigma_{0,v} := \Sigma_v \cap \Sigma_0$ for the labels of the outgoing edges at $v$.

    Furthermore, if $s, t \in \Sigma_{0,v}$ are the labels of two edges $e, e' \in \vec{ \mathcal{E} }(X)_v$, then we say that $s$ and $t$ {\it commute} if and only if $e$ and $e'$ span a square in $X$. We write $[s,t] = 1$ if they commute and $[s,t] \neq 1$, if they do not. (See Definition \ref{def:admissible.edge.labeling} and Lemma \ref{lem:existenceoflabeling} as to why this does not depend on the vertex $v$ and Proposition \ref{prop:canceling.and.commuting} for a justification of this terminology and notation.)
\end{notationintro}

Having labeled oriented edges and diagonals, we can use these labels to identify concatenations of oriented edges and diagonals (we call such concatenations {\it cube paths}) with words over the alphabet $\Sigma$ together with a starting vertex. In particular, if we restrict to the cube paths that start at a fixed vertex $o$, we obtain a language $\mathcal{L}_o^{cube}$ over the alphabet $\Sigma$. Given two words $v, w \in \mathcal{L}_o^{cube}$, we say they are {\it equivalent} if and only if their corresponding cube paths, starting at $o$, have the same endpoint and we write $v \equiv_o w$. Each equivalence class under this equivalence relation corresponds to the set of all cube paths from $o$ to a particular vertex in $X^{(0)}$. In particular, the empty word $\epsilon$ represents the vertex $o$. We use this to identify vertices in $X$ with $\equiv_o$-equivalence classes in $\mathcal{L}_o^{cube}$.

\subsection{Topological generators and the size of $\text{Aut}(X)$} \label{subsecintro:topological.generators.and.size.of.aut}

Let $S$, $X$, and $\Gamma$ be as in Theorem \ref{thmintro:characterisation.of.special.cube.complexes}. We always consider $\Aut(X)$ equipped with the compact-open topology. By a series of standard arguments, local finiteness of $X$ implies that $\Aut(X)$ is second countable and locally compact. Furthermore, since every element of $\Aut(X)$ preserves vertices in $X$ and the set of vertices is discrete, $\Aut(X)$ is totally disconnected.\\

Let $\ell$ be a $\Gamma$-invariant admissible edge labeling on $X$. In order to state our main results, we need to establish a few more definitions.

\begin{defintronumber}
    Let $\gamma$ be a concatenation of edges and diagonals, starting at $o$, and let $v \in \mathcal{L}_o^{cube}$ be the word spelt out by $\gamma$. We say $v$ is {\it reduced}, if $\gamma$ does not cross any hyperplane more than once. We call $v$ {\it reducible}, if it is not reduced.
\end{defintronumber}

Every vertex $v \in X^{(0)}$ admits a (usually not unique) cube path from $o$ to $v$ whose word is reduced.

\begin{defintronumber}
    Let $v \in \mathcal{L}_o^{cube}$ represent a vertex and let $\sigma : \Sigma_v \rightarrow \Sigma$ be an injective map. We call $\sigma$ a {\it label-morphism}, if it satisfies the following:
    \begin{itemize}
        \item $\sigma(\Sigma_{0,v}) \subset \Sigma_0$.
        
        \item $\forall s, t \in \Sigma_{0,v} : [s,t] = 1 \Leftrightarrow [\sigma(s), \sigma(t)] = 1$.

        \item $\forall \alpha = \{ s_1, \dots, s_n \} \in \Sigma_v : \quad \sigma(\alpha) = \{ \sigma(s_1), \dots, \sigma(s_n) \}.$
    \end{itemize}
    Since label-morphisms are uniquely determined by their restriction to $\Sigma_{0,v}$, we frequently write $\sigma : \Sigma_{0,v} \rightarrow \Sigma_0$.
\end{defintronumber}

\begin{convention*}
    If $\alpha = \{ s_1, \dots, s_n \} \in \Sigma$, we write $\sigma(\alpha) = \alpha$ if $\sigma$ fixes the set $\alpha$ pointwise. We write $\sigma( \{ \alpha \} ) = \{ \alpha \}$, if $\sigma$ preserves the set $\alpha$, although it may not fix its elements pointwise.
\end{convention*}

\begin{defintronumber}
    Given a reduced word $v \in \mathcal{L}_o^{cube} \setminus \{ \epsilon \}$ and a label-morphism $\sigma : \Sigma_{0,v} \rightarrow \Sigma_0$, we say that $\sigma$ is {\it compatible with $v$} if and only if for all $\alpha \in \Sigma_v$ such that $v\alpha$ is reducible and for all $\beta \in \Sigma_v$ such that $[\alpha, \beta ] = 1$, we have $\sigma(\alpha) = \alpha$ and $\sigma(\beta) = \beta$.
\end{defintronumber}

Let $o \in X^{(0)}$. We denote $G_o := \Stab_{\Aut(X)}(o)$. For any $g \in G_o$ and any $v \in X^{(0)}$, $g$ induces a bijection
\[ \pre{v}{\sigma(g)} : \Sigma_v \rightarrow \Sigma_{g(v)} \]
which sends the label of an outgoing edge $e$ at $v$ to the label of the edge $g(e) \in \vec{\mathcal{E}}(X)_{g(v)}$. Equivalently, $\pre{v}{\sigma(g)}$ is defined by the equation
\[ g(v\alpha) = g(v) \pre{v}{\sigma(g)}(\alpha) \quad \forall \alpha \in \Sigma_v. \]
The maps $(\pre{v}{\sigma(g)})_{v \in X^{(0)}}$ form what we will call the {\it portrait of $g$}. Portraits will be of significant importance throughout the paper.

\begin{defintronumber}
    Let $v \in \mathcal{L}_o^{cube}$ represent a vertex other than $o$. Let $\hat{h}_1, \dots, \hat{h}_n$ be the collection of hyperplanes that intersect an edge incident to $v$ and that separate $o$ from $v$. Let $h_1, \dots, h_n$ denote the corresponding halfspaces containing $v$.
    
    Let $w \in \mathcal{L}_o^{cube}$ represent a vertex. We say $w$ is of {\it Type 1 relative to $(o,v)$} if $w$ is not contained in $h_1 \cap \dots \cap h_n$. We say $w$ is of {\it Type 2 relative to $(o,v)$} if $w$ is contained in $h_1 \cap \dots \cap h_n$ and contained in the carrier of $\hat{h}_i$ for some $i$. It is of {\it Type 3 relative to $(o,v)$} otherwise. (See Definition \ref{def:types.relative.to.o.v} for an equivalent, but formally more useful definition.)
\end{defintronumber}

\begin{defintronumber}
    Let $\sigma : \Sigma_{0,o} \rightarrow \Sigma_0$ be a label-morphism. We say that {\it $\sigma$ appears at $o$ in $G_o$}, if there exists some isometry $g \in G_o$ such that $\pre{\epsilon}{\sigma(g)} = \sigma$.

    Let $v \in \mathcal{L}_o^{cube}$ represent a vertex other than $o$ and let $\sigma : \Sigma_{0,v} \rightarrow \Sigma_0$ be a label-morphism. We say that {\it $\sigma$ appears at $v$ in $G_o$} if there exists some isometry $g \in G_o$ such that $\pre{v}{\sigma(g)} = \sigma$ and $g$ fixes all vertices of Type 1 and 2 relative to $(o,v)$.
\end{defintronumber}

\begin{example*}
    Suppose $X$ is a regular tree, $\ell$ an admissible edge labeling on $X$, $o$ a vertex in $X$, and let $v \in \mathcal{L}_o^{cube}$ represent a vertex other than $o$. We define $X_v$ to be the subtree containing all the vertices whose unique geodesic to $o$ contains $v$.
    
    In this situation, $v$ is of Type 2 relative to $(o,v)$ and the vertices of Type 1 relative to $(o,v)$ are exactly those that are not contained in $X_v$. If $\sigma : \Sigma_{0,v} \rightarrow \Sigma_0$ is a label-morphism, then it is compatible with $v$ if and only if it fixes the label of the only edge at $v$ that is not contained in $X_v$. Furthermore, the label-morphism $\sigma$ appears at $v$ in $G_o$ if and only if there exists an automorphism $g$ of $X$ that fixes all vertices that are not in $X_v$. (Readers familiar with rooted automorphisms of regular trees will quickly observe that every label-morphism that is compatible with $v$ also appears at $v$ in $G_o$.)
\end{example*}

As can be seen from the example above, the last few definitions heuristically mean the following: A label-morphism appears at $v$ in $G_o$, if there exists an element in $G_o$ that performs the permutation $\sigma$ on the edges emanating from $v$ and fixes all vertices that are not `behind $v$' from the vantage point of $o$. A label-morphism $\sigma$ is compatible with $v$ if it satisfies the basic requirements needed in order to appear at $v$ in $G_o$. The reasons for these definitions will become clear throughout the paper.

\begin{thmintro} \label{thmintro:topological.finite.generation}
    Suppose that for every reduced $v \in \mathcal{L}_o^{cube} \setminus \{ \epsilon \}$, every $\sigma$ that is compatible with $v$ also appears at $v$ in $G_o$. Then there exists a finitely generated dense subgroup of $\Aut(X)$.
\end{thmintro}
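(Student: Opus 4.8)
The plan is to exhibit an explicit countable (in fact finitely generated) subgroup of $\Aut(X)$ whose closure is all of $\Aut(X)$, by combining three sources of elements: the lattice $\Gamma$, a finite set of elements realizing label-morphisms at $o$ (rooted automorphisms), and — crucially — a finite set of elements realizing label-morphisms at the finitely many vertices that are ``close'' to $o$. The key structural fact to establish first is that $\Aut(X)$ is generated topologically by $G_o = \Stab_{\Aut(X)}(o)$ together with $\Gamma$: since $\Gamma$ acts cocompactly, its orbit $\Gamma \cdot o$ meets every $\Gamma$-orbit of vertices, and since $X$ is connected one shows $\Aut(X) = \overline{\langle G_o, \Gamma\rangle}$ provided $\Gamma$ acts transitively enough on vertices — more precisely, one uses that any $g \in \Aut(X)$ can be written, after multiplying by a suitable element of $\Gamma$, as an element fixing $o$, so it suffices to find a finitely generated dense subgroup of $G_o$ and then adjoin (finite generators of) $\Gamma$. [Here one should be slightly careful: $\Gamma$ need not be transitive on vertices; but $\Aut(X)$ is, or at least $\Gamma \backslash X^{(0)}$ is finite, and one enlarges $\Gamma$ by finitely many coset representatives inside $\Aut(X)$ taking $o$ to the other orbit representatives.]

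**The heart of the argument** is to show $G_o$ has a finitely generated dense subgroup under the hypothesis. I would filter $G_o$ by the subgroups $G_o^{(n)}$ of automorphisms fixing the ball $B_n(o)$ pointwise (on vertices); these form a neighborhood basis of the identity, so it suffices to produce finitely many elements of $G_o$ that together with $\Gamma$ generate a subgroup $H$ with $H \cdot G_o^{(n)} = G_o$ for all $n$, i.e. $H$ surjects onto each finite quotient $G_o / G_o^{(n)}$ — equivalently $H$ is dense. The induction is on $n$: given $g \in G_o$, I want to correct it step by step using generators so that it fixes larger and larger balls. At stage $n$, after modification $g$ fixes $B_n(o)$ pointwise; its portrait restricted to vertices $v$ on the sphere $S_n(o)$ consists of label-morphisms $\pre{v}{\sigma(g)}$, and because $g$ fixes everything in $B_n(o)$ the hyperplane/parallelism constraints force each such $\pre{v}{\sigma(g)}$ to be compatible with (a reduced word representing) $v$ — this is exactly where the definition of ``compatible with $v$'' is engineered to match the local picture. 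By hypothesis every such compatible $\sigma$ appears at $v$ in $G_o$, meaning there is $h_v \in G_o$ with $\pre{v}{\sigma(h_v)} = \pre{v}{\sigma(g)}$ fixing all Type 1 and Type 2 vertices relative to $(o,v)$; then $h_v^{-1} g$ fixes $B_n(o)$ and agrees with $g$ at $v$ appropriately, and running over the finitely many $v \in S_n(o)$ one lands in $G_o^{(n+1)}$. The only issue is that there are infinitely many spheres, so the $h_v$'s form an infinite set; the fix is that, up to the action of $\Gamma$ (which is in our group) and up to the finitely many ``germ types'' of vertices, there are only \emph{finitely many} isomorphism types of triples $(o,v,\sigma)$ that can occur — cocompactness of $\Gamma$ and finiteness of $\Sigma$ bound the combinatorics — so finitely many $h_v$'s, conjugated around by $\Gamma$, suffice to realize all needed corrections.

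**Concretely**, the finitely generated dense subgroup is $H = \langle \Gamma_0, \ G_o^{\mathrm{rooted}}, \ \{h_1,\dots,h_k\}\rangle$ where $\Gamma_0$ is a finite generating set of (the enlargement of) $\Gamma$, $G_o^{\mathrm{rooted}}$ is a finite set generating all automorphisms fixing all of $X$ except ``beyond the first sphere'' (realizing all label-morphisms $\sigma$ at $o$ that appear, which is a finite set since $\Sigma_{0,o}$ is finite), and $h_1,\dots,h_k$ is a finite set of elements of $G_o$ realizing, up to $\Gamma$-conjugacy, all compatible label-morphisms appearing at non-basepoint vertices. One then verifies $\overline{H} = \Aut(X)$ by the ball-by-ball correction just described: given $g \in \Aut(X)$ and $n$, produce $h \in H$ with $h^{-1} g \in G_o^{(n)}$, using $\Gamma_0$ to move the basepoint onto $g^{-1}(o)$ and reduce to $G_o$, then inducting on spheres.

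**The main obstacle** I anticipate is the bookkeeping in the inductive correction step: showing that after fixing $B_n(o)$ the portrait at a sphere vertex $v$ is automatically \emph{compatible with $v$} (so the hypothesis applies), and that the correcting element $h_v$ — which fixes Type 1 and Type 2 vertices relative to $(o,v)$ but may move Type 3 vertices far from $o$ — does not disturb the already-fixed ball $B_n(o)$. This requires unwinding the Type 1/2/3 trichotomy: one needs that $B_n(o) \setminus \{v\text{-side}\}$ consists of Type 1 and Type 2 vertices relative to $(o,v)$, which should follow from the definition of the separating hyperplanes $\hat h_1,\dots,\hat h_m$ at $v$ and the fact that a reduced cube path from $o$ to $v$ crosses exactly these. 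A secondary obstacle is the ``finitely many conjugacy types'' claim — making precise that $\Gamma$-cocompactness plus finiteness of $\Sigma$ yields finitely many local models — but this is the kind of compactness argument that should go through by a standard ``pigeonhole on $\Gamma\backslash X$ and on $\Sigma$'' argument, perhaps packaged as a separate lemma proved earlier in the paper.
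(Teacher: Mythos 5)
Your proposal matches the paper's proof in structure and in every key idea: the decomposition $\Aut(X) = \Gamma \cdot T \cdot G_o$ (with $T$ a finite set of coset representatives when $\Gamma$ is not vertex-transitive), the ball-by-ball inductive correction inside $G_o$ using elements $A_{v,\sigma}$ that fix Type 1 and 2 vertices, the observation that after fixing $B_n(o)$ the sphere-portraits $\pre{v}{\sigma(g)}$ are automatically compatible with $v$ (so the hypothesis applies), and the reduction to finitely many $A_{v,\sigma}$ by conjugating with $\Gamma$ via a pigeonhole on $\Gamma\backslash X$ and the finite alphabet $\Sigma$. This is essentially the paper's route (Proposition~\ref{prop:generating.dense.subgroup.of.stabiliser}, Lemma~\ref{lem:finitely.many.words.suffice}, Theorem~\ref{thm:automorphism.group.is.topologically.finitely.generated}), and the obstacles you flag — compatibility of $\sigma_v$ at stage $n$, non-interference of the correctors with the already-fixed ball, and well-definedness of the product over the sphere — are precisely the content of Steps~2--5 in the paper's proof of Proposition~\ref{prop:generating.dense.subgroup.of.stabiliser}.
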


\begin{remark*}
    As we will see, the assumption stated in Theorem \ref{thmintro:topological.finite.generation} is satisfied if the action of $\Gamma$ on $X$ is transitive on vertices. If the action of $\Gamma$ is not vertex-transitive, but cocompact, we can still find a finite set $V \subset \mathcal{L}_o^{cube} \setminus \{ \epsilon \}$ such that we only need to require for all $v \in V$ that every $\sigma$ compatible with $v$ also appears at $v$ in $G_o$ (see Lemma \ref{lem:finitely.many.words.suffice} and Theorem \ref{thm:automorphism.group.is.topologically.finitely.generated}). It seems likely that, with the development of some additional tools to understand $\mathcal{L}_o^{cube}$ in the non-vertex-transitive case, this assumption can either be described more concretely or removed.
\end{remark*}

We can get a more explicit version of Theorem \ref{thmintro:topological.finite.generation} if we assume the action of $\Gamma$ to be transitive on vertices. In that case, every letter in $\Sigma_0$ appears at every vertex of $X$, that is $\Sigma_{0,v} = \Sigma_0$ for every vertex $v$, and our label-morphisms become bijective maps $\sigma : \Sigma_0 \rightarrow \Sigma_0$. In this case, we call these maps {\it label-isomorphisms}. We also highlight that, if the action of $\Gamma$ is vertex-transitive, then one can easily show that $\Gamma$ is a right-angled Artin group and $X$ its Salvetti complex (see Remark \ref{rem:vertex.transitive.means.RAAG} for details).

If $\Gamma$ acts vertex-transitively on $X$, then we can do the following: For any vertex $o \in X^{(0)}$, any reduced word $v \in \mathcal{L}_o^{cube}$, and any label-isomorphism $\sigma$ that is compatible with $v$, we can construct an element $A_{v, \sigma} \in G_o$ such that $\pre{v}{A_{v,\sigma}} = \sigma$ and $A_{v, \sigma}$ fixes all vertices in $X$ that are of Type 1 or 2 relative to $(o,v)$. These automorphisms allow us to build a topologically generating set in the sense of the following Theorem.

\begin{thmintro} \label{thmintro:topological.finite.generation.vertex.transitive.case}
Suppose $\Gamma$ acts vertex-transitively on $X$. Let $S$ be a finite generating set of $\Gamma$ such that $S^{-1} = S$. The set
\[ S \cup \{ A_{o, \sigma} \vert \sigma \text{ a label-isomorphism} \} \cup \{ A_{\alpha, \sigma} \vert \alpha \in \Sigma, \sigma \text{ compatible with } \alpha \} \]
generates a dense subgroup of $\Aut(X)$.
\end{thmintro}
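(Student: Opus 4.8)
The plan is to deduce Theorem~\ref{thmintro:topological.finite.generation.vertex.transitive.case} from Theorem~\ref{thmintro:topological.finite.generation} by showing that its hypothesis is automatic in the vertex-transitive setting, and then to exhibit the proposed finite set as an explicit replacement for the abstract ``finitely generated dense subgroup'' produced there. So the first step is to verify the hypothesis of Theorem~\ref{thmintro:topological.finite.generation}: given a reduced $v \in \mathcal{L}_o^{cube} \setminus \{\epsilon\}$ and a label-isomorphism $\sigma$ compatible with $v$, one must produce some $g \in G_o$ with $\pre{v}{\sigma(g)} = \sigma$ that fixes all vertices of Type~1 and Type~2 relative to $(o,v)$. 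In the vertex-transitive case $\Sigma_{0,v} = \Sigma_0$ for every vertex, label-morphisms are label-isomorphisms, and the automorphisms $A_{v,\sigma}$ alluded to just before the statement are exactly such elements; I would construct $A_{v,\sigma}$ vertex-by-vertex along cube paths emanating from $v$ into the region $h_1 \cap \dots \cap h_n$, defining its portrait to be $\sigma$ at $v$ and the identity elsewhere, and then check that compatibility of $\sigma$ with $v$ is precisely the condition making this assignment consistent: where $v\alpha$ is reducible (so the cube path backtracks over a hyperplane separating $o$ from $v$) or where some $\beta$ commutes with such an $\alpha$, compatibility forces $\sigma$ to fix $\alpha$ and $\beta$, so the prescribed portrait glues to a well-defined isometry. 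This is the step I expect to be the main obstacle: one must show that a prescribed portrait actually integrates to a genuine cube-complex automorphism, which requires checking that the induced map respects squares and hyperplane combinatorics across the whole region behind $v$, and that the Type~1/Type~2 vertices are genuinely untouched. The details of this construction are presumably carried out in the body of the paper (the reference to the yet-to-be-constructed $A_{v,\sigma}$ signals as much), so in the proof I would cite that construction and its key properties.

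Granting that, the hypothesis of Theorem~\ref{thmintro:topological.finite.generation} holds, so $\Aut(X)$ has \emph{some} finitely generated dense subgroup; but I want the specific set in the statement. Here the second step is to trace through the proof of Theorem~\ref{thmintro:topological.finite.generation} and identify which elements it actually uses. That proof should build a dense subgroup from (i) enough of $G_o$ — or rather enough portraits appearing in $G_o$ — to ``correct'' any automorphism to one fixing a large ball, together with (ii) enough translations to move the basepoint around. The claim is that $G_o$ is topologically generated by the rooted automorphisms $\{A_{o,\sigma}\}$ together with the ``directed'' automorphisms $\{A_{v,\sigma}\}$: an arbitrary $g \in G_o$ can be written, on larger and larger balls around $o$, as a product of an $A_{o,\sigma}$ (matching the portrait at $o$) followed by finitely many $A_{v,\sigma}$'s for $v$ at distance $1$ (matching portraits at the neighbors of $o$), followed by finitely many at distance $2$, and so on — each stage fixing everything already matched, because $A_{v,\sigma}$ only acts behind $v$. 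Passing to the limit in the compact-open topology shows $g$ lies in the closure of the group generated by these elements, so $\overline{\langle A_{o,\sigma}, A_{v,\sigma}\rangle} \supseteq G_o$.

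The third step reduces the index set for the $A_{v,\sigma}$'s from all reduced words $v$ to just the one-letter words $\alpha \in \Sigma$, as the statement asserts. The point is that $A_{v,\sigma}$ for a longer word $v = v'\alpha$ should decompose, up to the subgroup generated by $\Gamma$ and the shorter generators, as a $\Gamma$-conjugate (or a product involving a translation by a $\Gamma$-element moving $o$ to $v'$) of some $A_{\alpha, \sigma'}$: conjugating a directed automorphism supported behind $v$ by an element of $\Gamma$ carrying $v'$ to $o$ produces a directed automorphism supported behind a length-$1$ word, because $\Gamma$ acts by label-preserving isometries (the labeling is $\Gamma$-invariant) and vertex-transitively. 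This is where the translations $S$ genuinely enter — both to realize these conjugations and, as in the remark after Theorem~\ref{thmintro:topological.finite.generation}, because $G_o$ together with one translation per generator already suffices to be dense in $\Aut(X)$ once one knows $\Aut(X) = G_o \cdot \Gamma \cdot (\text{small corrections})$ by cocompactness. Finally, I would assemble the pieces: $\langle S \rangle = \Gamma$ is a uniform lattice, $\overline{\langle S, A_{o,\sigma}, A_{\alpha,\sigma}\rangle}$ contains $G_o$ by steps two and three, and $G_o \cdot \Gamma$ acts cocompactly (indeed its closure is open and cocompact); a standard argument — an open cocompact subgroup of $\Aut(X)$ that contains a vertex stabilizer and enough of $\Aut(X)$ to reach every vertex is everything, or more carefully, density follows since the closure is open, cocompact, and meets every coset needed — gives that the closure is all of $\Aut(X)$. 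The routine verifications (that the listed set is finite, that each $A_{o,\sigma}$ and $A_{\alpha,\sigma}$ genuinely lies in $\Aut(X)$, and the bookkeeping in the inductive correction over balls) I would state but not belabor.
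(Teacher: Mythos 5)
The high-level plan you outline — verify the hypothesis of Theorem~\ref{thmintro:topological.finite.generation} in the vertex-transitive case, use the ball-by-ball correction argument to topologically generate $G_o$, conjugate by $\Gamma$ to collapse the index set from all reduced words to one-letter words, and finish by noting $\Aut(X) = \Gamma\,G_o$ — is structurally the same as the paper's Theorem~\ref{thm:finitetopologicalgeneration} (via Proposition~\ref{prop:generating.dense.subgroup.of.stabiliser}, Lemma~\ref{lem:conjugation.of.stabilisers}, and Lemma~\ref{lem:fixed.points.when.conjugating.by.translations}).

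However, there is a genuine flaw in your sketched construction of $A_{v,\sigma}$. You propose ``defining its portrait to be $\sigma$ at $v$ and the identity elsewhere'' and claim compatibility of $\sigma$ with $v$ is what makes this glue. That portrait cannot satisfy property $inv$: if $s \in \Sigma_0$ with $\sigma(s) \neq s$, then $inv$ forces
\[
\pre{vs}{\sigma}(s^{-1}) \;=\; \pre{v}{\sigma}(s)^{-1} \;=\; \sigma(s)^{-1} \;\neq\; s^{-1},
\]
so $\pre{vs}{\sigma}$ is not the identity. In fact the paper's Construction~\ref{con:explicit.generators.at.v} puts $\sigma$ at \emph{all} Type~2 vertices (not just $v$), the identity only at Type~1 vertices, and at Type~3 vertices recursively sets $\pre{w}{A_{v,\sigma}} := \tau\bigl(\pre{w_-}{A_{v,\sigma}}, w_{end}\bigr)$, where $\tau$ is precisely the twist needed to propagate the inverse-constraint outward. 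Compatibility of $\sigma$ with $v$ is what makes the transitions across Type-boundaries consistent; it does not let one declare the portrait trivial behind $v$. So the ``identity elsewhere'' picture is not a harmless simplification — it describes a portrait that is not an automorphism. If you instead cite Constructions~\ref{con:explicit.generators.at.o} and~\ref{con:explicit.generators.at.v} and Propositions~\ref{prop:our.family.defines.an.automorphism.case.at.o} and~\ref{prop:our.family.defines.an.isometry.case.away.from.o}, the rest of your argument goes through as in the paper, though your final paragraph can be shortened: vertex-transitivity gives $\Aut(X)=\Gamma\,G_o$ directly, with no need for the cocompactness/open-subgroup detour.
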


We now turn to the size of the automorphism group. The size of $\Aut(X)$ depends significantly on the `fractally self-repeating nature' of $\mathcal{L}_o^{cube}$. Since $\Aut(X)$ is a second countable, locally compact group, it is non-discrete if and only if it is uncountable. (One direction of this equivalence uses Baire's category theorem, the other simply follows from second countability of $\Aut(X)$.) Thus, the size of $\Aut(X)$ can be characterised in terms of whether it is discrete or not.

\begin{thmintro} \label{thmintro:dichotomy}
    Suppose there exists some $o \in X^{(0)}$ such that for all reduced $v \in \mathcal{L}_o^{cube} \setminus \{ \epsilon \}$, every $\sigma$ that is compatible with $v$ also appears at $v$ in $G_o$. Then exactly one of the following holds:
    \begin{enumerate}
        \item $\Aut(X)$ is finitely generated. Furthermore, all vertex-stabilizers are finite.

        \item $\Aut(X)$ is non-discrete.
    \end{enumerate}
    In particular, the vertex-stabilizers are either uncountable or finite.
\end{thmintro}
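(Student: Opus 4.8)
The plan is to collapse the statement into the clean alternative ``$\Aut(X)$ is discrete versus $\Aut(X)$ is non-discrete'' for the second countable, locally compact, totally disconnected group $\Aut(X)$, feeding the discrete case into Theorem~\ref{thmintro:topological.finite.generation} and handling the rest with elementary topological group theory.

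First I would assemble the ambient facts. Since $\Aut(X)$ is locally compact Hausdorff it is a Baire space, and being also second countable it follows that $\Aut(X)$ is countable if and only if it is discrete: a countable topological group without isolated points is a countable union of nowhere dense singletons, contradicting Baire's theorem, and by homogeneity a single isolated point makes the whole group discrete. Next, for every vertex $v \in X^{(0)}$ I would record that $G_v := \Stab_{\Aut(X)}(v)$ is simultaneously \emph{open} and profinite (hence compact): it is open because any automorphism carrying $v$ into a metric ball around $v$ that contains no other vertex must fix $v$, so $G_v$ contains a basic compact-open neighbourhood of the identity; and it is profinite because restricting the action to the successive (finite, by local finiteness) metric balls $B_n(v)$ exhibits $G_v$ as a closed subgroup of $\varprojlim_n \mathrm{Sym}(B_n(v))$. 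These are among the ``standard consequences of local finiteness'' already invoked for $\Aut(X)$.

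Now the case split. If $\Aut(X)$ is discrete, then the finitely generated dense subgroup furnished by Theorem~\ref{thmintro:topological.finite.generation} — whose hypothesis is precisely the hypothesis of the present theorem, for the given $o$ — must be all of $\Aut(X)$, since the only dense subset of a discrete space is the space itself; hence $\Aut(X)$ is finitely generated, and each $G_v$ is then a compact subgroup of a discrete group and therefore finite. This is alternative~(1). If instead $\Aut(X)$ is non-discrete — alternative~(2) — then for every vertex $v$ the open subgroup $G_v$ cannot be finite, for a finite subgroup is discrete and an open discrete subgroup of a topological group forces the whole group to be discrete; thus $G_v$ is an infinite profinite group, hence uncountable. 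Finally, (1) and (2) are mutually exclusive: a finitely generated group is countable, and a countable second countable locally compact group is discrete by the Baire argument above, so (1) entails discreteness whereas (2) is non-discreteness. The closing ``in particular'' is then immediate, the vertex stabilisers being all finite in case~(1) and all uncountable in case~(2).

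I do not expect a genuine obstacle here; the substance of the dichotomy lives in Theorem~\ref{thmintro:topological.finite.generation}, and what remains is bookkeeping around two standard inputs — that vertex stabilisers are open profinite subgroups of $\Aut(X)$, and that Baire category upgrades ``countable'' to ``discrete'' for second countable locally compact groups. The only point needing a little care is to check that these two inputs really do make alternatives~(1) and~(2) complementary and exhaustive, so that ``exactly one'' of them holds.
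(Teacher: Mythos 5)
Your proof is correct, and it takes a genuinely different route from the paper's. The paper proves the body version of this theorem (Theorem \ref{thm:dichotomy.discrete.nondiscrete}) by working directly with the generating family $\{A_{v,\sigma}\}$: it first shows (via a finite-orbit argument using local finiteness) that the cardinality class of $G_o$ is independent of the basepoint $o$, then splits on whether the set $\{v \in \mathcal{L}_o : \mathcal{S}_o(v) \neq \{\id\}\}$ is finite or infinite. In the finite case, a product-length bound (Lemma \ref{lem:finitely.generated.stabilizers.are.finite}) makes $G_o$ finite, and Lemma \ref{lem:finite.stabilizers.imply.finite.generation} then yields a finite generating set for $\Aut(X)$; in the infinite case, Lemma \ref{lem:infinitely.generated.stabilizers.are.uncountable.and.nondiscrete} exhibits an explicit sequence $A_{v_i,\sigma_i} \to \Id_X$ of non-trivial elements, giving non-discreteness. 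You instead split directly on the clean alternative ``discrete versus non-discrete,'' feed the discrete case into Theorem~\ref{thmintro:topological.finite.generation} (which shares the hypothesis verbatim), and otherwise argue from general tdlc structure: stabilizers are open and compact, a compact subgroup of a discrete group is finite, an infinite second countable profinite group is uncountable, and finite generation forces countability hence discreteness by Baire. A small bonus of your route is that the ``in particular'' clause about all stabilizers falls out automatically from openness and compactness, whereas the paper needs a separate basepoint-consistency argument. What the paper's route buys in exchange is explicitness (the witness to non-discreteness is constructed, which is re-used downstream, e.g.\ in Section~\ref{sec:AUT+}) and independence from Theorem~\ref{thmintro:topological.finite.generation} as a black box, though the underlying mechanisms (Proposition~\ref{prop:generating.dense.subgroup.of.stabiliser}) are shared. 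Interestingly, the paper flags the Baire-category equivalence in the introduction but does not deploy it in the body proof; your argument is closer in spirit to that introductory remark.
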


In the vertex-transitive case, there is a characterisation when $\Aut(X)$ is non-discrete entirely in terms of $\Gamma$. This characterisation has been proven independently in \cite{Taylor17}. Unfortunately, this work seems to be unpublished and is difficult to get a hand on, so we provide the result including our proof using the technology developed in this paper.

\begin{thmintro}[cf.\,\cite{Taylor17}] \label{thmintro:dichotomy.vertex.transitive.case}
    Suppose the action $\Gamma \curvearrowright X$ is vertex-transitive. Then the following are equivalent:
    \begin{enumerate}
        \item The automorphism-group $\Aut(X)$ is non-discrete.
        
        \item There exist some distinct $s, t \in \Sigma_0$ such that $s \neq t^{-1}$ and $[s, t] \neq 1$.

        \item $\Gamma$ is not free abelian.
    \end{enumerate}
\end{thmintro}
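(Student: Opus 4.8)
The plan is to prove Theorem~\ref{thmintro:dichotomy.vertex.transitive.case} by establishing the cycle of implications $(3) \Rightarrow (2) \Rightarrow (1) \Rightarrow (3)$, though in practice the bulk of the work is in $(2) \Rightarrow (1)$ and $(1) \Rightarrow (3)$; the implication $(3) \Rightarrow (2)$ is nearly immediate from the structure theory already established. Recall (Remark~\ref{rem:vertex.transitive.means.RAAG}) that vertex-transitivity of $\Gamma \curvearrowright X$ forces $\Gamma$ to be a right-angled Artin group with defining graph having vertex set (a transversal of) $\Sigma_0$ and edge relations $[s,t]=1$, and $X$ its Salvetti complex. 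If $\Gamma$ is free abelian, then every pair of distinct generators commutes, so the only non-commuting pairs $s,t$ with $s \neq t^{-1}$ are... absent; conversely if $\Gamma$ is not free abelian, the defining graph is not complete, so there are two distinct non-adjacent vertices $s,t$ with $s \neq t^{-1}$ and $[s,t] \neq 1$. This settles $(2) \Leftrightarrow (3)$ directly.

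For $(2) \Rightarrow (1)$, I would exhibit an explicit element of an infinite vertex-stabilizer, or rather show that some $G_o$ is uncountable, using the machinery of label-isomorphisms and the automorphisms $A_{v,\sigma}$ from Theorem~\ref{thmintro:topological.finite.generation.vertex.transitive.case}. Concretely: given distinct $s,t \in \Sigma_0$ with $s \neq t^{-1}$ and $[s,t]\neq 1$, consider the word $v = s$ (a single edge, which is reduced and nonempty). One checks that the transposition-type label-isomorphism $\sigma$ which swaps $t \leftrightarrow t^{-1}$ (or some nontrivial permutation of the letters not equal to $s^{\pm 1}$ and not commuting-obstructed) is compatible with $v = s$: the compatibility condition requires $\sigma$ to fix all $\alpha$ such that $s\alpha$ is reducible, i.e. $\alpha = s^{-1}$, and all $\beta$ commuting with such $\alpha$, i.e. $\beta$ commuting with $s^{-1}$ — since $[s,t]\neq 1$, the letter $t$ is not among these, so $\sigma$ is free to move $t$. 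Hence $A_{s,\sigma} \in G_o$ is a nontrivial element fixing all vertices of Type 1 and 2 relative to $(o,s)$. Iterating this construction along an infinite reduced path $s, st', st't'', \dots$ that repeatedly reuses the non-commuting pattern, one builds, for each element of $\{0,1\}^{\mathbb{N}}$, a distinct automorphism fixing $o$ (the supports are essentially disjoint or nested in a controlled way), producing an injection $\{0,1\}^{\mathbb{N}} \hookrightarrow G_o$. Since $\Aut(X)$ is second countable and $G_o$ is open, uncountability of $G_o$ gives uncountability, hence (by the dichotomy already noted in the text, via Baire category) non-discreteness, of $\Aut(X)$.

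For the converse $(1) \Rightarrow (3)$, equivalently $\neg(3) \Rightarrow \neg(1)$: if $\Gamma = \Z^n$ is free abelian, then $X = \R^n$ with its standard cubulation, and I would argue that $\Aut(X)$ is discrete. Any cubical automorphism $g$ fixing the vertex $o$ permutes the $n$ hyperplane-directions (parallelism classes of edges at $o$), and since all these directions pairwise commute and $X$ is just the standard tiling of $\R^n$, the portrait $\pre{v}{\sigma(g)}$ is determined at every vertex by $\pre{\epsilon}{\sigma(g)}$ — there is no branching and no freedom to "act fractally". More precisely, using the identification of vertices with $\equiv_o$-classes of words in $\mathcal{L}_o^{cube}$, an element of $G_o$ is determined by the single permutation-with-signs it induces on $\Sigma_0$ (an element of the hyperoctahedral group), so $G_o$ is finite, hence $\Aut(X)$ is discrete. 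This is the cleanest place to invoke the already-developed portrait formalism rather than reprove flatness of Euclidean space by hand.

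The main obstacle I anticipate is the careful bookkeeping in $(2)\Rightarrow(1)$: verifying that the infinitely many automorphisms $A_{v,\sigma}$ one wants to multiply together (or assemble into a limit) genuinely have compatible, essentially disjoint supports so that arbitrary infinite products are well-defined automorphisms and pairwise distinct, and that the relevant words $v$ along the chosen infinite path remain reduced and admit the desired compatible label-isomorphism at each stage. The key point making this work is that $[s,t]\neq 1$ ensures the "forbidden" letters forced to be fixed by compatibility (those commuting with $s^{-1}$) never include the letter we need to move, so the recursion never stalls; but writing this down requires being precise about Types 1, 2, 3 relative to $(o,v)$ and about how the supports of $A_{v,\sigma}$ for nested $v$ interact. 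Everything else reduces to the RAAG translation of Remark~\ref{rem:vertex.transitive.means.RAAG} and the constructions already in hand.
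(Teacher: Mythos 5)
Your strategy is essentially the paper's, but you take a more roundabout and incomplete route in the crucial direction $(2)\Rightarrow(1)$, and you hand-wave $(1)\Rightarrow(3)$ where the paper has a short clean argument. Let me compare.

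\textbf{On $(2)\Rightarrow(1)$.} Your choice $v=s$ and $\sigma=(t,t^{-1})$ is a perfectly good dual to the paper's $v=\{t\}$ and $\sigma=(s,s^{-1})$, and your compatibility check is correct: the ``forced-to-be-fixed'' set is $\{s^{-1}\}\cup\{r:[s,r]=1\}$, and $[s,t]\neq 1$ means $t,t^{-1}$ escape it. But then, to get non-discreteness, you set out to construct an injection $\{0,1\}^{\mathbb N}\hookrightarrow G_o$ by assembling infinite products $\prod_n A_{v_n,\sigma_n}^{b_n}$ along an infinite path with ``essentially disjoint or nested supports''. This is overkill, and it is exactly the bookkeeping you flag as the main obstacle. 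You never need uncountability first: a single sequence of \emph{nontrivial} elements of $G_o$ converging to $\Id_X$ in the compact-open topology already gives non-discreteness, and uncountability then comes for free (second countability $+$ non-discrete $\Rightarrow$ uncountable). The paper gets such a sequence in two short steps: Lemma~\ref{lem:conjugation.of.stabilisers} shows $\alpha_{t^n}=\{t\}$ for all $n$, so the \emph{same} $\sigma=(s,s^{-1})$ is compatible with $t^n$ for every $n$; then the automorphisms $A_{t^n,\sigma}$ are all nontrivial and converge to $\Id_X$ because they fix larger and larger balls around $o$ (Lemma~\ref{lem:infinitely.generated.stabilizers.are.uncountable.and.nondiscrete}). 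Your version with $v_n=s^n$ and $\sigma=(t,t^{-1})$ works identically — you should abandon the $\{0,1\}^{\mathbb N}$-injection plan and just run the convergent-sequence argument; this completely dissolves your anticipated obstacle.

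\textbf{On $(1)\Rightarrow(3)$.} Your assertion that for $\Gamma=\mathbb Z^n$ ``an element of $G_o$ is determined by the single permutation-with-signs it induces on $\Sigma_0$'' is the right conclusion but needs proof in this formalism; it is not immediate from ``no branching''. The actual mechanism is the compatibility constraint, exactly as in the paper: if all labels pairwise commute or are inverse, then every label-isomorphism $\sigma$ compatible with some $v\neq\epsilon$ is forced to be the identity. The argument (see the proof of Theorem~\ref{thm:characterizing.uncountable.automorphism.groups.in.vertex.transitive.case}) goes: reduce via Lemma~\ref{lem:conjugation.of.stabilisers} to $v=\alpha_v\in\Sigma$; $\sigma$ must fix $\alpha_v^{-1}$ and everything commuting with it, which — under the negation of (2) — exhausts all of $\Sigma_0$ except $\alpha_v$, and $\sigma$ is then forced to fix $\alpha_v$ too by bijectivity or by a second commuting constraint. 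Then Lemmas~\ref{lem:finitely.generated.stabilizers.are.finite} and~\ref{lem:finite.stabilizers.imply.finite.generation} give finiteness of $G_o$ and discreteness of $\Aut(X)$. Your sketch identifies the correct punchline (finite $G_o$, hence discreteness) but skips the one nontrivial step, namely why ``no branching'' holds, which is precisely the compatibility argument.

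\textbf{On $(2)\Leftrightarrow(3)$.} Your treatment is the same as the paper's and fine.

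In summary: same underlying ideas and the correct key insight, but the detour through an uncountable injection in $(2)\Rightarrow(1)$ is both unnecessary and the hardest unverified part of your plan; replace it with the convergent-sequence argument via Lemma~\ref{lem:conjugation.of.stabilisers}. And in $(1)\Rightarrow(3)$ you should spell out the compatibility computation rather than appeal to intuition about flatness of $\mathbb R^n$.
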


\subsection{The subgroup $\text{Aut}^{+}(X)$ and simplicity} \label{subsecintro:aut+.and.simplicity}

Our remaining results require some additional assumptions. A $\CAT$ cube complex $X$ is called {\it reducible}, if it is isomorphic to a non-trivial product. It is called {\it irreducible}, if it is not reducible. We say $X$ is {\it essential}, if every halfspace $h$ in $X$ contains points that are arbitrarily far away from its bounding hyperplane $\hat{h}$. A group $G < \Aut(X)$ is called {\it elementary}, if its limit set $\Lambda(G)$ has at most two points. It is called {\it non-elementary}, if it is not elementary. We will require that $X$ is irreducible and essential and that $\Aut(X)$ is non-elementary. Note that, if $\Aut(X)$ is non-elementary, then $X$ cannot be isometric to the real line.\\

We define $\Aut^{+}(X)$ to be the subgroup generated by all elements in $\Aut(X)$ that fix some halfspace of $X$ pointwise. We emphasize that we do not take the closure and $\Aut^{+}(X)$ may not be closed in general. One easily checks that $\Aut^{+}(X)$ is a normal subgroup of $\Aut(X)$, because conjugation sends halfspace-fixators to halfspace-fixators. Thus, this group is bound to appear when studying questions of simplicity in the automorphism group.

\begin{remark*}
    Both the literature on automorphisms of right-angled buildings (e.g. \cite{Caprace14, MedtsSilvaStruyve18, MedtsSilva19, BossaertMedts21, BossaertMedts23}) and the literature on automorphisms of $\CAT$ cube complexes (e.g. \cite{Lazarovich18}) have a group denoted $\Aut^{+}(X)$. However, the definitions of this group in these two bodies of literature are distinct and likely do not coincide outside of some special cases. Both notions of $\Aut^{+}(X)$ are generalisations of the definition given by Tits \cite{Tits70}, who studied the case where $X$ is a tree. It is worth noting that $\Aut^{+}(X)$ enjoys similar properties in both cases, although some properties are significantly harder to prove in one context than the other. Throughout this paper, we work with the definition given above, which is taken from \cite{Lazarovich18} and is commonly used for automorphisms of $\CAT$ cube complexes.
\end{remark*}

In {\cite[Appendix A]{Lazarovich18}}, Lazarovich provides a sufficient condition for $\Aut^{+}(X)$ to be simple. We can combine his work with our results on non-discreteness to prove the following theorem.

\begin{thmintro} \label{thmintro:Aut+.is.simple.nondiscrete.td}
    Suppose $X$ is irreducible and essential, suppose $\Aut(X)$ is non-elementary, and suppose that the action $\Gamma \curvearrowright X$ is vertex-transitive.
    
    Suppose that for every pair $s, t \in \Sigma_0$ such that $[s,t] = 1$, there exists $r \in \Sigma_0 \setminus \{ s^{-1}, t^{-1} \}$ such that $[s,r] \neq 1$ and $[t,r] \neq 1$. Then $\Aut^{+}(X)$ is a non-discrete, simple, totally disconnected group.
\end{thmintro}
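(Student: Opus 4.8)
The plan is to deduce Theorem~\ref{thmintro:Aut+.is.simple.nondiscrete.td} by combining three inputs: Lazarovich's simplicity criterion from \cite[Appendix A]{Lazarovich18}, the non-discreteness dichotomy of Theorem~\ref{thmintro:dichotomy.vertex.transitive.case}, and the (easy) total disconnectedness of $\Aut(X)$ already observed in Section~\ref{subsecintro:topological.generators.and.size.of.aut}. So the argument is mostly a matter of checking hypotheses rather than building new machinery.

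First I would record that $\Aut(X)$ is totally disconnected: this was noted in the introduction, since every automorphism permutes the discrete vertex set, and the subspace topology on a subgroup is again totally disconnected, so $\Aut^{+}(X)$ is totally disconnected. Next I would verify non-discreteness. The combinatorial hypothesis in the theorem says that for every commuting pair $s,t$ there is an $r \notin \{s^{-1},t^{-1}\}$ with $[s,r]\neq 1$ and $[t,r]\neq 1$; in particular, since $X$ is not the real line (because $\Aut(X)$ is non-elementary) there is at least one commuting pair $s,t$ with $s\neq t^{-1}$, hence there exist distinct $s,t\in\Sigma_0$ with $s\neq t^{-1}$ and $[s,t]\neq 1$ (take the witness $r$ against $s$, say). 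By Theorem~\ref{thmintro:dichotomy.vertex.transitive.case}, condition~(2) there holds, so $\Aut(X)$ is non-discrete; and since $\Aut^{+}(X)$ is a non-trivial normal subgroup of a non-discrete group whose topology has no isolated points, it too is non-discrete. (If one prefers, one can instead argue directly that the halfspace-fixators generating $\Aut^{+}(X)$ are already plentiful enough, using the $A_{v,\sigma}$ of Theorem~\ref{thmintro:topological.finite.generation.vertex.transitive.case}, to see $\Aut^{+}(X)$ is infinite with small open subgroups.)

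Then I would invoke Lazarovich's criterion. Since $\Gamma$ acts vertex-transitively, $X$ is regular in the sense of \cite{Lazarovich18} (all links are isomorphic), so his framework applies. His theorem asserts that $\Aut^{+}(X)$ is simple provided $X$ is irreducible, essential, $\Aut(X)$ is non-elementary (equivalently, the action on $X$ is non-elementary), and a local condition on links holds guaranteeing that the local action is ``rich enough'' — concretely, the condition that the link of a vertex is not a join, which translates exactly into: for every pair $s,t$ with $[s,t]=1$ there is some $r\neq s^{-1},t^{-1}$ with $[s,r]\neq 1$ and $[t,r]\neq 1$. This is verbatim the hypothesis of our theorem, so Lazarovich's conclusion gives simplicity of $\Aut^{+}(X)$. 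Assembling the three pieces yields that $\Aut^{+}(X)$ is non-discrete, simple, and totally disconnected.

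The main obstacle I anticipate is not conceptual but bookkeeping: one must check carefully that Lazarovich's hypotheses match ours under the dictionary between links of $X$ and the commutation data $([s,t])$ on $\Sigma_0$. Specifically, (i) that vertex-transitivity really does put $X$ into the ``regular'' class Lazarovich requires; (ii) that his non-join / irreducibility-of-links condition is equivalent to the stated $r$-condition — here one uses that in the Salvetti-complex picture the link of a vertex is the flag complex on the commutation graph, so being a non-trivial join corresponds to a partition of $\Sigma_0/\{\pm\}$ into two mutually-commuting parts, which the $r$-condition rules out; and (iii) that ``$\Aut(X)$ non-elementary'' coincides with the non-elementarity hypothesis in \cite{Lazarovich18} (both are phrased via the limit set in $\dinf X$). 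Once these identifications are in place, the proof is a short citation-and-assembly argument.
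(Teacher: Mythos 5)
Your high-level outline — establish total disconnectedness, then non-discreteness, then cite Lazarovich for simplicity — matches the paper's strategy (Theorem~\ref{thm:aut+.is.simple.nondiscrete.tdlc}), but two of the three legs have genuine gaps as you have written them.

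\textbf{Non-discreteness of $\Aut^{+}(X)$.} The inference ``$\Aut^{+}(X)$ is a non-trivial normal subgroup of a non-discrete group without isolated points, hence non-discrete'' is false: a non-discrete topological group can perfectly well contain discrete non-trivial normal subgroups (e.g.\ $\Z\lhd\R$), and normality plays no role in the subspace topology. Deducing non-discreteness of $\Aut(X)$ from Theorem~\ref{thmintro:dichotomy.vertex.transitive.case} does not transfer to $\Aut^{+}(X)$. What actually works is the alternative you mention in passing, and it is what the paper does: show that the elements $A_{v,\sigma}$ of Construction~\ref{con:explicit.generators.at.v} with $v\neq\epsilon$ fix a halfspace (so lie in $\Aut^{+}(X)$; this is Proposition~\ref{prop:generators.of.aut+}), that the commutation hypothesis produces non-commuting labels $s,t$ with $t\neq s^{-1}$, hence infinitely many $v$ with non-trivial compatible $\sigma$, and then take a sequence $A_{v_i,\sigma_i}$ with $d_{\ell^1}(o,v_i)\to\infty$ converging to $\Id_X$ in the compact-open topology. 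You should promote this to the main argument, not leave it as a parenthetical.

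\textbf{Simplicity and the $r$-condition.} Your translation of the combinatorial hypothesis as ``the link is not a join'' is incorrect, and the claimed equivalence fails. Irreducibility of $X$ already forces the link to be a non-join, so if the $r$-condition were equivalent to non-join it would be vacuous under the other hypotheses — but it is not: the paper's own example $X_{CK}$ (the Croke–Kleiner complex, defined by a path $a$--$b$--$c$--$d$) is irreducible and essential with non-elementary $\Aut(X_{CK})$, yet the $r$-condition fails for the pair $b,c$. The actual role of the $r$-condition in the paper (Proposition~\ref{prop:simplicity.of.aut+}) is to verify Lazarovich's Claim~A.8: for any two transversally intersecting halfspaces $h_1,h_2$ there is a halfspace $k\subset h_1\cap h_2$ — the label $r$ that commutes with neither $s$ nor $t$ produces the nested hyperplane. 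This in turn gives faithfulness of the $\Aut(X)$-action on $\partial X$, which is the input to Corollary~A.4. So the $r$-condition is a sufficient (not necessary, and not a ``non-join'') certificate for Lazarovich's faithfulness hypothesis. Your write-up should cite Claim~A.8 and Corollary~A.4 and verify the halfspace-nesting condition from the $r$-hypothesis, rather than asserting an equivalence with a join condition.

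The total-disconnectedness step is fine.
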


\begin{remark*}
    Suppose $\Gamma$ and $X$ are as in the theorem above. Using Theorem \ref{thmintro:characterisation.of.special.cube.complexes}, one easily checks that, because $\Gamma$ acts vertex-transitively on $X$, there exists a finite graph $L$ such that $\Gamma$ is the right-angled Artin group induced by $L$ and $X$ is its corresponding Salvetti complex (cf.\,Remark \ref{rem:vertex.transitive.means.RAAG}). One can now reformulate the condition on labels required in Theorem \ref{thmintro:Aut+.is.simple.nondiscrete.td} as follows: Suppose that for every edge $e$ in $L$ there exists a vertex that is not adjacent to either endpoint of $e$. Then $\Aut^{+}(X)$ is non-discrete, simple, and totally disconnected. (Note that $L$ cannot be a single isolated vertex, as $\Aut(X)$ is assumed to be non-elementary.)
\end{remark*}

One of the reasons why simple, non-discrete, tdlc groups are of interest is their relevance in developing a structure theory of tdlc groups (see \cite{CapraceReidWillis17a, CapraceReidWillis17b} for an in-depth discussion of current developments on this subject). Since $\Aut(X)$ is tdlc whenever $X$ is locally finite and its non-discreteness can be understood in terms of Theorems \ref{thmintro:dichotomy} and \ref{thmintro:dichotomy.vertex.transitive.case}, the groups we are studying are an interesting class of examples to consider in the context of such a structure theory. Theorem \ref{thmintro:Aut+.is.simple.nondiscrete.td} tells us that, under some extra conditions, $\Aut^{+}(X)$ is a simple, non-discrete, totally disconnected subgroup of $\Aut(X)$. However, it is not a-priori obvious that $\Aut^{+}(X)$ is locally compact. The following result adresses this.

\begin{thmintro} \label{thmintro:Aut+.is.locally.compact}
    Suppose that the action $\Gamma \curvearrowright X$ is vertex-transitive and suppose there exists some $s \in \Sigma_0$ such that for any two distinct $t, r \in \Sigma_0$ such that $[s,t] = [s,r] = 1$, we have either $t = r^{-1}$ or $[t,r] = 1$. Then $\Aut^{+}(X)$ is locally compact.
\end{thmintro}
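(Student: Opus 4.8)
The plan is to deduce local compactness from the fact that $\Aut^{+}(X)$ contains an open subgroup of $\Aut(X)$. Recall that $\Aut(X)$ is totally disconnected and locally compact, with the pointwise stabilizers $\Fix_{\Aut(X)}(B(o,N))$ of the combinatorial balls around a fixed basepoint $o$ forming a neighbourhood basis of the identity. Thus it suffices to prove that $\Fix_{\Aut(X)}(B(o,N)) \subseteq \Aut^{+}(X)$ for some $N$: then $\Aut^{+}(X)$ is open in $\Aut(X)$, hence locally compact (and, incidentally, closed, so this also shows slightly more than claimed). So the entire content is to choose $N$ and to show that every automorphism fixing the ball $B(o,N)$ is a finite product of halfspace-fixators.

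Two observations guide the argument. First, if $g \in \Aut(X)$ moves only vertices lying in a single halfspace $h$, then $g$ fixes the complementary halfspace $h^{c}$ pointwise and hence $g \in \Aut^{+}(X)$; in particular each of the automorphisms $A_{v,\sigma}$ lies in $\Aut^{+}(X)$, since $A_{v,\sigma}$ fixes all vertices of Type $1$ and $2$ relative to $(o,v)$ and therefore fixes the halfspace complementary to any one of the halfspaces $h_1,\dots,h_n$ in its defining data. Second, an automorphism $g$ fixing $B(o,N)$ pointwise fixes setwise every hyperplane that crosses $B(o,N)$, and hence permutes the "branches" of $X$ hanging off $B(o,N)$; by local finiteness these branches are organised by a finite collection $\hat h_1,\dots,\hat h_m$ of outermost hyperplanes touching $B(o,N)$. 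The core claim is then: for $N$ large enough, every $g \in \Fix_{\Aut(X)}(B(o,N))$ decomposes as a finite product of automorphisms each of which moves only vertices contained in a single halfspace not containing $o$; by the first observation this yields $\Fix_{\Aut(X)}(B(o,N)) \subseteq \Aut^{+}(X)$.

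To prove the core claim I would induct on a complexity measure built from the outermost hyperplanes $\hat h_1,\dots,\hat h_m$ that actually meet the support of $g$, peeling off one halfspace at a time. If some such $\hat h_i$ neither crosses nor is separated from $o$ by the hyperplanes hosting the remaining support of $g$, then $g$ preserves the halfspace $h_i$ not containing $o$, the restriction $g|_{h_i}$ extends by the identity to an automorphism $g_i$ supported in $h_i$, and $g_i^{-1}g$ fixes a larger region with strictly smaller complexity, so the induction proceeds, terminating once $g$ is trivial or already supported in one halfspace. The genuinely hard case, and the one for which the hypothesis on $s$ is tailored, is when the outermost hyperplanes carrying the support of $g$ pairwise cross, so that their halfspaces overlap in corner (product) regions on which $g$ need not split off cleanly. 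Here I would use that the condition on $s$ says precisely that $\lk_L(s)$ is a clique, so that $A_{\lk_L(s)}$ is free abelian and the hyperplanes dual to $s$-edges are Euclidean flats (with correspondingly flat carriers); an automorphism fixing $B(o,N)$ acts rigidly on such a flat, which lets one find, near any vertex moved by $g$, an outermost hyperplane of this flat type whose contribution can be split off with halfspace-fixators while strictly decreasing the complexity. The main obstacle is exactly this last case: controlling the overlapping product regions and arranging that the peeling terminates — everything else is routine, if somewhat lengthy, bookkeeping over hyperplanes and carriers. Once the core claim is established, $\Aut^{+}(X)$ is an open (hence closed) subgroup of the locally compact group $\Aut(X)$, and is therefore locally compact.
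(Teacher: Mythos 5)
Your outer scaffolding coincides with the paper's: exhibit a finite vertex set whose pointwise fixator lands in $\Aut^{+}(X)$, so that $\Aut^{+}(X)$ contains a compact open subgroup of $\Aut(X)$. The gap is in the argument for the containment $\mathrm{Fix}(B(o,N)) \subseteq \Aut^{+}(X)$, which is also where your route diverges from the paper's.

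The peeling step does not work as stated. To extend $g|_{h_i}$ by the identity to an automorphism $g_i$ supported in $h_i$, you need $g$ to fix the hyperplane $\hat h_i$ (equivalently, its carrier $\hat h_i \times [0,1]$) pointwise, not merely setwise. That is not implied by $g \in \mathrm{Fix}(B(o,N))$: the carrier extends arbitrarily far in directions tangent to $\hat h_i$, and $g$ can act non-trivially there. You recognise this in your ``hard case'' and invoke the flat geometry of $s$-hyperplanes, but your induction runs over \emph{all} outermost hyperplanes touching $B(o,N)$, almost none of which are flat, and your complexity measure is never defined. By your own account the decisive case remains open, so the core claim is not established.

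What salvages the idea is to drop the peeling entirely and use your flatness observation on a \emph{single} hyperplane. Since the link of $s$ is a clique, the hyperplane $\hat h(s)$ dual to the $s$-edge at $o$ is cubically isomorphic to $\mathbb Z^k$, where $2k = |C(s)|$ and $C(s) = \{t \in \Sigma_0 : [s,t]=1\}$; and a cube automorphism of $\mathbb Z^k$ fixing one vertex and all of its neighbours is the identity. Hence any $g \in \mathrm{Fix}(B(o,2))$ fixes $\hat h(s)$ pointwise, and since $g$ preserves the two sides (it fixes $o$) it fixes the whole carrier; $g$ then splits into exactly two halfspace-fixators and lies in $\Aut^{+}(X)$. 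No induction, no peeling, no complexity measure. The paper performs the same one-hyperplane split but establishes the carrier-fixing by the portrait calculus, propagating the label-identity from the much smaller set $K = \{o,s\} \cup C(s)$ via $par$, $inv$, and bijectivity of the local label-isomorphisms, rather than by Euclidean rigidity.
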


\begin{remark*}
    Let $L$ be a finite graph, $\Gamma$ its induced right-angled Artin group, and $X$ the corresponding Salvetti complex. The condition required from $\Sigma_0$ in the theorem above can again be formulated as a property of $L$. Namely, $\Aut^{+}(X)$ is locally compact if there exists a vertex $v$ in $L$ such that all vertices adjacent to $v$ are adjacent to each other. In particular, if $L$ contains an isolated vertex, or a vertex of degree one, then $\Aut^{+}(X)$ is locally compact.

    While this provides a sufficient condition for $\Aut^{+}(X)$ to be locally compact, it seems likely that one can obtain local compactness results for more examples by further enhancing the methods developed in this paper.
\end{remark*}

With $\Aut^{+}(X) \triangleleft \Aut(X)$ being a simple subgroup, a naturally arising question is how big $\Aut^{+}(X)$ is inside $\Aut(X)$. Lazarovich showed in \cite{Lazarovich18} that $\Aut^{+}(X)$ has finite index in $\Aut(X)$ in some specific cases. We obtain a sufficient condition for the closure of $\Aut^{+}(X)$ to have finite index in $\Aut(X)$.

\begin{defintronumber} \label{defintro:flexible.vertex.transitivity}
    Let $L$ be a graph. We say $L$ is flexibly vertex-transitive, if for all vertices $v, w$ in $L$, there exists a vertex $u$ and a graph-automorphism $\varphi \in \Aut(L)$ such that $\varphi$ fixes $u$ and all its neighbours and $\varphi(v) = w$.
\end{defintronumber}

See section \ref{subsec:index.of.Aut+} for a discussion of flexible vertex-transitivity. For now, we simply point out that flexible vertex-transitivity is a very strong condition that is rarely satisfied.

Since we assume that $\Gamma$ acts vertex-transitively on $X$, the links at the vertices of $X$ are all canonically isomorphic. We denote by $\lk(X)$ the link at any vertex of $X$.

\begin{thmintro} \label{thmintro:finite.index.of.aut+}
    Suppose $\Gamma$ acts vertex-transitively on $X$ and suppose the $1$-skeleton of $\lk(X)$ is flexibly vertex-transitive. Then $\overline{\Aut^{+}(X)}$ has finite index in $\Aut(X)$.
\end{thmintro}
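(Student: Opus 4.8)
The plan is to prove that $\overline{\Aut^+(X)}$ is simultaneously \emph{open} and \emph{cocompact} in $\Aut(X)$. Since $\Aut(X)$ acts properly and cocompactly on the locally finite complex $X$, a closed subgroup that is open has discrete quotient, and one that is cocompact on $X$ has compact quotient (write $\Aut(X)=\overline{\Aut^+(X)}\cdot Q$ with $Q=\{g:g(o)\in C\}$ for a compact fundamental domain $C$ of $\overline{\Aut^+(X)}$, $Q$ compact by properness of the action); so a closed subgroup with both properties has finite index, which is the assertion. Throughout I use the vertex-transitive structure: $X$ is the Salvetti complex of the right-angled Artin group $\Gamma$, $\Lambda:=\lk(X)^{(1)}$ is the commuting graph on $\Sigma_0$, the compact open subgroup $K:=\Fix_{\Aut(X)}(\st(o))=\{g\in G_o:\pre{\epsilon}{\sigma(g)}=\id\}$ satisfies $[G_o:K]=|\Aut(\Lambda)|<\infty$ (the homomorphism $g\mapsto\pre{\epsilon}{\sigma(g)}$ on $G_o:=\Stab_{\Aut(X)}(o)$ is onto $\Aut(\Lambda)$ via the $A_{o,\sigma}$), and $\Aut(X)=\Gamma\cdot G_o$.

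\emph{Cocompactness, granting openness.} Grant that $K\subseteq\overline{\Aut^+(X)}$ — this is the openness statement, dealt with below. Since $\Aut^+(X)\trianglelefteq\Aut(X)$ and $\Gamma$ is vertex-transitive, every $\Fix_{\Aut(X)}(\st(w))$ is a conjugate of $K$, so the closed normal subgroup $N:=\overline{\langle\,\Fix_{\Aut(X)}(\st(w)):w\in X^{(0)}\,\rangle}$ lies in $\overline{\Aut^+(X)}$ and is open. For $\alpha\in\Sigma_0$ and $\sigma$ compatible with $\alpha$ (equivalently $\sigma\in\Aut(\Lambda)$ with $\sigma|_{\st_\Lambda(\alpha^{-1})}=\id$) the automorphism $A_{\alpha,\sigma}$ fixes a halfspace pointwise — hence lies in $\Aut^+(X)$ — fixes $\st(o)$, and induces $\sigma$ on the edges at the neighbour $o\alpha$. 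Transporting by $\Gamma$: for every vertex $z$ and every neighbour $w$ of $z$, $\Fix_{\Aut(X)}(\st(w))$ contains elements fixing $z$ and inducing on the edges at $z$ every element of $R_\nu:=\{\varphi\in\Aut(\Lambda):\varphi|_{\st_\Lambda(\nu)}=\id\}$, where $\nu=\ell(z\to w)$. As $w$ runs over the neighbours of $z$, the label $\ell(z\to w)$ runs over all of $\Sigma_0$, so $N$ induces on the edges at $z$, through elements fixing $z$, at least $\langle R_\nu:\nu\in\Sigma_0\rangle\le\Aut(\Lambda)$. By flexible vertex-transitivity of $\Lambda$ this subgroup is transitive on the vertices of $\Lambda$, i.e. on the edges at $z$. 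Hence, for $z',z''$ both adjacent to a common vertex $z$, there is an element of $N$ fixing $z$ and carrying $z'$ to $z''$. An induction on $d(o,\cdot)$ then shows $N\cdot o$ contains every vertex at even distance from $o$ (to reach $v$ with $d(o,v)=2k$, move $o$ onto a vertex $z$ of a geodesic with $d(o,z)=2(k-1)$ by induction, then move $o$ across the length-two final stretch using an $N$-element fixing its mid-vertex). Every vertex is within distance one of an even-distance vertex, so $N\cdot o$ is $1$-dense and $N$ — a fortiori $\overline{\Aut^+(X)}$ — acts cocompactly on $X$.

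\emph{Openness (the main obstacle).} It remains to find $R$ with $\Fix_{\Aut(X)}(B(o,R))\subseteq\overline{\Aut^+(X)}$; one expects $R=1$, i.e. $K\subseteq\overline{\Aut^+(X)}$. The idea is to approximate a given $g\in\Fix_{\Aut(X)}(B(o,R))$ in the compact-open topology by elements of $\Aut^+(X)$ agreeing with $g$ on larger and larger balls, building these by peeling off hyperplane carriers one layer at a time: enumerate the hyperplanes separating $o$ from the region to be corrected and, at each step, multiply by a halfspace-fixator — an $A$-type automorphism, or a product of several — matching $g$ on the next layer. The difficulty — and the reason the hypothesis is flexible vertex-transitivity of $\Lambda$ rather than mere vertex-transitivity — lies at the carriers: a single halfspace-fixator must act trivially on the whole carrier of its bounding hyperplane, so a naive layer-by-layer correction stalls whenever $g$ acts non-trivially on a carrier $C(\hat h)$; to correct $g$ there one must use products of halfspace-fixators of hyperplanes crossing $\hat h$, and flexible vertex-transitivity of $\Lambda$ is precisely what guarantees that the direction-permutations this forces upon us lie in $\langle R_\nu:\nu\in\Sigma_0\rangle$ and are therefore realizable by such products. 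Choosing an enumeration of the hyperplanes so that no two carrier constraints conflict, and checking that the limits produced are genuinely approximable from $\Aut^+(X)$, is the technical heart of the argument, and the step I expect to be hardest to make precise. With openness in hand, the discreteness and compactness of $\Aut(X)/\overline{\Aut^+(X)}$ established above give finiteness of this quotient.
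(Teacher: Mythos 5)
Your high-level framework---show $\overline{\Aut^{+}(X)}$ is open and cocompact in $\Aut(X)$, hence of finite index---is a legitimate repackaging of what the paper does via Lazarovich's Lemma~5.1: the two inputs there ($[G_o:\Stab_{\overline{\Aut^{+}(X)}}(o)]<\infty$ and finiteness of $X^{(0)}/\overline{\Aut^{+}(X)}$) correspond exactly to your openness and cocompactness. Your cocompactness argument is also essentially the paper's: flexible vertex-transitivity of $\lk(X)^{(1)}$ is used to transport a vertex across a length-two edge path via an $A_{\alpha,\sigma}\in\Aut^{+}(X)$, so $\Aut^{+}(X)$ has (at most) two orbits on $X^{(0)}$. (You route this through an auxiliary subgroup $N$ built from star-fixators, which is unnecessary and creates a false dependency on openness; the paper moves vertices directly with the $A_{\alpha,\sigma}$'s, which lie in $\Aut^{+}(X)$ by Proposition~\ref{prop:generators.of.aut+} without any openness input.)

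The genuine gap is your treatment of openness, which you label ``the main obstacle'' and leave as an admittedly incomplete hyperplane-peeling heuristic. In fact openness is the short step, and you already hold all the pieces. By Proposition~\ref{prop:generators.of.aut+}, \emph{every} $A_{v,\sigma}$ with $v\neq\epsilon$ fixes a halfspace pointwise and hence lies in $\Aut^{+}(X)$---not just the length-one ones $A_{\alpha,\sigma}$ you invoke for cocompactness. And the proof of Proposition~\ref{prop:generating.dense.subgroup.of.stabiliser} shows that any $g\in G_o$ with $\pre{\epsilon}{\sigma(g)}=\id$ is approximated in compact-open topology by finite products $\prod A_{v,\sigma_v}$ with all $v\neq\epsilon$ (Step~1 of that proof reduces to the trivial correction $A_{\epsilon,\id}=\Id$ when $\pre{\epsilon}{\sigma(g)}=\id$, and every subsequent correction uses $v$ at positive distance from $o$). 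Hence $K=\{g\in G_o:\pre{\epsilon}{\sigma(g)}=\id\}=\overline{\langle A_{v,\sigma}:v\neq\epsilon\rangle}\subseteq\overline{\Aut^{+}(X)}$, which is the openness you want with $R=1$. Note this step does not use flexible vertex-transitivity at all---only vertex-transitivity, which underlies the $A_{v,\sigma}$ constructions. Your worry about carriers is a red herring here: the $A_{v,\sigma}$'s for longer $v$ are themselves honest halfspace-fixators (for a halfspace separating $o$ from $v$), there is no layer-by-layer matching of $g$ against carrier-fixators, and no enumeration of hyperplanes needs to be arranged to avoid ``conflicts.''
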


This provides us with a sufficient condition for the closure of $\Aut^{+}(X)$ to be large in $\Aut(X)$. On the other hand, we can present a relatively basic example in which $\overline{ \Aut^{+}(X) }$ is non-trivial, even uncountable and non-discrete, yet not even cocompact in $\Aut(X)$. This answers a question by Haglund. Our example is as follows: Let $\Gamma_{CK}$ be the right-angled Artin group defined by
\[ \Gamma_{CK} := \langle a, b, c, d \vert [a,b] = [b,c] = [c,d] = 1 \rangle. \]
Let $X_{CK}$ be the Salvetti-complex of $\Gamma_{CK}$.

\begin{thmintro} \label{thmintro:non.cocompact.AUT+}
    The quotient $\faktor{ \Aut(X_{CK}) }{\overline{ \Aut^{+}(X_{CK}) }}$ is not compact, while $\Aut^{+}(X_{CK})$ is simple, non-discrete, and tdlc.
\end{thmintro}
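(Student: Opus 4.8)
The plan is to split the statement into two independent halves: (i) the simplicity, non-discreteness, and local compactness of $\Aut^{+}(X_{CK})$, which should follow by checking the hypotheses of Theorems~\ref{thmintro:Aut+.is.simple.nondiscrete.td} and~\ref{thmintro:Aut+.is.locally.compact} against the defining graph $L_{CK}$ (the path $a-b-c-d$ on four vertices); and (ii) the non-cocompactness of $\overline{\Aut^{+}(X_{CK})}$ in $\Aut(X_{CK})$, which is the genuinely new content. For (i), first I would record that $L_{CK}$ is the path graph $P_4$, whose associated RAAG $\Gamma_{CK}$ acts vertex-transitively on its Salvetti complex $X_{CK}$. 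I would then verify that $X_{CK}$ is irreducible (the complement graph of $P_4$ is connected, so the RAAG is not a nontrivial direct product), essential (no vertex of $L_{CK}$ is adjacent to all others, so no hyperplane is bounded), and that $\Aut(X_{CK})$ is non-elementary (there are commuting standard generators, e.g. $[a,b]=1$, hence by Theorem~\ref{thmintro:dichotomy.vertex.transitive.case} the group is non-discrete and in particular $X_{CK}$ is not the real line; non-elementarity follows since $\Gamma_{CK}$ itself is non-elementary). The commutation condition in Theorem~\ref{thmintro:Aut+.is.simple.nondiscrete.td} must be checked edge-by-edge in $P_4$: for the edge $\{a,b\}$ the vertex $d$ is adjacent to neither $a$ nor $b$ and satisfies $[b,d]\neq 1$—wait, $d$ is not adjacent to $b$ in $P_4$, so actually I need a vertex $r$ with $[a,r]\neq 1$ \emph{and} $[b,r]\neq 1$; here $d$ works since $d$ is adjacent to neither $a$ nor $b$. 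By symmetry the edge $\{c,d\}$ is handled by $a$. For the middle edge $\{b,c\}$, I need $r\notin\{b^{-1},c^{-1}\}$ with $[b,r]\neq 1$ and $[c,r]\neq 1$; but every vertex of $P_4$ is adjacent to $b$ or to $c$—so this edge \emph{fails} the hypothesis, and I must instead invoke Lazarovich's simplicity criterion directly (or the more refined version in the body of the paper), which is presumably what the paper actually does; I would cite the precise statement from Section~\ref{subsecintro:aut+.and.simplicity} and the body. For local compactness via Theorem~\ref{thmintro:Aut+.is.locally.compact}, the vertex $a$ of $L_{CK}$ has a unique neighbour ($b$), so the (vacuous) condition ``all neighbours of $a$ are pairwise adjacent'' holds, giving local compactness, hence tdlc.

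For part (ii), the strategy is to exhibit an explicit sequence $(g_n)\subset\Aut(X_{CK})$ whose images in $\faktor{\Aut(X_{CK})}{\overline{\Aut^{+}(X_{CK})}}$ leave every compact set, equivalently to show that $\overline{\Aut^{+}(X_{CK})}\,K\neq\Aut(X_{CK})$ for any compact $K$. The key observation is that elements of $\Aut^{+}(X_{CK})$ (and their limits) are constrained in how they can act near hyperplanes: each generator fixes some halfspace pointwise, so the ``portrait'' of any element of $\Aut^{+}$, restricted to vertices far inside a halfspace, is trivial on that side. I would make this precise using the portrait formalism and the Type~1/2/3 decomposition: if $g\in\Aut^{+}(X_{CK})$, then writing $g$ as a product of halfspace-fixators, each factor's local action $\pre{v}{\sigma(g)}$ at a vertex $v$ deep inside the fixed halfspace of that factor is the identity, and one can bound the total ``spread'' of label-isomorphisms realized by $g$ across a large ball. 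By contrast, there should exist automorphisms of $X_{CK}$—arising from the special structure of the path $P_4$, e.g. automorphisms that simultaneously rotate the $a$-edges and $c$-edges by independent permutations at infinitely many well-separated vertices along a geodesic in the $b$-direction—whose portraits realize these independent local actions at arbitrarily large distance, and which therefore cannot be approximated (in the compact-open topology) by $\Aut^{+}$ modulo a compact set. Concretely, I would build $g_n$ fixing a base ball of radius $n$ but acting nontrivially (by a fixed label-isomorphism $\sigma$ swapping $a\leftrightarrow a^{-1}$, say) at a vertex at distance $n+1$ in a direction where this $\sigma$ is \emph{not} compatible with being the local action of any halfspace-fixator, and show that the coset $g_n\,\overline{\Aut^{+}(X_{CK})}$ meets no fixed $G_o$-translate, so the quotient is not covered by finitely many such, hence not cocompact.

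The main obstacle is part (ii): proving that the ``obstruction classes'' $g_n$ genuinely survive in the quotient, i.e. that they are not already in $\overline{\Aut^{+}(X_{CK})}$ up to a bounded error. This requires a clean invariant that is (a) continuous, so it passes to the closure $\overline{\Aut^{+}}$, and (b) actually distinguishes the $g_n$'s. The natural candidate is a homomorphism-like or coarse invariant built from how $g$ permutes the parallelism classes of hyperplanes ``at infinity'' in the $b$-direction—something like the induced action on the set of $\{a\}$- and $\{c\}$-hyperplanes crossing a fixed $b$-axis, recorded up to finite indeterminacy. I expect one shows that halfspace-fixators, and hence all of $\Aut^{+}$ and its closure, act on this data with only finitely much nontriviality (bounded independently of the element, or at least with a controlled ``support''), whereas the $g_n$ require unboundedly large support; quantifying ``support'' correctly and showing it is lower-semicontinuous (so survives the closure) is the delicate point. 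An alternative, possibly cleaner route is to produce a continuous quotient $\Aut(X_{CK})\to Q$ onto a non-compact group that kills $\Aut^{+}(X_{CK})$—for instance a target recording the asymptotic behaviour of the two ``pendant'' colours $a$ and $d$ relative to the ``spine'' $b,c$—and argue $Q$ is non-compact while $\Aut^{+}$, being generated by halfspace-fixators, lies in $\ker$; then non-cocompactness of $\overline{\Aut^{+}}$ is immediate. I would try the explicit-sequence argument first and fall back to the quotient construction if the continuity bookkeeping becomes unwieldy.
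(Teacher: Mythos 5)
Your part~(i) is essentially right and matches the paper. You correctly detect that the hypothesis of Theorem~\ref{thmintro:Aut+.is.simple.nondiscrete.td} fails on the edge $\{b,c\}$ of $P_4$ (every vertex of $L_{CK}$ is adjacent to $b$ or to $c$), that this hypothesis is only used to get faithfulness of $\Aut(X_{CK})\curvearrowright\partial X_{CK}$, and that faithfulness can be supplied separately for this example; and you correctly verify local compactness via Theorem~\ref{thmintro:Aut+.is.locally.compact} using the degree--one vertex $a$. This is exactly how the paper handles it (see the remark following Theorem~\ref{thmintro:non.cocompact.AUT+}).

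Part~(ii) is where the real content lies, and your primary strategy has a conceptual flaw. You propose to build $g_n$ fixing the ball of radius $n$ around a base vertex $o$ and acting nontrivially just outside, and to show that the cosets $g_n\,\overline{\Aut^{+}(X_{CK})}$ escape every compact set. But any $g_n$ fixing a neighbourhood of $o$ lies in $G_o$, which is \emph{compact} (open and closed in $\Aut(X_{CK})$, and compact by Arzel\`a--Ascoli since $X_{CK}$ is locally finite). In fact, the argument in the proof of Theorem~\ref{thm:finite.index.of.aut+} shows $G_o\cap\overline{\Aut^{+}(X_{CK})}$ has finite index in $G_o$: any $g\in G_o$ is a limit of products of $A_{v,\sigma}$'s, and all the factors with $v\neq\epsilon$ already lie in $\Aut^{+}$ by Proposition~\ref{prop:generators.of.aut+}. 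So the cosets $g_n\,\overline{\Aut^{+}}$ remain inside the (compact) image of $G_o$ in the quotient and witness nothing. Elements that fix large balls are, if anything, the \emph{wrong} candidates for non-cocompactness; they would more naturally witness non-discreteness.

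The missing idea is to show instead that the $\overline{\Aut^{+}}$-\emph{orbit of the base vertex} $o$ is not cobounded in $X_{CK}^{(0)}$. Since $\Aut(X_{CK})$ is vertex-transitive and $G_o$ is compact, coboundedness of the orbit is equivalent to compactness of $\faktor{\Aut(X_{CK})}{\overline{\Aut^{+}(X_{CK})}}$. The paper takes $v_n:=(ad)^n\cdot o$ and proves $d_{\ell^1}(o,\varphi(v_n))\geq 2n$ for every $\varphi\in\Aut^{+}(X_{CK})$; the estimate is robust under closure because it is a distance bound, so no separate semicontinuity argument is needed. The invariant that makes this work is the one you were groping for under the heading ``asymptotic behaviour of the pendant colours $a,d$ relative to the spine $b,c$'': because the only automorphisms of $L(X_{CK})$ fixing a vertex or sending a vertex to its inverse are products of $(a,a^{-1}),(b,b^{-1}),(c,c^{-1}),(d,d^{-1})$, every halfspace-fixator $\varphi$ with $\varphi(v_n)$ appearing in the orbit only ever modifies a $*_n$-decomposed word by inserting a suffix $w'^{-1}\varphi(w')$ that automatically carries an $(a,d)$-palindrome-pattern, and such words reduce to reduced words of length $\geq 2n$ (Lemma~\ref{lem:palindrome.pattern.properties} and Steps~3--5 of the proof). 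Your alternative route via a continuous quotient $\Aut(X_{CK})\to Q$ killing $\Aut^{+}$ would also work in principle, but the paper gets by without constructing such a $Q$: the orbit-distance estimate is enough.
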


\begin{remark*}
    We point out that $X_{CK}$ does not satisfy the condition on commuting pairs $s,t$ required in Theorem \ref{thmintro:Aut+.is.simple.nondiscrete.td}. However, this condition is only required to prove that $\Aut(X_{CK})$ acts faithfully on $X_{CK}$, which can be proven in a different way for this particular example. So the conclusion of Theorem \ref{thmintro:Aut+.is.simple.nondiscrete.td} still holds for $X_{CK}$ (see section \ref{subsec:AUT+.and.simplicity} or {\cite[Corollary A.4 $\&$ Claim A.8]{Lazarovich18}}). The condition required in Theorem \ref{thmintro:Aut+.is.locally.compact} is satisfied and thus its conclusion holds here.
\end{remark*}

\subsection{About the proofs}

Key to our proofs is our ability to describe the stabilizer subgroups $G_o$ and generate them by elements that enjoy useful properties. We do so by noticing that $G_o$ canonically embeds into the automorphism group of a particular rooted tree.

The language $\mathcal{L}_o^{cube}$ carries the structure of a prefix tree (because this language is regular, see Proposition \ref{prop:regularity.of.cube.path.language}). That is, we obtain a rooted tree $T_o^{cube}$ with root $\epsilon$, whose vertices are given by $\mathcal{L}_o^{cube}$ such that $v$, $w$ are connected by an edge if and only if $w = v \alpha$ for some $\alpha$ in $\Sigma$. (This is called the {\it path language tree} and it is a standard construction that can be done for any regular language.) Since any automorphism $g \in G_o$ fixes $o$ and sends cube paths to cube paths, it induces a rooted tree-automorphism on $T_o^{cube}$. This provides us with a canonical embedding $G_o < \Aut(T_o^{cube})$ (see Lemma \ref{lem:stabilizer.acting.on.trees}).

Our key technical result is a description of $G_o$ as a subgroup of $\Aut(T_o^{cube})$. For this, we use that rooted tree-automorphisms can be described in terms of a family of bijections.

\begin{defintronumber} \label{defintronumber:portrait}
    Let $\mathcal{L}$ be a regular language over the alphabet $\Sigma$. For all $v \in \mathcal{L}$, we denote
    \[ \Sigma(v) := \{ \alpha \in \Sigma \vert v\alpha \in \mathcal{L} \}. \]
    A portrait in a language $\mathcal{L}$ is a family of maps $(\pre{v}{\sigma})_{v \in \mathcal{L}}$ such that $\pre{v}{\sigma} : \Sigma(v) \rightarrow \Sigma$ is injective for every $v \in \mathcal{L}$. For any portrait, we define the map $\sigma : \mathcal{L} \rightarrow \Sigma^*$ by
    \[ \sigma(\alpha_1 \dots \alpha_n) := \pre{\epsilon}{\sigma}(\alpha_1) \dots \pre{\alpha_1 \dots \alpha_{i-1}}{\sigma}(\alpha_i) \dots \pre{\alpha_1 \dots \alpha_{n-1}}{\sigma}(\alpha_n). \]

    Denote the path language tree corresponding to $\mathcal{L}$ by $T$. We say a portrait {\it defines an automorphism on $T$} if $\sigma$ is the vertex-map of a rooted automorphism on $T$. (Since rooted automorphisms on $T$ are uniquely determined by what they do on vertices, $\sigma$ determines the entire automorphism.) We will discuss in section \ref{subsec:path.language.tree.and.tree.automorphisms} how to check whether a portrait defines an automorphism on $T$.
\end{defintronumber}

Given an element $g \in G_o$, its portrait $(\pre{v}{\sigma(g)})_{v \in V(T_o^{cube})}$ defined earlier is exactly the portrait in $\mathcal{L}_o^{cube}$ that recovers $g$ as an element in $\Aut(T_o^{cube})$. Thus, this notion of a portrait on a tree is compatible with the notion of portrait we defined previously for elements $g \in G_o$.

\begin{defintronumber} \label{defintronumber:admissible.portrait}
    A portrait in the language $\mathcal{L}_o^{cube}$ is called {\it admissible} if the following two conditions hold:
    \begin{itemize}
        \item For every pair $v, w \in \mathcal{L}_o^{cube}$ such that $v \equiv_o w$, we have $\pre{v}{\sigma} = \pre{w}{\sigma}$.

        \item For every $v \in \mathcal{L}_o^{cube}$, the restriction of $\pre{v}{\sigma}$ to $\Sigma_{0,v}$ is a label-morphism and $\pre{v}{\sigma}$ is the canonical extension of that label-morphism to $\Sigma_v$.
    \end{itemize}
\end{defintronumber}

\begin{defintronumber}
    Suppose $(\sigma_v)_{v \in \mathcal{L}_o^{cube}}$ is an admissible portrait that defines an automorphism on $T_o^{cube}$. We define the following two properties that this family may satisfy:
    \begin{enumerate}
        
        \item[(par)] For all $v \in \mathcal{L}_o^{cube}$, for all non-isolated $s \in \Sigma_{0,v}$, and for all $\beta \in \Sigma_v$ such that $[\beta, s ]  = 1$, we have
        \[ \pre{v}{\sigma(g)}(\beta) = \pre{vs}{\sigma(g)}(\beta). \]

        \item[(inv)] For all $\alpha_1 \dots \alpha_n \in \mathcal{L}_o^{cube}$ and all $s \in \alpha_n$, we have
        \[ \pre{\alpha_1 \dots \alpha_n}{\sigma(g)}(s^{-1}) = \pre{\alpha_1 \dots \tilde{\alpha}_n}{\sigma(g)}(s)^{-1}, \]
        where $\tilde{\alpha}_n := \alpha_n \setminus \{ s \}$. (If $\tilde{\alpha}_n$ is empty, then $\alpha_1 \dots \tilde{\alpha}_n = \alpha_1 \dots \alpha_{n-1}$.)
    \end{enumerate}
\end{defintronumber}

\begin{thmintro} \label{thmintro:characterisation.of.stabilizer}
    Let $o \in X^{(0)}$ and let $(\pre{v}{\sigma})_{v \in \mathcal{L}_o^{cube}}$ be a portrait in $\mathcal{L}_o^{cube}$. There exists some $g \in G_o$ such that $(\pre{v}{\sigma})_{v \in \mathcal{L}_o^{cube}}$ is the portrait of $g$ if and only if $(\pre{v}{\sigma})_{v \in \mathcal{L}_o^{cube}}$ is admissible, defines an automorphism on $T_o^{cube}$, and satisfies $par$ and $inv$.
\end{thmintro}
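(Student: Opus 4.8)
# Proof proposal

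The plan is to prove the two directions separately, with the bulk of the work going into showing that the four conditions (admissibility, defining an automorphism on $T_o^{cube}$, $(par)$, $(inv)$) are sufficient to realize the portrait by a genuine cube complex automorphism fixing $o$.

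\textbf{Necessity.} Suppose $g \in G_o$ and $(\pre{v}{\sigma})_v$ is its portrait. Admissibility is essentially by construction: since $g$ is a genuine isometry sending the vertex represented by $v$ to the vertex represented by $g(v)$, the map $\pre{v}{\sigma(g)}$ depends only on that vertex, so $v \equiv_o w$ forces $\pre{v}{\sigma(g)} = \pre{w}{\sigma(g)}$; and since $g$ maps edges to edges and diagonals to diagonals respecting the combinatorics, the restriction to $\Sigma_{0,v}$ preserves the commutation relation (as $g$ maps squares to squares) and extends canonically to diagonals. That it defines an automorphism on $T_o^{cube}$ is exactly the content of the embedding $G_o \hookrightarrow \Aut(T_o^{cube})$ recalled before the statement. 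For $(par)$: if $s$ is non-isolated and $[\beta,s]=1$, then starting a cube path with $\beta$ from $v$ versus from $vs$ lands on parallel edges/diagonals (they cobound a square or cube), so their images under $g$ are again parallel, forcing the two labels to agree by (A1). For $(inv)$: traversing $\alpha_n$ and then the edge $s^{-1}$ back is the same geometric move as traversing $\tilde\alpha_n$ and then $s$ forward followed by its reverse; applying $g$ and using that $g$ respects the opposite-orientation rule in (A1) gives the identity.

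\textbf{Sufficiency.} This is the main work. Given an admissible portrait defining a tree automorphism $\sigma$ on $T_o^{cube}$ and satisfying $(par)$ and $(inv)$, I want to produce $g \in \Aut(X)$ with $g(o)=o$ inducing $\sigma$. The construction is: define $g$ on $X^{(0)}$ by sending the vertex represented by $v$ to the vertex represented by $\sigma(v)$ — this is well-defined on vertices precisely because $\sigma$ is a tree automorphism respecting $\equiv_o$ (admissibility's first bullet guarantees $\pre{v}{\sigma}$ depends only on the $\equiv_o$-class, hence $\sigma$ descends to a map on vertices, and it is a bijection since $\sigma$ is a tree automorphism). Then one checks $g$ extends to a cube complex isometry: it suffices to show $g$ preserves adjacency and, more strongly, sends squares to squares (by the cube condition / the fact that a graph map of $1$-skeleta preserving the square structure extends uniquely to a cubical isometry of a $\CAT(0)$ cube complex). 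Adjacency is immediate from admissibility: $v$ and $v\alpha$ are adjacent, and $\sigma(v\alpha) = \sigma(v)\,\pre{v}{\sigma}(\alpha)$ is adjacent to $\sigma(v)$. To see squares go to squares: two edges $e_1,e_2$ at $v$ with labels $s,t$ span a square iff $[s,t]=1$; since $\pre{v}{\sigma}$ restricts to a label-morphism it preserves $[\,\cdot\,,\cdot\,]=1$, so the images span a square as well. One must also verify compatibility around all four corners of each square, and for higher cubes — this is where $(par)$ is the essential ingredient: it guarantees that the label-morphism read off at $v$ and at an adjacent vertex $vs$ agree on the directions $\beta$ that commute with $s$, i.e.\ the portrait is "locally consistent" across squares and cubes, so the local isometries at each vertex patch together globally. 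Finally $(inv)$ ensures consistency with reversing orientations so that $g$ is well-defined independently of which cube path representative of a vertex one uses (complementing the $\equiv_o$-consistency from admissibility, which only handles endpoint-preserving reroutings; $(inv)$ handles backtracking).

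\textbf{Where the difficulty lies.} The routine parts are the two consistency conditions making $g$ a well-defined vertex bijection. The genuinely delicate step is showing that this vertex map extends to an automorphism of the whole cube complex — that is, that preserving edge-labels and the pairwise commutation data at every vertex is enough to preserve all squares (and thus, by the structure of $\CAT(0)$ cube complexes and the cube condition, all cubes), and that no global obstruction arises. I expect this to hinge on a careful local analysis at each vertex using $(par)$ together with the characterisation of commuting/canceling labels (Proposition \ref{prop:canceling.and.commuting}) and the fact that, in a special cube complex, the link at each vertex is a flag simplicial complex determined by the commutation relation among incident edge-labels; once the $1$-skeleton map is shown to restrict to a simplicial isomorphism of each link, it extends uniquely. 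A secondary subtlety is verifying that $g$ so constructed actually induces the \emph{given} portrait $(\pre{v}{\sigma})_v$ and not merely \emph{some} automorphism with the right vertex map — but this follows since the portrait of $g$ is read off from $g$'s action on outgoing edges, which by construction is $\pre{v}{\sigma}$.
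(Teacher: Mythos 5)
Your overall plan matches the paper's — prove necessity by checking each condition for a genuine $g \in G_o$, then for sufficiency build a cubical isometry from the portrait — and your necessity direction is essentially correct. The problem is in the structure of the sufficiency argument, and it is not a minor one.

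You write that ``admissibility's first bullet guarantees $\pre{v}{\sigma}$ depends only on the $\equiv_o$-class, hence $\sigma$ descends to a map on vertices.'' That inference is wrong. Admissibility constrains the \emph{local} maps $\pre{v}{\sigma}$, but the map $\sigma(\alpha_1\cdots\alpha_n)=\pre{\epsilon}{\sigma}(\alpha_1)\cdots\pre{\alpha_1\cdots\alpha_{n-1}}{\sigma}(\alpha_n)$ is a cumulative product along a path, and nothing in admissibility forces $\sigma(v)\equiv_o\sigma(w)$ when $v\equiv_o w$. Concretely: if $v=ss^{-1}$ and $w=\epsilon$, then $\sigma(v)=\pre{\epsilon}{\sigma}(s)\,\pre{s}{\sigma}(s^{-1})$, and for this to represent $o$ you need $\pre{s}{\sigma}(s^{-1})=\pre{\epsilon}{\sigma}(s)^{-1}$, which is precisely $inv$, not admissibility. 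Likewise, if $v=st$ and $w=ts$ with $[s,t]=1$, then $\sigma(v)=\pre{\epsilon}{\sigma}(s)\,\pre{s}{\sigma}(t)$ and $\sigma(w)=\pre{\epsilon}{\sigma}(t)\,\pre{t}{\sigma}(s)$; to swap these you need not only $[\pre{\epsilon}{\sigma}(s),\pre{\epsilon}{\sigma}(t)]=1$ (label-morphism) but also $\pre{s}{\sigma}(t)=\pre{\epsilon}{\sigma}(t)$ and $\pre{t}{\sigma}(s)=\pre{\epsilon}{\sigma}(s)$, which is exactly $par$. So well-definedness of $\sigma$ on vertices — which is the heart of the sufficiency proof, and is Proposition \ref{prop:inv.and.par.imply.that.equivalence.is.preserved} in the paper — requires \emph{both} $par$ and $inv$, not admissibility alone.

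Your later parenthetical remark correctly spots that $inv$ is needed for backtracking, but you still attribute the commuting-swap move to admissibility, so $par$ never enters your well-definedness argument; you instead invoke $par$ only afterwards, for ``patching together'' squares. That mislocates the work: once $\sigma$ is known to descend to a bijection of $X^{(0)}$ preserving $\equiv_o$ in both directions, preservation of adjacency and of squares is the easy part (from admissibility and the label-morphism property). The hard step — and the step where $par$ is used — is precisely showing that $\sigma$ descends at all. As written, your proof either assumes what it needs to prove (if ``hence'' is taken at face value) or leaves $par$ doing no work where it is actually indispensable.
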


The  characterisation of stabilizer elements as portraits that are admissible, define an automorphism on $T_o^{cube}$, and satisfy $par$ and $inv$, allows us to work and reason with the local behaviour of automorphisms. In addition, it enables us to explicitly construct useful stabilizer elements and do computations with them. This is the key to most of our results.\\

The remainder of the paper is structured as follows. In section \ref{sec:encodings.of.cat0.cube.complexes}, we establish all the necessary knowledge on rooted tree-automorphisms, $\CAT$ cube complexes, and their edge labelings. In particular, we discuss properties of the language $\mathcal{L}_o^{cube}$ and define the language $\mathcal{L}_o$ induced by normal cube paths. In section 3, we discuss the stabilizer subgroups $G_o$ and prove Theorem \ref{thmintro:characterisation.of.stabilizer}. In section \ref{sec:special.actions.and.edge.labelings}, we prove Theorem \ref{thmintro:characterisation.of.special.cube.complexes}. In section \ref{sec:topologicallygeneratingset}, we produce generators of dense subgroups of the stabilizer subgroups $G_o$ and use them to produce finitely generated dense subgroups of $\Aut(X)$, proving Theorems \ref{thmintro:topological.finite.generation} and \ref{thmintro:topological.finite.generation.vertex.transitive.case}. In section \ref{sec:size.of.stabilizer.subgroups.and.Aut}, we prove Theorems \ref{thmintro:dichotomy} and \ref{thmintro:dichotomy.vertex.transitive.case}. In section \ref{sec:AUT+}, we move to the subgroup $\Aut^{+}(X)$, prove its simplicity, and study its size in $\Aut(X)$, which yields Theorems \ref{thmintro:Aut+.is.simple.nondiscrete.td}, \ref{thmintro:Aut+.is.locally.compact}, \ref{thmintro:finite.index.of.aut+}, and \ref{thmintro:non.cocompact.AUT+}.\\

\subsection*{Acknowledgments.}

The authors thank Pierre-Emmanuel Caprace, Fr\'ed\'eric Haglund, Nir Lazarovich, and Fr\'ed\'eric Paulin for their comments and suggestions. We also thank Federico Berlai and Michal Ferov for informing us about and helping us find \cite{Taylor17}. Finally, we thank Claudio Llosa Isenrich for giving a talk at KIT that sparked this project and for pointing us towards several useful references. The second author was partially supported by the FWF grant 10.55776/ESP124.




\section{Encodings of $\mathrm{CAT(0)}$ cube complexes} \label{sec:encodings.of.cat0.cube.complexes}

Throughout this article we are going to use the following notation:
\begin{itemize}
\item $X$ is a locally finite $\CAT$ cube complex with vertex set $X^{(0)}$ and $o \in X^{(0)}$ is a basepoint.

\item Automorphisms of $X$ are cubical isometries from $X$ to itself and we denote the group of automorphisms on $X$ by $\Aut(X)$.

\item We denote by $\mathfrak h(X)$ the set of halfspaces in $X$ and by $\widehat{\mathfrak h}(X)$ the set of hyperplanes. Given $h \in \mathfrak h(X)$ we denote by $\widehat{h}$ the bounding hyperplane and by $h^c$ the opposite halfspace. Given a cube $C$ we denote by $\widehat{\mathfrak h}(C)$ the set of hyperplanes which intersect $C$. We say that $\widehat{h}_1, \widehat{h}_2 \in \widehat{\mathfrak h}(X)$ \emph{intersect transversally} and write $\widehat{h}_1 \pitchfork \widehat{h}_2$ if they intersect and are not equal. 
\item We denote by $\vec{\mathcal D}(X)$ the set of \emph{oriented diagonals} of cubes in $X$. If $v$ and $w$ are opposite vertices on a cube $C$, then we denote by $(v, C, w)$ the oriented diagonal of $C$ from $v$ to $w$ and refer to $C$ as its \emph{underlying cube}. There is a fixpoint free involution on $\vec{\mathcal D}$ given by $(v,C, w)^{-1} := (w, C, v)$. We refer to the corresponding orbit $[v, C, w]$ as a \emph{diagonal} and denote the set of diagonals by
$\mathcal D(X)$.
\item $\vec{\mathcal E}(X) \subset \vec{\mathcal D}(X)$ denotes the subset of oriented edges (i.e.\ oriented diagonals of $1$-dimensional cubes)  and $\mathcal E(X) \subset \mathcal D(X)$ denotes the corresponding subset of geometric edges. Given an oriented edge $e = (v, C, w)$ we denote by $h(e)$ the unique halfspace containing $w$, but not $v$. We then denote by $\widehat{h}(e)$ the corresponding hyperplane. If $[e]$ is the corresponding geometric edge, then we set
\[\widehat{h}([e]) := \widehat{h}(e) = \widehat{h}(e^{-1}).\]

\item Two oriented edges $e, e'$ are {\it parallel}, if $h(e) = h(e')$. We call the resulting equivalence classes {\it oriented parallel classes}. Oriented parallel classes are in 1--1 correspondence with $\mathfrak{h}(X)$ and -- for our purposes -- they are a useful generalisation of halfspaces to general cube complexes.

\item If $S$ denotes a cube complex, we abuse notation and denote the set of oriented parallel classes by $\mathfrak{h}(S)$. Similarly, we write $h(e)$ for the oriented parallel class containing an oriented edge $e$ and $h^c$ for the oriented parallel class obtained by reversing the orientation of all elements of $h \in \mathfrak{h}(S)$.

\item We denote by $\Gamma_{\mathcal D}(X)$ the graph with vertex set $X^{(0)}$ and oriented edge set $\vec{\mathcal D}(X)$ with source and target map given by $s(v,C,w) = v$ and $t(v,C, w) = w$. There is a natural immersion $|\Gamma_{\mathcal D}(X)|\looparrowright X$ obtained by sending each edge in $\vert \Gamma_{\mathcal{D}}(X) \vert$ to the corresponding diagonal in $X$.

\item A path in $\Gamma_{\mathcal D}(X)$ is called a \emph{cube path} in $X$; if all underlying cubes are $1$-dimensional, then it is called an \emph{edge path}. We denote by $D(X)$ and $E(X)$ the collections of all cube paths and edge path respectively. We consider $\vec{\mathcal D}(X)$ and $\vec{\mathcal E}(X)$ as subsets of $D(X)$ by considering oriented diagonals as cube paths of length one. 
\item Using the immersion $|\Gamma_{\mathcal D}(X)|\looparrowright X$, cube paths correspond to curves in $X$ which move exclusively along edges and diagonals, and edge paths correspond to curves in the $1$-skeleton of $X$. It thus makes sense to say that a cube path intersects a hyperplane in $X$. 
\item We abbreviate a cube path $((v, C_1, v_1), \dots, (v_{n-1}, C_n, w)) \in D(X)$ by $(v, C_1, \dots, C_n, w)$ and refer to $(C_1, \dots, C_n)$ as the \emph{underlying sequence of cubes}. 
We then denote by $\widehat{\mathfrak h}(v, C_1, \dots, C_n, w)$ the set of hyperplanes which intersect at least one of the cubes $C_1, \dots, C_n$.
\item If $v \in X^{(0)}$ is a vertex, then we denote by $D(X)_v$ the collection of all cube paths emanating from $v$. The subsets $E(X)_v$, $\vec{\mathcal D}(X)_v$, $\vec{\mathcal E}(X)_v$ are defined accordingly.
\end{itemize} 

\subsection{Labelings of $\mathrm{CAT(0)}$ cube complexes}
Let $\Sigma_0$ be a finite set with a fixpoint free involution $s \mapsto s^{-1}$.

\begin{definition} \label{def:halfspace.labeling}
Let $S$ be a cube complex. A map $\ell: \mathfrak h(S) \to \Sigma_0$ is 
an \emph{oriented parallel class labeling} of $S$ if
\[
\ell(h^c) = \ell(h)^{-1} \quad \text{for all }h \in \mathfrak h(S).
\]
If $S$ is $\CAT$, we also call this a {\it half-space labeling}. Either way, the \emph{induced edge labeling} is defined as 
\[\ell: \vec{\mathcal{E}}(S) \to \Sigma_0, \quad \ell(e) := \ell(h(e)).\]
\end{definition}
We will be interested in a special class of induced edge labelings. To define these we introduce the following notion:

\begin{definition} We say that two geometric edges $[e], [e']$ \emph{span a square} if they have precisely one endpoint in common and if 
$\widehat{h}([e]) \pitchfork \widehat{h}([e'])$. We say that two oriented edges span a square if the underlying geometric edges span a square.
\end{definition}

\begin{definition} \label{def:admissible.edge.labeling} A map $\ell: \vec{\mathcal{E}}(S) \to \Sigma_0$ is called an \emph{admissible edge labeling} if the following hold:
\begin{enumerate}[({A}1)]
\item $\ell$ is induced from an oriented parallel class labeling.
\item $\ell\vert_{\vec{\mathcal{E}}(S)_v}$ is injective for every $v \in S^{(0)}$.
\item For all $v,w \in S^{(0)}$, $e_1, e_2 \in \vec{\mathcal{E}}(S)_v$, and $e'_1, e'_2 \in \vec{\mathcal{E}}(S)_w$, the following holds: If $\ell(e_1) = \ell(e_1')$ and $\ell(e_2) = \ell(e_2')$, then $e_1$ and $e_2$ span a square if and only if $e_1'$ and $e_2'$ span a square.
\end{enumerate}
We then refer to the pair $(S, \ell)$ as a \emph{$\Sigma_0$-labeled cube complex}.
\end{definition}

If a group $\Gamma$ acts by automorphisms on $S$, then an admissible edge labeling is called \emph{$\Gamma$-invariant} if $\ell(g \cdot e) = \ell(e)$ for all $e \in \vec{\mathcal{E}}(S)$.\\

We want to give a sufficient condition for the existence of an admissible labeling. As we will see later, this sufficient condition is also necessary. To describe this, we need the notion of special cube complexes introduced by Haglund and Wise in \cite{HaglundWise07}. We use the terminology from {\cite[Chapter 6.b]{Wise21}} regarding special cube complexes.

\begin{lemma} \label{lem:existenceoflabeling}
Let $X$ be the universal covering of a compact, non-positively curved special cube complex $S$ with fundamental group $\Gamma$. Then there exists a $\Gamma$-invariant admissible edge labeling on $X$.
\end{lemma}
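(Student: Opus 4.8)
The plan is to obtain the labeling on $X$ by pulling back, along a suitable combinatorial local isometry, the tautological edge labeling of a Salvetti complex (which is admissible, essentially by the example in the introduction). Concretely: since $S$ is a compact, non-positively curved special cube complex, the theory of special cube complexes \cite{HaglundWise07, Wise21} furnishes a combinatorial local isometry $\phi\colon S \to R$, where $R$ is the Salvetti complex of the right-angled Artin group on the crossing graph of $S$. Let $p\colon X \to S$ be the universal covering. A covering map of cube complexes restricts to an isomorphism on every vertex link, hence is a combinatorial local isometry, and a composition of combinatorial local isometries is again one; so $\psi := \phi \circ p\colon X \to R$ is a combinatorial local isometry. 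Let $\ell_R\colon \vec{\mathcal E}(R) \to \Sigma_0$ be the canonical admissible edge labeling of $R$, where $\Sigma_0$ consists of the standard generators of the right-angled Artin group together with their inverses (with the obvious fixpoint-free involution $\cdot^{-1}$) and $\ell_R$ sends an oriented edge to the generator, respectively inverse generator, that it crosses. Set $\ell := \ell_R \circ \psi$.

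It then remains to verify that $\ell$ is an admissible edge labeling, which amounts to the general fact that a combinatorial local isometry pulls back admissible edge labelings. For (A1): $\psi$ sends squares to squares, hence oriented parallel classes of edges of $X$ into oriented parallel classes of edges of $R$; since $\ell_R$ is induced from an oriented parallel class labeling and intertwines the involutions, so is $\ell$. For (A2): being a local isometry, $\psi$ is injective on the link of each vertex $v \in X^{(0)}$, hence on $\vec{\mathcal E}(X)_v$; since the edges emanating from the vertex of $R$ carry pairwise distinct $\ell_R$-labels, $\ell\vert_{\vec{\mathcal E}(X)_v}$ is injective. For (A3): if $e_1, e_2 \in \vec{\mathcal E}(X)_v$ span a square then $\psi(e_1), \psi(e_2)$ span a square in $R$; whether two edges emanating from the vertex of $R$ span a square depends only on their $\ell_R$-labels (it occurs exactly when the two underlying generators are distinct and adjacent in the crossing graph), so from $\ell_R(\psi(e_i)) = \ell(e_i) = \ell(e_i') = \ell_R(\psi(e_i'))$ we get that $\psi(e_1'), \psi(e_2')$ span a square in $R$; finally, because $\psi$ maps the link of $w$ isomorphically onto a \emph{full} subcomplex of $\lk(\psi(w))$, adjacency of $\psi(e_1'), \psi(e_2')$ in $\lk(\psi(w))$ forces adjacency of $e_1', e_2'$ in $\lk(w)$, i.e.\ $e_1', e_2'$ span a square; the reverse implication is symmetric.

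Finally, $\Gamma$-invariance is immediate: for a deck transformation $g \in \Gamma$ one has $p \circ g = p$, hence $\psi \circ g = \psi$ and $\ell(g \cdot e) = \ell_R(\psi(g \cdot e)) = \ell_R(\psi(e)) = \ell(e)$. The step carrying genuine content is the (A3)-verification, and there the point to be careful about is that one must use not merely that $\psi$ is combinatorial but that it is a local isometry, i.e.\ that images of vertex links are full subcomplexes, in order to transport ``spanning a square'' \emph{back} along $\psi$; the input from the theory of special cube complexes is precisely what guarantees the existence of such a $\psi$. (Alternatively, one can avoid the Salvetti complex and argue directly: take $\Sigma_0$ to be the set of co-oriented hyperplanes of $S$ --- finite since $S$ is compact, with fixpoint-free involution reversing the co-orientation, using that hyperplanes in a special complex are two-sided --- and let $\ell$ send an oriented edge of $X$ to the co-oriented hyperplane of $S$ that its image edge crosses; then (A1) is built in, (A2) is the no-self-osculation condition, and (A3) follows from the no-self-intersection and no-inter-osculation conditions together with lifting $2$-cubes through $p$.)
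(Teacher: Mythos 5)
Your primary argument is correct, but it takes a different route from the paper's. You invoke the Haglund--Wise theorem that a compact NPC special cube complex admits a combinatorial local isometry $\phi\colon S \to R$ into the Salvetti complex $R$ of the right-angled Artin group on the crossing graph of $S$, compose with the universal covering $p\colon X \to S$ to get a local isometry $\psi\colon X \to R$, and pull back the tautological labeling $\ell_R$ of $R$. The delicate step, as you correctly identify, is transporting the ``spans a square'' condition \emph{backwards} along $\psi$ in the (A3)-verification, for which you use that a local isometry embeds links as full subcomplexes; this is exactly the right point to be careful about and your argument is sound. The $\Gamma$-invariance argument via $\psi \circ g = \psi$ is clean.

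The paper instead works directly from the ``no pathologies'' definition of specialness: it chooses an arbitrary injective labeling of the oriented parallel classes of $S$ (using compactness for finiteness and two-sidedness of hyperplanes for the fixpoint-free involution), lifts it along $p$, and verifies (A2) and (A3) downstairs by observing that a failure of (A2) forces a self-osculation and a failure of (A3) forces an inter-osculation, both of which specialness excludes. This is your parenthetical alternative at the end, essentially verbatim. The trade-off is clear: the Salvetti route packs the pathology bookkeeping into the Haglund--Wise theorem and then only needs formal properties of local isometries, while the paper's route is more self-contained (it never needs the existence of $\phi$, only the four pathology conditions) and keeps the bijection between labels and co-oriented hyperplanes of $S$, which is convenient later for the converse direction in Theorem~\ref{thm:label.preserving.actions.are.special}. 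Both are valid proofs.
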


\begin{proof}
    Our strategy is to put a labeling on the oriented parallel classes in $S$ which lifts to an admissible labeling on $X$. As the lift of a labeling in $S$, this will be $\Gamma$-invariant.
    
    Since $S$ is compact, we can choose a sufficiently large finite set $\Sigma_0$ and an involution $\cdot^{-1}$ on $\Sigma_0$ such that there exists an injective map $\ell : \mathfrak{h}(S) \rightarrow \Sigma_0$ satisfying $\ell(h^c) = \ell(h)^{-1}$ for all $h \in \mathfrak{h}(S)$. Since all hyperplanes in $S$ are two-sided (that is, every hyperplane induces exactly two oriented parallel classes), $\cdot^{-1}$ is fixpoint free. We denote the induced edge labeling by $\ell$ as well and the lift of $\ell$ to $X$ by $\tilde{\ell}$. By construction $\tilde{\ell}$ satisfies (A1), as $\ell$ lifts both as a labeling of oriented parallel classes and as an edge labeling and the two lifts are compatible with each other. We are left to show that $\tilde{\ell}$ satisfies (A2) and (A3).

    We first show that, if $\ell$ satisfies (A2) and (A3), then so does $\tilde{\ell}$. For (A2) this is obvious, as it is a local property and $X$ is a covering of $S$. For (A3), suppose we have vertices $v, w \in X^{(0)}$, $e_1, e_2 \in \vec{\mathcal{E}}(X)_v$, and $e'_1, e'_2 \in \vec{\mathcal{E}}(X)_w$ such that $\tilde{\ell}(e_1) = \tilde{\ell}(e'_1)$ and $\tilde{\ell}(e_2) = \tilde{\ell}(e'_2)$. Denoting the projection map by $p : X \rightarrow S$, this implies that $\ell(p(e_1)) = \ell(p(e'_1))$ and $\ell(p(e_2)) = \ell(p(e'_2))$. Suppose that $e_1, e_2$ span a square, while $e'_1$ and $e'_2$ do not. Since $p$ is a covering map, this implies that $p(e_1), p(e_2)$ span a square, while $p(e'_1), p(e'_2)$ do not. We see that, if $\tilde{\ell}$ does not satisfy (A3), neither does $\ell$. Therefore, it is sufficient to show that $\ell$ satisfies (A2) and (A3) to conclude that $\tilde{\ell}$ is admissible.

    We first show (A2). Let $v \in S^{(0)}$ and suppose there exist edges $e_1, e_2 \in \vec{\mathcal{E}}(X)_v$ such that $\ell(e_1) = \ell(e_2)$. Since $\ell$ is injective on oriented parallel classes, we conclude that $h(e_1) = h(e_2)$. This means that the hyperplane $\hat{h}(e_1)$ self-osculates which contradicts the assumption that $S$ is special. We conclude that $\ell$ satisfies (A2).

    We are left to show (A3). Let $v, w \in S^{(0)}$, $e_1, e_2 \in \vec{\mathcal{E}}(X)_v$, and $e'_1, e'_2 \in \vec{\mathcal{E}}(X)_w$ such that $\ell(e_1) = \ell(e'_1)$ and $\ell(e_2) = \ell(e'_2)$. Since $\ell$ is injective on halfspaces by assumption, this implies that $e_1$ and $e'_1$ cross the same hyperplane in the same direction and analogously for $e_2$ and $e'_2$. Let $\hat{h} = \hat{h}(e_1)$ and $\hat{k} = \hat{h}(e_2)$ and suppose that $e_1, e_2$ span a square, while $e'_1$ and $e'_2$ do not. This means that $\hat{h}$ and $\hat{k}$ interosculate, which is ruled out by the fact that $S$ is special. We conclude that $\ell$ satisfies (A3). This proves that its lift $\tilde{\ell}$ is a $\Gamma$-invariant admissible edge labeling.
\end{proof}

\begin{example}
The Salvetti complex of a RAAG admits an admissible edge labeling, which is invariant under the given RAAG. In fact, this is exactly the class of examples for which $\Gamma$ acts vertex-transitively on $X$ (see Remark \ref{rem:vertex.transitive.means.RAAG}).
\end{example}

\begin{example}
    Most surface groups appear as fundamental groups of compact, non-positively curved special cube complexes \cite{CrispWiest04}. Furthermore, the fundamental group of any closed, hyperbolic 3-manifold $M$ has a finite index subgroup that appears as the fundamental group of a compact, non-positively curved special cube complex \cite{AschenbrennerFriedlWilton15}. Thus, the cubulations of these groups provide us with $\CAT$ cube complexes that have admissible edge-labelings which are invariant under the corresponding group action.
\end{example}

\subsection{Cube paths in labeled $\mathrm{CAT(0)}$ cube complexes}

From now on, $(X, \ell)$ denotes a $\Sigma_0$-labeled $\CAT$ cube complex. We refer to $\Sigma_0$ as the \emph{edge-label set} of $(X, \ell)$.
\begin{construction}[Labeling oriented diagonals] \label{con:labeling.oriented.diagonals}
We can identify oriented diagonals in $X$ with sets of oriented edges as follows: If $d = (v,C,w)$ is an oriented diagonal, then we define its \emph{oriented edge set} as
\[
\vec{\mathcal E}_d := \{(v, e, v') \in \vec{\mathcal{E}}(X)_v \mid e \text{ is an edge of }C\},
\]
and obtain an embedding
\[
\vec{\mathcal D}(X) \to 2^{\vec{\mathcal E}(X)}, \quad d \mapsto\vec{\mathcal E}_d.
\]
It is thus natural to extend $\ell$ to oriented diagonals (see also figure \ref{fig:diagonal.labels}) by setting
\[
\ell(v,C,w) := \ell(\vec{\mathcal E}_d) \in 2^{\Sigma_0}.
\]
Given a vertex $v$, we denote by 
\[
\Sigma_v := \ell(\vec{\mathcal{D}}(X)_v) \quad \text{and} \quad \Sigma_{0,v} := \ell(\vec{\mathcal{E}}(X)_v)
\]
the sets of \emph{labels at $v$} and \emph{edge-labels at $v$} respectively. By (A2) these sets are in bijection with the outgoing diagonals and outgoing edges at $v$ respectively. 
\end{construction}

The following crucial definition will enable us to describe geometric properties and manipulations of cube paths in algebraic terms.

\begin{definition} \label{def:commutinglabels} Let $s,t \in \Sigma_0$, $\alpha, \beta \in 2^{\Sigma_0}$ and $v \in X^{(0)}$.
We say that $s$ and $t$ \emph{commute at $v \in X^{(0)}$} if there exist edges $e, e' \in \vec{\mathcal E}(X)_v$ which span a square such that $\ell(e) = s$ and $\ell(e') = t$. We say that $s$ and $t$ \emph{commute} if $s$ and $t$ commute at some vertex $v$ and we write $[s,t] = 1$. Similarly, we say that $\alpha$ and $\beta$ \emph{commute (at $v$)} if $\alpha, \beta \in \Sigma_v$ and every $s \in \alpha$ commutes (at $v$) with every $t \in \beta$. We write $[\alpha, \beta] = 1$. 
\end{definition}

Note that, if edge-labels $s$ and $t$ commute, then by (A3) they commute at every vertex $v$ with $s, t \in \Sigma_{0, v}$. Similarly, if labels $\alpha$ and $\beta$ commute, then they commute at every vertex $v$ with $\alpha, \beta \in \Sigma_v$. We define the set
\[ \Sigma := \{ \alpha \in 2^{\Sigma_0} \setminus \{ \emptyset \} \vert \forall s, t \in \alpha : (s = t) \vee ( [s,t] = 1 ) \]
and see that the extended labeling is a map
\[ \ell : \vec{\mathcal{D}}(X) \rightarrow \Sigma. \]

\begin{remark}
    Note that there may be elements of $\Sigma$ that do not lie in any $\Sigma_v$. This happens when there is a collection of edge-labels $s_1, \dots s_n$ that commute pairwise, but for which there exists no vertex $v$ such that they are all contained in $\Sigma_{0,v}$. These elements are not contained in the image of $\ell$, as they do not correspond to any diagonal in $X$.
\end{remark}

\begin{remark}[Geometry of commuting labels] \label{rem:commutation.and.links} 
Geometrically, commutation of labels is a property of the \emph{links} of $X$: By definition, the vertices of $\lk(v)$ are given by the outgoing edges at $v$, which we can identify with $\Sigma_{0,v}$ via $\ell$. Under this identification, two elements $s,t \in \Sigma_{0,v}$ are connected by an edge in $\lk(v)$ if and only if the commute. Similarly, if $\alpha, \beta \in \Sigma_v$, then $\alpha$ and $\beta$ commute if and only if the vertices in $\lk(v)$ corresponding to the set $\alpha \cup \beta$ span a simplex in $\lk(v)$. Equivalently, $\alpha, \beta \in \Sigma_v$ commute if and only if the corresponding oriented diagonals at $v$ lie in (and hence span) a common cube of dimension $|\alpha| + |\beta|$. Using this interpretation of commutation in links, one easily checks that $\Sigma_v = \Sigma \cap 2^{\Sigma_{0,v}}$ because of the link-condition of $\CAT$ spaces.
 \end{remark}
 
 \begin{example}[Trivial examples] \label{exam:trivial.examples} Since admissible edge labelings satisfy (A2), $s$ does not commute with itself. We claim that $s$ and $s^{-1}$ do not commute either. Indeed, suppose $[s, s^{-1}] = 1$. Then there exists some vertex $v$ with two outgoing edges $e, e'$, labeled $s$ and $s^{-1}$ respectively, that span a square (see figure \ref{fig:trivial.examples}). Let $w$ be the vertex at the other end of $e'$ and $e''$ the outgoing edge at $w$ crossing $\hat{h}(e)$. Condition (A1) implies that $\ell(e'') = s$ and $\ell(e'^{-1}) = s$, which contradicts (A2) at the vertex $w$. Therefore, $s$ and $s^{-1}$ cannot commute.
 \end{example}
 
 \begin{example}[Commutation and inverses] \label{rem:commutation.extends.to.inverses}
    If $[s,t] = 1$, then $[s, t^{-1}] = [s^{-1}, t] = [s^{-1}, t^{-1} ] = 1$. This follows from considering a vertex which has outgoing edges $e_1, e_2$ with labels $s$ and $t$ and looking at the outgoing edges at every corner of the square spanned by $e_1, e_2$ (see figure \ref{fig:trivial.examples}).
\end{example}

\begin{example}[Isolated labels]
A label $s \in \Sigma_{0,v}$ corresponds to an isolated vertex in $\lk(v)$ if and only if it does not commute with any $t \in \Sigma_{0,v}$. We say that 
a label $s \in \Sigma_0$ is \emph{isolated}, if it does not commute with any $t \in \Sigma_0$. Geometrically, this means that \emph{all} $s$-colored edges are isolated points in the corresponding links.
\end{example}

\begin{figure}
   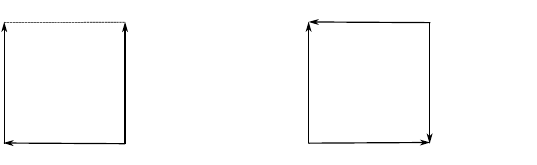
    \caption{The basic rules about commutation and inversion can be seen from these two squares.}
     \label{fig:trivial.examples}
\end{figure}

We can now extend our labeling $\ell: \vec{\mathcal D}(X) \to \Sigma$ further to a labeling $\ell: D(X) \to \Sigma^*$ such that
\[\ell((v_1, C_1, v_2), \dots, (v_n, C_n, v_{n+1})) = \ell(v_1, C_1, v_2) \cdots \ell(v_n, C_n, v_{n+1}).\]
\begin{definition} The \emph{language of cube paths} of $(X, \ell, o)$ is defined as \[\mathcal L^{\mathrm{cube}}_{o} := \ell(D(X)_o) \subset \Sigma^*.\]
\end{definition}

We observe that $\ell$ induces a bijection $\ell: D(X)_o \to  \mathcal L^{\mathrm{cube}}_{o} $, i.e.\ every cube path emanating from $o$ is uniquely determined by the corresponding word over $\Sigma$.

\begin{proposition} \label{prop:regularity.of.cube.path.language}
Suppose a subgroup $\Gamma < \Aut(X)$ acts cocompactly on $X$ and let $\ell$ be a $\Gamma$-invariant admissible edge labeling. Then the language $\mathcal L^{\mathrm{cube}}_{o}$ is a regular language, i.e.\ recognized by a finite automaton.
\end{proposition}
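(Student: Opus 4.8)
The plan is to construct an explicit finite automaton recognizing $\mathcal{L}^{\mathrm{cube}}_o$ whose states remember only a bounded amount of "local" information about the cube path constructed so far — enough to decide which letters $\alpha \in \Sigma$ may legally be appended next. The key observation is that whether a word $w\alpha$ lies in $\mathcal{L}^{\mathrm{cube}}_o$, given that $w \in \mathcal{L}^{\mathrm{cube}}_o$ ends at a vertex $v$, depends only on the set $\Sigma_v \subset \Sigma$ of labels available at $v$ (to know $\alpha \in \Sigma_v$) together with the "type" data needed to determine the target vertex's label set — but since the labeling $\ell$ is $\Gamma$-invariant and $\Gamma$ acts cocompactly, there are only finitely many $\Gamma$-orbits of vertices, hence only finitely many possible "local configurations" up to the relevant equivalence.

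More precisely, here are the steps I would carry out. First, since $\Gamma$ acts cocompactly on the locally finite complex $X$, there are finitely many $\Gamma$-orbits of vertices; pick orbit representatives $v_1,\dots,v_k$. By $\Gamma$-invariance of $\ell$, the "isomorphism type of the labeled star at a vertex" — meaning the pair consisting of $\Sigma_{0,v}$ together with the commutation data among its elements, equivalently $\lk(v)$ with its $\Sigma_0$-labeling via Remark \ref{rem:commutation.and.links} — takes only finitely many values over $X^{(0)}$. Second, I take the automaton states to be (roughly) these finitely many labeled-star types, plus a dead state. The start state is the type of $o$. From a state of type $\tau$, reading a letter $\alpha \in \Sigma$: if $\alpha \notin \Sigma_v$ for a (hence any) vertex $v$ of type $\tau$, go to the dead state; otherwise $\alpha$ corresponds to a unique outgoing diagonal at $v$ with endpoint $v'$, and we transition to the state recording the type of $v'$. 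All of $\Sigma^*$-words not arising this way eventually hit the dead state. Third, I would check that a word $w$ is accepted (i.e.\ never reaches the dead state) if and only if $w \in \mathcal{L}^{\mathrm{cube}}_o$: this is essentially the statement that $\ell \colon D(X)_o \to \mathcal{L}^{\mathrm{cube}}_o$ is a bijection and that concatenation of cube paths corresponds to concatenation of words, both already recorded in the excerpt; one proceeds by induction on word length, using that "$\alpha \in \Sigma_v$" exactly encodes "there is an outgoing diagonal at $v$ with label $\alpha$".

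The subtlety — and the step I expect to require the most care — is that knowing the labeled-star type of $v$ must actually suffice to determine the labeled-star type of $v' = v\alpha$, so that the transition function is well-defined on the finite state set and we are not secretly remembering the whole history. This is where one must be careful that the relevant equivalence on vertices really is "same labeled star" (or possibly a mild refinement thereof still taking finitely many values), and invoke that $X$ is a $\CAT$ cube complex together with properties (A1)–(A3): condition (A3) guarantees that commutation among labels is vertex-independent, so the commutation structure on $\Sigma_{0,v'}$ is determined by which labels lie in $\Sigma_{0,v'}$; and the passage from the star at $v$ across the edge labeled by (an element of) $\alpha$ to the star at $v'$ is governed by the link condition and (A1), which relate the labels at $v'$ to those at $v$. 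If the bare labeled-star type turns out not to be quite enough to pin down transitions, the fix is to enlarge the finite state set to record a slightly larger (still bounded, by cocompactness) ball of labeled data around the current vertex; the argument is unchanged in outline. Once the automaton is shown to be well-defined and correct, regularity of $\mathcal{L}^{\mathrm{cube}}_o$ follows by definition (Kleene / Myhill–Nerode), completing the proof.
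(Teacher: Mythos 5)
Your overall approach — a finite automaton over $\Sigma$ whose states classify vertices into finitely many types, with transitions given by following the outgoing diagonal with a given label — is exactly the approach the paper takes. However, your proposed state set (the ``labeled-star type'' of a vertex, i.e.\ $\Sigma_{0,v}$ together with its commutation structure) is too coarse, and you correctly sense this in your last paragraph without resolving it. The issue is real: knowing $\Sigma_{0,v}$ and the commutation data at $v$ does \emph{not} determine $\Sigma_{0,v'}$ for $v'=v\alpha$. Crossing an edge labeled $s$ from $v$ to $v'$ forces $s^{-1}\in\Sigma_{0,v'}$ and transports the squares at $v$ containing $s$, but $v'$ can have additional outgoing edges with labels invisible from $v$. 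In a non-vertex-transitive cocompact $\CAT$ cube complex nothing prevents two vertices with identical labeled stars from having neighbors with different labeled stars, so the transition function is genuinely not well-defined on your state set. Your fallback, enlarging the state to a bounded labeled ball, also does not obviously terminate: for an arbitrary complex there is no a priori radius $r$ such that the $r$-ball at $v$ determines the $r$-ball at $v'$.

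The clean fix, and the one the paper uses, is to take the states to be the $\Gamma$-orbits of vertices rather than labeled-star types. Cocompactness makes this set finite, $\Gamma$-invariance of $\ell$ gives $\Sigma_{\gamma v}=\Sigma_v$ so the set of accepted next letters depends only on the state, and the transition function is well-defined for free: if $v'=\gamma v$, then the diagonal labeled $\alpha$ at $v'$ terminates at $\gamma(v\alpha)$, which lies in the same orbit as $v\alpha$. This is strictly finer than labeled-star type and requires no further refinement or ball-enlargement; the correctness argument (induction on word length, using that $\ell$ is a bijection on $D(X)_o$) then proceeds exactly as you outline.
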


\begin{proof} We define an equivalence relation on $X^{(0)}$ as follows: If $v_1, v_2 \in X^{(0)}$ are two vertices, then $v_1$ and $v_2$ are equivalent, if $v_2 \in \Gamma v_1$. By $\Gamma$-invariance, this implies that $\Sigma_{v_1} = \Sigma_{v_2}$. We denote by $Q$ the set of equivalence classes of vertices with respect to this equivalence relation. Since $\Gamma$ acts cocompactly on $X$, $Q$ is finite.

Now let $\mathcal A$ be the following finite automaton: The underlying alphabet is $\Sigma$, the set of states is given by $Q$ and the equivalence class of $o$ is the unique initial state. (All states are final states.) The transition function $\delta$ is the partial function on $Q \times \Sigma$ defined as follows:

At a state $\Gamma v$, every $\alpha \in \Sigma_v$ is accepted (the set $\Sigma_v$ does not depend on the choice of representative, as $\Sigma_{gv} = \Sigma_v$ for all $g \in \Gamma$) and $\delta( \Gamma v, \alpha)$ is the equivalence class of the endpoint of the diagonal with label $\alpha$ emanating at $v$.

One easily checks that this automaton accepts all words in $\ell( D(X)_o ) = \mathcal{L}_o^{cube}$. Now suppose $\alpha_1 \dots \alpha_n = w \in \Sigma^*$ is accepted by $\mathcal{A}$. Starting at the vertex $o$, one can inductively construct a cube path that spells out the word $w$. Since $w$ is accepted by $\mathcal{A}$, we know that $\alpha_1 \in \Sigma_o$ and thus there exists a diagonal $D_1$ emanating from $o$ with label $\alpha_1$. If $v_1$ denotes the endpoint of $D_1$, the automaton is now in the state of $\Gamma v_1$ and accepts only elements of $\Sigma_{v_1}$. Since $\mathcal{A}$ accepts $w$, we conclude that $\alpha_2 \in \Sigma_{v_1}$ and we obtain a diagonal emanating at $v_1$ labeled with $\alpha_2$. Continuing inductively, we construct an element $\gamma \in D(X)_o$ such that $\ell(\gamma) = w$. We conclude that $\mathcal{A}$ accepts exactly the language $\mathcal{L}_o^{cube}$.
\end{proof}




\subsection{Equivalence of paths and words}

\begin{remark} \label{rem:identification.of.words.and.vertices.paths.and.words}
We say that two cube paths $\gamma_1, \gamma_2 \in D(X)_o$ are equivalent and write $\gamma_1 \equiv_o \gamma_2$ if $\gamma_1$ and $\gamma_2$ have the same endpoints. If $o$ is clear from the context, we just write $\gamma_1 \equiv \gamma_2$. Since $\ell|_{D(X)_o}$ is injective, this induces an equivalence relation (also denoted $\equiv_o$ or $\equiv$) on the language $\mathcal{L}_o^{cube}$. Since $X$ is connected, the quotients $D(X)_o/\equiv$ and $ \mathcal L^{\mathrm{cube}}_{o} /\equiv$ can be identified with $X^{(0)}$. This identification allows for the following notation:
\end{remark}

\begin{notation} \label{not:Identifying.words.with.vertices}
    If $w \in \mathcal{L}_o^{cube}$, we can identify $w$ with a unique cube path $D(X)_o$. We may thus talk of words in $\mathcal{L}_o^{cube}$ crossing hyperplanes and having start and endpoints. We write $\Sigma_w$ for the labels of outgoing diagonals at the endpoint of $w$ and $\Sigma_{0,w}$ for the labels of the outgoing edges. Similarly, we write $\mathcal{L}_w^{cube}$ for the language of cube paths that start at the endpoint of $w$. We define $D(X)_w$, $E(X)_w$, $\vec{\mathcal{D}}(X)_w$, and $\vec{\mathcal{E}}(X)_w$ analogously.
\end{notation}

\begin{proposition} \label{prop:canceling.and.commuting}
The following equivalences hold in $ \mathcal L^{\mathrm{cube}}_{o} $:
\begin{enumerate}[(i)]

\item We have $w_1 ss^{-1} w_2 \equiv w_1 w_2$ for all $w_1 \in \mathcal{L}_o^{cube}$, $s \in \Sigma_{0,w_1}$, and $w_2 \in \mathcal{L}_{w_1}^{cube}$.

\item We have $w_1 st w_2 \equiv w_1 ts w_2$ for all $s, t \in \Sigma_{0,w_1}$ with $[s,t] = 1$, $w_1 \in \mathcal{L}_o^{cube}$, $w_2 \in \mathcal{L}_{w_1 st}^{cube}$.

\item We have $w_1 \alpha \beta w_2 \equiv w_1 \beta \alpha w_2$ for all $\alpha, \beta \in \Sigma_{w_1}$ with $[\alpha, \beta] = 1$, $w_1 \in \mathcal{L}_o^{cube}$, $w_2 \in \mathcal{L}_{w_1 \alpha \beta}^{cube}$.

\end{enumerate}
\end{proposition}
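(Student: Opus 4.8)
The plan is to exploit that, by Remark~\ref{rem:identification.of.words.and.vertices.paths.and.words}, the relation $\equiv_o$ on $\mathcal L^{\mathrm{cube}}_{o}$ records nothing but the endpoint of the corresponding cube path. So for each of (i)--(iii) it suffices to check that the two words, read as cube paths starting at $o$, have the same endpoint. Since in all three cases both sides carry the common prefix $w_1$ and the common suffix $w_2$ (attached at whatever the common intermediate endpoint turns out to be), I would first reduce to the local statements at the vertex $v$ which is the endpoint of $w_1$: (i$'$) the cube path spelling $ss^{-1}$ from $v$ returns to $v$; (ii$'$) the cube paths spelling $st$ and $ts$ from $v$ end at the same vertex; (iii$'$) the cube paths spelling $\alpha\beta$ and $\beta\alpha$ from $v$ end at the same vertex. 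Granting these, appending $w_1$ on the left and $w_2$ on the right yields the three equivalences, where one only needs to observe that once the intermediate endpoints agree, $w_2$ is a legal continuation on both sides.

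For (i$'$): as $s \in \Sigma_{0,v}$ there is, by (A2), a unique outgoing edge $e$ at $v$ with $\ell(e)=s$; let $v'$ be its other endpoint. By (A1) the reversed edge $e^{-1}\in\vec{\mathcal E}(X)_{v'}$ has label $\ell(e)^{-1}=s^{-1}$, and by (A2) it is the unique edge at $v'$ with that label. Hence the cube path spelling $ss^{-1}$ from $v$ is $e$ followed by $e^{-1}$, and it returns to $v$.

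For (ii$'$) and (iii$'$) I would argue in one go, (ii$'$) being the case $\alpha=\{s\}$, $\beta=\{t\}$. Since $\alpha,\beta\in\Sigma_v$ and $[\alpha,\beta]=1$, Remark~\ref{rem:commutation.and.links} tells us that the outgoing diagonals at $v$ with labels $\alpha$ and $\beta$ together span a cube $C\subset X$ of dimension $|\alpha|+|\beta|$; write $v_\alpha$ and $v_\beta$ for their endpoints and $w$ for the vertex of $C$ antipodal to $v$. Within $C$, the face at $v_\alpha$ spanned by the edge-directions used by $\beta$ has $v_\alpha$ and $w$ as its two antipodal corners, so its diagonal runs from $v_\alpha$ to $w$; moreover each edge of this face at $v_\alpha$ lies in the same parallel class as the corresponding $\beta$-labeled edge at $v$, so by (A1) this diagonal again carries the label $\beta$. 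Thus the cube path spelling $\alpha\beta$ from $v$ is ``the $\alpha$-diagonal from $v$ to $v_\alpha$, then the $\beta$-diagonal from $v_\alpha$ to $w$'', and, symmetrically, the cube path spelling $\beta\alpha$ from $v$ ends at $w$ as well; this gives (iii$'$) and hence (ii$'$).

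The step I expect to require the most care, mild as it is, is the label bookkeeping in (iii$'$): one must identify the $\beta$-labeled continuation of the $\alpha$-diagonal with the diagonal of the complementary sub-face of $C$ and confirm through (A1) that its label is exactly the set $\beta$ and not some other subset of $\Sigma_0$. The rest is a routine unwinding of the definition of $\equiv_o$, of the extension of $\ell$ to diagonals (Construction~\ref{con:labeling.oriented.diagonals}), and of the link/cube dictionary of Remark~\ref{rem:commutation.and.links}.
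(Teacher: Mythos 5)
Your proposal is correct and follows essentially the same route as the paper's own proof: both arguments reduce to the local picture at the endpoint $v$ of $w_1$, use (A1)/(A2) to see that $ss^{-1}$ retraces an edge, and invoke the cube $C$ spanned by the commuting diagonals to see that $\alpha\beta$ and $\beta\alpha$ both reach the corner of $C$ antipodal to $v$. Your explicit (A1) bookkeeping showing that the complementary sub-face's diagonal carries the label $\beta$ is a detail the paper leaves implicit, but it is the same argument.
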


\begin{proof} (i): $w_1$ and $w_2$ are chosen so that $w_1 w_2 \in \mathcal{L}_o^{cube}$ and there exists a cube path $\gamma \in D(X)_o$ such that $\ell(\gamma) = w_1 w_2$. Since $s \in \Sigma_{0,w_1}$, there exists an edge $e \in \vec{\mathcal{E}}(X)_{w_1}$ labeled $s$. We can thus define $\gamma'$ to be the cube path where we insert the path $e * e^{-1}$ into $\gamma$ at $w_1$. By construction, $\gamma$ and $\gamma'$ have the same endpoints and thus
\[ w_1 s s^{-1} w_2 = \ell(\gamma') \equiv \ell(\gamma) = w_1 w_2. \]

(ii): $w_1$, $s$, $t$, and $w_2$ are chosen so that $w_1 st w_2 \in \mathcal{L}_o^{cube}$ and there exists a cube path $\gamma$ such that $\ell(\gamma) = w_1 st w_2$. Let $e * e'$ be the segment of $\gamma$ that spells out the letters $st$ between $w_1$ and $w_2$. Since $[s, t] = 1$, the edges $e^{-1}$ and $e'$ span a square $C$ at $w_1 s$. We define $\gamma'$ to be the path obtained from $\gamma$ by replacing the segment $e * e'$ with the other edge path around $C$ (see figure \ref{fig:basic.move}). By construction, $\gamma$ and $\gamma'$ are cube paths with the same end points and thus
\[ w_1 t s w_2 = \ell(\gamma') \equiv \ell(\gamma) = w_1 s t w_2. \]

\begin{figure}
   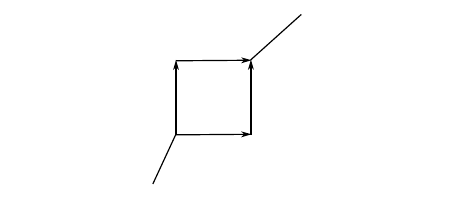
    \caption{The two words $w_1 st w_2$ and $w_1 ts w_2$ correspond to the two paths going around the indicated square in the two possible ways.}
     \label{fig:basic.move}
\end{figure}

(iii) $w_1$, $\alpha$, $\beta$, and $w_2$ are chosen so that $w_1 \alpha \beta w_2 \in \mathcal{L}_o^{cube}$ and there exists a cube path $\gamma$ such that $\ell(\gamma) = w_1 \alpha \beta w_2$. Since $\alpha, \beta \in \Sigma_{w_1}$ and $[\alpha, \beta] = 1$, there are two cubes $C_{\alpha}, C_{\beta}$ at $w_1$ with diagonals labeled $\alpha$ and $\beta$ respectively such that these cubes span a cube $C_{\alpha, \beta}$ together. We replace the segment in $\gamma$ that spells out $\alpha \beta$ by the concatenation of diagonals $(w_1, \beta, w_1 \beta) * (w_1 \beta, \alpha, w_1 \beta \alpha)$. (This concatenation exists because of the existence of $C_{\alpha, \beta}$). We define $\gamma'$ to be the cube path obtained by replacing the segment that spells out $\alpha \beta$ by the segment $(w_1, \beta, w_1 \beta) * (w_1 \beta, \alpha, w_1 \beta \alpha)$. By construction, this is a cube-path with the same endpoints as $\gamma$ and thus
\[ w_1 \beta \alpha w_2 = \ell(\gamma') \equiv \ell(\gamma) = w_1 \alpha \beta w_2. \]
\end{proof}

This proposition explains the terminology of commuting labels. The next Lemma generalises a standard result about edge paths in $\CAT$ cube complexes to cube paths and formulates it using the terminology of $\mathcal{L}_o^{cube}$. For convenience of the reader, we provide a proof using our terminology.

\begin{definition} \label{def:innermost.cancellations}
    Let $\alpha_1 \dots \alpha_n \in \mathcal{L}_o$. A pair $\alpha_i, \alpha_j$ (with $i < j$) {\it contains an innermost cancellation} if and only if there exists a hyperplane that is crossed by $\alpha_i$ and $\alpha_j$ in opposite directions and the path-segments $\alpha_i \dots \alpha_{j-1}$ and $\alpha_{i+1} \dots \alpha_j$ both do not cross any hyperplane twice.
\end{definition}

\begin{remark} \label{rem:existence.of.innermost.cancellations}
    Any cube path $\gamma$ that crosses some hyperplane more than once contains an innermost cancellation. One may start with any hyperplane $\hat{h}$ that is crossed twice by $\gamma$. If it does not produce an innermost cancellation, there has to be another hyperplane $\hat{h}_{new}$ that crosses $\gamma$ twice on a segment of $\gamma$ between two places where $\gamma$ crosses $\hat{h}$. (One of the two crossings of $\hat{h}_{new}$ may happen at the same place where $\gamma$ crosses $\hat{h}$.) We replace $\hat{h}$ by $\hat{h}_{new}$. After finitely many iterations, this yields an innermost cancellation.
\end{remark}

If $\alpha_i, \alpha_j$ contains an innermost cancellation, then there exists a labeling pair $s, s^{-1} \in \Sigma_0$ that corresponds to the hyperplane being crossed by $\alpha_i$ and $\alpha_j$. In particular, $s^{-1} \in \alpha_i$ and $s \in \alpha_j$ (or vice-versa).

\begin{lemma} \label{lem:innermost.cancellations.commute.with.the.word.in.between}
    Let $\alpha_1 \dots \alpha_n \in \mathcal{L}_o$ and suppose $\alpha_i, \alpha_j$ contains an innermost cancellation. Let $\hat{h}$ be a hyperplane crossed by $\alpha_i$ and $\alpha_j$ and let $s, s^{-1} \in \Sigma_0$ be the labels of the oriented edges crossing $\hat{h}$, such that $s^{-1} \in \alpha_i$ and $s \in \alpha_j$.
    
    Then, for all $i \leq k \leq j-1$, $s \in \Sigma_{0, \alpha_1 \dots \alpha_k}$, and and for all $i+1 \leq k \leq j-1$, $[s, \alpha_k] = 1$.
    
    In particular,
    \[ \alpha_1 \dots \alpha_n \equiv \alpha_1 \dots \tilde{\alpha}_i \dots \tilde{\alpha}_j \dots \alpha_n, \]
    where $\tilde{\alpha}_i = \alpha_i \setminus \{ s^{-1} \}$ and $\tilde{\alpha}_j = \alpha_j \setminus \{ s \}$.
\end{lemma}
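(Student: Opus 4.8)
The plan is to track the hyperplane $\hat{h}$ along the cube path and use the innermost cancellation hypothesis to show that it forces a square (hence commutation) at every intermediate step. First I would fix notation: write $v_k$ for the endpoint of $\alpha_1 \dots \alpha_k$, so $v_{i-1}$ is the vertex just before the diagonal $\alpha_i$ and $v_j$ is the endpoint of $\alpha_j$. Because $\alpha_i$ contains $s^{-1}$ and $\alpha_j$ contains $s$, the path crosses $\hat{h}$ at the $\alpha_i$-step (in the negative direction, moving out of the halfspace $h$ with $\ell$-label $s$) and crosses it back at the $\alpha_j$-step. The minimality built into the definition of innermost cancellation says that neither $\alpha_i \dots \alpha_{j-1}$ nor $\alpha_{i+1}\dots\alpha_j$ recrosses any hyperplane; in particular $\hat{h}$ is crossed exactly once in each of those sub-segments, so the segment $\alpha_{i+1}\dots\alpha_{j-1}$ (the piece strictly between the two crossings of $\hat{h}$) does not cross $\hat{h}$ at all, and stays entirely on one side of $\hat{h}$ — namely in the halfspace $h^c$ opposite to $h$ (the side the path is on after the $\alpha_i$-crossing and before the $\alpha_j$-crossing).

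The key step is then: for each $i \le k \le j-1$, the vertex $v_k$ lies in the carrier of $\hat{h}$, so there is an outgoing edge at $v_k$ crossing $\hat{h}$, and by (A1) its label is $s$ (it crosses $\hat{h}$ in the same direction as the $\alpha_j$-edge, since both $v_k$ and $v_{j-1}$ are on the $h^c$-side). This gives $s \in \Sigma_{0,v_k} = \Sigma_{0,\alpha_1\dots\alpha_k}$. To see $v_k$ is in the carrier: $v_{i-1}$ and $v_i$ are the two endpoints of the $\hat{h}$-crossing edge in the $\alpha_i$-step (or, if $|\alpha_i|>1$, the diagonal $\alpha_i$ has an edge crossing $\hat{h}$ whose endpoints are in the carrier and the rest of the diagonal stays in the carrier because the hyperplanes of $\alpha_i$ crossing together span a cube — commuting with $s$), so $v_i$ is in the carrier; and for $i < k \le j-1$ one argues inductively that the segment $\alpha_{i+1}\dots\alpha_{j-1}$ stays in the carrier of $\hat{h}$ — this is the standard fact that a geodesic-type segment (no hyperplane recrossed) lying in $h^c$ and starting adjacent to $\hat{h}$ and ending adjacent to $\hat{h}$ must remain in the carrier, because leaving the carrier would force re-entering it, which (combined with the convexity of the carrier and the single-crossing condition) contradicts minimality. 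Concretely: if $v_k$ is in the carrier and $\alpha_{k+1}$ (with $i+1 \le k+1 \le j-1$) moves to $v_{k+1}$, then the edge at $v_k$ labeled $s$ and the diagonal $\alpha_{k+1}$ must span a cube, i.e. $[s, \alpha_{k+1}] = 1$ — otherwise the hyperplane $\hat{h}$ and a hyperplane of $\alpha_{k+1}$ would be crossed in a way that forces the path to leave and later re-cross $\hat{h}$ within $\alpha_{i+1}\dots\alpha_{j-1}$, violating the innermost condition. This simultaneously yields $[s,\alpha_k]=1$ for $i+1 \le k \le j-1$ and propagates "$v_{k+1}$ is in the carrier".

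Once we know $[s,\alpha_k] = 1$ for all $i+1 \le k \le j-1$, the conclusion follows by the commutation rules already proved. By Proposition~\ref{prop:canceling.and.commuting}(iii), since $\{s^{-1}\}$ and $\tilde\alpha_i = \alpha_i\setminus\{s^{-1}\}$ commute (they lie in the common cube underlying $\alpha_i$, so $[\tilde\alpha_i, \{s^{-1}\}]=1$), we may rewrite $\alpha_i$ as $\tilde\alpha_i \{s^{-1}\}$; similarly $\alpha_j = \{s\}\tilde\alpha_j$. Then, using part~(iii) repeatedly to commute the singleton $\{s^{-1}\}$ past each of $\alpha_{i+1}, \dots, \alpha_{j-1}$ (legal since $[s^{-1},\alpha_k] = [s,\alpha_k] = 1$ by Example~\ref{rem:commutation.extends.to.inverses}), we bring the $s^{-1}$ adjacent to the $s$ in the $\alpha_j$-block, and part~(i) (applied after splitting $\alpha_j$) cancels the pair $s s^{-1}$. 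Tracking the diagonals that remain gives exactly $\alpha_1\dots\tilde\alpha_i\dots\tilde\alpha_j\dots\alpha_n$, with the understanding that an empty $\tilde\alpha$ is simply deleted from the word.

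I expect the main obstacle to be the carrier-tracking argument — rigorously showing that $v_k$ stays in the carrier of $\hat h$ for all $i \le k \le j-1$ and that $[s,\alpha_k]=1$, purely from the "no hyperplane crossed twice on $\alpha_i\dots\alpha_{j-1}$ and on $\alpha_{i+1}\dots\alpha_j$" hypothesis. This is essentially the cube-complex statement that between two crossings of a hyperplane along a path with no recrossings, the path runs through the carrier; translating it into the language of diagonals and the alphabet $\Sigma$ (so that it produces the commutation relations $[s,\alpha_k]=1$ rather than just a geometric statement) is where the care is needed. The cancellation bookkeeping at the end is routine given the earlier propositions.
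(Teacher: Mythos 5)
Your proposal follows the same route as the paper: show inductively that each intermediate vertex $w_k$ carries an outgoing $s$-labeled edge crossing $\hat{h}$, deduce $[s,\alpha_k]=1$ along the way, and then finish by shuffling $s^{-1}$ past the inner block and cancelling via Proposition~\ref{prop:canceling.and.commuting}. The one place where your write-up is loose is exactly where you flag it: the paper nails the inductive step by first proving, via a clean four-vertex separation argument ($w_{i-1}, w_i, w_{j-1}, w_j$ lie in four different components of $X\setminus(\hat{h}\cup\hat{h'})$), that $\hat{h}$ is transverse to every hyperplane $\hat{h'}$ crossed by $\alpha_{i+1}\dots\alpha_{j-1}$ — note the contradiction you want is a forced re-crossing of that $\hat{h'}$, not of $\hat{h}$.
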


\begin{proof}
    \begin{figure}
        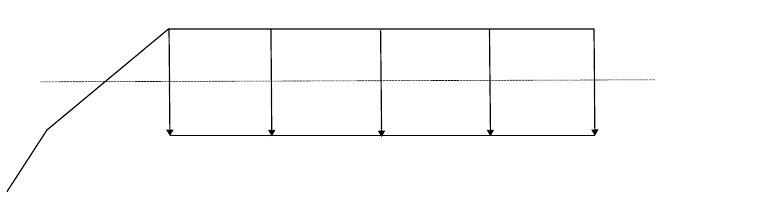
        \caption{Since $\hat{h}$ crosses every hyperplane crossed by the path from $w_i$ to $w_{j-1}$, we obtain the cubes (indicated as squares in the figure) that are spanned by an edge labeled $s$ and the diagonals of the path $\alpha_{i+1} \dots \alpha_{j-1}$. Thus $s$ commutes with every letter along this path.}
        \label{fig:innermost.cancellation}
    \end{figure}
    
    The essence of the proof is captured in Figure \ref{fig:innermost.cancellation}. We provide a formal proof for the sake of completeness. To abbreviate notation, we denote $w_k := \alpha_1 \dots \alpha_k$ for all $1 \leq k \leq n$. Let $\hat{h}$, $s$, and $s^{-1}$ as stated in the Lemma. Let $\hat{h'}$ be a hyperplane crossed by the path-segment $\alpha_{i+1} \dots \alpha_{j-1}$. We claim that $\hat{h}$ and $\hat{h'}$ intersect transversally. Indeed, since $\alpha_i \dots \alpha_{j-1}$ and $\alpha_{i+1} \dots \alpha_j$ are both reduced, we know that $\hat{h'}$ separates the vertices $w_{i-1}$ and $w_i$ from the vertices $w_{j-1}$ and $w_j$. On the other hand, $\hat{h}$ separates $w_{i-1}$ and $w_j$ from $w_{i}$ and $w_{j-1}$. We conclude that these four vertices lie in four distinct connected components of $X \setminus ( \hat{h} \cup \hat{h'} )$. Therefore, $\hat{h}$ and $\hat{h'}$ intersect transversally.\\

    Since $s^{-1} \in \alpha_i$, $s \in \alpha_i^{-1}$, and therefore $s \in \Sigma_{0, w_i}$. Furthermore, the edge in $\vec{\mathcal{E}}(X)_{w_i}$ labeled $s$ crosses the hyperplane $\hat{h}$ (because this is how we chose $\hat{h})$. Similarly, since $s \in \alpha_j$, we know that $s \in \Sigma_{0, w_{j-1}}$. We now prove inductively that, for all $i+1 \leq k \leq j-1$, $s \in \Sigma_{0, w_k}$ and $[s, \alpha_k] = 1$.
    
    Suppose $i+1 \leq k \leq j-1$, suppose $s \in \Sigma_{0, w_{k-1}}$, and suppose the outgoing edge at $w_{k-1}$ with label $s$ crosses the hyperplane $\hat{h}$. (The induction start for $k = i+1$ is given by what we have shown before.) Since $k \leq j-1$, every hyperplane crossed by $\alpha_{k}$ is crossed by $\hat{h}$. Thus, the outgoing edges at $w_{k-1}$ labeled by $s$ and the elements of $\alpha_k$ span a cube, $[s, \alpha_k] = 1$, and we obtain an outgoing edge at $w_k$ that is labeled by $s$ and crosses the hyperplane $\hat{h}$. This finishes the induction.\\

    To prove the remainder of the Lemma, we note that for any $\alpha = \beta \cup \{ s \} \in \Sigma_v$, we have $\alpha \equiv_v \beta s \equiv_v s \beta$. This, combined with finitely many applications of Proposition \ref{prop:canceling.and.commuting} implies that
    \begin{equation*}
        \begin{split}
            \alpha_1 \dots \alpha_i \dots \alpha_j \dots \alpha_n & \equiv \alpha_1 \dots \tilde{\alpha}_i s^{-1} \dots \alpha_k \dots s \tilde{\alpha}_j \dots \alpha_n\\
            & \equiv \alpha_1 \dots \tilde{\alpha}_i s^{-1} s \dots \alpha_k \dots \tilde{\alpha}_j \dots \alpha_n\\
            & \equiv \alpha_1 \dots \tilde{\alpha}_i \dots \tilde{\alpha}_j \dots \alpha_n.
        \end{split}
    \end{equation*}
    
\end{proof}

\subsection{The language of normal cube paths}

\begin{definition}[Niblo-Reeves]
A cube path $(v, C_1, \dots, C_n, w)$ is \emph{normal} if it does not cross any hyperplane more than once and if for all  $i \in \{ 1, \dots, n-1 \}$ there exists no hyperplane $\widehat{h} \in \widehat{\mathfrak h}(C_{i+1})$ which intersects all hyperplanes in $\widehat{\mathfrak h}(C_i)$ transversally.
\end{definition}

\begin{figure}
\begingroup%
  \makeatletter%
  \providecommand\color[2][]{%
    \errmessage{(Inkscape) Color is used for the text in Inkscape, but the package 'color.sty' is not loaded}%
    \renewcommand\color[2][]{}%
  }%
  \providecommand\transparent[1]{%
    \errmessage{(Inkscape) Transparency is used (non-zero) for the text in Inkscape, but the package 'transparent.sty' is not loaded}%
    \renewcommand\transparent[1]{}%
  }%
  \providecommand\rotatebox[2]{#2}%
  \newcommand*\fsize{\dimexpr\f@size pt\relax}%
  \newcommand*\lineheight[1]{\fontsize{\fsize}{#1\fsize}\selectfont}%
  \ifx\svgwidth\undefined%
    \setlength{\unitlength}{151.40949099bp}%
    \ifx\svgscale\undefined%
      \relax%
    \else%
      \setlength{\unitlength}{\unitlength * \real{\svgscale}}%
    \fi%
  \else%
    \setlength{\unitlength}{\svgwidth}%
  \fi%
  \global\let\svgwidth\undefined%
  \global\let\svgscale\undefined%
  \makeatother%
  \begin{picture}(1,0.47801031)%
    \lineheight{1}%
    \setlength\tabcolsep{0pt}%
    \put(0,0){\includegraphics[width=\unitlength,page=1]{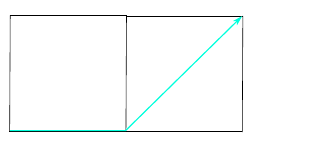}}%
    \put(-0.00312642,0.01367217){\color[rgb]{0,0,0}\makebox(0,0)[lt]{\lineheight{1.25}\smash{\begin{tabular}[t]{l}$v_0$\end{tabular}}}}%
    \put(0.37869551,0.01351405){\color[rgb]{0,0,0}\makebox(0,0)[lt]{\lineheight{1.25}\smash{\begin{tabular}[t]{l}$v_1$\end{tabular}}}}%
    \put(0.79566873,0.42531375){\color[rgb]{0,0,0}\makebox(0,0)[lt]{\lineheight{1.25}\smash{\begin{tabular}[t]{l}$v_2$\end{tabular}}}}%
  \end{picture}%
\endgroup%

    \caption{The indicated cube path is not a normal cube path. However, the reverse path from $v_2$ to $v_0$ is a normal cube path.}
     \label{fig:normal.cube.path}
\end{figure}

\begin{remark} \label{rem:equivalent.definitions.of.normal.cube.paths}
    Niblo and Reeves' original definition of normal cube paths in {\cite[Section 3]{NibloReeves98}} does not require that the path crosses every hyperplane at most once. Instead of this condition, they require that for all $1 \leq i \leq n-1$, $\widehat{\mathfrak{h}} (C_i) \cap \widehat{\mathfrak{h}}(C_{i+1}) = \emptyset$ and then they implicitly show that their definition is equivalent to the one given above.
\end{remark}

The set of normal cube paths in $X$ will be denoted by $N(X) \subset D(X)$ and for $o \in X^{(0)}$ we define $N(X)_o := N(X) \cap D(X)_o$.
We define the \emph{language of normal cube paths at $o$} to be
\[
\mathcal L_o :=  \ell(N(X)_o) \subset  \mathcal L^{\mathrm{cube}}_{o}.
\]

\begin{remark} \label{rem:characterising.normal.cube.path.words}
    Given a word $\alpha_1 \dots \alpha_n \in \Sigma^*$, one can easily verify from the definition, together with Remark \ref{rem:equivalent.definitions.of.normal.cube.paths}, that
    \[ \alpha_1 \dots \alpha_n \in \mathcal{L}_o \Leftrightarrow \forall 1 \leq i \leq n-1, \forall s \in \alpha_{i+1} : [\alpha_i, s] \neq 1 \text{ and } s^{-1} \notin \alpha_i. \]
\end{remark}

\begin{lemma}[Niblo--Reeves, \cite{NibloReeves98}] \label{lem:regularity.of.normal.cube.path.language}
    Suppose there is a subgroup $\Gamma < \Aut(X)$ that acts cocompactly on $X$ and let $\ell$ be a $\Gamma$-invariant admissible edge-labeling. The language $\mathcal{L}_{o}$ of normal cube paths is regular, i.e.\,it is the language recognized by a finite automaton.
\end{lemma}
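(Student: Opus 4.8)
The plan is to deduce regularity of $\mathcal{L}_o = \ell(N(X)_o)$ directly from two facts already available: the regularity of the ambient language $\mathcal{L}_o^{\mathrm{cube}}$ (Proposition \ref{prop:regularity.of.cube.path.language}) and the purely combinatorial description of normal cube path words in Remark \ref{rem:characterising.normal.cube.path.words}. The crucial point is that the normality condition appearing there is \emph{local} in the word: beyond membership in $\mathcal{L}_o^{\mathrm{cube}}$, it constrains only each consecutive pair $(\alpha_i,\alpha_{i+1})$, and — because commutation $[s,t]$ of edge-labels and the relation $s^{-1}\in\alpha$ are defined without reference to a basepoint (this is exactly what (A3) buys us) — the constraint on such a pair does not depend on which vertex of the cube path one sits at. Thus $\mathcal{L}_o$ is the intersection of the regular language $\mathcal{L}_o^{\mathrm{cube}}$ with the language $F\subset\Sigma^*$ of all words containing no factor $\gamma\beta$ (with $\gamma,\beta\in\Sigma$) such that $s^{-1}\in\gamma$ or $[\gamma,s]=1$ for some $s\in\beta$. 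Since $F$ is defined by forbidding a fixed finite set of length-two factors it is regular, and regular languages are closed under intersection, so $\mathcal{L}_o$ is regular.

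If one prefers an explicit automaton, I would refine the automaton $\mathcal{A}$ constructed in the proof of Proposition \ref{prop:regularity.of.cube.path.language}: take state set $Q\times(\Sigma\sqcup\{\ast\})$, where the second coordinate records the previous letter read (and $\ast$ before any letter is read), with initial state $(\Gamma o,\ast)$, all states final, and with the transition from $(\Gamma v,\gamma)$ on a letter $\beta\in\Sigma$ defined exactly when $\beta\in\Sigma_v$ and moreover either $\gamma=\ast$ or $\gamma\in\Sigma$ with $s^{-1}\notin\gamma$ and $[\gamma,s]\neq1$ for every $s\in\beta$; in that case it moves to $(\delta(\Gamma v,\beta),\beta)$. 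Since $Q$ and $\Sigma$ are finite this is a finite automaton. Its $Q$-coordinate reproduces the acceptance condition of $\mathcal{A}$, while the extra coordinate enforces precisely the pairwise condition of Remark \ref{rem:characterising.normal.cube.path.words}, so the automaton accepts a word if and only if that word lies in $\mathcal{L}_o^{\mathrm{cube}}$ and satisfies that condition, i.e.\ if and only if it lies in $\mathcal{L}_o$.

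I do not expect a genuine obstacle here; the content is essentially the observation that superimposing a local constraint on a regular language keeps it regular. The one point deserving a line of care is confirming that the condition of Remark \ref{rem:characterising.normal.cube.path.words} really only sees consecutive letters — so that a single letter of memory suffices — and that adjoining the $\Sigma$-coordinate to $\mathcal{A}$ does not shrink the accepted language below $\mathcal{L}_o$; both follow once one notes that along any cube path the relevant commutation and inversion data are already determined by the sequence of labels alone.
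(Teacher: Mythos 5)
Your argument is correct, and it is worth noting that the paper does not actually prove this lemma in-house: it attributes it to \cite{NibloReeves98} and moves on. So there is no in-paper proof to compare against. What you have written is a clean, self-contained derivation from material already in the paper: Proposition \ref{prop:regularity.of.cube.path.language} supplies a finite automaton for $\mathcal{L}_o^{\mathrm{cube}}$, Remark \ref{rem:characterising.normal.cube.path.words} reduces normality to a constraint on consecutive letter pairs, and your observation that (A3) makes commutation of labels (and, more trivially, the relation $s^{-1}\in\alpha$) well-defined independently of the vertex is exactly what turns that constraint into a bona fide factor-avoidance condition on words over $\Sigma$, enforceable with one letter of memory. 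Taking the product automaton $Q\times(\Sigma\sqcup\{\ast\})$ — or, equivalently, intersecting $\mathcal{L}_o^{\mathrm{cube}}$ with the regular language $F$ of words avoiding the finite set of bad length-two factors — then yields $\mathcal{L}_o$. One small remark in your favour: the equivalence in Remark \ref{rem:characterising.normal.cube.path.words} is stated for arbitrary words in $\Sigma^*$, but it implicitly presupposes membership in $\mathcal{L}_o^{\mathrm{cube}}$ in the non-vertex-transitive case, since the pairwise condition alone does not guarantee that each $\alpha_{i+1}$ actually occurs in $\Sigma_{\alpha_1\dots\alpha_i}$; your intersection with $\mathcal{L}_o^{\mathrm{cube}}$ handles this correctly, and is necessary. (Strictly speaking your explicit automaton has a partial transition function; adjoining a rejecting sink state makes it a standard DFA, but this is routine.)
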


\begin{remark}[Normalization] \label{rem:normalization}
By \cite{NibloReeves98}, every cube path is $\equiv$-equivalent to a unique normal cube path. This shows that we can identify $N(X)_o$ with $X^{(0)}$ and hence obtain a bijective encoding $\mathcal L_{o} \to X^{(0)}$. Moreover, there are 
normalization operations
\[
\mathrm{norm}: D(X)_o \to N(X)_o \quad \text{and} \quad \mathrm{norm}:  \mathcal L^{\mathrm{cube}}_{o}   \to \mathcal L_{o},
\]
which map every cube path (or word in the cube path language) to the unique $\equiv_o$-equivalent normal cube path (or word in the normal cube path language).
This allows us to define a partial multiplication on $\bigcup_{o \in X^{(0)}} \mathcal L_o$ as follows:
\[
\forall o \in X^{(0)}, v_1 \in \mathcal{L}_o, v_2 \in \mathcal{L}_{v_1} : v_1 \ast v_2 := \mathrm{norm}(v_1 v_2)
\]
This partial multiplication is associative and defines a groupoid.
\end{remark}
To describe the partial multiplication explicitly, it suffices to compute $v \ast s$ for $s \in \Sigma_{0,v}$. For this we need the following observation:

\begin{lemma}[Double crossing] \label{lem:technical.result.on.normal.cube.paths}
    Let $v = \alpha_1 \dots \alpha_n \in \mathcal{L}_o$ and $s \in \Sigma_{0,v}$. Suppose $vs$ crosses some hyperplane more than once. Then there exists some $k \in \{ 1, \dots, n \}$ such that $s^{-1} \in \alpha_k$ and $[s, \alpha_i] = 1$ for all $i > k$.
\end{lemma}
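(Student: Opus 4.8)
The plan is to argue by induction on the structure of the normal cube path $v = \alpha_1 \dots \alpha_n$, peeling off letters from the end and tracking how the hyperplane crossed twice by $vs$ interacts with the tail of $v$. Let $\widehat{h}$ be a hyperplane crossed more than once by the cube path $vs$. Since $v$ itself is normal, hence (by definition) crosses every hyperplane at most once, $\widehat{h}$ must be crossed exactly once by $v$ and exactly once by the final edge $s$. So $\widehat{h} = \widehat{h}(e_s)$ where $e_s \in \vec{\mathcal E}(X)_v$ is the outgoing edge labeled $s$, and $\widehat{h}$ is crossed by $v$ at some diagonal $\alpha_k$ in the opposite direction; in labeling terms $s^{-1} \in \alpha_k$. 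Fix such a $k$, chosen maximal. The first claim, $s^{-1}\in\alpha_k$, is then immediate.

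For the second claim, $[s,\alpha_i]=1$ for all $i>k$, I would proceed exactly as in the proof of Lemma \ref{lem:innermost.cancellations.commute.with.the.word.in.between}: the point is that the pair $(\alpha_k, \text{the final } s)$ forms an innermost cancellation in the word $vs = \alpha_1\dots\alpha_n s$. Indeed $\alpha_k$ and the final letter $\{s\}$ cross $\widehat{h}$ in opposite directions, and both tail segments $\alpha_k\dots\alpha_n$ and $\alpha_{k+1}\dots\alpha_n s$ cross no hyperplane twice: the first because it is a sub-path of the normal path $v$, and the second because if it crossed some hyperplane $\widehat{h}'$ twice, that double crossing would have to involve $s$ (as $\alpha_{k+1}\dots\alpha_n$ is a sub-path of $v$), so $\widehat{h}'$ is crossed once by $s$ and once by some $\alpha_i$ with $i>k$, contradicting maximality of $k$. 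With the innermost-cancellation hypothesis in hand, Lemma \ref{lem:innermost.cancellations.commute.with.the.word.in.between} applied to the word $\alpha_1\dots\alpha_n s \in \mathcal{L}_o$ (with the roles of indices $i=k$ and $j=n+1$) yields precisely that $[s,\alpha_i]=1$ for all $k+1 \le i \le n$, which is the desired conclusion.

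The main obstacle I anticipate is the bookkeeping needed to justify that we really are in the situation covered by Definition \ref{def:innermost.cancellations} and Lemma \ref{lem:innermost.cancellations.commute.with.the.word.in.between}: one must be careful that the "word" $\alpha_1\dots\alpha_n s$ is a legitimate element of $\mathcal{L}_o^{\mathrm{cube}}$ (it is, since $s\in\Sigma_{0,v}$ means $\{s\}\in\Sigma_v$), that the final single-letter diagonal $\{s\}$ genuinely plays the role of $\alpha_j$ with $j=n+1$, and that the separation argument identifying the four vertices $w_{k-1}, w_k, w_n, w_{n+1}=vs$ in distinct components of $X\setminus(\widehat{h}\cup\widehat{h}')$ goes through verbatim. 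An alternative, more self-contained route avoids quoting Lemma \ref{lem:innermost.cancellations.commute.with.the.word.in.between} and instead runs a direct downward induction on $i$ from $n$ to $k+1$: assuming an outgoing $s$-labeled edge at the endpoint $w_i$ of $\alpha_1\dots\alpha_i$ which crosses $\widehat{h}$, one shows every hyperplane crossed by $\alpha_i$ is crossed transversally by $\widehat{h}$ (using that $vs$ is obtained from a normal path and the positions of the crossings), deduces that the $s$-edge and the diagonal $\alpha_i$ span a cube so $[s,\alpha_i]=1$, and propagates the $s$-edge back to $w_{i-1}$. Either way the geometric content is the same square-spanning argument illustrated in Figure \ref{fig:innermost.cancellation}; I would present the short version via Lemma \ref{lem:innermost.cancellations.commute.with.the.word.in.between} and only expand into the direct induction if a referee wants the crossing-transversality step spelled out.
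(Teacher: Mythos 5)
Your proof is correct and follows essentially the same route as the paper's: you identify the unique double-crossed hyperplane as $\widehat{h}(s)$, locate the index $k$ with $s^{-1}\in\alpha_k$ using normality of $v$, verify that $(\alpha_k, s)$ forms an innermost cancellation in $\alpha_1\dots\alpha_n s$, and invoke Lemma~\ref{lem:innermost.cancellations.commute.with.the.word.in.between}. The only difference is one of explicitness: the paper dismisses the innermost-cancellation check with ``by construction,'' whereas you spell out why the two tail segments cross no hyperplane twice (your appeal to ``maximality of $k$'' is slightly overbuilt, since normality of $v$ already makes $k$ unique, but the reasoning is sound).
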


\begin{proof}
    Since $v$ is a normal cube path, the multi-crossing in $vs$ cannot occur within $v$. We conclude that the only hyperplane that can be crossed more than once by $vs$ is the hyperplane that is crossed in the last step, when passing through the edge labeled by $s$. We denote this hyperplane by $\hat{h}(s)$. If $vs$ crosses $\hat{h}(s)$ more than once, then $v$ has to cross the hyperplane $\hat{h}(s)$. Since $v$ is a normal cube path, it can cross $\hat{h}(s)$ exactly once and in opposite direction from the crossing in the last edge of $vs$. We conclude that $s^{-1} \in \alpha_k$ for some $k$.

    By construction, $\alpha_k, s$ is an innermost cancellation. By Lemma \ref{lem:innermost.cancellations.commute.with.the.word.in.between}, we conclude that for every $i > k$ and every $t \in \alpha_i$, $[s^{-1}, t] = 1$ (see also figure \ref{fig:innermost.cancellation}). This proves the Lemma.
\end{proof}

\begin{corollary}[Basic concatenations] \label{cor:propertiesofnormalcubepaths}
    Let $v = \alpha_1 \dots \alpha_n \in \mathcal{L}_o$ and $s \in \Sigma_{0,v}$.

    \begin{enumerate}
        \item If $[s, \alpha_n] \neq 1$ and $s^{-1} \notin \alpha_n$, then $v \ast s = vs$.

        \item If $s^{-1} \in \alpha_n$, then  $v \ast s = \alpha_1 \dots \tilde{\alpha}_n$, where $\tilde{\alpha}_n := \alpha_n \setminus \{ s^{-1} \}$.

        \item Suppose $[s, \alpha_n] = 1$ and let $k \in \{ 1, \dots, n \}$ be the least integer such that $[s, \alpha_i] = 1$ for all $i \geq k$. If $s^{-1} \in \alpha_{k-1}$, then $ v \ast s = \alpha_1 \dots \tilde{\alpha}_{k-1} \dots \alpha_n$, where $\tilde{\alpha}_{k-1} := \alpha_{k-1} \setminus \{ s^{-1} \}$.

        \item Suppose $[s, \alpha_n] = 1$ and let $k \in \{ 1, \dots, n \}$ be the least integer such that $[s, \alpha_i] = 1$ for all $i \geq k$. If $s^{-1} \notin \alpha_{k-1}$, then $v \ast s = \alpha_1 \dots \tilde{\alpha}_k \dots \alpha_n$, where $\tilde{\alpha}_k := \alpha_k \cup \{ s \}$.
    \end{enumerate}
Accordingly, the word length of $v \ast s$ over $\Sigma$ can be $n-1$, $n$, or $n+1$.
\end{corollary}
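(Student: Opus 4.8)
The plan is to compute $v \ast s = \mathrm{norm}(vs)$ case by case, producing in each case an explicit word $w$ with $w \equiv_o vs$ and then checking $w \in \mathcal L_o$ against the combinatorial description of normal cube path words in Remark \ref{rem:characterising.normal.cube.path.words}; since normal representatives of $\equiv_o$-classes are unique (Remark \ref{rem:normalization}), this pins down $v \ast s = w$. The rewriting moves available are cancellation and commutation from Proposition \ref{prop:canceling.and.commuting}, together with the identity $\gamma \equiv_u \delta t \equiv_u t \delta$ whenever $\gamma = \delta \cup \{t\} \in \Sigma_u$; and the deciding input is the Double Crossing Lemma \ref{lem:technical.result.on.normal.cube.paths}, which says $vs$ repeats a hyperplane crossing iff there is an index $m$ with $s^{-1} \in \alpha_m$ and $[s, \alpha_i] = 1$ for all $i > m$.

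Before the cases, I would record two preliminaries. (a) Suppose the index $m$ of the Double Crossing Lemma exists. Since $[s, s^{-1}] \neq 1$ (Example \ref{exam:trivial.examples}) we get $[s, \alpha_m] \neq 1$, so if $k$ is the least index with $[s, \alpha_i] = 1$ for all $i \geq k$ --- which exists exactly when $[s, \alpha_n] = 1$, i.e.\ in cases (3) and (4) --- then $m < k$, while $[s, \alpha_i] = 1$ for $i > m$ forces $m \geq k-1$; hence $m = k-1$ and $s^{-1} \in \alpha_{k-1}$. Thus $vs$ repeats a crossing precisely in case (3). (b) Whenever $[s, \alpha_i] = 1$ for all $i \geq k$ and $s \in \Sigma_{0,v}$, one has $s \in \Sigma_{0, \alpha_1 \cdots \alpha_i}$ for every $i \geq k-1$; this follows by downward induction, pushing the outgoing $s$-edge at the endpoint of $v$ past the cubes labelled $\alpha_n, \alpha_{n-1}, \dots$, exactly as in the proof of Lemma \ref{lem:innermost.cancellations.commute.with.the.word.in.between}. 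In particular $\alpha_k \cup \{s\} \in \Sigma_{\alpha_1 \cdots \alpha_{k-1}}$ by the link description of $\Sigma_v$ in Remark \ref{rem:commutation.and.links}, so all commutations used below are legitimate.

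Now the four cases. In case (1), preliminary (a) shows $vs$ repeats no crossing, and $[\alpha_n, s] \neq 1$, $s^{-1} \notin \alpha_n$ are exactly the adjacency conditions of Remark \ref{rem:characterising.normal.cube.path.words} at the final letter, so $vs \in \mathcal L_o$ and $v \ast s = vs$. In case (2), write $\alpha_n \equiv \tilde\alpha_n s^{-1}$ and cancel the trailing $s^{-1} s$, giving $vs \equiv \alpha_1 \cdots \alpha_{n-1} \tilde\alpha_n$ (read as $\alpha_1 \cdots \alpha_{n-1}$ when $\tilde\alpha_n = \emptyset$); this word repeats no crossing (we only deleted a crossing of $\hat h(s)$), and since $\tilde\alpha_n \subseteq \alpha_n$ its remaining adjacency is inherited from $v$, so it is normal. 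In cases (3) and (4), I commute the appended $s$ leftward past $\alpha_n, \dots, \alpha_k$ (legitimate by (b)): in case (4) it merges into the single letter $\tilde\alpha_k = \alpha_k \cup \{s\}$, giving $\alpha_1 \cdots \tilde\alpha_k \cdots \alpha_n$; in case (3) it meets the $s^{-1}$ sitting inside $\alpha_{k-1}$ and cancels, leaving $\tilde\alpha_{k-1} = \alpha_{k-1} \setminus \{s^{-1}\}$ and the word $\alpha_1 \cdots \tilde\alpha_{k-1} \cdots \alpha_n$. What is left is to check the one or two modified adjacency conditions: minimality of $k$ gives $[\alpha_{k-1}, s] \neq 1$, which handles $\alpha_{k-1}$ against $\tilde\alpha_k$ in case (4); and for $\alpha_k$ against $\tilde\alpha_{k-1}$ in case (3) one observes that an element of $\alpha_{k-1}$ failing to commute with a given $t \in \alpha_k$ --- one exists by normality of $v$ --- cannot be $s^{-1}$, because $[s, \alpha_k] = 1$. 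This last observation also forces $\tilde\alpha_{k-1} \neq \emptyset$, from which the stated length count ($n-1$, $n$, or $n+1$) is immediate. The only real work is bookkeeping: tracking the degenerate possibilities $k = 1$ and $\tilde\alpha_n = \emptyset$, confirming $\tilde\alpha_{k-1}$ never becomes empty, and making sure at each commutation that the label being moved actually appears at the intermediate vertex --- all of which is handled by the two preliminaries.
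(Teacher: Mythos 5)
Your proof is correct and follows essentially the same route as the paper's: apply Lemma~\ref{lem:technical.result.on.normal.cube.paths} to decide whether $vs$ repeats a hyperplane crossing, then in each of the four cases produce an explicit $\equiv_o$-equivalent word and verify normality via the commutation/adjacency criterion of Remark~\ref{rem:characterising.normal.cube.path.words}. The only noteworthy difference is that you isolate as preliminary~(b) the check that $s \in \Sigma_{0,\alpha_1\cdots\alpha_i}$ for $i \geq k-1$ (and hence $\alpha_k \cup \{s\} \in \Sigma_{\alpha_1\cdots\alpha_{k-1}}$), which the paper uses implicitly when it commutes $s$ across $\alpha_k \cdots \alpha_n$, and that you explicitly observe $\tilde\alpha_{k-1} \neq \emptyset$ in case (3), which the paper leaves implicit in the phrase ``there has to be some element in $\tilde\alpha_{k-1}$.'' One small imprecision: your phrase ``$vs$ repeats a crossing precisely in case (3)'' reads as a claim about all four cases, whereas you clearly mean ``among cases (3) and (4)''; case (2) of course also produces a repeated crossing, which is exactly what drives the cancellation there.
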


\begin{proof}
    Let $\hat{h}(s)$ denote the hyperplane crossed by the path $s \in \mathcal{L}_v^{cube}$. In statement (1), we suppose that $[s, \alpha_n] \neq 1$ and $s^{-1} \notin \alpha_n$. This implies that $vs$ cannot cross any hyperplanes twice (otherwise there would be a contradiction to Lemma \ref{lem:technical.result.on.normal.cube.paths}). Furthermore, $vs$ satisfies the second condition required of normal cube paths, because $\alpha_1 \dots \alpha_n$ does so and $[s, \alpha_n] \neq 1$. Thus, $vs$ is a normal cube path.

    In statement (2), the path $vs$ crosses the hyperplane $\hat{h}(s)$ twice, once in the segment labeled $\alpha_n$ and once in the segment labeled $s$. Using canceling of labels, we see that
    \[ \alpha_1 \dots \alpha_n s \equiv_o \alpha_1 \dots \alpha_{n-1} \tilde{\alpha}_n, \]
    where $\tilde{\alpha}_n = \alpha_n \setminus \{ s^{-1} \}$. The path $\alpha_1 \dots \tilde{\alpha}_n$ crosses the same hyperplanes as $\alpha_1 \dots \alpha_n$, except for $\hat{h}(s)$. Furthermore, it crosses every hyperplane at most once and it is a normal cube path, as we still have for every $t \in \tilde{\alpha}_n \subset \alpha_n$ that $[t, \alpha_{n-1}] \neq 1$. Thus, it is the unique normal cube path equivalent to $vs$.

    In statement (3), the path $vs$ crosses $\hat{h}(s)$ twice, once in the segment labeled $\alpha_{k-1}$ and once in the segment labeled $s$. Since $[s, \alpha_i] = 1$ for every $i \geq k$ and $s^{-1} \in \alpha_{k-1}$, we know that
    \[ \alpha_1 \dots \alpha_n s \equiv \alpha_1 \dots \alpha_{k-1} s \alpha_k \dots \alpha_n \equiv \alpha_1 \dots \tilde{\alpha}_{k-1} \dots \alpha_n, \]
    where $\tilde{\alpha}_{k-1} := \alpha_{k-1} \setminus \{ s^{-1} \}$. The path $\alpha_1 \dots \tilde{\alpha}_{k-1} \dots \alpha_n$ crosses the same hyperplanes as $v$, except for $\hat{h}(s)$, and it crosses them exactly once. In order to show that $\alpha_1 \dots \tilde{\alpha}_{k-1} \dots \alpha_n$ is still a normal cube path, we are left to show that for every $t \in \alpha_k$, $[t, \tilde{\alpha}_{k-1}] \neq 1$. Since we know that $[s, \alpha_k] = 1$, we have that $[s,t] = 1$. On the other hand, $[t, \alpha_{k-1}] \neq 1$, as we started out with the normal cube path $v$. Therefore, there has to be some element in $\tilde{\alpha}_{k-1}$ that does not commute with $t$. We conclude that $\alpha_1 \dots \tilde{\alpha}_{k-1} \dots \alpha_n$ is a normal cube path.

    In statement (4), the path $vs$ cannot cross any hyperplane twice, as that would require $s^{-1} \in \alpha_{k-1}$ by Lemma \ref{lem:technical.result.on.normal.cube.paths}. Putting $\tilde{\alpha}_k := \alpha_k \cup \{ s \}$, we obtain that the path $\alpha_1 \dots \tilde{\alpha}_k \dots \alpha_n$ does not cross any hyperplane more than once. We are left to show that for every $t \in \tilde{\alpha}_k$, $[t, \alpha_{k-1}] \neq 1$. For $t \in \alpha_k$, this is true because $v$ is a normal cube path. For $t = s$, this holds because statement (4) supposes that $[s, \alpha_{k-1}] \neq 1$. This proves the final statement and the Lemma.
    
\end{proof}

\subsection{Regular languages and rooted automorphisms of trees} \label{subsec:path.language.tree.and.tree.automorphisms}

For the remainder of the paper, it will be useful to understand the geometric structure intrinsically present in a regular language and how one can describe automorphisms of this geometric structure. Our main reference for this is \cite{HartnickMedici23a}. Here, we present the relation between regular languages and self-similar trees and the description of automorphisms as portraits as developed in said reference to the extent that we need in this paper.

Let $\mathcal{L}$ be a regular language over an alphabet $\Sigma$, that is, $\mathcal{L}$ is precisely the language accepted by a finite automaton. This language has an associated path language tree $T$, which is a rooted tree, whose vertices are the elements of $\mathcal{L}$ (with the empty word $\epsilon$ as the root). Given two vertices $v, w \in \mathcal{L}$, there is an edge from $v$ to $w$ in $T$ if and only if there exists some $\alpha \in \Sigma$ such that $w = v\alpha$. We can label the edges of $T$ by elements of $\Sigma$ by labeling the edge from $v$ to $v\alpha$ by $\alpha$. A rooted tree is called {\it self-similar} if it is the path language tree of a regular language.

\begin{notation}
    By Proposition \ref{prop:regularity.of.cube.path.language}, there is a self-similar tree associated to the language $\mathcal{L}_o^{cube}$. We denote this tree by $T_o^{cube}$. Similarly, Lemma \ref{lem:regularity.of.normal.cube.path.language} implies that there is a self-similar tree associated to the language $\mathcal{L}_o$, which we denote by $T_o$. We write $V(T_o^{cube}) = \mathcal{L}_o^{cube}$ and $V(T_o) = \mathcal{L}_o$ for the vertices of these trees.
\end{notation}

\begin{definition} \label{def:portrait}
    Let $\mathcal{L}$ be a regular language over the alphabet $\Sigma$ and $T$ the associated path language tree. For all $v \in \mathcal{L}$, we denote
    \[ \Sigma(v) := \{ \alpha \in \Sigma \vert v\alpha \in \mathcal{L} \}. \]
    A portrait in a language $\mathcal{L}$ is a family of maps $(\pre{v}{\sigma})_{v \in \mathcal{L}}$ such that $\pre{v}{\sigma} : \Sigma(v) \rightarrow \Sigma$ is injective for every $v \in \mathcal{L}$. For any portrait, we define the map $\sigma : \mathcal{L} \rightarrow \Sigma^*$ by
    \[ \sigma(\alpha_1 \dots \alpha_n) := \pre{\epsilon}{\sigma}(\alpha_1) \dots \pre{\alpha_1 \dots \alpha_{i-1}}{\sigma}(\alpha_i) \dots \pre{\alpha_1 \dots \alpha_{n-1}}{\sigma}(\alpha_n). \]

    We say a portrait {\it defines an automorphism on $T$} if $\sigma$ is the vertex-map of a rooted automorphism on $T$. (Since rooted automorphisms on $T$ are uniquely determined by what they do on vertices, $\sigma$ determines the entire automorphism.)
\end{definition}

\begin{remark} \label{rem:portrait.recovers.automorphism}
    Given an element $\sigma \in \Aut(T)$, we define a map
    \[ \pre{v}{\sigma} : \Sigma(v) \rightarrow \Sigma(\sigma(v)) \]
    by the equation
    \[ \forall \alpha \in \Sigma_v : \quad \sigma(v\alpha) = \sigma(v) \pre{v}{\sigma}(\alpha). \]
    As was shown in {\cite[Construction 3.3]{HartnickMedici23a}}, the collection $(\pre{v}{\sigma})_{v \in V(T)}$ is a portrait and recovers the automorphism $\sigma$ via the formula given in Definition \ref{def:portrait}.
\end{remark}

\begin{remark}
    By {\cite[Theorem 3.6]{HartnickMedici23a}}, a portrait defines an automorphism on $T$ if and only if $\sigma(\mathcal{L}) \subset \mathcal{L}$ and, for every $v \in \mathcal{L}$, $\pre{v}{\sigma}(\Sigma(v)) \subset \Sigma(\sigma(v))$. By {\cite[Remark 3.8]{HartnickMedici23a}}, this condition can be checked by verifying inductively over the length of $v$ that $\pre{v}{\sigma}(\Sigma(v)) \subset \Sigma(\sigma(v))$ for every $v \in \mathcal{L}$.
\end{remark}

\begin{remark} \label{rem:multiplication.formula.of.portraits}
    One easily checks from the equation in Remark \ref{rem:portrait.recovers.automorphism} that the composition of automorphisms on $T$ satisfies the following formula, when written in portraits:
    \begin{equation} \label{eq:multiplication.formula.of.portraits}
        \forall \sigma, \tau \in \Aut(T) : \pre{v}{(\sigma \circ \tau)} = \pre{\tau(v)}{\sigma} \circ \pre{v}{\tau}.
    \end{equation}
\end{remark}

\begin{notation} \label{not:children}
    For the language $\mathcal{L}_o^{cube}$, we have already introduced the special notation $\Sigma_v$ for the set $\Sigma(v)$ introduced in Definition \ref{def:portrait}. For the language $\mathcal{L}_o$, we write $\Sigma_v^{norm}$ for its set $\Sigma(v)$ and $\Sigma_{0,v}^{norm} := \Sigma_v^{norm} \cap \Sigma_0$.
\end{notation}

This allows us to describe elements of $\Aut(T_o^{cube})$ and $\Aut(T_o)$ in terms of the corresponding portraits, which we will make use of throughout the rest of the paper.




\section{Stabilizer elements of $\Aut(X)$ as tree automorphisms} \label{sec:stabilizer.elements.of.AUT.as.tree.automorphisms}

Let $X$ be a locally finite $\CAT$ cube complex, $\Gamma$ a group acting cubically and cocompactly on $X$, and $\ell$ a $\Gamma$-invariant admissible edge labeling on $X$. We fix a vertex $o \in X^{(0)}$. Our goal in this section is to find a description of the stabilizer subgroup 
\[ G_o := \{ g \in \Aut(X) \vert g(o) = o \}. \]

We start by embedding $G_o$ in the group of rooted automorphisms of the trees $T_o$ and $T_o^{cube}$ as follows: For $T_o$, we note that vertices of $T_o$ correspond to normal cube paths in $X$ that start at $o$ (see Remark \ref{rem:normalization}). Since the action of $G_o$ on $X$ preserves cube paths that start at $o$, it induces an action on the vertices of $T_o$ that fixes the root $\epsilon$.

For $T_o^{cube}$, we observe that vertices of $T_o^{cube}$ correspond to elements in $D(X)_o$ (see Remark \ref{rem:identification.of.words.and.vertices.paths.and.words}). Since the action of $G_o$ on $X$ preserves cube paths that start at $o$, it induces an action on the vertices of $T_o^{cube}$ that fixes the root $\epsilon$.

\begin{lemma} \label{lem:stabilizer.acting.on.trees}
    Let $o \in X^{(0)}$. The actions $G_o \curvearrowright V(T_o)$, $G_o \curvearrowright V(T_o^{cube})$ described above are actions by rooted automorphisms. In particular, $G_o$ canonically embeds into $\Aut(T_o)$ and $\Aut(T_o^{cube})$.
\end{lemma}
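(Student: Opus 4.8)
The plan is as follows. For each of the two trees, I would check directly that an isometry $g \in G_o$ induces a bijection of the vertex set which fixes the root and preserves the parent--child relation; such a bijection is precisely a rooted tree automorphism. I would then observe that $g \mapsto (\text{induced vertex map})$ is a group homomorphism and verify that it is injective, which gives the claimed embeddings.

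For $T_o^{cube}$: since $g$ is a cubical isometry with $g(o) = o$, it carries cubes to cubes and hence oriented diagonals to oriented diagonals, and it restricts to a bijection of $D(X)_o$ onto itself that commutes with concatenation of cube paths: if $\gamma \in D(X)_o$ ends at $v$ and $d \in \vec{\mathcal D}(X)_v$, then $g(\gamma \ast d) = g(\gamma) \ast g(d)$ with $g(d) \in \vec{\mathcal D}(X)_{g(v)}$. Transporting through the bijection $\ell \colon D(X)_o \to \mathcal{L}_o^{cube}$ of Remark \ref{rem:identification.of.words.and.vertices.paths.and.words} yields a bijection of $V(T_o^{cube}) = \mathcal{L}_o^{cube}$ onto itself. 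It fixes $\epsilon$ because $g$ fixes the trivial cube path at $o$, and it sends a word $v\alpha$ (with $\alpha \in \Sigma_v$) to $g(v)\beta$, where $\beta := \ell(g(d))$ and $d$ is the outgoing diagonal at the endpoint of $v$ with $\ell(d) = \alpha$; in particular it preserves the parent--child relation of $T_o^{cube}$. Running the same argument for $g^{-1}$ shows that $g$ acts on $T_o^{cube}$ by a rooted automorphism.

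For $T_o$ the argument is the same, with one extra observation: normality of a cube path is a combinatorial property of the hyperplanes it crosses, their transversality relations, and their incidences with cubes (see the definition of normal cube paths and Remark \ref{rem:equivalent.definitions.of.normal.cube.paths}), and $g$ permutes $\widehat{\mathfrak h}(X)$ while preserving transversality and incidence with cubes. Hence $g$ restricts to a bijection of $N(X)_o$ onto itself. Identifying $V(T_o) = \mathcal{L}_o$ with $N(X)_o$ via $\ell$ gives a root-fixing bijection of $V(T_o)$; and if $v\alpha \in \mathcal{L}_o$, then $g$ sends the normal cube path $v\alpha$ to the normal cube path $g(v) \ast g(d)$, which is $g(v)$ extended by the single diagonal $g(d)$, so the parent--child relation of $T_o$ is again preserved. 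Using $g^{-1}$ as well, $g$ acts on $T_o$ by a rooted automorphism.

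Finally, $g \mapsto (\text{induced vertex map})$ is a homomorphism into $\Aut(T_o)$ and into $\Aut(T_o^{cube})$, since the induced vertex maps compose the way the isometries do. It is injective: if $g$ induces the identity on $V(T_o^{cube})$, then $g$ fixes every cube path emanating from $o$ as a path, hence fixes the endpoint of every edge path from $o$, hence fixes every vertex of $X$ because $X^{(1)}$ is connected; and a cubical automorphism fixing $X^{(0)}$ pointwise is the identity, since a cube is determined by its vertex set and an isometry of a cube fixing all its vertices is trivial. The same argument (using $V(T_o) \cong X^{(0)}$) handles $T_o$. Thus $G_o$ embeds into both $\Aut(T_o)$ and $\Aut(T_o^{cube})$. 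The only point that is not pure bookkeeping with the concatenation structure of cube paths is the invariance of normality under $g$ in the $T_o$ case, and that is immediate from the Niblo--Reeves description; I do not anticipate any genuine obstacle.
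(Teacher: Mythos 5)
Your proof is correct and follows essentially the same route as the paper: you show that a cubical isometry fixing $o$ preserves (normal) cube paths emanating from $o$, hence preserves parent--child relations in the path-language trees, and that the resulting vertex-map determines $g$, giving injectivity. The paper does $T_o$ first and declares the $T_o^{cube}$ case analogous, whereas you spell out both and give slightly more justification for why normality is preserved, but the underlying argument is the same.
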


\begin{proof}
We present the proof for $T_o$. The proof for $T_o^{cube}$ is analogous. Let $g \in G_o$ and consider its action on $V(T_o)$. Clearly, this action fixes the root, as $g$ fixes the vertex $o$. To prove that $G_o$ acts by rooted automorphisms, we need to show that the action of $g$ on $T_o$ preserves edges.

Let $e$ be an edge from $v$ to $w$ in $T_o$, i.e.\,$v = \alpha_1 \dots \alpha_n$ and $w = \alpha_1 \dots \alpha_{n+1}$ are two normal cube paths in $X$. Since the action of $g$ on $X$ is a cubical isometry that fixes $o$, it sends normal cube paths starting at $o$ to normal cube paths (of the same length) that start at $o$. In particular, $g$ sends $\alpha_1 \dots \alpha_{n+1}$ to a normal cube path $\alpha'_1 \dots \alpha'_{n+1}$, which implies that $g(v) = \alpha'_1 \dots \alpha'_n$ and $g(w) = \alpha'_1 \dots \alpha'_{n+1}$. Therefore, $g(w) = g(v) \alpha'_{n+1}$ and there is an edge from $g(v)$ to $g(w)$ in $T_o$.

Applying the same argument to $g^{-1} \in G_o$ shows that, if there is an edge from $g(v)$ to $g(w)$ in $T_o$, then there is an edge from $v$ to $w$ in $T_o$. Thus, $g$ acts on $T_o$ as a rooted tree-automorphism.

Since the action of $g$ on both $X$ and $T_o$ is determined by the action on the respective vertices, which are canonically bijective, we immediately see that this map is injective and a homomorphism.
\end{proof}

The Lemma above tells us that, for every vertex $o \in X^{(0)}$, we may think of $G_o$ as a subgroup of $\Aut(T_o)$ and $\Aut(T_o^{cube})$. We can thus use the theory developed in {\cite[Section 3]{HartnickMedici23a}} to describe elements of $G_o$ in terms of a portrait. We will define the portrait of $g$ explicitly and then show that it is indeed a portrait in the sense of Definition \ref{def:portrait}.

\begin{definition} \label{def:portrait.of.stabilizing.element}
    Let $g \in G_o$ and $v \in V(T_o^{cube})$. We define
    \[ \pre{v}{\sigma(g)} : \Sigma_{v} \rightarrow \Sigma \]
    \[ s \mapsto \ell \circ g \circ \ell\vert_{\vec{\mathcal{D}}(X)_v}^{-1} (s). \]
    Equivalently, the map $\pre{v}{\sigma(g)}$ is defined to send the label $s$ of a diagonal $d$ that is outgoing at $v$ to the label of the edge $g(d)$. Equivalently, $\pre{v}{\sigma(g)}$ is defined as the unique map satisfying the following equation:
    \[ \forall \alpha \in \Sigma_v : g(v\alpha) = g(v) \pre{v}{\sigma(g)}(\alpha). \]
    We define the {\it portrait of $g$} to be the family $(\pre{v}{\sigma(g)})_{v \in V(T_o^{cube})}$. 
\end{definition}

\begin{remark} \label{rem:portrait.recovers.g}
    Following Definition \ref{def:portrait}, we obtain a map $\sigma(g) : \mathcal{L}_o^{cube} \rightarrow \mathcal{L}_o^{cube}$ defined by
    \[ \sigma(g)(\alpha_1 \dots \alpha_n) := \pre{\epsilon}{\sigma(g)}(\alpha_1) \dots \pre{\alpha_1 \dots \alpha_i}{\sigma(g)}(\alpha_{i+1}) \dots \pre{\alpha_1 \dots \alpha_{n-1}}{\sigma(g)}(\alpha_n). \]
    One easily verifies that this map satisfies the formula
    \[ \forall \alpha_1 \dots \alpha_n \in \mathcal{L}_o^{cube} : g(\alpha_1 \dots \alpha_n) = \sigma(g)(\alpha_1 \dots \alpha_n). \]
\end{remark}

We recall that, by the definition of a labeling, the restrictions $\ell\vert_{\vec{\mathcal{D}}(X)_v}$ and $\ell\vert_{\vec{\mathcal{D}}(X)_{g(v)}}$ are injective and $\ell(\vec{\mathcal{D}}(X)_v ) = \Sigma_{v}$. Therefore, $\pre{v}{\sigma(g)}$ is injective, making the portrait of $g$ a portrait in $\mathcal{L}_o^{cube}$ in the sense of Definition \ref{def:portrait}.




\subsection{Label-morphisms} \label{subsec:label.morphisms}

Given $g \in G_o$, the portrait of $g$ satisfies much better properties than a general portrait in $\mathcal{L}_o^{cube}$. In this subsection, we introduce the necessary terminology to discuss these properties. To do so, we define a suitable notion of morphism on the set $\Sigma$.

\begin{definition} \label{def:labelisomorphism}
    A map $\sigma : \Sigma_{0,v} \rightarrow \Sigma_0$ is called a {\it label-morphism} if it is injective and satisfies the following condition:
    \[ \forall s, t \in \Sigma_{0,v} :  [s,t] = 1 \Leftrightarrow [ \sigma(s), \sigma(t) ] = 1. \]

    If $\Sigma_{0,v} = \Sigma_0$ for all $v$, then label-morphisms are bijective and we call them {\it label-isomorphisms}.
\end{definition}

\begin{remark}
    A label-morphism admits a canonical extension $\sigma : \Sigma_v \rightarrow \Sigma$ defined by
    \[ \sigma( \{ s_1, \dots, s_n \} ) := \{ \sigma(s_1), \dots, \sigma(s_n) \}. \]
    Since label-morphisms preserve commutation, the expression above sends elements of $\Sigma_v$ to elements of $\Sigma$. Furthermore, the size of a label does not change, since label-morphisms are injective. We always denote this canonical extension by the same symbol as the initial label-morphism.
\end{remark}

\begin{warning*}
    If $\beta \in \Sigma$, then $\beta \subset \Sigma_0$ and thus $\beta$ can be thought of as an element or a subset of $\Sigma$. In order to distinguish between $\beta$ as an element and $\beta$ as a subset, we write $\{ \beta \}$ whenever we speak of $\beta$ as a subset of $\Sigma$, and $\beta$ when we speak of it as an element of $\Sigma$. Under this convention, we have that, for a label-morphism $\sigma$, the expression
    \[ \sigma( \{ \beta \} ) = \{ \beta \} \]
    means that $\sigma$ preserves the subset $\{ \beta \} \subset \Sigma_0$, while we write
    \[ \sigma(\beta) = \beta :\Leftrightarrow \forall s \in \beta : \sigma(s) = s. \]
    We are rarely going to use the expression $\sigma( \{ \beta \} )$, but $\sigma(\beta)$ and in particular $\sigma(\beta) = \beta$ will occur frequently.
\end{warning*}

\begin{remark} 
We emphasize that, in general, a label-morphism does not satisfy the equation
\[ \sigma(s^{-1}) = \sigma(s)^{-1}. \]
An easy way to produce a counterexample is to work with a cube complex that has a link containing more than two isolated vertices. However, even non-isolated labels do not necessarily satisfy the above equation. (Consider for example the right-angled Artin group induced by a square and set $X$ to be the universal cover of its Salvetti complex. Study the automorphism-group of its link.)
\end{remark}

\begin{definition} \label{def:admissible.portrait}
    A portrait $(\pre{v}{\sigma})_{v \in V(T_o^{cube})}$ in the language $\mathcal{L}_o^{cube}$ is called {\it admissible}, if the following two conditions hold:
    \begin{itemize}
        \item For every pair $v, w \in \mathcal{L}_o^{cube}$ such that $v \equiv w$, we have $\pre{v}{\sigma} = \pre{w}{\sigma}$.

        \item For every $v \in \mathcal{L}_o^{cube}$, $\pre{v}{\sigma}\vert_{\Sigma_{0,v}} : \Sigma_{0,v} \rightarrow \Sigma_0$ is a label-morphism and $\pre{v}{\sigma}$ is its canonical extension.
    \end{itemize}

    We frequently identify the maps $\pre{v}{\sigma}, \pre{w}{\sigma}$ for equivalent words. Furthermore, since $\pre{v}{\sigma}$ is determined by its restriction to $\Sigma_{0,v}$, we view $(\pre{v}{\sigma})_{v \in V(T_o)}$ as a collection of maps $\pre{v}{\sigma} : \Sigma_{0,v} \rightarrow \Sigma_0$.
\end{definition}

\begin{definition} \label{def:par.inv}
    Let $(\pre{v}{\sigma})_{v \in \mathcal{L}_o^{cube}}$ be an admissible portrait that defines an automorphism on $T_o^{cube}$. We define the following two properties that this family may satisfy:
    \begin{enumerate}
        \item[(par)] For all $v \in \mathcal{L}_o^{cube}$, for all non-isolated $s \in \Sigma_{0,v}$, and for all $\beta \in \Sigma_v$ such that $[\beta, s ]  = 1$, we have
        \[ \pre{v}{\sigma(g)}(\beta) = \pre{vs}{\sigma(g)}(\beta). \]

        \item[(inv)] For all $\alpha_1 \dots \alpha_n \in \mathcal{L}_o^{cube}$ and all $s \in \alpha_n$, we have
        \[ \pre{\alpha_1 \dots \alpha_n}{\sigma(g)}(s^{-1}) = \pre{\alpha_1 \dots \tilde{\alpha}_n}{\sigma(g)}(s)^{-1}, \]
        where $\tilde{\alpha}_n := \alpha_n \setminus \{ s \}$. (If $\tilde{\alpha}_n$ is empty, then $\alpha_1 \dots \tilde{\alpha}_n = \alpha_1 \dots \alpha_{n-1}$.)
    \end{enumerate}
\end{definition}




\subsection{Proof of Theorem \ref{thmintro:characterisation.of.stabilizer}} \label{subsec:Characterisingstabilizers}

The main result of this section is a characterisation of the portraits in $\mathcal{L}_o$ and $\mathcal{L}_o^{cube}$ that are obtained from elements in $G_o$.

\begin{theorem} \label{thm:characterisation.of.stabilizing.isometries.as.tree.automorphisms}
    Let $o \in X^{(0)}$. The following statements hold:
    \begin{enumerate}
        \item Let $(\pre{v}{\sigma})_{v \in V(T_o^{cube})}$ be a portrait in $\mathcal{L}_o^{cube}$. There exists some $g \in G_o$ such that $(\pre{v}{\sigma})_{v \in V(T_o^{cube})}$ is the portrait of $g$ if and only if $(\pre{v}{\sigma})_{v \in V(T_o^{cube})}$ is an admissible portrait that defines an automorphism on $T_o^{cube}$ and satisfies $par$ and $inv$.

        \item Every admissible portrait in $\mathcal{L}_o^{cube}$ that defines an automorphism on $T_o^{cube}$ and satisfies $par$ and $inv$ restricts to a portrait in $\mathcal{L}_o$ that defines an automorphism on $T_o$ and satisfies $par$ and $inv$.
    \end{enumerate}
\end{theorem}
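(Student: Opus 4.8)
Given $g\in G_o$, I would check directly that the portrait $(\pre{v}{\sigma(g)})_{v}$ has all four listed properties. For \emph{admissibility}: $g$ is a cubical isometry fixing $o$, so it carries the outgoing edges at the endpoint of $v$ bijectively to those at the endpoint of $g(v)$, and two such edges span a square if and only if their $g$‑images do; with Construction \ref{con:labeling.oriented.diagonals} this makes $\pre{v}{\sigma(g)}|_{\Sigma_{0,v}}$ a label‑morphism whose canonical extension is $\pre{v}{\sigma(g)}$, and $\pre{v}{\sigma(g)}$ depends only on the endpoint of $v$, hence only on its $\equiv_o$‑class. That the portrait \emph{defines an automorphism on} $T_o^{cube}$ is Lemma \ref{lem:stabilizer.acting.on.trees} together with Remarks \ref{rem:portrait.recovers.automorphism} and \ref{rem:portrait.recovers.g}. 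For \emph{($par$)}: if $[\beta,s]=1$ then the $s$‑edge and the $\beta$‑cube at the endpoint of $v$ span a cube, whose opposite face carries a parallel translate of the $\beta$‑diagonal based at the endpoint of $vs$; since $g$ sends parallel diagonals to parallel diagonals and parallel diagonals carry the same label by (A1), one gets $\pre{vs}{\sigma(g)}(\beta)=\pre{v}{\sigma(g)}(\beta)$. For \emph{($inv$)}: the edge spelling the last letter $s$ and its reverse, spelling $s^{-1}$, are carried by $g$ to an edge and its reverse, and reversing an edge inverts its label by (A1); this is exactly the asserted identity.

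\textbf{The backward direction of (1).} Suppose the portrait is admissible, defines $\tau\in\Aut(T_o^{cube})$, and satisfies $par$ and $inv$; let $\sigma$ denote the induced length‑preserving bijection of $\mathcal{L}_o^{cube}$. The heart of the matter is to show that $\sigma$ respects $\equiv_o$. By Proposition \ref{prop:canceling.and.commuting} together with the normalization procedure of Corollary \ref{cor:propertiesofnormalcubepaths} (and the evident equivalence between a diagonal and the staircase of its edges), the relation $\equiv_o$ is generated by the elementary moves $w_1 ss^{-1}w_2\equiv w_1w_2$, $\ w_1stw_2\equiv w_1tsw_2$ for $[s,t]=1$, and $\ w_1\{s_1,\dots,s_k\}w_2\equiv w_1 s_1\cdots s_k w_2$; it therefore suffices to check that $\sigma$ carries each of these to an $\equiv_o$‑equivalence. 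In the cancellation move, $inv$ (with $\alpha_n=\{s\}$) shows the two inserted letters $\pre{w_1}{\sigma}(s)$ and $\pre{w_1 s}{\sigma}(s^{-1})$ are mutually inverse, so they cancel, while the tails agree because $w_1 ss^{-1}\gamma\equiv w_1\gamma$ and equivalent prefixes give the same local maps by admissibility. In the commutation move, $par$ lets one replace $\pre{w_1 s}{\sigma}(t)$ by $\pre{w_1}{\sigma}(t)$ and $\pre{w_1 t}{\sigma}(s)$ by $\pre{w_1}{\sigma}(s)$, after which the two sides differ only by a commutation of $\pre{w_1}{\sigma}(s)$ and $\pre{w_1}{\sigma}(t)$, which commute since $\pre{w_1}{\sigma}$ is a label‑morphism; admissibility again handles the tails. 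The staircase move is analogous, using $par$ and the canonical‑extension clause of admissibility. Hence $\sigma$ descends to a well‑defined map $\bar\sigma\colon X^{(0)}\to X^{(0)}$.

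\textbf{Finishing (1) and proving (2).} To see $\bar\sigma$ is a bijection: $\sigma$ is a bijection of $\mathcal{L}_o^{cube}$ preserving word length, and on the set of words of a given length — finite because $X$ is locally finite — it maps each $\equiv_o$‑class into an $\equiv_o$‑class; a bijection of this finite set sending the partition into $\equiv_o$‑classes into itself must permute those classes, so $\sigma^{-1}$ also respects $\equiv_o$ and $\bar\sigma$ is bijective. Using $par$, one checks $\bar\sigma$ sends the vertex set of each cube bijectively onto the vertex set of a cube compatibly with faces (the edge‑labels along a cube at the endpoint of $v$ are the $\pre{v}{\sigma}$‑images of those at $v$, independently of which vertex of the cube one reads them from — precisely by $par$); since a $\CAT$ cube complex is determined by its $1$‑skeleton, the affine extension is a cubical automorphism $g$ of $X$ with $g(o)=o$, i.e.\ $g\in G_o$, whose portrait is the given one. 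For (2): the portrait is then the portrait of some $g\in G_o$; by Lemma \ref{lem:stabilizer.acting.on.trees} $g$ also acts as a rooted automorphism of $T_o$, and since $g$ is a cubical isometry it preserves normality, so $\sigma(g)$ maps $\mathcal{L}_o$ into $\mathcal{L}_o$ and the restricted portrait defines an automorphism of $T_o$ by \cite[Theorem~3.6]{HartnickMedici23a}; finally $par$ and $inv$ are inherited verbatim, since the restriction of $par$ to $\mathcal{L}_o$ only quantifies over children $s\in\Sigma^{norm}_{0,v}$, and the word $\alpha_1\cdots\tilde\alpha_n$ occurring in $inv$ is again a normal cube path word whenever $\alpha_1\cdots\alpha_n$ is.

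\textbf{Main obstacle.} The delicate step is the backward direction of (1), and inside it the proof that $\sigma$ respects $\equiv_o$: one must see that $par$ and $inv$ are \emph{exactly} the conditions — beyond being an admissible portrait defining a tree automorphism — needed for the output of the portrait to be compatible with, respectively, commutation moves and cancellation moves in $\mathcal{L}_o^{cube}$. The subtlety is the bookkeeping of which vertex's label‑morphism is applied to which letter when a word is rewritten, and $par$ is precisely what normalizes this bookkeeping across commuting letters; the forward direction, the counting argument for bijectivity, and the passage from (1) to (2) are then essentially routine.
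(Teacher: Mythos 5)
Your proof is correct in substance and follows the same overall strategy as the paper: forward direction via geometry of cubical isometries, backward direction via showing the portrait's vertex map respects $\equiv_o$ using the generating moves, then extension to a cubical isometry of $X$. Two places diverge genuinely from the paper's proof. First, for the converse implication $\sigma(v)\equiv_o\sigma(w)\Rightarrow v\equiv_o w$, you use a counting argument on the finite, length-filtered pieces of $\mathcal{L}_o^{cube}$: a length-preserving bijection that maps $\equiv_o$-classes into $\equiv_o$-classes must permute those classes. The paper instead proves the converse by a direct case-by-case computation mirroring the forward direction, and explicitly remarks on (but does not take) a third route via the inverse portrait. Your counting argument is valid, but it is a little terse: the classes have varying sizes, so the assertion that a bijection sending classes into classes must permute them deserves a sentence (surjectivity of $\sigma$ forces the induced class map to be surjective, hence injective since the number of classes is preserved, and then disjointness of the $\sigma(C_i)$ forces $\sigma(C_i)=C_{j(i)}$). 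Second, for part (2) you invoke part (1), take the resulting $g\in G_o$, and note $g$ preserves normal cube paths, whence the restriction to $T_o$ is a tree automorphism; the paper instead proves part (2) independently (Lemma \ref{lem:par.and.inv.imply.preservation.of.children}) by a direct calculation with $\Sigma_v^{norm}$ using $par$ and $inv$, without going through the existence of $g$. Your route is shorter once (1) is in hand and is logically fine, since the paper does not use its Lemma \ref{lem:par.and.inv.imply.preservation.of.children} in the proof of (1). One small imprecision worth noting: in the "finishing (1)" paragraph, the clean statement is that $\bar\sigma$ is a graph isomorphism of the $1$-skeleton (as in Lemma \ref{lem:stabilizer.elements.are.exactly.automorphisms.satisfying.label.par.inv}); the claim about cubes is then automatic because a $\CAT$ cube complex is reconstructed from its median $1$-skeleton, and $par$ is not needed separately for that step.
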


By {\cite[Remark 3.8]{HartnickMedici23a}}, we have a sufficient condition as to when the map $\sigma$ defines an automorphism on $T_o^{cube}$. This condition is subject of the next Lemma.

\begin{lemma} \label{lem:portraits.define.automorphisms}
    Let $g \in G_o$ and $(\pre{v}{\sigma(g)})_{v \in V(T_o^{cube})}$ its portrait. Then we have the following:
    \begin{itemize}
        \item $(\pre{v}{\sigma(g)})_{v \in V(T_o^{cube})}$ is an admissible portrait.
                
        \item For every $v \in V(T_o^{cube})$, $\pre{v}{\sigma(g)}( \Sigma_{0,v} ) \subset \Sigma_{0, \sigma(g)(v)}$
    \end{itemize}
    
    In particular, the portrait of $g$ defines an automorphism on $\mathcal{L}_o^{cube}$ and the restriction
    \[ \pre{v}{\sigma(g)} : \Sigma_{0,v} \rightarrow \Sigma_{0, \sigma(g)(v)} \]
    is bijective.
\end{lemma}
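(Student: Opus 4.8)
The plan is that almost everything unwinds from the definitions, with one recurring observation: since $g \in G_o$ is a cubical isometry and so is $g^{-1}$, for each $v \in V(T_o^{cube})$ with endpoint $x \in X^{(0)}$ the map $g$ restricts to a bijection $\vec{\mathcal D}(X)_x \to \vec{\mathcal D}(X)_{g(x)}$ which (i) carries $\vec{\mathcal E}(X)_x$ bijectively onto $\vec{\mathcal E}(X)_{g(x)}$, (ii) sends the underlying cube of an oriented diagonal to the underlying cube of its image, hence is compatible with the oriented-edge-set identification of Construction \ref{con:labeling.oriented.diagonals}, and (iii) both preserves and reflects the relation ``span a square''. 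Writing $\ell_x \colon \vec{\mathcal E}(X)_x \to \Sigma_{0,x}$ for the bijection induced by $\ell$ (it is a bijection onto its image $\Sigma_{0,x}$ by (A2)), Definition \ref{def:portrait.of.stabilizing.element} shows that the restriction of $\pre{v}{\sigma(g)}$ to $\Sigma_{0,v} = \Sigma_{0,x}$ equals $\ell_{g(x)} \circ g \circ \ell_x^{-1}$.

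First I would verify admissibility. That $\pre{v}{\sigma(g)} = \pre{w}{\sigma(g)}$ whenever $v \equiv_o w$ is immediate from Definition \ref{def:portrait.of.stabilizing.element}, whose formula depends only on the common endpoint of $v$ and $w$ (and $g$); recall moreover that $\pre{v}{\sigma(g)}$ is already known to be an injective map $\Sigma_v \to \Sigma$. Next, $\ell_{g(x)} \circ g \circ \ell_x^{-1}$ is injective with image $\Sigma_{0,g(x)} \subset \Sigma_0$, and for $s,t \in \Sigma_{0,x}$ one has $[s,t] = 1$ iff $\ell_x^{-1}(s)$ and $\ell_x^{-1}(t)$ span a square at $x$ (using the remark after Definition \ref{def:commutinglabels} that commuting labels commute at every vertex where both occur, together with (A2)), iff their $g$-images span a square at $g(x)$ by (iii), iff $[\pre{v}{\sigma(g)}(s),\pre{v}{\sigma(g)}(t)] = 1$. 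Hence $\pre{v}{\sigma(g)}\vert_{\Sigma_{0,v}}$ is a label-morphism. Finally, for an outgoing diagonal $d$ at $x$ with label $\alpha = \{s_1,\dots,s_n\}$ and oriented edge set $\{e_1,\dots,e_n\}$, part (ii) gives that $g(d)$ has oriented edge set $\{g(e_1),\dots,g(e_n)\}$, so $\pre{v}{\sigma(g)}(\alpha) = \ell(g(d)) = \{\pre{v}{\sigma(g)}(s_1),\dots,\pre{v}{\sigma(g)}(s_n)\}$; thus $\pre{v}{\sigma(g)}$ is the canonical extension of its restriction to $\Sigma_{0,v}$. This proves the first bullet.

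Next I would show $\sigma(g)(w) = g(w)$ for all $w \in \mathcal{L}_o^{cube}$ by induction on word length: the base case is $g(\epsilon) = \epsilon$, and the inductive step compares the recursion defining $\sigma(g)$ in Definition \ref{def:portrait} with the equation $g(w\alpha) = g(w)\, \pre{w}{\sigma(g)}(\alpha)$ of Definition \ref{def:portrait.of.stabilizing.element}. Since $g$ already acts on $T_o^{cube}$ by a rooted automorphism (Lemma \ref{lem:stabilizer.acting.on.trees}), this identifies the vertex map $\sigma(g)$ with that automorphism, so the portrait defines an automorphism on $T_o^{cube}$. For the second bullet, $\sigma(g)(v) = g(v)$ has endpoint $g(x)$, so $\Sigma_{0,\sigma(g)(v)} = \Sigma_{0,g(x)}$, which is exactly the image of $\Sigma_{0,v}$ under $\ell_{g(x)} \circ g \circ \ell_x^{-1} = \pre{v}{\sigma(g)}\vert_{\Sigma_{0,v}}$; in particular this restriction is a bijection onto $\Sigma_{0,\sigma(g)(v)}$, being a composite of bijections.

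I do not expect a genuine obstacle: the only points that need a little care are the ``if and only if'' for commutation (which really uses that $g^{-1}$ is also a cubical isometry, to get the reflecting direction of (iii)) and the bookkeeping between a label $\alpha \in \Sigma$ as an element and as a subset of $\Sigma_0$ in the canonical-extension step.
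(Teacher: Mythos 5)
Your proof is correct and follows essentially the same approach as the paper's: unwind Definition \ref{def:portrait.of.stabilizing.element} and use that $g$ is a cubical isometry to preserve edges, diagonals, and squares. Two small differences are worth noting. First, you explicitly flag that the reflecting direction of the square-preservation (needed for the ``$\Leftarrow$'' in the label-morphism condition of Definition \ref{def:labelisomorphism}) uses that $g^{-1}$ is also a cubical isometry; the paper's proof only writes out the forward implication $[s,t]=1 \Rightarrow [\pre{v}{\sigma(g)}(s),\pre{v}{\sigma(g)}(t)]=1$ and leaves the converse implicit, so your version is marginally more complete on this point. Second, for the ``defines an automorphism'' conclusion, the paper invokes the external criterion \cite[Remark 3.8]{HartnickMedici23a} (inductively checking $\pre{v}{\sigma}(\Sigma_v) \subset \Sigma_{\sigma(v)}$), whereas you argue directly that $\sigma(g)$ coincides with the rooted-automorphism action of $g$ from Lemma \ref{lem:stabilizer.acting.on.trees} via an induction on word length; your route is a bit more self-contained but the two arguments are interchangeable and of comparable length. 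No gaps.
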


\begin{proof}
    Let $g \in G_o$. The first condition of admissibility is satisfied by definition of $(\pre{v}{\sigma(g)})_{v \in V(T_o^{cube})}$.

    Now let $s, t \in \Sigma_{0,v}$ such that $[s,t] = 1$ and let $e, e'$ be the outgoing edges at $v$ such that $\ell(e) = s$, $\ell(e') = t$. Since $[s,t] = 1$, $e$ and $e'$ span a square at $v$. Since $g$ is a cubical isometry, the edges $g(e)$ and $g(e')$ span a square at $g(v)$ and the diagonal induced by the pair $e, e'$ is sent to the diagonal induced by the pair $g(e), g(e')$. Therefore, $[\pre{v}{\sigma(g)}(s), \pre{v}{\sigma(g)}(t)] = [\ell(g(e)), \ell(g(e'))] = 1$ and $\pre{v}{\sigma(g)}( \{ s, t \} ) = \{ \pre{v}{\sigma(g)}(s), \pre{v}{\sigma(g)}(t) \}$. Thus, $\pre{v}{\sigma}$ is the extension of a label-morphism $\Sigma_{0,v} \rightarrow \Sigma_{0, \sigma(g)(v)}$ for every $v$.
    
    Next, we show that $\pre{v}{\sigma}(\Sigma_{0,v}) \subset \Sigma_{0,\sigma(g)(v)}$. Let $s \in \Sigma_{0,v}$ be the label of an edge $e$ emanating from $v$. By Definition, $\pre{v}{\sigma(g)}(s)$ is the label of $g(e)$, which is an outgoing edge at $g(v)$. Thus $\pre{v}{\sigma(g)}(s) \in \Sigma_{0, g(v)}$. By Remark \ref{rem:portrait.recovers.g}, we know that $g(v) = \sigma(g)(v)$ and thus $\pre{v}{\sigma(g)}$ sends $\Sigma_{0,v}$ to $\Sigma_{0,\sigma(g)(v)}$. The rest of the Lemma now follows from {\cite[Remark 3.8]{HartnickMedici23a}}.
\end{proof}

\begin{lemma} \label{lem:stabilizer.elements.satisfy.label.par.inv}
    Let $g \in G_o$. Then the family $(\pre{v}{\sigma(g)})_{v \in V(T_o^{cube})}$ satisfies $par$ and $inv$.
\end{lemma}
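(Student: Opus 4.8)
The plan is to verify (par) and (inv) directly from the geometry of $X$, using only that $g$ is a cubical isometry together with condition (A1) of the edge labeling. Throughout I identify words in $\mathcal{L}_o^{cube}$ with the vertices and cube paths they represent, and I use the geometric interpretation of commutation from Remark \ref{rem:commutation.and.links}: for $\alpha,\beta \in \Sigma_v$ the relation $[\alpha,\beta] = 1$ means precisely that the diagonals at $v$ labelled $\alpha$ and $\beta$ span a common cube of dimension $|\alpha|+|\beta|$. I also recall from Definition \ref{def:portrait.of.stabilizing.element} that for a diagonal $d$ outgoing at $v$ with label $\alpha$ one has $\pre{v}{\sigma(g)}(\alpha) = \ell(g(d))$.

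\textbf{Verifying (par).} Fix $v \in \mathcal{L}_o^{cube}$, a non-isolated $s \in \Sigma_{0,v}$, and $\beta \in \Sigma_v$ with $[\beta,s] = 1$. Then the $s$-labelled edge $e$ at $v$ and the $\beta$-labelled diagonal $d^v_\beta$ at $v$ span a cube $C \cong e \times C_\beta$, where $C_\beta$ is the face of $C$ at $v$ with diagonal $d^v_\beta$; the opposite face $C_\beta'$ is based at the endpoint $vs$ of $e$, and its edges are the parallel translates of those of $C_\beta$, so by (A1) its diagonal $d^{vs}_\beta$ is again labelled $\beta$ (in particular $\beta \in \Sigma_{vs}$). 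Since $g$ is a cubical isometry, $g(C)$ is a cube, $g(C_\beta)$ and $g(C_\beta')$ are its opposite faces, and the diagonals $g(d^v_\beta)$ and $g(d^{vs}_\beta)$ are built from pairwise parallel oriented edges; hence (A1) gives $\ell(g(d^v_\beta)) = \ell(g(d^{vs}_\beta))$, which is exactly $\pre{v}{\sigma(g)}(\beta) = \pre{vs}{\sigma(g)}(\beta)$.

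\textbf{Verifying (inv).} Fix $\alpha_1\dots\alpha_n \in \mathcal{L}_o^{cube}$ and $s \in \alpha_n$, and set $w := \alpha_1\dots\alpha_{n-1}$ and $\tilde\alpha_n := \alpha_n \setminus \{s\}$; since $\tilde\alpha_n \subsetneq \alpha_n \in \Sigma_w$ we have $\alpha_1\dots\tilde\alpha_n \in \mathcal{L}_o^{cube}$ (read as $\alpha_1\dots\alpha_{n-1}$ when $\tilde\alpha_n = \emptyset$). The cube $C_{\alpha_n}$ at $w$ decomposes as $C_{\tilde\alpha_n}\times e_s$ with $e_s$ the $s$-labelled edge at $w$, and among its vertices one finds $w$, $w\tilde\alpha_n$, $ws$, and $w\alpha_n = (w\tilde\alpha_n)s$. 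Thus the vertex represented by $\alpha_1\dots\tilde\alpha_n$ and the one represented by $\alpha_1\dots\alpha_n$ are joined by a single edge $e_1$, a parallel translate of $e_s$, hence labelled $s$ by (A1); its reverse $e_1^{-1}$ is then the $s^{-1}$-labelled edge at $w\alpha_n$ by (A1). As $g$ is an isometry, $g(e_1^{-1}) = g(e_1)^{-1}$, so (A1) yields
\[ \pre{\alpha_1\dots\alpha_n}{\sigma(g)}(s^{-1}) = \ell(g(e_1^{-1})) = \ell(g(e_1))^{-1} = \pre{\alpha_1\dots\tilde\alpha_n}{\sigma(g)}(s)^{-1}, \]
which is (inv).

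\textbf{Main obstacle.} No property of $g$ beyond being a cubical automorphism enters, so in a sense there is nothing deep here; the one place that needs care is the combinatorial bookkeeping in each case, namely producing the correct cube ($C$ for (par), $C_{\alpha_n}$ for (inv)) and checking that the two diagonals, resp.\ the two edges, in question are honest parallel translates (resp.\ reverses) of one another, so that (A1) can be invoked both before and after applying $g$. Once that identification is set up, (par) and (inv) follow immediately.
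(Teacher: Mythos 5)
Your proof is correct and follows essentially the same strategy as the paper's: for (par), use that a cubical automorphism sends the square (or, in your version, the cube spanned by the $s$-edge and the $\beta$-diagonal) to a square/cube, so parallel edges go to parallel edges, and then invoke (A1); for (inv), use that $g$ carries the single oriented edge from $\alpha_1\dots\tilde\alpha_n$ to $\alpha_1\dots\alpha_n$ to an oriented edge, and apply the rule $\ell(e^{-1})=\ell(e)^{-1}$ from (A1). The only cosmetic difference is that the paper reduces (par) to individual letters $t\in\beta$ before invoking parallelism, while you argue on the whole diagonal at once; the underlying mechanism is identical.
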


\begin{proof}
    Let $g \in G_o$ and $(\pre{v}{\sigma(g)})_{v \in V(T_o^{cube})}$ its portrait. By Lemma \ref{lem:portraits.define.automorphisms}, $(\pre{v}{\sigma(g)})_{v \in V(T_o^{cube})}$ is admissible and defines an automorphism on $T_o^{cube}$. Thus it makes sense to speak of properties $par$ and $inv$ (cf.\,the assumptions on the portrait in Definition \ref{def:par.inv}).\\
    
    First, we show that $(\pre{v}{\sigma(g)})_{v \in V(T_o^{cube})}$ satisfies property $par$. Let $v = \alpha_1 \dots \alpha_n \in \mathcal{L}_o^{cube}$, $s \in \Sigma_0$ be non-isolated, and $w \in \mathcal{L}_o^{cube}$ such that $w \equiv vs$. Let $\beta \in \Sigma_v$ such that $[s, \beta]=1$ and $t \in \beta$. It is sufficient to show that
    \[ \pre{v}{\sigma}(t) = \pre{w}{\sigma}(t) \]
    for all such $t$. Since $[s,t] = 1$ by assumption, there exists a square with vertices $v, w, vt, wt$. (Recall that we identify $\faktor{\mathcal{L}_o^{cube}}{\equiv_o}$ with $X^{(0)}$ and $D(X)_o$ with $\mathcal{L}_o^{cube}$; see Remark \ref{rem:identification.of.words.and.vertices.paths.and.words} and Notation \ref{not:Identifying.words.with.vertices}.) Since $g$ is a cubical isomorphism, it sends this square to a square that has $g(v)$ and $g(w)$ as two of its vertices and it sends the oriented edges $e := (v, vt), e' := (w, wt)$ to a pair of edges $g(e), g(e')$ such that $h(g(e)) = h(g(e'))$. This implies that $\ell(g(e)) = \ell(g(e'))$ and thus
    \[ \pre{v}{\sigma(g)}(t) = \ell(g(e)) = \ell(g(e')) = \pre{w}{\sigma(g)}(t). \]
    We conclude that $\pre{v}{\sigma(g)}(t) = \pre{w}{\sigma(g)}(t)$ for all $t \in \beta$, which proves property $par$.\\

    Next, we prove property $inv$. Consider the vertex $\alpha_1 \dots \tilde{\alpha}_n \in V(T_o^{cube}) = \mathcal{L}_o^{cube}$, where $\tilde{\alpha}_n = \alpha_n \setminus \{ s \}$. There is an oriented edge $e$ from $\alpha_1 \dots \tilde{\alpha}_n$ to $\alpha_1 \dots \alpha_n$, which is labeled by $s$. Since $\ell$ is an admissible edge-labeling, the edge $e^{-1}$, obtained by reversing the orientation of $e$, satisfies $\ell(e^{-1}) = s^{-1}$. Since $g$ is a cubical isomorphism, it sends $e$ to $g(e)$, which is the oriented edge from $g(\alpha_1 \dots \tilde{\alpha}_n)$ to $g(\alpha_1 \dots \alpha_n)$. By definition of $\pre{v}{\sigma(g)}$, we have that $\pre{\alpha_1 \dots \alpha_n}{\sigma(g)}(s^{-1})$ is the labeling of the edge $g(e)^{-1}$, while $\pre{\alpha_1 \dots \tilde{\alpha}_n}{\sigma}(s)$ is the labeling of the edge $g(e)$. Since $\ell$ is an admissible edge-labeling, we obtain
    \[ \pre{\alpha_1 \dots \alpha_n}{\sigma}(s^{-1}) = \ell( g(e)^{-1} ) = \ell( g(e) )^{-1} = \pre{\alpha_1 \dots \tilde{\alpha}_n}{\sigma}(s)^{-1}, \]
    which proves property $inv$.

\end{proof}

The purpose of portraits is to encode automorphisms of their underlying tree. This requires that portraits send children of a vertex $v$ to children of the image of $v$. It turns out that, in the case of $\CAT$ cube complexes, admissibility of the portrait, together with $par$ and $inv$ implies that good behaviour on the language of diagonal-paths implies good behaviour on the language of normal cube paths. This is the subject of the next two Lemmata.

\begin{lemma} \label{lem:Shorteningtheprefix}
    Suppose $(\pre{v}{\sigma})_{v \in V(T_o^{cube})}$ is an admissible portrait that satisfies properties $par$ and $inv$. Then for all $\alpha_1 \dots \alpha_n \in \mathcal{L}_o^{cube}$ and all $s \in \Sigma_0$ such that $[s, \alpha_n] = 1$, we have
    \[ \pre{\alpha_1 \dots \alpha_{n-1}}{\sigma}(s) = \pre{\alpha_1 \dots \alpha_n}{\sigma}(s), \]
    \[ \pre{\alpha_1 \dots \alpha_{n-1}}{\sigma}(\alpha_n)^{-1} = \pre{\alpha_1 \dots \alpha_n}{\sigma}(\alpha_n^{-1}) \]
\end{lemma}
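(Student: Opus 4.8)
The plan is to establish the two displayed identities separately: the first is extracted from property $par$ together with admissibility, and the second is then deduced from property $inv$ combined with the first (so there is no circularity, the first being proven without reference to the second).

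First I would fix notation: set $v := \alpha_1 \dots \alpha_{n-1}$ and $w := v\alpha_n$, and write $\alpha_n = \{ t_1, \dots, t_k \}$; note that $s \in \Sigma_{0,v}$ is forced, since otherwise $\pre{v}{\sigma}(s)$ is not defined. The geometric input I would record at the outset is that, because $s$ commutes with every element of $\alpha_n$ and $\alpha_n \in \Sigma_v$, the set $\alpha_n \cup \{ s \}$ lies in $\Sigma_v = \Sigma \cap 2^{\Sigma_{0,v}}$ (Remark \ref{rem:commutation.and.links}); hence the outgoing edges at $v$ carrying these labels all span a common cube $C$ of $X$. Reading off parallel classes in $C$ via (A1) then gives that $s$ is still an outgoing edge-label at each vertex $v t_1 \dots t_j$, that each word $v t_1 \dots t_j$ lies in $\mathcal{L}_o^{cube}$ (using the rewriting rule $\beta \cup \{ r \} \equiv r\beta$ from the proof of Lemma \ref{lem:innermost.cancellations.commute.with.the.word.in.between}), and that $v t_1 \dots t_k \equiv_o w$.

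For the first identity I would apply $par$ at the vertex $v t_1 \dots t_j$ with the non-isolated letter $t_{j+1}$ and $\beta = \{ s \}$ — legitimate because $t_{j+1}$ commutes with $s$, and $s$ is an outgoing edge-label at $v t_1 \dots t_j$ — obtaining $\pre{v t_1 \dots t_j}{\sigma}(s) = \pre{v t_1 \dots t_{j+1}}{\sigma}(s)$; chaining over $j = 0, \dots, k-1$ and then invoking admissibility together with $v t_1 \dots t_k \equiv_o w$ yields $\pre{v}{\sigma}(s) = \pre{w}{\sigma}(s)$. For the second identity I would fix $t_i \in \alpha_n$, apply $inv$ to $w$ to rewrite $\pre{w}{\sigma}(t_i^{-1})$ as $\pre{\alpha_1 \dots \alpha_{n-1}(\alpha_n \setminus \{ t_i \})}{\sigma}(t_i)^{-1}$, and then invoke the already-proven first identity for the word $\alpha_1 \dots \alpha_{n-1}(\alpha_n \setminus \{ t_i \})$ — whose last letter-set commutes with $t_i$ — to shorten the prefix down to $v$, getting $\pre{w}{\sigma}(t_i^{-1}) = \pre{v}{\sigma}(t_i)^{-1}$. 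Taking the union over all $i$ and using that both $\pre{v}{\sigma}$ and $\pre{w}{\sigma}$ are canonical extensions of label-morphisms then assembles this into $\pre{w}{\sigma}(\alpha_n^{-1}) = \pre{v}{\sigma}(\alpha_n)^{-1}$.

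The only genuine obstacle is the bookkeeping in the first identity: one must check that $s$ persists as an outgoing edge-label along the whole path $v, v t_1, \dots, v t_1 \dots t_k$ and that all the intermediate words are legitimate elements of $\mathcal{L}_o^{cube}$ that are $\equiv_o$-equivalent to $w$ — which is exactly what the cube $C$ and the parallel-class condition (A1) provide. I would also deal with the degenerate cases: $k = 0$ cannot occur since $\alpha_n \neq \emptyset$, and when $k = 1$ the set $\alpha_n \setminus \{ t_i \}$ is empty, so $inv$ directly yields $\pre{w}{\sigma}(t_i^{-1}) = \pre{v}{\sigma}(t_i)^{-1}$ with no appeal to the first identity.
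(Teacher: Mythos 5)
Your proposal is correct and takes essentially the same route as the paper: the first identity is obtained by chaining $par$ along an edge-path through the cube spanned by $\alpha_n \cup \{s\}$ and then invoking admissibility, and the second identity is the composition of that first identity (applied with $\alpha_1\dots\alpha_{n-1}(\alpha_n\setminus\{t\})$ as the shorter word) with a single application of $inv$. The paper compresses this to three lines, running the chain from $\pre{\alpha_1\dots\alpha_{n-1}}{\sigma}(t)^{-1}$ toward $\pre{\alpha_1\dots\alpha_n}{\sigma}(t^{-1})$ rather than in the reverse direction as you do, but the two steps and the assembly via the canonical extension of label-morphisms are the same.
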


\begin{proof}[Proof of Lemma \ref{lem:Shorteningtheprefix}]
    The first equation is simply a repeated application of property $par$. The second equation follows, if we can show for every $t \in \alpha_n$ that
    \[ \pre{\alpha_1 \dots \alpha_{n-1}}{\sigma}(t)^{-1} = \pre{\alpha_1 \dots \alpha_n}{\sigma}(t^{-1}). \]
    Setting $\tilde{\alpha}_n := \alpha_n \setminus \{ t \}$, application of $par$ and $inv$ shows that
    \[ \pre{\alpha_1 \dots \alpha_{n-1}}{\sigma}(t)^{-1} = \pre{\alpha_1 \dots \alpha_{n-1} \tilde{\alpha}_n}{\sigma}(t)^{-1} = \pre{\alpha_1 \dots, \alpha_n}{\sigma}(t^{-1}). \]
\end{proof}

\begin{lemma} \label{lem:par.and.inv.imply.preservation.of.children}
    Let $( \pre{v}{\sigma} )_{v \in V(T_o^{cube})}$ be an admissible portrait that defines an automorphism on $T_o^{cube}$ and satisfies $par$ and $inv$. Then, for every $v \in \mathcal{L}_o$, $\pre{v}{\sigma}$ restricts to a bijection $\Sigma_{v}^{norm} \rightarrow \Sigma_{\sigma(v)}^{norm}$. In particular, the map
    \[ \sigma^{norm} : \mathcal{L}_o \rightarrow \mathcal{L}_o \]
    \[ \sigma^{norm}(\alpha_1 \dots \alpha_n) := \pre{\epsilon}{\sigma}(\alpha_1) \dots \pre{\alpha_1 \dots \alpha_{i-1}}{\sigma}(\alpha_i) \dots \pre{\alpha_1 \dots \alpha_{n-1}}{\sigma}(\alpha_n) \]
    is well-defined and defines an automorphism on $T_o$.
\end{lemma}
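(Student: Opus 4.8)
The plan is to verify, for the restriction of the given portrait to the language $\mathcal{L}_o$, the portrait criterion of {\cite[Theorem 3.6]{HartnickMedici23a}} in the inductive formulation of {\cite[Remark 3.8]{HartnickMedici23a}}. Concretely, it suffices to show two things: that $\sigma$ maps $\mathcal{L}_o$ into $\mathcal{L}_o$, and that $\pre{v}{\sigma}(\Sigma_v^{norm}) \subseteq \Sigma_{\sigma(v)}^{norm}$ for every $v \in \mathcal{L}_o$; then $(\pre{v}{\sigma}\vert_{\Sigma_v^{norm}})_{v \in \mathcal{L}_o}$ is a portrait in $\mathcal{L}_o$ defining an automorphism of $T_o$, whose vertex-map is exactly the map $\sigma^{norm}$ in the statement. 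The claimed \emph{bijectivity} will then come for free: applying the multiplication formula of Remark \ref{rem:multiplication.formula.of.portraits} to $\sigma$ and its inverse shows that $\pre{v}{\sigma}$ is already a bijection $\Sigma_v \to \Sigma_{\sigma(v)}$, so it is enough to prove the equivalence $\beta \in \Sigma_v^{norm} \Leftrightarrow \pre{v}{\sigma}(\beta) \in \Sigma_{\sigma(v)}^{norm}$ for all $\beta \in \Sigma_v$, which identifies the subset $\Sigma_v^{norm} \subseteq \Sigma_v$ with the subset $\Sigma_{\sigma(v)}^{norm} \subseteq \Sigma_{\sigma(v)}$.

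The technical heart is a local comparison of the label-morphism ``just after'' a diagonal with the one ``just before'' it. Writing a non-empty cube-path word as $v = v'\gamma$ with last letter $\gamma \in \Sigma$, I would first establish
\[ \pre{v}{\sigma}(t^{-1}) = \pre{v'}{\sigma}(t)^{-1} \qquad \text{for every } t \in \gamma, \]
by combining property $inv$ with finitely many applications of property $par$ -- this is exactly the computation performed inside the proof of Lemma \ref{lem:Shorteningtheprefix}. By (A1) every $t^{-1}$ with $t \in \gamma$ lies in $\Sigma_{0,v}$, and $\pre{v}{\sigma}$ restricted to $\Sigma_{0,v}$ is an injective label-morphism; combining this with the insensitivity of commutation to inverting a letter (Example \ref{rem:commutation.extends.to.inverses}), the displayed identity turns the two obstructions to normality at a final junction into one another: for every $s \in \Sigma_{0,v}$,
\[ s^{-1} \in \gamma \ \Longleftrightarrow\ \pre{v}{\sigma}(s)^{-1} \in \pre{v'}{\sigma}(\gamma), \qquad [\gamma, s] = 1 \ \Longleftrightarrow\ \bigl[\,\pre{v'}{\sigma}(\gamma),\ \pre{v}{\sigma}(s)\,\bigr] = 1. \]

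With this local fact I would then induct on the length of $v \in \mathcal{L}_o$. The case $v = \epsilon$ is immediate, since $\sigma(\epsilon) = \epsilon$ and $\Sigma_\epsilon^{norm} = \Sigma_\epsilon$ (a single diagonal is always a normal cube path). For $v = v'\gamma$ of positive length, $v'$ is again a normal cube-path word, so by induction $\sigma(v') \in \mathcal{L}_o$ and $\pre{v'}{\sigma}$ restricts to a bijection $\Sigma_{v'}^{norm} \to \Sigma_{\sigma(v')}^{norm}$; since $\gamma \in \Sigma_{v'}^{norm}$, this already gives $\sigma(v) = \sigma(v')\,\pre{v'}{\sigma}(\gamma) \in \mathcal{L}_o$, and $\pre{v'}{\sigma}(\gamma)$ is the last letter of $\sigma(v)$. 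For $\beta \in \Sigma_v$, the characterisation of normal cube-path words in Remark \ref{rem:characterising.normal.cube.path.words} rewrites ``$\beta \in \Sigma_v^{norm}$'' as the junction condition ``$[\gamma,s] \neq 1$ and $s^{-1} \notin \gamma$ for all $s \in \beta$'', and the two equivalences above convert this letter by letter into the junction condition for $\pre{v}{\sigma}(\beta)$ relative to the last letter of $\sigma(v)$; as $\pre{v}{\sigma}(\beta) \in \Sigma_{\sigma(v)}$ automatically, this is precisely ``$\pre{v}{\sigma}(\beta) \in \Sigma_{\sigma(v)}^{norm}$''. Hence $\beta \in \Sigma_v^{norm} \Leftrightarrow \pre{v}{\sigma}(\beta) \in \Sigma_{\sigma(v)}^{norm}$, which closes the induction and, as explained in the first paragraph, yields both the bijection and the automorphism of $T_o$.

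The main obstacle is the identity $\pre{v}{\sigma}(t^{-1}) = \pre{v'}{\sigma}(t)^{-1}$: this is the only step that genuinely uses $par$ and $inv$, and everything else is bookkeeping about how a last diagonal sits in $\mathcal{L}_o^{cube}$ versus $\mathcal{L}_o$. A secondary point requiring attention is keeping track of which labels are available at $v$ and which at $v'$, so that a label-morphism is only ever applied within its domain -- this is exactly what the identity above is designed to manage, together with the degenerate case $v = \epsilon$.
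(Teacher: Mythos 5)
Your proof is correct and follows essentially the same route as the paper: both reduce the statement to the local characterisation of $\mathcal L_o$ from Remark~\ref{rem:characterising.normal.cube.path.words} and use the identity $\pre{v}{\sigma}(v_{end}^{-1}) = \pre{v_-}{\sigma}(v_{end})^{-1}$ from Lemma~\ref{lem:Shorteningtheprefix} to transport the two obstructions to normality. Two minor virtues of your write-up are worth noting: you make explicit the induction on word length (implicit in the paper) that is needed to justify that $\sigma(v_-)\,\pre{v_-}{\sigma}(v_{end})$ is indeed the $\mathcal L_o$-representative of $\sigma(v)$, and your commutation equivalence is obtained via $inv$ together with Example~\ref{rem:commutation.extends.to.inverses}, which yields a clean biconditional in one pass, whereas the paper's use of $par$ most directly gives only the forward implication.
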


\begin{proof}
    Let $v = \alpha_1 \dots \alpha_n \in \mathcal{L}_o = V(T_o)$. We denote $v_- := \alpha_1 \dots \alpha_{n-1}$ and $v_{end} := \alpha_n$. By {\cite[Theorem 3.6]{HartnickMedici23a}}, we know that $\pre{v}{\sigma}$ is in fact a bijection $\Sigma_{0,v} \rightarrow \Sigma_{0, \sigma(v)}$. We prove the Lemma by showing that $\pre{v}{\sigma}$ restricts to a bijection between the complements of $\Sigma_v^{norm}$ and $\Sigma_{\sigma(v)}^{norm}$. We know from Remark \ref{rem:characterising.normal.cube.path.words} that
    \begin{equation*}
        \begin{split} 
        \Sigma_v^{norm} & = \Sigma_v \setminus ( \{ \alpha \in \Sigma_v \vert   \alpha \cap v_{end}^{-1} \neq \emptyset \} \cup \{ \alpha \in \Sigma_v \vert \exists s \in \alpha : [s, v_{end}] = 1 \} ),\\
        \Sigma_{\sigma(v)}^{norm} & = \Sigma_{\sigma(v)} \setminus ( \{ \alpha \in \Sigma_{\sigma(v)} \vert \alpha \cap \pre{v_-}{\sigma}(v_{end})^{-1} \neq \emptyset \}\\
        & \qquad \qquad \qquad \cup \{ \alpha \in \Sigma_{\sigma(v)} \vert \exists s \in \alpha : [s, \pre{v_-}{\sigma}(v_{end})] = 1 \} ).
        \end{split}
    \end{equation*}
    Let $\alpha \in \Sigma_v$ such that there exists $s \in \alpha \cap v_{end}^{-1}$. By Lemma \ref{lem:Shorteningtheprefix}, we have
    \[ \pre{v}{\sigma}(v_{end}^{-1}) = \pre{v_-}{\sigma}(v_{end})^{-1} \]
    and thus $\pre{v}{\sigma}(s) \in \pre{v}{\sigma}(\alpha \cap v_{end}^{-1}) = \pre{v}{\sigma}(\alpha) \cap \pre{v_-}{\sigma}(v_{end})^{-1}$. We conclude that $\alpha \cap v_{end}^{-1} \neq \emptyset$ if and only if $\pre{v}{\sigma}(\alpha) \cap \pre{v_-}{\sigma}(v_{end})^{-1} \neq \emptyset$.
    
    Now suppose $\alpha \in \Sigma_v$ such that there exists $s \in \alpha$ satisfying $[s, v_{end}] = 1$. Since all $\pre{v}{\sigma}$ are label-isomorphisms, this holds if and only if
    \[ [ \pre{v_-}{\sigma}(s) , \pre{v_-}{\sigma}(v_{end}) ] = 1. \]
    Since $[s, v_{end}] = 1$, property {\it par} implies that $\pre{v_-}{\sigma}(s) = \pre{v}{\sigma}(s)$ and thus the commutation above is equivalent to
    \[ [ \pre{v}{\sigma}(s), \pre{v_-}{\sigma}(v_{end}) ] = 1. \]
    We conclude that there exists $s \in \alpha$ such that $[s, v_{end}] = 1$ if and only if there exists $s' \in \pre{v}{\sigma}(\alpha)$ such that $[s', \pre{v_-}{\sigma}(v_{end})]=1$ (with $s' = \pre{v}{\sigma}(s)$). Thus, $\pre{v}{\sigma}$ sends the complement of $\Sigma_v^{norm}$ bijectively to $\Sigma_{\sigma(v)}^{norm}$.
\end{proof}

Lemma \ref{lem:portraits.define.automorphisms} and Lemma \ref{lem:stabilizer.elements.satisfy.label.par.inv} tell us that the map that sends $g$ to its portrait $(\pre{v}{\sigma(g)})_{v \in V(T_o)}$ sends $G_o$ into the collection of admissible portraits that define an automorphism on $T_o^{cube}$ and that satisfy the properties $par$ and $inv$. To prove the first statement of Theorem \ref{thm:characterisation.of.stabilizing.isometries.as.tree.automorphisms}, we are left to prove the converse. Before we can do so, we need to show the following preliminary result.

\begin{proposition} \label{prop:inv.and.par.imply.that.equivalence.is.preserved}
    Let $(\pre{v}{\sigma})_{v \in V(T_o^{cube})}$ be an admissible portrait that defines an automorphism on $T_o^{cube}$ and satisfies $par$ and $inv$. Then, for any two $v, w \in \mathcal{L}_o^{cube}$, $v \equiv_o w$ if and only if $\sigma(v) \equiv_o \sigma(w)$.
\end{proposition}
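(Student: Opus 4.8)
The plan is to establish the forward implication, $v \equiv_o w \Rightarrow \sigma(v) \equiv_o \sigma(w)$, first, and then to deduce the full equivalence from it together with Lemma~\ref{lem:par.and.inv.imply.preservation.of.children}. For the forward implication I would use that $\equiv_o$ on $\mathcal{L}_o^{cube}$ is generated by three elementary moves, each changing a subword that sits between a fixed prefix $w_1$ and a fixed suffix $w_2$: (i) inserting or deleting a pair $ss^{-1}$ with $s \in \Sigma_{0,w_1}$ (Proposition~\ref{prop:canceling.and.commuting}(i)); (ii) swapping two adjacent commuting edge-letters, $st \leftrightarrow ts$ with $[s,t]=1$ (Proposition~\ref{prop:canceling.and.commuting}(ii)); and (iii) replacing a single diagonal-letter $\gamma = \{s_1,\dots,s_p\} \in \Sigma_{w_1}$ by an edge sequence $s_{\pi(1)}\cdots s_{\pi(p)}$ for a permutation $\pi$, or conversely. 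Move (iii) is immediate from the definition of $\equiv_o$, since both path-segments run from one corner of the cube spanned by $\gamma$ to the opposite one; that (i)--(iii) generate $\equiv_o$ follows from the normalization recalled in Remark~\ref{rem:normalization}, since every word reaches its normal form by exactly such moves (this is what the proof of Corollary~\ref{cor:propertiesofnormalcubepaths} does). It then suffices to check that each single move sends the $\sigma$-images to $\equiv_o$-equivalent words.

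So fix a move that replaces a subword $X$ between $w_1$ and $w_2$ by $X'$, so $w_1 X w_2 \equiv_o w_1 X' w_2$. Expanding with Definition~\ref{def:portrait}, one has $\sigma(w_1 X w_2) = \sigma(w_1)\,P_X\,Q_X$, where $P_X$ is the image of $X$ read from the prefix $w_1$ and $Q_X$ is the image of $w_2$ read from the prefix $w_1 X$, and likewise $\sigma(w_1 X' w_2) = \sigma(w_1)\,P_{X'}\,Q_{X'}$. For every prefix $u$ of $w_2$, the word $w_1 X u$ is again $\equiv_o$-equivalent to $w_1 X' u$ (the same move, now with $u$ as fixed suffix), so admissibility of the portrait forces $\pre{w_1 X u}{\sigma} = \pre{w_1 X' u}{\sigma}$; reading off letter by letter this gives $Q_X = Q_{X'}$ as words. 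Hence the claim reduces to showing $\sigma(w_1)\,P_X \equiv_o \sigma(w_1)\,P_{X'}$.

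Now I would run the three cases. For (i), apply $inv$ to the word $w_1 s$ (whose last letter is $\{s\}$, with $\widetilde{\{s\}}=\emptyset$) to obtain $\pre{w_1 s}{\sigma}(s^{-1}) = \pre{w_1}{\sigma}(s)^{-1}$; writing $a := \pre{w_1}{\sigma}(s)$, which lies in $\Sigma_{0,\sigma(w_1)}$ since $\pre{w_1}{\sigma}$ is a label-morphism and the portrait defines an automorphism (so $\pre{w_1}{\sigma}(\Sigma_{w_1})\subseteq\Sigma_{\sigma(w_1)}$), one gets $P_{ss^{-1}} = a\,a^{-1}$ and $P_\epsilon = \epsilon$, and $\sigma(w_1)\,a\,a^{-1} \equiv_o \sigma(w_1)$ by Proposition~\ref{prop:canceling.and.commuting}(i). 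For (ii), both $s$ and $t$ are non-isolated (they commute), so $par$ gives $P_{st} = \pre{w_1}{\sigma}(s)\,\pre{w_1}{\sigma}(t)$ and $P_{ts} = \pre{w_1}{\sigma}(t)\,\pre{w_1}{\sigma}(s)$; since $\pre{w_1}{\sigma}$ preserves commutation these two labels commute, so Proposition~\ref{prop:canceling.and.commuting}(ii) applies. For (iii) with $p\ge 2$ (the case $p=1$ being trivial), repeated use of $par$ collapses the image of $s_{\pi(1)}\cdots s_{\pi(p)}$ to $\pre{w_1}{\sigma}(s_{\pi(1)})\cdots\pre{w_1}{\sigma}(s_{\pi(p)})$, which is precisely an edge-ordering of the diagonal-letter $\pre{w_1}{\sigma}(\gamma) = \{\pre{w_1}{\sigma}(s_1),\dots,\pre{w_1}{\sigma}(s_p)\} \in \Sigma_{\sigma(w_1)}$, so again the two segments traverse a common cube and are $\equiv_o$-equivalent. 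Chaining over a sequence of moves connecting $v$ to $w$ then yields $\sigma(v) \equiv_o \sigma(w)$.

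Finally, for the converse (which also re-packages the forward direction), I would invoke Lemma~\ref{lem:par.and.inv.imply.preservation.of.children}: $\sigma$ carries normal cube paths to normal cube paths and restricts to a \emph{bijection} $\sigma^{norm}$ of $\mathcal{L}_o$. Given $v \in \mathcal{L}_o^{cube}$, we have $v \equiv_o \mathrm{norm}(v) \in \mathcal{L}_o$, hence by the forward implication $\sigma(v) \equiv_o \sigma(\mathrm{norm}(v)) = \sigma^{norm}(\mathrm{norm}(v))$, and the latter is normal; by uniqueness of normal forms this gives $\mathrm{norm}(\sigma(v)) = \sigma^{norm}(\mathrm{norm}(v))$. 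Consequently $\sigma(v)\equiv_o\sigma(w) \Leftrightarrow \mathrm{norm}(\sigma(v)) = \mathrm{norm}(\sigma(w)) \Leftrightarrow \sigma^{norm}(\mathrm{norm}(v)) = \sigma^{norm}(\mathrm{norm}(w)) \Leftrightarrow \mathrm{norm}(v) = \mathrm{norm}(w) \Leftrightarrow v \equiv_o w$, the middle step using injectivity of $\sigma^{norm}$. The only genuinely delicate point I anticipate is the per-move step: one must notice that the equivalences $w_1 X u \equiv_o w_1 X' u$ used to kill the tail are themselves instances of the move under analysis (so there is no circular appeal to the proposition), and one must keep the hypotheses of $par$ (non-isolated edge-labels only) and $inv$ (sensitive to which letter is last) straight; the rest is routine bookkeeping.
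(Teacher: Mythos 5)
Your proof is correct, and while the forward implication follows essentially the same outline as the paper, your treatment of the converse is genuinely different and arguably cleaner, so the comparison is worth recording.

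For the forward direction, the paper first applies Lemma~\ref{lem:diagonal.to.edge.path} once at the outset to replace each of $v, w$ by an $\equiv_o$-equivalent edge path with $\equiv_o$-equivalent $\sigma$-image, then invokes Lemma~\ref{lem:transforming.edge.paths.into.each.other} and checks exactly the two elementary moves that you list as (i) and (ii). You fold the diagonal-to-edge conversion into a third elementary move (iii) and check it on the same footing using $par$; the substance of each per-move verification matches the paper's, and your preliminary observation that the tail $Q_X = Q_{X'}$ follows from admissibility is a correct (and slightly more careful) repackaging of what the paper does implicitly via the auxiliary $\pre{u_1}{\sigma}(u_2)$ notation and the identity $\pre{u_1}{\sigma}(u_2) = \pre{u_1ss^{-1}}{\sigma}(u_2)$ etc. One small note: your pointer to Remark~\ref{rem:normalization} and the proof of Corollary~\ref{cor:propertiesofnormalcubepaths} for the generating-set claim is not really where that fact lives; the precise statement you need is Lemma~\ref{lem:transforming.edge.paths.into.each.other} together with the trivial observation that move (iii) converts cube paths to edge paths.

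The real divergence is in the converse. The paper proves $\sigma(v)\equiv\sigma(w)\Rightarrow v\equiv w$ by a second per-move case analysis (pulling the moves on the $\sigma$-side back through the automorphism on $T_o^{cube}$), explicitly declining to exploit any structure of the inverse portrait because, as they note, ``dealing with the encoding of inverses in portraits\dots becomes cumbersome in notation.'' You instead invoke Lemma~\ref{lem:par.and.inv.imply.preservation.of.children}, which is proved before the proposition and depends only on Lemma~\ref{lem:Shorteningtheprefix}, so there is no circularity: $\sigma$ restricts to a bijection $\sigma^{\mathrm{norm}}$ of $\mathcal{L}_o$, and combining the forward implication with uniqueness of normal forms gives $\mathrm{norm}(\sigma(v)) = \sigma^{\mathrm{norm}}(\mathrm{norm}(v))$, after which injectivity of $\sigma^{\mathrm{norm}}$ gives the converse in one line. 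This buys you roughly half the length of the paper's proof of the proposition and avoids redoing the case analysis backward; it costs nothing, since Lemma~\ref{lem:par.and.inv.imply.preservation.of.children} is already part of the paper's toolkit at this point.
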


The proof of this proposition is built on the following two lemmata.

\begin{lemma} \label{lem:diagonal.to.edge.path}
    Let $(\pre{v}{\sigma})_{v \in V(T_o^{cube})}$ be an admissible portrait that defines an automorphism on $T_o^{cube}$ and satisfies $par$. Then, for any $v \in \mathcal{L}_o^{cube}$ and any $\alpha = \{ s_1, \dots, s_n \} \in \Sigma_v$, we have
    \[ \pre{v}{\sigma}(\alpha) \equiv_{\sigma(v)} \pre{v}{\sigma}(s_1) \dots \pre{v}{\sigma}(s_n) = \pre{v}{\sigma}(s_1) \dots \pre{vs_1\dots s_{n-1}}{\sigma}(s_n). \]
\end{lemma}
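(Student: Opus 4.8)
The statement has two parts: the equivalence $\pre{v}{\sigma}(\alpha) \equiv_{\sigma(v)} \pre{v}{\sigma}(s_1)\cdots \pre{v}{\sigma}(s_n)$, and the identity $\pre{v}{\sigma}(s_1)\cdots\pre{v}{\sigma}(s_n) = \pre{v}{\sigma}(s_1)\cdots \pre{vs_1\cdots s_{n-1}}{\sigma}(s_n)$. Let me think about how each should go.

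First, let me understand what's being claimed. We have a diagonal $\alpha = \{s_1,\dots,s_n\}$ at $v$, i.e., the $s_i$ pairwise commute and span an $n$-cube $C$ at $v$. The word $s_1 s_2 \cdots s_n$ (as an edge path) is a cube path from $v$ to $v\alpha$ going along edges of $C$. Applying $\sigma$ to the diagonal gives $\pre{v}{\sigma}(\alpha)$, a single letter in $\Sigma$, while applying $\sigma$ to the edge path gives the word $\pre{v}{\sigma}(s_1)\cdot \pre{vs_1}{\sigma}(s_2)\cdots$. Both should end at $\sigma(v\alpha) = \sigma(v)\pre{v}{\sigma}(\alpha)$.

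Here is the plan.

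**Step 1: The identity collapsing prefixes.** I claim that for each $i$, $\pre{vs_1\cdots s_{i-1}}{\sigma}(s_i) = \pre{v}{\sigma}(s_i)$. Since $\{s_1,\dots,s_n\} \in \Sigma_v$, all the $s_j$ pairwise commute; in particular each of $s_1,\dots,s_{i-1}$ commutes with $s_i$, and each $s_j$ is non-isolated (it commutes with $s_i$, say, as $n\ge 2$ in the nontrivial case; if $n=1$ there is nothing to prove). So property (par), applied repeatedly — first $\pre{v}{\sigma}(s_i) = \pre{vs_1}{\sigma}(s_i)$ using $[s_1,s_i]=1$, then $\pre{vs_1}{\sigma}(s_i) = \pre{vs_1s_2}{\sigma}(s_i)$ using $[s_2,s_i]=1$, and so on — gives $\pre{v}{\sigma}(s_i) = \pre{vs_1\cdots s_{i-1}}{\sigma}(s_i)$. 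This proves the second (equality) part of the statement, and shows that the image edge path is literally $\pre{v}{\sigma}(s_1)\cdots\pre{v}{\sigma}(s_n)$.

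**Step 2: The image letters span a cube.** Now I must show $\pre{v}{\sigma}(\alpha) \equiv_{\sigma(v)} \pre{v}{\sigma}(s_1)\cdots\pre{v}{\sigma}(s_n)$. Since $\pre{v}{\sigma}\vert_{\Sigma_{0,v}}$ is a label-morphism (admissibility), it preserves commutation: the letters $\pre{v}{\sigma}(s_1),\dots,\pre{v}{\sigma}(s_n)$ pairwise commute, they are distinct by injectivity, and $\pre{v}{\sigma}(\alpha) = \{\pre{v}{\sigma}(s_1),\dots,\pre{v}{\sigma}(s_n)\}$ by the canonical-extension clause of admissibility. Because $\sigma$ defines an automorphism on $T_o^{cube}$, we have $\pre{v}{\sigma}(\alpha) \in \Sigma_{\sigma(v)}$, so by Remark \ref{rem:commutation.and.links} (the link interpretation) the edges labeled $\pre{v}{\sigma}(s_1),\dots,\pre{v}{\sigma}(s_n)$ at $\sigma(v)$ span a common $n$-cube whose diagonal is labeled $\pre{v}{\sigma}(\alpha)$. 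Walking around this cube along the edge path $\pre{v}{\sigma}(s_1)\cdots\pre{v}{\sigma}(s_n)$ therefore ends at the opposite vertex $\sigma(v)\pre{v}{\sigma}(\alpha)$, i.e., this edge path is $\equiv_{\sigma(v)}$-equivalent to the single diagonal $\pre{v}{\sigma}(\alpha)$. (Formally this is $n$ applications of Proposition \ref{prop:canceling.and.commuting}(iii), rewriting the diagonal $\pre{v}{\sigma}(\alpha)$ as a product of its singleton sub-diagonals, or one can simply invoke the bijection $\ell\colon D(X)_{\sigma(v)} \to \mathcal{L}_{\sigma(v)}^{cube}$ together with the fact that both paths trace edges/diagonals of the same cube from $\sigma(v)$ to its antipode.)

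**Main obstacle.** The only subtlety is making sure the $\equiv$ is with respect to the right basepoint and that the cube at $\sigma(v)$ genuinely exists — this rests on $\sigma$ defining an automorphism of $T_o^{cube}$ (so that $\pre{v}{\sigma}(\alpha)$ is an admissible label at $\sigma(v)$, hence corresponds to an actual diagonal there) and on the link-theoretic description of $\Sigma_{\sigma(v)}$ in Remark \ref{rem:commutation.and.links}, which guarantees that a set of pairwise-commuting labels at a vertex is realized by a single cube. Once that is in hand, Step 1 is a routine iteration of (par) and Step 2 is a routine translation between "diagonal of a cube" and "edge path around that cube." I do not anticipate any essential difficulty.
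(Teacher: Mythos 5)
Your proposal is correct and follows essentially the same route as the paper's proof: the paper first notes $\pre{v}{\sigma}(\alpha) = \{\pre{v}{\sigma}(s_1),\dots,\pre{v}{\sigma}(s_n)\}$ by admissibility, observes that the diagonal and the corresponding edge path around the cube have the same endpoints (your Step 2), and then dispatches the equality between the two edge-path expressions with a single appeal to $par$ (your Step 1). Your write-up simply spells out these two points in more detail, including the harmless $n=1$ edge case for non-isolation.
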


\begin{proof}
    Since $\pre{v}{\sigma}$ is a label-morphism for every $v$, we know that
    \[ \pre{v}{\sigma}(\alpha) = \{ \pre{v}{\sigma}(s_1), \dots, \pre{v}{\sigma}(s_n) \}. \]
    Since any directed diagonal $\{ s_1, \dots, s_n \}$ has the same start and end point as the edge-path $s_1 \dots s_n$, we obtain the equivalence
    \[ \pre{v}{\sigma}(\alpha) \equiv_{\sigma(v)} \pre{v}{\sigma}(s_1) \dots \pre{v}{\sigma}(s_n). \]
    The second equation is a consequence of $par$.
\end{proof}

The other result we need is a well-known fact about edge-paths in $\CAT$ cube complexes. For convenience of use, we formulate it using our labelings.

\begin{lemma} \label{lem:transforming.edge.paths.into.each.other}
    Consider two edge-paths in $X$ that both start at a vertex $o$ and end at a vertex $p$. Let $v, w \in \mathcal{L}_o^{cube}$ be the words corresponding to these edge-paths under the labeling $\ell$. Then $v$ can be transformed into $w$ by finitely many applications of the following two moves:
    \[ u_1 u_2 \equiv_o u_1 s s^{-1} u_2 \]
    \[ u_1 st u_2 \equiv_o u_1 ts u_2. \]
\end{lemma}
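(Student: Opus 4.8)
The plan is to prove, by induction on the total edge-length $|v|+|w|$, that any two edge-path words $v,w\in\mathcal L^{\mathrm{cube}}_o$ with the same endpoint $p$ are connected by finitely many applications of the two moves; I write $v\rightsquigarrow w$ for this, noting that $\rightsquigarrow$ is an equivalence relation since both moves are reversible and that both moves preserve endpoints by Proposition~\ref{prop:canceling.and.commuting}(i)--(ii), so only the converse needs an argument.

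First I would reduce to the case that $v$ and $w$ are reduced. If $v$ is reducible, Remark~\ref{rem:existence.of.innermost.cancellations} provides an innermost cancellation and Lemma~\ref{lem:innermost.cancellations.commute.with.the.word.in.between} yields a word $\bar v\equiv_o v$ with $|\bar v|=|v|-2$; since all letters of an edge path are singletons, the equivalences used in the proof of that lemma are exactly the two moves here, so $v\rightsquigarrow\bar v$, and the inductive hypothesis applied to $(\bar v,w)$ finishes; symmetrically if $w$ is reducible. Once both are reduced they cross precisely the hyperplanes separating $o$ from $p$, so $|v|=|w|=:n$, and I would continue the same induction.

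The key input is the following sub-claim: \emph{if $u=t_1\cdots t_n$ is a reduced edge path from $o$ to $p$, $\hat h$ is a hyperplane separating $o$ from $p$ crossed by $t_k$, and there is an edge at $o$ crossing $\hat h$, then $[\ell(t_i),\ell(t_k)]=1$ for all $i<k$.} Granting it, let $s$ be the first letter of $v$, put $\hat h:=\hat h(s)$ (which separates $o$ from $p$ since $v$ is reduced and starts with $s$), and let $k$ be the position at which $w$ crosses $\hat h$. Applying the move $u_1 ab u_2\equiv_o u_1 ba u_2$ successively at positions $(k-1,k),(k-2,k-1),\dots,(1,2)$ — valid at each stage by the sub-claim, as the edge crossing $\hat h$ commutes with every edge it passes — transforms $w$ into a reduced word $w'$ whose first edge crosses $\hat h$; the edge at $o$ crossing $\hat h$ from the $o$-side to the $p$-side is unique (its halfspace, hence by (A1) its label, is determined, so by (A2) so is the edge), so the first letter of $w'$ is $s$. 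Writing $w'=s\,w''$ and $v=s\,v''$, the words $v'',w''$ are edge paths from the endpoint of $s$ to $p$ of strictly smaller total length, so $v''\rightsquigarrow w''$ by induction and $v=s v''\rightsquigarrow s w''=w'\rightsquigarrow w$.

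The main obstacle is the sub-claim, which I would prove by the standard nesting argument. Fix $i<k$. Since $u$ is reduced the hyperplanes $\hat h(t_j)$ are pairwise distinct, so $\hat h(t_i)\neq\hat h=\hat h(t_k)$; for distinct hyperplanes, $[\ell(t_i),\ell(t_k)]=1$ is equivalent to $\hat h(t_i)\pitchfork\hat h$, immediate from Definition~\ref{def:commutinglabels}, (A3), and the fact that two crossing hyperplanes are the two hyperplanes of a common square. Suppose $\hat h(t_i)$ and $\hat h$ were disjoint; being distinct, one of the four intersections of their halfspaces is empty. Write $h\ni o$, $h^c$ for the halfspaces of $\hat h$ and $m\ni o$, $m^c$ for those of $\hat h(t_i)$; as $u$ is reduced it crosses each of these hyperplanes once, from the $o$-side to the $p$-side, so $o\in m\cap h$, the vertex of $u$ just before $t_k$ lies in $m^c\cap h$ (past $t_i$ but before $t_k$), and the vertex just after $t_k$ lies in $m^c\cap h^c$. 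Hence the empty quadrant is $m\cap h^c$, i.e.\ $m\subseteq h$, equivalently $h^c\subseteq m^c$. But the edge $e$ at $o$ crossing $\hat h$ has $o\in m$ and its other endpoint in $h^c\subseteq m^c$, so $e$ crosses $\hat h(t_i)$ as well — impossible, as an edge is crossed by exactly one hyperplane and $\hat h(t_i)\neq\hat h$. This contradiction proves the sub-claim, hence the Lemma. (Alternatively one may quote that $X$, being $\CAT$, is contractible, so its $2$-skeleton is simply connected and edge paths with equal endpoints differ by free $ss^{-1}$-cancellations and pushes across square $2$-cells — precisely the two moves — but the argument above stays within the hyperplane language of this section.)
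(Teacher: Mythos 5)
The paper states Lemma \ref{lem:transforming.edge.paths.into.each.other} without proof, remarking only that it is ``a well-known fact about edge-paths in $\CAT$ cube complexes,'' so there is no paper proof to compare against. Your argument is a correct, self-contained proof: the reduction to reduced words via Remark \ref{rem:existence.of.innermost.cancellations} and Lemma \ref{lem:innermost.cancellations.commute.with.the.word.in.between} is fine (for singleton letters the equivalences in that lemma's proof are literally the two moves), the induction bookkeeping works (once both words are reduced they cross precisely the hyperplanes separating $o$ from $p$, hence have equal length, and the empty-word base case is handled), and the quadrant argument for the sub-claim is the standard one. Two small points you leave implicit but which hold: first, you use the equivalence ``$[\ell(t_i),\ell(t_k)]=1 \iff \hat h(t_i)\pitchfork \hat h(t_k)$'' in both directions, the backward direction relying on the standard fact that two crossing hyperplanes are dual to a common square, and on Example \ref{rem:commutation.extends.to.inverses} to absorb orientation ambiguity; second, each commutation step $u_1 t_j t_k u_2 \equiv_o u_1 t_k t_j u_2$ requires $\ell(t_k)\in\Sigma_{0,u_1}$ and not merely $[\ell(t_j),\ell(t_k)]=1$ --- this follows because $\ell(t_j)^{-1},\ell(t_k)\in\Sigma_{0,u_1t_j}$ commute (by (A3) and the inverse-commutation rule), so the corresponding edges span an actual square at $u_1 t_j$, whose opposite edge gives the required $\ell(t_k)$-edge at $u_1$. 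Your closing remark that the lemma also follows from simple-connectedness of the $2$-skeleton is the most common way this is cited; the hyperplane-quadrant proof you give is the one that stays entirely within the toolkit this paper has already set up, which is a reasonable choice.
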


\begin{proof}[Proof of Proposition \ref{prop:inv.and.par.imply.that.equivalence.is.preserved}]
    We introduce the following auxiliary notation for this proof. Given a word $u_1 u_2 \in \mathcal{L}_o^{cube}$, where $u_2 = \alpha_{n+1} \dots \alpha_{n+m}$, we write
    \[ \pre{u_1}{\sigma}(u_2) := \pre{u_1}{\sigma}(\alpha_{n+1}) \dots \pre{u_1 \alpha_{n+1} \dots \alpha_{n+m-1}}{\sigma}(\alpha_{n+m}). \]
    Admissibility of $(\pre{v}{\sigma})_{v \in V(T_o^{cube})}$ immediately implies that, if $u_1 \equiv u'_1$, then $\pre{u_1}{\sigma}(u_2) = \pre{u'_1}{\sigma}(u_2)$.\\

    Let $v \in \mathcal{L}_o^{cube}$. By Lemma \ref{lem:diagonal.to.edge.path}, we can build an edge-path $v' \equiv_o v$ such that $\sigma(v) \equiv \sigma(v')$. It is thus sufficient to check that $v \equiv w$ if and only if $\sigma(v) \equiv \sigma(w)$ for edge-paths. By Lemma \ref{lem:transforming.edge.paths.into.each.other}, we only need to check the cases, where $v$ and $w$ differ by one of the two moves above.\\

    $``\Rightarrow"$: Suppose $v \equiv w$ and there are $u_1, s, u_2$ such that
    \[ v = u_1 s s^{-1} u_2, \quad w = u_1 u_2. \]
    Since $(\pre{v}{\sigma})_{v \in V(T_o^{cube})}$ is admissible and $u_1 \equiv u_1 s s^{-1}$, we know that
    \[ \pre{u_1}{\sigma}(u_2) = \pre{u_1 s s^{-1}}{\sigma}(u_2). \]
    Furthermore, property $inv$ implies that
    \[ \pre{u_1}{\sigma}(s) = \pre{u_1 s}{\sigma}(s^{-1})^{-1}. \]
    We compute
    \begin{equation*}
        \begin{split}
            \sigma(v) & = \sigma(u_1 s s^{-1} u_2)\\
            & = \sigma(u_1) \pre{u_1}{\sigma}(s) \pre{u_1 s}{\sigma}(s^{-1}) \pre{u_1 s s^{-1}}{\sigma}(u_2)\\
            & = \sigma(u_1) \pre{u_1}{\sigma}(u_2)\\
            & = \sigma(w).
        \end{split}
    \end{equation*}
    Thus $\sigma(v) \equiv \sigma(w)$.\\    
    
    Now suppose $v \equiv w$ and suppose there are $u_1, s, t, u_2$ such that $[s,t] = 1$ and
    \[ v = u_1 s t u_2, \quad w = u_1 t s u_2. \]
    Using admissibility again, we obtain
    \[ \pre{u_1 st}{\sigma}(u_2) = \pre{u_1 ts}{\sigma}(u_2) \]
    and, since $[s,t] = 1$, $[\pre{u_1}{\sigma}(s), \pre{u_1}{\sigma}(t)] = 1$. This, combined with $par$, implies that
    \[ \pre{u_1}{\sigma}(s) = \pre{u_1 t}{\sigma}(s), \]
    \[ \pre{u_1}{\sigma}(t) = \pre{u_1 s}{\sigma}(t). \]
    We thus compute
    \begin{equation*}
        \begin{split}
            \sigma(v) & = \sigma(u_1 s t u_2)\\
            & = \sigma(u_1) \pre{u_1}{\sigma}(s) \pre{u_1 s}{\sigma}(t) \pre{u_1 s t}{\sigma}(u_2)\\
            & \equiv \sigma(u_1) \pre{u_1}{\sigma}(t) \pre{u_1 t}{\sigma}(s) \pre{u_1 t s}{\sigma}(u_2)\\
            & = \sigma(u_1 t s u_2)\\
            & = \sigma(w).
        \end{split}
    \end{equation*}

    $``\Leftarrow"$: We are left to show the converse, that is $\sigma(v) \equiv \sigma(w) \Rightarrow v \equiv w$. One could show this by arguing that the inverse of $(\pre{v}{\sigma})_{v \in V(T_o^{cube})}$ in the group $\Aut(T_o^{cube})$ is also admissible and satisfies $par$ and $inv$. This requires dealing with the encoding of inverses in portraits, which becomes cumbersome in notation. Instead, we simply prove the converse directly.

    Suppose $\sigma(v) \equiv \sigma(w)$ and there are $u'_1, s', u'_2$ such that
    \[ \sigma(v) = u'_1 s' s'^{-1} u'_2, \quad \sigma(w) = u'_1 u'_2. \]
    Since $(\pre{v}{\sigma})_{v \in V(T_o^{cube})}$ defines an automorphism on $T_o^{cube}$, we obtain the following:
    \begin{itemize}
        \item There exists $u_1 \in \mathcal{L}_o^{cube}$ such that $\sigma(u_1) = u'_1$.

        \item There exists $s \in \Sigma_{0, u_1}$ such that $\pre{u_1}{\sigma}(s) = s'$.

        \item There exists $u_2 \in \mathcal{L}_{u_1}^{cube}$ such that $\pre{u_1}{\sigma}(u_2) = u'_2$. (The decomposition of $\sigma(w)$ into $u'_1$ and $u'_2$ induces a decomposition of the path $w$ into $u_1$ and a second path segment. This second path-segment is $u_2$.)
    \end{itemize}

    Since $(\pre{v}{\sigma})_{v \in V(T_o^{cube})}$ is admissible, we know that
    \[ \pre{u_1}{\sigma}(u_2) = \pre{u_1 s s^{-1}}{\sigma}(u_2). \]
    By $inv$, we have that
    \[ \pre{u_1}{\sigma}(s) = \pre{u_1 s}{\sigma}(s^{-1})^{-1}. \]
    We thus compute
    \begin{equation*}
        \begin{split}
            \sigma(u_1 s s^{-1} u_2) & = \sigma(u_1) \pre{u_1}{\sigma}(s) \pre{u_1 s}{\sigma}(s^{-1}) \pre{u_1 s s^{-1}}{\sigma}(u_2)\\
            & = u'_1 s' s'^{-1} u'_2\\
            & = \sigma(v)
        \end{split}
    \end{equation*}
    Since $\sigma$ is bijective, this implies that $v = u_1 s s^{-1} u_2$, while $w = u_1 u_2$. Thus, $v \equiv w$.\\

    Suppose now $\sigma(v) \equiv \sigma(w)$ and there are $u'_1, s', t', u'_2$ such that $[s', t'] = 1$ and
    \[ \sigma(v) = u'_1 s' t' u'_2, \quad \sigma(w) = u'_1 t' s' u'_2. \]
    Since $(\pre{v}{\sigma})_{v \in V(T_o^{cube})}$ defines an automorphism on $T_o^{cube}$, the decomposition of the path $\sigma(v)$ into $u'_1$, $s'$, $t'$, $u'_2$ induces a decomposition of the path $v$ with the following properties:
    \begin{itemize}
        \item There exists $u_1 \in \mathcal{L}_o^{cube}$ such that $\sigma(u_1) = u'_1$.

        \item There exists $s \in \Sigma_{0,u_1}$ such that $\pre{u_1}{\sigma}(s) = s'$.

        \item There exists $t \in \Sigma_{0,u_1 s}$ such that $\pre{u_1 s}{\sigma}(t) = t'$.

        \item There exists $u_2 \in \mathcal{L}_{u_1 s t}^{cube}$ such that $\pre{u_1 s t}{\sigma}(u_2) = u'_2$.
    \end{itemize}
    Since $\pre{u_1}{\sigma}$ is a label-morphism and $[s',t'] = 1$, we conclude that
    \[ [s, t] = [\pre{u_1}{\sigma}(s), \pre{u_1}{\sigma}(t)] = [s',t'] = 1. \]
    In particular, $u_1 s t \equiv u_1 t s$. Admissibility now implies that
    \[ \pre{u_1 t s}{\sigma}(u_2) = \pre{u_1 s t}{\sigma}(u_2) = u'_2. \]
    By $par$, we obtain
    \[ \pre{u_1}{\sigma}(t) = \pre{u_1 s}{\sigma}(t) = t', \]
    \[ \pre{u_1 t}{\sigma}(s) = \pre{u_1}{\sigma}(s) = s'. \]
    We conclude that
    \[ \sigma(u_1 s t u_2) = u'_1 s' t' u'_2 = \sigma(v) \]
    \[ \sigma(u_1 t s u_2) = \sigma(u_1) \pre{u_1}{\sigma}(t) \pre{u_1 t}{\sigma}(s) \pre{u_1 t s}{\sigma}(u_2) = u'_1 t' s' u'_2 = \sigma(w). \]
    Since $\sigma$ is bijective, this implies that $v = u_1 s t u_2$ and $w = u_1 t s u_2$, while $[s,t] = 1$. It follows that $v \equiv w$.\\

    This shows that $v \equiv w$ if and only if $\sigma(v) \equiv \sigma(w)$, which concludes the proof.
\end{proof}

\begin{lemma} \label{lem:stabilizer.elements.are.exactly.automorphisms.satisfying.label.par.inv}
    Let $(\pre{v}{\sigma})_{v \in V(T_o^{cube})}$ be an admissible portrait that defines an automorphism on $T_o^{cube}$ and satisfies $par$ and $inv$. Then there exists a unique $g \in G_o$ such that $(\pre{v}{\sigma})_{v \in V(T_o)}$ is the portrait of $g$.
\end{lemma}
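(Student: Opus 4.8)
The plan is to build $g$ first as a bijection of $X^{(0)}$, then upgrade it to a cubical isometry, and finally check that its portrait is the prescribed one.

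\emph{Step 1: defining $g$ on vertices.} Since $(\pre{v}{\sigma})_{v \in V(T_o^{cube})}$ defines an automorphism on $T_o^{cube}$, the associated map $\sigma : \mathcal{L}_o^{cube} \to \mathcal{L}_o^{cube}$ is a root-fixing bijection. By Proposition \ref{prop:inv.and.par.imply.that.equivalence.is.preserved} we have $v \equiv_o w$ if and only if $\sigma(v) \equiv_o \sigma(w)$, so $\sigma$ descends, along the identification $X^{(0)} = \mathcal{L}_o^{cube}/{\equiv_o}$ of Remark \ref{rem:identification.of.words.and.vertices.paths.and.words}, to a well-defined injection $g : X^{(0)} \to X^{(0)}$, $g([w]) := [\sigma(w)]$. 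It is surjective since $\sigma$ is and every vertex of $X$ is the endpoint of some cube path from $o$, and $g([\epsilon]) = [\epsilon]$, i.e.\ $g(o) = o$.

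\emph{Step 2: $g$ is the vertex map of a cubical isometry.} I would first note that $g$ is a graph automorphism of $X^{(1)}$. An edge at $[w]$ is recorded by some $s \in \Sigma_{0,w}$ and $g$ sends its far endpoint $[ws]$ to $[\sigma(w)\,\pre{w}{\sigma}(s)]$. Admissibility gives that $\pre{w}{\sigma}|_{\Sigma_{0,w}}$ is a label-morphism, so it lands in $\Sigma_0$; since the portrait defines an automorphism on $T_o^{cube}$, by {\cite[Theorem 3.6]{HartnickMedici23a}} it restricts to a bijection $\Sigma_{0,w} \to \Sigma_{0,\sigma(w)}$. Hence $g$ restricts to a bijection between the edges at $[w]$ and the edges at $g([w])$ for every $w$, so $g$ is a graph automorphism of $X^{(1)}$. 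Moreover, two edges at $[w]$ labeled $s,t$ span a square iff $[s,t]=1$ iff $[\pre{w}{\sigma}(s),\pre{w}{\sigma}(t)]=1$ (the label-morphism condition) iff their $g$-images span a square; thus $g$ preserves squares. A square-preserving graph automorphism of $X^{(1)}$ induces simplicial isomorphisms $\lk(p) \to \lk(g(p))$ for every $p$, because the links of a $\CAT$ cube complex are flag complexes, and therefore extends (uniquely) to a cubical automorphism of $X$, which is automatically an isometry. Hence $g \in \Aut(X)$, and since $g(o)=o$ we get $g \in G_o$.

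\emph{Step 3: the portrait of $g$ is $(\pre{v}{\sigma})_v$, and $g$ is unique.} By Lemma \ref{lem:stabilizer.acting.on.trees}, $g$ induces a rooted automorphism of $T_o^{cube}$; I claim it equals $\sigma$, equivalently that for every cube path $\gamma \in D(X)_o$ with label word $v$ the label word of $g(\gamma)$ is $\sigma(v)$. This follows by induction on the length of $v$. Writing $v = v'\alpha$, decompose $\gamma = \gamma' \ast d$ where $d$ is the outgoing diagonal at the endpoint $[v']$ of $\gamma'$ with $\ell(d)=\alpha$. By induction $g(\gamma')$ has label word $\sigma(v')$ and endpoint $g([v']) = [\sigma(v')]$, while $g(d)$ is an outgoing diagonal at $[\sigma(v')]$ with endpoint $g([v'\alpha]) = [\sigma(v')\,\pre{v'}{\sigma}(\alpha)]$. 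A diagonal at a vertex is determined by its endpoint (a cube is determined by one of its vertices together with the set of hyperplanes crossing it, and the hyperplanes crossing the cube underlying a diagonal are exactly those separating its two endpoints), and $\sigma(v'\alpha) = \sigma(v')\,\pre{v'}{\sigma}(\alpha) \in \mathcal{L}_o^{cube}$ exhibits such a diagonal labeled $\pre{v'}{\sigma}(\alpha)$; hence $g(d)$ is that diagonal, so $g(\gamma)$ has label word $\sigma(v')\,\pre{v'}{\sigma}(\alpha) = \sigma(v)$. Since a portrait is recovered uniquely from the tree automorphism it defines, the portrait of $g$ in the sense of Definition \ref{def:portrait.of.stabilizing.element} coincides with $(\pre{v}{\sigma})_v$. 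Uniqueness of $g$ is then immediate, because $G_o \hookrightarrow \Aut(T_o^{cube})$ is injective (Lemma \ref{lem:stabilizer.acting.on.trees}) and a tree automorphism determines its portrait.

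\emph{Where the work is.} The substantive step is Step 2: promoting the bare vertex bijection to an honest cubical isometry. Edge- and square-preservation are bookkeeping with the label-morphism axioms, but one must keep track of the $\equiv_o$-identifications (which is precisely where Proposition \ref{prop:inv.and.par.imply.that.equivalence.is.preserved} and admissibility enter) and then invoke flagness of links to pass from the $1$-skeleton and squares to all cubes. Step 3 is routine once one records the elementary fact that a diagonal is pinned down by its endpoint.
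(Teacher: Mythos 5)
Your proof is correct and follows essentially the same route as the paper: use Proposition \ref{prop:inv.and.par.imply.that.equivalence.is.preserved} to descend $\sigma$ to a well-defined root-fixing bijection of $X^{(0)}$, verify it is a graph automorphism of $X^{(1)}$, extend to a cubical isometry $g \in G_o$, and identify the portrait of $g$ with the given one. Your Step 2 is in fact a bit more careful than the paper's treatment, which after proving edge-preservation simply asserts that the vertex map extends to an isometry of $X$; your explicit square-preservation check via the label-morphism condition, followed by the flagness of links, is exactly the justification that step deserves.
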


\begin{proof}
    Let $( \pre{v}{\sigma} )_{v \in V(T_o^{cube})}$ be an admissible portrait satisfying $par$ and $inv$ that defines an automorphism on $T_o^{cube}$. We want to show that $\sigma$ is induced by a cubical isometry of $X$ that fixes $o$. We recall that any portrait that defines an automorphism on $T_o^{cube}$ has an associated map $\sigma : \mathcal{L}_o^{cube} \rightarrow \mathcal{L}_o^{cube}$ defined by
    \[ \sigma(\alpha_1 \dots \alpha_n) = \pre{\epsilon}{\sigma}(\alpha_1) \dots \pre{\alpha_1 \dots \alpha_{i-1}}{\sigma}(\alpha_i) \dots \pre{\alpha_1 \dots \alpha_{n-1}}{\sigma}(\alpha_n) \]
    By Proposition \ref{prop:inv.and.par.imply.that.equivalence.is.preserved}, $\sigma$ defines a map on the vertices of $X^{(0)}$, that is, for any vertex $v \in X^{(0)}$, we set $\sigma(v)$ to be the equivalence class represented by $\sigma(\alpha_1 \dots \alpha_n)$ for any representative $\alpha_1 \dots \alpha_n \in \mathcal{L}_o^{cube}$ of $v$. We claim that this map extends to a cubical isometry on $X$. To prove this claim, we are left to show that two vertices $v, w \in X^{(0)}$ are connected by an edge if and only if $\sigma(v), \sigma(w)$ are.\\

    $``\Rightarrow"$: Suppose there is an edge between $v$ and $w$ in $X$. Let $e$ denote the oriented edge from $v$ to $w$ and put $s := \ell(e)$. Identifying both $v$ and $w$ with elements in $\mathcal{L}_o^{cube}$ that represent these two vertices, we can thus write $vs \equiv w$. Let $\alpha_1 \dots \alpha_n$ be a representative of $v$. Then $\alpha_1 \dots \alpha_n s$ is a cube path from $o$ to $w$ and we conclude that
    \[ \sigma(v) \equiv \pre{\epsilon}{\sigma}(\alpha_1) \dots \pre{\alpha_1 \dots \alpha_{i-1}}{\sigma}(\alpha_i) \dots \pre{\alpha_1 \dots \alpha_{n-1}}{\sigma}(\alpha_n), \]
    \[ \sigma(w) \equiv \sigma(vs) \equiv \pre{\epsilon}{\sigma}(\alpha_1) \dots \pre{\alpha_1 \dots \alpha_{i-1}}{\sigma}(\alpha_i) \dots \pre{\alpha_1 \dots \alpha_{n-1}}{\sigma}(\alpha_n) \pre{v}{\sigma}(s). \]
    Thus, $\sigma(v)$ and $\sigma(w)$ are connected by an edge in $X$ (whose label, when oriented from $\sigma(v)$ to $\sigma(w)$ is $\pre{v}{\sigma}(s)$).\\

    $``\Leftarrow"$: Suppose there is an oriented edge $e'$ from $\sigma(v)$ to $\sigma(w)$ and put $s' := \ell(e')$. We obtain $\sigma(w) \equiv \sigma(v)s'$. Since $(\pre{v}{\sigma})_{v \in V(T_o^{cube})}$ defines an automorphism on $T_o^{cube}$, {\cite[Theorem 3.6]{HartnickMedici23a}} implies that $\pre{v}{\sigma} : \Sigma_{0,v} \rightarrow \Sigma_{0,\sigma(v)}$ is bijective. Thus, there exists some $s \in \Sigma_0$ such that $\pre{v}{\sigma}(s) = s'$.

    Let $\alpha_1 \dots \alpha_n \in \mathcal{L}_o^{cube}$ be a representative of $v$. Then
    \[ \sigma(v) \equiv \pre{\epsilon}{\sigma}(\alpha_1) \dots \pre{\alpha_1 \dots \alpha_{i-1}}{\sigma}(\alpha_i) \dots \pre{\alpha_1 \dots \alpha_{n-1}}{\sigma}(\alpha_n), \]
    \[ \sigma(vs) \equiv \pre{\epsilon}{\sigma}(\alpha_1) \dots \pre{\alpha_1 \dots \alpha_{i-1}}{\sigma}(\alpha_i) \dots \pre{\alpha_1 \dots \alpha_{n-1}}{\sigma}(\alpha_n) \pre{v}{\sigma}(s) \equiv \sigma(v) s' \equiv \sigma(w). \]
    We conclude that $\sigma(vs) \equiv \sigma(w)$. By Proposition \ref{prop:inv.and.par.imply.that.equivalence.is.preserved}, this implies that $vs \equiv w$. Therefore, $v$ and $w$ are connected by an edge.\\

    We conclude that the action of $\sigma$ on $V(T_o) \cong X^{(0)}$ extends to an isometric action on $X$, which fixes the vertex $o$. Since the action of $g$ on the vertices is given by $\sigma$, the portrait of $g$ is exactly given by $(\pre{v}{\sigma})_{v \in V(T_o)}$. Uniqueness of $g$ follows from the fact that $g$ is uniquely determined by its action on the vertices, which is determined by $\sigma$. This proves the Lemma.
\end{proof}

\begin{proof}[Proof of Theorem \ref{thm:characterisation.of.stabilizing.isometries.as.tree.automorphisms}]
    The first statement is the combined result of Lemma \ref{lem:portraits.define.automorphisms}, \ref{lem:stabilizer.elements.satisfy.label.par.inv}, and \ref{lem:stabilizer.elements.are.exactly.automorphisms.satisfying.label.par.inv}. The second statement is the content of Lemma \ref{lem:par.and.inv.imply.preservation.of.children}.
\end{proof}

Having this characterisation of the image of $G_o \hookrightarrow \Aut(T_o)$ and the image of $G_o \hookrightarrow \Aut(T_o^{cube})$, we identify $G_o$ with its image under these inclusions from now on.

\section{Special actions and edge labelings} \label{sec:special.actions.and.edge.labelings}

In this section, we present a characterisation of compact special cube complexes in terms of actions on $\CAT$ cube complexes and edge labelings.

\begin{lemma} \label{lem:label.preserving.actions.are.free}
    Suppose there exists a $\Gamma$-invariant admissible edge labeling on $X$. Then $\Gamma$ acts freely on $X$.
\end{lemma}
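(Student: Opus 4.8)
Freeness means that the only element of $\Gamma$ with a fixed point in $X$ is the identity, so the plan is to prove: if $g \in \Gamma$ fixes a point, then $g = \mathrm{id}$. The argument has two stages — first reduce to the case where $g$ fixes a \emph{vertex}, then propagate a fixed vertex across the $1$-skeleton — and the first stage is where the labeling axioms do the real work.

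First I would reduce to a fixed vertex. Given $g \in \Gamma$ with $g(x) = x$, note that $x$ lies in the interior of a unique cube $C$ (the open cubes partition $X$), and $g(C) = C$ since $g$ maps open cubes to open cubes. If $\dim C = 0$ we are done, so assume $\dim C = n \geq 1$, fix a vertex $u$ of $C$, and identify $C$ with $[0,1]^n$ so that $u = (0,\dots,0)$. Let $e_1,\dots,e_n \in \vec{\mathcal E}(X)_u$ be the edges of $C$ at $u$ in the coordinate directions and put $s_i := \ell(e_i)$; these are pairwise distinct by (A2). Since $g|_C$ is a combinatorial symmetry of the $n$-cube, it is a signed coordinate permutation, given by some $\pi \in S_n$ and signs $\varepsilon_i \in \{\pm 1\}$ such that $g(e_i)$ is an edge of $C$ in direction $\pi(i)$, pointing away from $u$ if $\varepsilon_i = +1$ and towards $u$ if $\varepsilon_i = -1$. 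By (A1), every edge of $C$ pointing away from $u$ in direction $j$ carries the label $s_j$, and its reverse carries $s_j^{-1}$; hence $\ell(g(e_i))$ equals $s_{\pi(i)}$ when $\varepsilon_i = +1$ and $s_{\pi(i)}^{-1}$ when $\varepsilon_i = -1$. On the other hand $\Gamma$-invariance of $\ell$ gives $\ell(g(e_i)) = \ell(e_i) = s_i$.

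Combining these, I would rule out $\varepsilon_i = -1$. If $\varepsilon_i = -1$ then $s_{\pi(i)} = s_i^{-1}$; if $\pi(i) = i$ this says $s_i = s_i^{-1}$, impossible because $\cdot^{-1}$ is a fixpoint-free involution on $\Sigma_0$; while if $\pi(i) \neq i$ then $e_i$ and $e_{\pi(i)}$ span the $2$-face of $C$ in the $i$- and $\pi(i)$-directions, so $[s_i, s_{\pi(i)}] = 1$, and then $s_{\pi(i)} = s_i^{-1}$ would give $[s_i, s_i^{-1}] = 1$, contradicting Example~\ref{exam:trivial.examples}. Hence every $\varepsilon_i = +1$, so $g|_C$ is a pure coordinate permutation and in particular $g(u) = u$. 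Now $g$ fixes a vertex: it permutes $\vec{\mathcal E}(X)_u$, and since $\ell$ is $\Gamma$-invariant and injective on $\vec{\mathcal E}(X)_u$ by (A2), $g$ fixes every oriented edge at $u$, hence every neighbour of $u$; iterating along the connected $1$-skeleton shows $g$ fixes $X^{(0)}$ pointwise, and a cubical automorphism fixing all vertices is the identity, so $g = \mathrm{id}$.

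The main obstacle I expect is exactly the reduction step: a priori $g$ could fix only the barycentre of a positive-dimensional cube, so one genuinely has to analyse the induced symmetry of that cube, and ruling out its reflective part is where both the fixpoint-freeness of the involution on $\Sigma_0$ and the fact that $s$ does not commute with $s^{-1}$ (Example~\ref{exam:trivial.examples}) are essential. Everything downstream of a fixed vertex is standard.
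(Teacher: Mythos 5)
Your proof is correct and follows essentially the same route as the paper: both arguments reduce a fixed point to a fixed vertex by examining the induced cubical symmetry of the smallest cube containing it, using $\Gamma$-invariance of $\ell$, (A1), fixpoint-freeness of $\cdot^{-1}$, and $[s,s^{-1}]\neq 1$, and both then propagate a fixed vertex across $X$ via injectivity of $\ell$ on $\vec{\mathcal E}(X)_v$ (the paper phrases this propagation in its portrait formalism, while you argue directly on the $1$-skeleton). Your signed-coordinate-permutation analysis simply spells out the case check that the paper's phrase ``it can only preserve $C(p)$ if it fixes all vertices of $C(p)$'' leaves implicit.
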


\begin{proof}
    Let $\ell$ be a $\Gamma$-invariant admissible edge labeling on $X$. We start by showing that $\Gamma$ acts freely on $X^{(0)}$. Let $g \in \Gamma$ such that $g$ fixes a vertex $o \in X^{(0)}$. Then $g \in G_o$ and we can identify $g$ with its portrait $(\pre{v}{\sigma(g)})_{v \in V(T_o^{cube})}$. By definition of the portrait, $\pre{v}{\sigma(g)}(s)$ is the label of $g(e)$, where $e$ is the unique outgoing edge at $v$ labeled $s$. Since $\ell$ is $\Gamma$-invariant, $g$ has to preserve the labeling of oriented edges. Therefore, $\pre{v}{\sigma(g)} = \id$ for every $v \in \mathcal{L}_o$ and, by admissibility, for every $v \in \mathcal{L}_o^{cube}$. Since $g$ is uniquely characterised by its portrait, this implies that $g = \Id_X$. Thus, $\Gamma$ acts freely on $X^{(0)}$.

    Now suppose $g$ does not fix any vertices, but fixes a point $p \in X$. Let $C(p)$ be the lowest-dimensional cube that contains $p$. Without loss of generality, we may assume that $p$ is chosen such that $C(p)$ has minimal dimension among all fixed points of $g$. Since $g$ fixes $p$, it has to preserve $C(p)$ and we can look at the restriction of $g$ to $C(p)$. This restriction has to send the edges of $C(p)$ to the edges of $C(p)$ while preserving the labeling $\ell$. Let $s_1^{\pm 1}, \dots, s_n^{\pm 1}$ be the labels appearing on the edges of $C(p)$. Since every vertex of $C(p)$ is uniquely determined by an orientation of all the half-spaces in $C(p)$, every vertex $v$ in $C(p)$ is uniquely determined by a choice $s_1^{\epsilon_1}, \dots s_n^{\epsilon_n}$, where $\epsilon_1, \dots \epsilon_n \in \{ \pm 1 \}$. Since $g$ has to preserve the labeling, it can only preserve $C(p)$ if it fixes all vertices of $C(p)$. This contradicts our assumption and we conclude that every $g \in \Gamma$ that has a fixed point also fixes a vertex. Since $\Gamma$ acts freely on $X^{(0)}$, this implies that $\Gamma$ acts freely on $X$.
\end{proof}

\begin{theorem} \label{thm:label.preserving.actions.are.special}
    Suppose $\Gamma < \Aut(X)$ acts cocompactly on $X$ and suppose there exists a $\Gamma$-invariant admissible edge-labeling on $X$. Then $\faktor{X}{\Gamma}$ is a non-positively curved special cube complex and $X$ is the universal covering of $\faktor{X}{\Gamma}$.
\end{theorem}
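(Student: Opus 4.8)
\emph{Proof proposal.} The plan is to first upgrade the hypotheses into a covering-space statement, and then to check the four Haglund--Wise conditions for $S := \faktor{X}{\Gamma}$ one by one, translating each into a statement about $X$ and about the axioms (A1)--(A3) of the $\Gamma$-invariant admissible labeling $\ell$ (together with Examples \ref{exam:trivial.examples} and \ref{rem:commutation.extends.to.inverses}).

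First I would deal with the topology. By Lemma \ref{lem:label.preserving.actions.are.free}, $\Gamma$ acts freely on $X$; since $\Gamma$ acts cocompactly, $X$ has only finitely many isometry types of cubes, hence bounded dimension, so every metric ball of $X$ contains only finitely many vertices, and from this one checks readily that the free cocompact action of $\Gamma$ on $X$ is properly discontinuous. A free cubical action is automatically without inversions (an inversion of a cube fixes its barycenter), so the quotient map $p\colon X \to S$ is a covering map onto a compact cube complex $S$, and $p$ restricts to an isomorphism $\lk_X(v) \xrightarrow{\sim} \lk_S(p(v))$ on links. Since $X$ is $\CAT$, these links are flag, so $S$ is non-positively curved; and since $X$ is $\CAT$, hence simply connected, $X$ is the universal covering of $S$.

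Next I would treat the hyperplanes. The $\Gamma$-invariance of $\ell$ lets it descend to an edge labeling $\bar\ell$ of $S$, still induced from an oriented parallel class labeling and still satisfying (A2) and (A3) (these are, respectively, a one-vertex and a two-vertex condition, which transfer along the covering). By (A1) the oriented edges dual to a hyperplane $\widehat h$ of $X$ form exactly two oriented parallel classes, carrying labels $s$ and $s^{-1}$ with $s \neq s^{-1}$; as $\Gamma$ preserves labels it cannot interchange these two classes, so each hyperplane $\bar{\widehat h}$ of $S$ inherits a well-defined label pair $\{s, s^{-1}\}$ and a consistent transverse orientation (the ``$s$-side'' versus the ``$s^{-1}$-side''). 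In particular every hyperplane of $S$ is two-sided. If some hyperplane of $S$ were not embedded, a cube $\bar C$ of $S$ would carry two distinct dual edges $\bar e_1 \neq \bar e_2$ of it; lifting $\bar C$ to a cube $C$ of $X$, the edges $e_1,e_2$ are dual to two \emph{distinct} hyperplanes of $X$ (a cube in a $\CAT$ cube complex meets each hyperplane at most once) which are $\Gamma$-equivalent, hence share the label pair $\{s,s^{-1}\}$; the vertex of $C$ that is the initial vertex of the $s$-labeled edge in each of the two directions then has two distinct outgoing edges labeled $s$, contradicting (A2). The same mechanism rules out direct self-osculation: an osculating pair in $S$ lifts to two distinct outgoing edges of $X$ at a common vertex, both pointing to the ``$s$-side'', hence both labeled $s$, contradicting (A2).

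Finally, for inter-osculation, suppose distinct hyperplanes $\bar{\widehat h}, \bar{\widehat k}$ of $S$ both cross and osculate. Crossing gives, after lifting, a square in $X$ two of whose sides are oriented dual edges of lifts of $\bar{\widehat h}$ and $\bar{\widehat k}$ with labels $s$ and $t$, so $[s,t]=1$, and Example \ref{rem:commutation.extends.to.inverses} then gives $[s^{\pm1},t^{\pm1}]=1$ for all four sign choices. Osculation gives a vertex $v$ of $X$ with two distinct outgoing edges $e_1,e_2$ dual to lifts of $\bar{\widehat h}$, $\bar{\widehat k}$; their labels are some signs of $s$ and of $t$, hence commute, so by (A3) $e_1$ and $e_2$ span a square in $X$, whose image is a square in $S$ in which $\bar{\widehat h}$ and $\bar{\widehat k}$ cross rather than osculate at that vertex --- a contradiction. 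Hence $S$ is special. I expect the main obstacle to be the bookkeeping in the last two steps: verifying that the label pair of a hyperplane is genuinely invariant under the $\Gamma$-orbit and that the transverse orientation descends (so that the ``$s$-side'' is meaningful in $S$), and matching up orientations and signs in the inter-osculation argument so that Example \ref{rem:commutation.extends.to.inverses} and axiom (A3) apply verbatim; the covering-space and curvature statements are routine once proper discontinuity is recorded.
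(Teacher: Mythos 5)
Your proposal is correct and follows essentially the same route as the paper: descend the labeling to $S = X/\Gamma$, show each of the four specialness pathologies lifts to a violation of (A1)--(A3) or fixpoint-freeness in $X$, and use freeness (Lemma \ref{lem:label.preserving.actions.are.free}) plus local isometry to get the covering-space and non-positive-curvature statements. The only substantive differences are cosmetic: you establish the covering/NPC/universal-cover facts first rather than last, and in the inter-osculation step you package (A3) through the derived fact that commuting labels span squares at every common vertex rather than quoting (A3) verbatim as a two-vertex condition; both are sound, and your extra bookkeeping about transverse orientations and the sign issues (via Example \ref{rem:commutation.extends.to.inverses}) makes explicit something the paper's proof leaves implicit.
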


\begin{proof}
    Let $\tilde{\ell}$ be a $\Gamma$-invariant admissible edge labeling on $X$. Consider the quotient $S := \faktor{X}{\Gamma}$. Since $\Gamma$ acts cubically on $X$, this quotient is a cube complex. Since $\tilde{\ell}$ is $\Gamma$-invariant, it descends to a labeling $\ell$ of the parallel classes of oriented edges in $S$.

    In order to show that $S$ is special, we need to show that the four pathologies do not occur. Suppose there exists a hyperplane $\hat{h}$ in $S$ that self-intersects. This produces two oriented edges $e, e'$ in $S$ that span a square and have same label $s$. This square lifts to a square in $X$ spanned by two edges with the same label. This contradicts property (A2) in the definition of admissible edge labels and implies that $\tilde{\ell}$ is not admissible, a contradiction.

    Suppose there exists a hyperplane $\hat{h}$ in $S$ that is not two-sided. This produces an edge $e$ in $S$ such that both of its orientations have the same label. This lifts to an edge in $X$ such that both of its orientations have the same label, which contradicts the fact that the map $\cdot^{-1} : \Sigma_0 \rightarrow \Sigma_0$ in the definition of an oriented parallel class labeling is assumed to be fixpoint free.

    Suppose there exists a hyperplane $\hat{h}$ in $S$ that self-osculates. Then we find a vertex in $S$ with two outgoing edges $e, e'$ that have the same label. This lifts to a vertex in $X$ with two outgoing edges that have the same label, which contradicts (A2).

    Suppose there exist two hyperplanes $\hat{h}, \hat{k}$ in $S$ that interosculate. We find two vertices $v$, $w$ with outgoing edges $e_1, e_2 \in \vec{\mathcal{E}}(S)_v$, $e'_1, e'_2 \in \vec{\mathcal{E}}(S)_w$ such that $e_1$ and $e_2$ span a square, $e'_1$ and $e'_2$ do not span a square, $\ell(e_1) = \ell(e'_1)$, and $\ell(e_2) = \ell(e'_2)$. This lifts to two vertices $\overline{v}$ $\overline{w}$ and edges $\overline{e_1}, \overline{e_2} \in \vec{\mathcal{E}}(X)_{\overline{v}}$, $\overline{e'_1}, \overline{e'_2} \in \vec{\mathcal{E}}(X)_{\overline{w}}$ such that $\overline{e_1}, \overline{e_2}$ span a square, $\overline{e'_1}, \overline{e'_2}$ do not span a square and $\tilde{\ell}(\overline{e_1}) = \tilde{\ell}(\overline{e'_1})$, $\tilde{\ell}(\overline{e_2}) = \tilde{\ell}(\overline{e'_2})$. This contradicts (A3).

    We conclude that all four pathologies cannot occur. Thus, $S$ is a special cube complex. Non-positivity follows from the fact that $X$ is non-positively curved and that the projection $X \rightarrow S$ is a local isometry since $\Gamma$ acts freely (by Lemma \ref{lem:label.preserving.actions.are.free}) and cubically. Thus, $X \rightarrow S$ is a covering map and, since it is contractible, $X$ is the universal covering of $\faktor{X}{\Gamma}$.
\end{proof}

Lemma \ref{lem:existenceoflabeling}, Lemma \ref{lem:label.preserving.actions.are.free}, and Theorem \ref{thm:label.preserving.actions.are.special} together imply Theorem \ref{thmintro:characterisation.of.special.cube.complexes}.\\

It turns out that these findings allow us to show that the action of $\Gamma$ on $X$ can be expressed in a particularly nice way with respect to a $\Gamma$-invariant labeling $\ell$.

\begin{definition} \label{def:translations}
    Let $v \in \mathcal{L}_o^{cube}$ such that $\mathcal{L}_o^{cube} = \mathcal{L}_v^{cube}$. This means that the concatenation of words
    \[ \mathcal{L}_o^{cube} \rightarrow \mathcal{L}_o^{cube}, \quad w \mapsto vw \]
    is well-defined. It induces a map on the vertices of $X$ that can be written as
    \[ L_v : \mathcal{L}_o \rightarrow \mathcal{L}_o, \quad w \mapsto \mathrm{norm}(vw). \]
    We call the map $L_v$ the (left-)translation by $v$.
\end{definition}

One immediately sees that two vertices $w, w'$ are connected by an edge if and only if $L_v(w), L_v(w')$ are. Thus, $L_v$ extends to an isometry on the $1$-skeleton of $X$ and, since $X$ is $\CAT$, to an automorphism of $X$. Translations form a subgroup of $\Aut(X)$ that acts freely on $X^{(0)}$ and $L_{v^{-1}} = L_v^{-1}$. The following Lemma shows that there are many interesting instances where such a $\Gamma$ can be produced.

\begin{lemma} \label{lem:translations.of.universal.coverings}
    Let $S$ be a compact, non-positively curved special cube complex, $\Gamma$ its fundamental group, and $X$ its universal covering. Let $\ell$ be an admissible edge labeling on $S$. Then the action $\Gamma \curvearrowright X$ by decktransformation is an action by translations.
\end{lemma}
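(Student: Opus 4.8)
The plan is to exhibit, for each deck transformation $g\in\Gamma$, an explicit cube path word $v$ such that $g$ agrees with the translation $L_v$ of Definition~\ref{def:translations}; the word $v$ will simply record a cube path from the basepoint $o$ to $g(o)$.

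First I would set up notation. Fix a basepoint $o\in X^{(0)}$ and let $\tilde{\ell}$ be the lift to $X$ of the admissible edge labeling $\ell$ on $S$; by Lemma~\ref{lem:existenceoflabeling} this is a $\Gamma$-invariant admissible edge labeling, so in particular $\tilde{\ell}(g\cdot\gamma)=\tilde{\ell}(\gamma)$ for every cube path $\gamma$ and every $g\in\Gamma$ (the extension of $\tilde{\ell}$ to diagonals and to cube paths is built from edge labels and from the combinatorics of squares and cubes, all of which a cubical automorphism preserves). Recall also that $\tilde{\ell}$ restricts to a bijection $D(X)_o\to\mathcal{L}_o^{cube}$ under which $X^{(0)}$ is identified with $\mathcal{L}_o^{cube}/\equiv_o$, and that the label word of a concatenation of cube paths is the concatenation of the label words. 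Now fix $g\in\Gamma$, put $p:=g(o)$, choose a cube path from $o$ to $p$ (one exists since $X$ is connected), and let $v\in\mathcal{L}_o^{cube}$ be its label word. Since $g$ is a cubical automorphism with $g(o)=p$, the assignment $\gamma\mapsto g\cdot\gamma$ is a bijection $D(X)_o\to D(X)_p$ preserving label words; hence $\mathcal{L}_o^{cube}=\mathcal{L}_p^{cube}=\mathcal{L}_v^{cube}$, so $L_v$ is defined, and the equivalence relations $\equiv_o$ and $\equiv_p$ coincide on this common language (applying $g$ and $g^{-1}$ to pairs of equal-endpoint cube paths).

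The main step is to verify $g=L_v$ on vertices. Let $w\in X^{(0)}$ and let $\gamma$ be a cube path from $o$ to $w$ with $u:=\tilde{\ell}(\gamma)$, so $w$ is the vertex represented by $u$. Then $g\cdot\gamma$ is a cube path from $g(o)=p$ to $g(w)$ with label word $u$; prepending the chosen cube path from $o$ to $p$ (label word $v$) yields a cube path from $o$ to $g(w)$ with label word $vu$. Therefore $g(w)$ is the vertex represented by $vu$, that is $g(w)=\mathrm{norm}(vu)=L_v(w)$. Since a cubical automorphism of $X$ is determined by its action on $X^{(0)}$, and both $g$ and $L_v$ are such automorphisms (the latter by the discussion preceding the lemma), it follows that $g=L_v$; as $g\in\Gamma$ was arbitrary, $\Gamma$ acts on $X$ by translations.

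I expect the only point requiring genuine care — as opposed to mere unwinding of the encodings of Section~\ref{sec:encodings.of.cat0.cube.complexes} — to be the well-definedness of ``prepending $v$'' as an operation on vertices of $X$: one must know that translating a vertex by $v$ does not depend on the chosen representative in $\mathcal{L}_o^{cube}$, and this is exactly the coincidence of $\equiv_o$ with $\equiv_p$ established above via the label-preserving bijection $\gamma\mapsto g\cdot\gamma$. The remaining ingredients (invariance of $\tilde{\ell}$ under $\Gamma$, the concatenation identity for label words of cube paths, and the dictionary between cube paths, words, and vertices) are all routine.
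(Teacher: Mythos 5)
Your proposal is correct and follows essentially the same route as the paper's proof: both identify the deck transformation $g$ with the translation $L_v$ for $v$ the label word of a cube path from $o$ to $g(o)$, using $\Gamma$-invariance of the lifted labeling. The paper phrases this via the standard $\pi_1(S,\bar o)\cong\mathrm{Deck}(X/S)$ correspondence — take a loop $\bar\gamma$ representing the element of $\Gamma$, lift it to $\gamma$ starting at $o$, and assert the deck transformation is translation by $\gamma$ — while you work directly on $X$ with the deck transformation $g$ and verify $g=L_v$ vertex by vertex; your version is more explicit in its verification (in particular in checking $\mathcal L_o^{cube}=\mathcal L_{g(o)}^{cube}$ and that the endpoint computations are unambiguous), but the underlying argument is the same.
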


\begin{proof}
    Choose $o \in X^{(0)}$ and let $\bar{o}$ be its projection in $S$. Viewing $\Gamma$ as the fundamental group with basepoint $\bar{o}$, every element of $\Gamma$ can be represented by an edge path $\bar{\gamma}$ in the $1$-skeleton of $S$ that starts and ends at $\bar{o}$. Let $\gamma$ be the unique lift of $\bar{\gamma}$ to that starts at $o$. The action of $[\bar{\gamma}] \in \pi_1(S)$ on $X$ is exactly the translation by the edge path $\gamma$. (Note that, for any $v \in X^{(0)}$ and any $g \in \pi_1(S)$, $\mathcal{L}_v^{cube} = \mathcal{L}_{g \cdot v}^{cube}$, since $g$ preserves the chosen edge labeling.) Thus, the fundamental group acts on $X$ by translations.
\end{proof}

\begin{corollary} \label{cor:label.preserving.actions.are.by.translations}
    Suppose $\Gamma < \Aut(X)$ acts cocompactly on $X$ and suppose there exists a $\Gamma$-invariant admissible edge-labeling on $X$. Then $\Gamma$ acts by translations.
\end{corollary}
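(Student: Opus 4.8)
The plan is to reduce the statement to Lemma~\ref{lem:translations.of.universal.coverings} by exhibiting $\Gamma$ as the fundamental group of a compact special cube complex, acting by deck transformations, equipped with a labeling on the quotient whose lift is the given one.

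First I would apply Lemma~\ref{lem:label.preserving.actions.are.free} to conclude that $\Gamma$ acts freely on $X$, and Theorem~\ref{thm:label.preserving.actions.are.special} to conclude that $S := \faktor{X}{\Gamma}$ is a non-positively curved special cube complex, that the quotient map $p\colon X \to S$ is a covering, and that $X$ is the universal covering of $S$. Cocompactness of the action gives that $S$ is compact. Since $p$ is a covering and $X$ is simply connected, $\Gamma$ is precisely the deck transformation group of $p$, so we may identify $\Gamma$ with $\pi_1(S)$, the action on $X$ becoming the deck transformation action.

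Second, I would observe that the $\Gamma$-invariant admissible edge labeling $\ell$ on $X$, being constant on $\Gamma$-orbits of oriented edges, descends along $p$ to a labeling $\bar\ell$ of the oriented parallel classes of $S$, and that $\bar\ell$ is again admissible: (A1) is immediate, (A2) is a local property preserved under the covering $p$, and (A3) follows by lifting a hypothetical violation in $S$ to one in $X$ and invoking (A3) for $\ell$ — this is precisely the descent argument already carried out (in the opposite direction) in the proofs of Lemma~\ref{lem:existenceoflabeling} and Theorem~\ref{thm:label.preserving.actions.are.special}. By construction the lift of $\bar\ell$ to $X$ is again $\ell$, so the notion of translation from Definition~\ref{def:translations} associated with $\ell$ coincides with the one associated with the lift of $\bar\ell$.

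Finally I would invoke Lemma~\ref{lem:translations.of.universal.coverings} applied to the data $(S,\Gamma,X,\bar\ell)$: it states that the deck transformation action of $\pi_1(S)$ on its universal covering is an action by translations. Transporting this back through the identification $\Gamma \cong \pi_1(S)$ yields exactly the corollary. I do not anticipate a genuine obstacle here; the only steps requiring a little care are the identification of $\Gamma$ with $\pi_1(S)$ — where freeness from Lemma~\ref{lem:label.preserving.actions.are.free} is essential, since otherwise $p$ would fail to be a covering — and the verification that $\bar\ell$ is admissible, both of which are routine given the results already established.
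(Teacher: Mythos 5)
Your proposal is correct and follows essentially the same route as the paper: apply Lemma~\ref{lem:label.preserving.actions.are.free} and Theorem~\ref{thm:label.preserving.actions.are.special} to realize $\Gamma$ as the deck transformation group of the universal covering $X \to \faktor{X}{\Gamma}$ of a compact special cube complex, then invoke Lemma~\ref{lem:translations.of.universal.coverings}. You are more explicit than the paper about the descent of $\ell$ to an admissible labeling $\bar\ell$ on the quotient and about why the notion of translation induced by $\bar\ell$ agrees with the one induced by $\ell$ — these points are left implicit in the paper (the descent is carried out inside the proof of Theorem~\ref{thm:label.preserving.actions.are.special}), so your additional care is welcome but not a departure.
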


\begin{proof}
    By Lemma \ref{lem:label.preserving.actions.are.free} and Theorem \ref{thm:label.preserving.actions.are.special}, the action of $\Gamma$ on $X$ is free and the quotient $\faktor{X}{\Gamma}$ is a special cube complex with $X$ as its universal covering. By Lemma \ref{lem:translations.of.universal.coverings}, the action of $\Gamma$ is by translations.
\end{proof}




\section{Topologically generating set} \label{sec:topologicallygeneratingset}

Throughout this section, we assume $X$ to be a locally finite $\CAT$ cube complex and $\Gamma < \Aut(X)$ a subgroup whose action on $X$ is cocompact. Suppose there exists a $\Gamma$-invariant admissible edge labeling $\ell : \vec{\mathcal{E}}(X) \rightarrow \Sigma_0$. By our results from section \ref{sec:special.actions.and.edge.labelings}, we know that $\Gamma$ acts freely and by translations.

\begin{definition} \label{def:commuting.words.and.reducibility}
    Let $o \in X^{(0)}$. Given two words $w_1, w_2 \in \mathcal{L}_o^{cube}$, we say that $w_1$ and $w_2$ {\it commute} if all letters in $w_1$ commute with all letters in $w_2$ and we write $[w_1, w_2] = 1$.
    
    A word $w \in \mathcal{L}_o^{cube}$ is called {\it reducible} if its corresponding cube path with starting point $o$ crosses some hyperplane more than once. (Equivalently, there exists a letter $s \in \Sigma_0$ and a word $u$ such that $[s, u] = 1$ and $\alpha u\beta^{-1}$ is a subword in $w$, where $s \in \alpha$ and $s^{-1} \in \beta$.) A word is called {\it reduced}, if it is not reducible.
\end{definition}

Note that every normal cube path induces a reduced word, that is the language $\mathcal{L}_o$ consists entirely of reduced words. Furthermore, every cube path that starts at $o$ is $\equiv_o$-equivalent to an edge path that does not cross any hyperplane more than once. These edge paths are exactly the shortest paths in the $1$-skeleton of $X$ from $o$ to the endpoint of the initial cube path. In the notation of $\mathcal{L}_o^{cube}$, this means that for every word $w \in \mathcal{L}_o^{cube}$ there exists a reduced word $w' \in \mathcal{L}_o^{cube} \cap \Sigma_0^*$ such that $w \equiv_o w'$.\\

Our goal in this section is to construct a family of elements in $\Aut(T_o^{cube})$ that generates a dense subgroup of $G_o = \Stab_{\Aut(X)}(o)$. We will first give an abstract argument that provides us with such a family and use this to prove Theorem \ref{thmintro:topological.finite.generation}. After that, we will explicitly construct a topologically generating family of $G_o$ under the assumption that $\Gamma$ acts vertex-transitively on $X$. This will yield a proof of Theorem \ref{thmintro:topological.finite.generation.vertex.transitive.case}.




\subsection{Generating a dense subgroup of the stabilizer} \label{subsec:generating.a.dense.subgroup.of.the.stabilizer}

Fix a basepoint $o \in X^{(0)}$. For every $v \in \mathcal{L}_o$, we will find a finite set of isometries in $G_o$ that addresses `everything that can happen at $v$'. We need to distinguish between the case $v = \epsilon$ and $v \neq \epsilon$.

\begin{defcon}[Generators at $o$] \label{con:generators.at.o}
    Let $\sigma : \Sigma_{0,o} \rightarrow \Sigma_0$ be a label-morphism. We say that {\it $\sigma$ appears at $o$ in $G_o$} if there exists some isometry $g \in G_o$ such that $\pre{\epsilon}{\sigma(g)} = \sigma$. For every label-morphism $\sigma$ that appears at $o$ in $G_o$, we choose one such isometry and denote it by $A_{\epsilon, \sigma}$. 
\end{defcon}

To introduce a choice of generators at $v$ for $v \neq \epsilon$, we need some preparation. First, we sort all vertices $w \in \mathcal{L}_o$ into three distinct types relative to $o$ and $v$.

\begin{definition} \label{def:types.relative.to.o.v}
    Let $v \in \mathcal{L}_o \setminus \{ \epsilon \}$ and $w \in \mathcal{L}_o$. Since every cube path is $\equiv$-equivalent to a reduced cube path, we can choose a, usually not unique, reduced word $w' \in \mathcal{L}_v^{cube}$ such that $w' \equiv_v v^{-1} w$. We obtain the equivalence
    \[ w \equiv vv^{-1}w \equiv vw'. \]
    We now distinguish between three types:

    \begin{enumerate}
        \item[Type 1:] $vw'$ is reducible.

        \item[Type 2:] $vw'$ is reduced and there exists $\alpha \in \Sigma$ such that $v\alpha$ is reducible and $[\alpha, w'] = 1$.

        \item[Type 3:] $vw'$ is reduced and for all $\alpha \in \Sigma$ such that $v\alpha$ is reducible, $[\alpha, w'] \neq 1$.
    \end{enumerate}

    We say that $w$ is {\it of Type 1, 2, or 3 relative to $(o,v)$} respectively.
\end{definition}

\begin{remark} \label{rem:Basics.of.types}
    We highlight the following points:
    \begin{itemize}
        \item The type of $w$ does not depend on the choice of $w'$. Since every choice of $w'$ crosses the same hyperplanes, reducibility of $vw'$ is independent of $w'$ and all choices of $w'$ commute with the same letters $\alpha \in \Sigma$.
        
        \item The vertex $v$ is of Type 2 relative to $(o,v)$. (Following Definition \ref{def:commuting.words.and.reducibility}, the empty word commutes with everything.) The vertex $o$ is of Type 1 relative to $(o,v)$. In particular, swapping $o$ and $v$ in the pair $(o,v)$ changes the Type of vertices.

        \item If $w$ is of Type 3, then $d_{\ell^1}(o,w) > d_{\ell^1}(o,v)$, as $w \equiv vw'$, where $vw'$ is reduced and $w' \neq \epsilon$.
    \end{itemize}
\end{remark}

For every $v \in \mathcal{L}_o \setminus \{ \epsilon \}$, our goal is to find elements $A_{v, \sigma} \in G_o$ that fix all vertices of Type 1 and 2. To do so, we make use of the following definitions.

\begin{definition} \label{def:compatibility}
    Given a vertex $v \in \mathcal{L}_o \setminus \{ \epsilon \}$ and a label-morphism $\sigma : \Sigma_{0,v} \rightarrow \Sigma_0$, we say that $\sigma$ is {\it compatible with $v$} if and only if for all $\alpha \in \Sigma_v$ such that $v\alpha$ is reducible and for all $\beta \in \Sigma_v$ such that $[\alpha, \beta ] = 1$, we have $\sigma(\alpha) = \alpha$ and $\sigma(\beta) = \beta$.
\end{definition}

\begin{defcon}[Generators at $v$] \label{con:generators.at.v}
    Let $v \in \mathcal{L}_o \setminus \{ \epsilon \}$ and $\sigma : \Sigma_{0,v} \rightarrow \Sigma_0$ a label-morphism. We say that {\it $\sigma$ appears at $v$ in $G_o$} if there exists some isometry $g \in G_o$ such that $\pre{v}{\sigma(g)} = \sigma$ and $g$ fixes all vertices of Type 1 and 2 relative to $(o,v)$. For every label-morphism $\sigma$ that appears at $v$ in $G_o$, we choose one such isometry and denote it by $A_{v, \sigma}$.
\end{defcon}

\begin{remark}
    One easily checks that, if $\sigma$ appears at $v$ in $G_o$, then $\sigma$ has to be compatible with $v$ as otherwise $A_{v, \sigma}$ would not fix all vertices of Type 1 and 2.
\end{remark}

Fix a choice of $A_{v, \sigma}$ for all $v \in \mathcal{L}_o$ and all $\sigma$ that appear at $v$ in $G_o$. With these choices, we obtain a family of automorphisms $A_{v, \sigma} \in G_o$.

\begin{proposition} \label{prop:generating.dense.subgroup.of.stabiliser}
    Suppose that for every $v \in \mathcal{L}_o \setminus \{ \epsilon \}$, every $\sigma$ that is compatible with $v$ also appears at $v$ in $G_o$. Then the family
    \[ \{ A_{v, \sigma} \vert v \in \mathcal{L}_o, \sigma \text{ appears at $v$ in $G_o$ } \} \]
    generates a dense subgroup of $G_o$.
\end{proposition}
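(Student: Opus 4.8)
The plan is to show that the subgroup $H$ generated by the family $\{A_{v,\sigma}\}$ is dense in $G_o$ for the topology of pointwise convergence on $X^{(0)}$. Since $X$ is locally finite and a cubical automorphism is determined by its action on vertices, and since every finite set of vertices lies in some $\ell^1$-ball $\bar B_N := \{w\in X^{(0)} : d_{\ell^1}(o,w)\le N\}$, it suffices to prove that for every $g\in G_o$ and every $N\ge 1$ there is $h\in H$ agreeing with $g$ on $\bar B_N$; here $g$ fixes $o$, so it preserves each $\bar B_N$. I would prove this by induction on $N$. For $N=1$ take $h:=A_{\epsilon,\sigma_0}$ with $\sigma_0:=\pre{\epsilon}{\sigma(g)}$, which appears at $o$ in $G_o$ (witnessed by $g$ itself); since $\pre{\epsilon}{\sigma(h)}=\pre{\epsilon}{\sigma(g)}$, the maps $h$ and $g$ agree on $o$ and its neighbours.

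For the inductive step, suppose $h_N\in H$ agrees with $g$ on $\bar B_N$ and set $k:=g\,h_N^{-1}\in G_o$; then $k$ fixes $\bar B_N$ pointwise. For each vertex $v$ with $d_{\ell^1}(o,v)=N$ (identified with its normal cube path), write $\sigma_v:=\pre{v}{\sigma(k)}$, which by Lemma \ref{lem:portraits.define.automorphisms} is a bijection of $\Sigma_{0,v}$. The first key step is to show $\sigma_v$ is compatible with $v$ in the sense of Definition \ref{def:compatibility}. Since between two vertices of $X$ there is at most one edge and $k$ fixes every vertex of $\bar B_N$, the map $k$ fixes every edge joining two vertices of $\bar B_N$; hence $\sigma_v$ fixes every ``backward'' label $t$ at $v$ (one with $\hat h(t)$ separating $o$ from $v$, so that the endpoint of the $t$-edge lies at distance $N-1$). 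If a label $s$ at $v$ commutes with such a backward $t$, then the fourth corner $\mathrm{norm}(vst)=(v\ast t)\ast s$ again lies in $\bar B_N$, so $k$ fixes the edge $v\ast t\to (v\ast t)\ast s$, and property $par$ gives $\sigma_v(s)=\pre{v\ast t}{\sigma(k)}(s)=s$. As every $\alpha\in\Sigma_v$ with $v\alpha$ reducible contains a backward label with which all of $\alpha$, and everything commuting with $\alpha$, commutes, compatibility follows; hence by hypothesis $A_{v,\sigma_v}$ is defined, and lies in $H$.

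Finally, put $p:=\prod_{d_{\ell^1}(o,v)=N}A_{v,\sigma_v}$ (in any order) and $h_{N+1}:=p\,h_N$; I claim $h_{N+1}$ agrees with $g$ on $\bar B_{N+1}$, equivalently $p$ agrees with $k$ on $\bar B_{N+1}$. Each factor $A_{v,\sigma_v}$ fixes $\bar B_N$ pointwise, since all Type~1 and Type~2 vertices relative to $(o,v)$ include every vertex at distance $\le N$ (Remark \ref{rem:Basics.of.types}); so only agreement on the sphere $S_{N+1}$ of radius $N+1$ remains. The essential geometric input is that the backward edges at any vertex $w$ pairwise commute (they span the initial cube of a normal cube path from $w$ to $o$, cf.\ \cite{NibloReeves98}; equivalently, two nested halfspaces cannot both be adjacent to a common vertex). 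Consequently a vertex $w\in S_{N+1}$ with two distinct distance-$N$ neighbours is Type~2 relative to all of them, hence fixed by $k$ and by every $A_{v,\sigma_v}$; while a vertex $w\in S_{N+1}$ with a unique distance-$N$ neighbour $v$ is Type~3 relative to $(o,v)$ and Type~1 relative to every other distance-$N$ vertex, so among the factors of $p$ only $A_{v,\sigma_v}$ moves $w$, sending $w=v\ast s$ to $v\ast\sigma_v(s)=k(w)$, a vertex which by the same dichotomy is again fixed by all remaining factors. Thus $p|_{S_{N+1}}=k|_{S_{N+1}}$, completing the induction. I expect the main obstacle to be exactly this last point — ensuring that no single sphere-$(N+1)$ vertex is moved incompatibly by two different generators $A_{v,\sigma_v}$ — which is precisely where the commutation of backward edges is indispensable.
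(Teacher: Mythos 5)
Your proof is correct and follows the same inductive strategy as the paper: it establishes compatibility of the local label-morphisms $\sigma_v$ using $par$ (the paper does the same via Lemma~\ref{lem:Shorteningtheprefix}), then shows that the correcting product $\prod A_{v,\sigma_v}$ is order-independent on the next sphere because a vertex at distance $N+1$ with two distance-$N$ neighbours is Type~2 (hence fixed) relative to each of them, a fact driven by the commutation of backward edges (exactly Lemma~\ref{lem:basics.of.compatibility}(1) in the paper). The differences — working with $k=g\,h_N^{-1}$ rather than peeling off factors from $g$, and invoking the Type~1/2/3 dichotomy directly where the paper re-runs a $par$/$inv$ word argument in its Steps~3 and~4 — are presentational rather than substantive, and your packaging via compatibility is arguably a bit more modular.
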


\begin{remark}
    We will show in section \ref{subsec:an.explicit.set.of.generators.in.the.vertex.transitive.case} that the assumption of Proposition \ref{prop:generating.dense.subgroup.of.stabiliser} is satisfied when the action $\Gamma \curvearrowright X$ is transitive on vertices.
\end{remark}

\begin{proof}
Recall that we have a canonical identification of the sets $X^{(0)}$, $\faktor{ \mathcal{L}_o^{cube} }{\equiv_o}$, and $\mathcal{L}_o$ (see Remark \ref{rem:identification.of.words.and.vertices.paths.and.words} and Notation \ref{not:Identifying.words.with.vertices}).

\subsubsection*{Step 1: Preparing for induction.}

Let $g \in G_o$. By Lemma \ref{lem:portraits.define.automorphisms}, we obtain label-morphisms $\pre{v}{\sigma(g)} : \Sigma_{0,v} \rightarrow \Sigma_0$ for every $v \in X^{(0)}$. To prevent notation from getting cluttered in a moment, we define
\[ \sigma_{\epsilon} := \pre{\epsilon}{\sigma(g)}. \]
We will write $g$ as the limit of a composition of elements $A_{v, \sigma}$. We start by observing that for all $s \in \Sigma_0$,
\[ A_{\epsilon, \sigma_{\epsilon}}^{-1} \circ g(s) = \pre{\epsilon}{A_{\epsilon, \sigma_{\epsilon}}}^{-1} \circ \pre{\epsilon}{\sigma(g)}(s) = \sigma_{\epsilon}^{-1} \circ \sigma_{\epsilon}(s) = s. \]
We see that $A_{\epsilon, \sigma_{\epsilon}}^{-1} \circ g$ fixes not only $o$, but all vertices in $X$, whose $\ell^1$-distance to $o$ is at most $1$.

We continue by induction over the $\ell^1$-distance to $o$. Suppose $n \geq 1$ and suppose we have an isometry $g_n$ on $X$ that fixes all vertices in $X$ that satisfy $d_{\ell^1}(\cdot, o) \leq n$. For all $v \in X^{(0)}$ such that $d_{\ell^1}(v, o) = n$, we define $\sigma_v := \pre{v}{\sigma(g_n)}$. We claim that the isometry
\begin{equation} \label{eq:inductive.decomposition}
g_{n+1} := \left( \prod_{d_{\ell^1}(v,o) = n} A_{v, \sigma_v}^{-1} \right) \circ g_n
\end{equation}
is well-defined and fixes all vertices in $X$ that have distance $\leq n+1$ from $o$ with respect to $d_{\ell^1}$. (The remainder of the proof is largely dedicated to the proof of this claim.) We emphasize that the maps $A_{v, \sigma_v}$ do not commute in general and one has to choose some order for them. However, we will show that, for every vertex $w \in X^{(0)}$ with $d_{\ell^1}(w,o) \leq n+1$, the order of multiplication does not matter and thus any choice of order produces an element $g_{n+1}$ that satisfies our claim.

\subsubsection*{Step 2: For every $v$ with $d_{\ell^1}(o,v) = n$, $A_{v,\sigma_v}$ is well-defined.}

In Constructions \ref{con:generators.at.o} and \ref{con:generators.at.v}, we chose isometries $A_{v, \sigma}$ for all $\sigma$ that appear at $v$ in $G_o$. We thus need to show that $\sigma_v$ appears at $v$ in $G_o$ so that $A_{v, \sigma_v}$ is well-defined. Since $n \geq 1$, we know that $v \neq \epsilon$ and we are entirely concerned with the choices made in Construction \ref{con:generators.at.v}. By assumption, every $\sigma_v$ that is compatible with $v$ appears at $v$ in $G_o$. Thus, it suffices to show that $\sigma_v$ is compatible with $v$ for all $v$ that have $\ell^1$-distance $n$ from $o$. In other words, we need to show that, for every $\alpha \in \Sigma_v$ such that $v\alpha$ is reducible and for every $\beta \in \Sigma_v$ such that $[\alpha, \beta] = 1$, we have $\sigma_v(\alpha) = \alpha$ and $\sigma_v(\beta) = \beta$.

So let $v = \alpha_1 \dots \alpha_m \in \mathcal{L}_o \cong X^{(0)}$ such that $d_{\ell^1}(o,v) = n$. Let $\alpha, \beta \in \Sigma_v$ such that $v\alpha$ is reducible and $[\alpha, \beta]=1$. We can split $\alpha$ into two parts by writing
\[ \alpha_{inv} := \{ s \in \alpha \vert vs \text{ is reducible} \}, \]
\[ \alpha_{par} := \alpha \setminus \alpha_{inv}. \]
We claim that we can assume without loss of generality that $\alpha = \alpha_{inv}$. Indeed, $\alpha_{inv} \neq \emptyset$, as otherwise $v\alpha$ would be reduced. Since $v\alpha_{inv}$ is reducible and $[\alpha_{inv}, \alpha_{par}] = 1$, we cover the letter $\alpha_{par}$ when we discuss $\beta$ and we may assume without loss of generality that $\alpha = \alpha_{inv}$.

Since $\alpha = \alpha_{inv}$, the vertex $v' := v\alpha$ satisfies $d_{\ell^1}(o,v') < d_{\ell^1}(o,v) = n$. Therefore, the induction assumption on $g_n$ implies that
\[ \pre{v'}{\sigma(g_n)} = \id. \]
We can apply the second equation of Lemma \ref{lem:Shorteningtheprefix} to the pair $v, v' = v\alpha$ to compute
\[ \sigma_{v}(\alpha) = \pre{v}{\sigma(g_n)}(\alpha) = \pre{v'}{\sigma(g_n)}(\alpha^{-1})^{-1} = {\alpha^{-1}}^{-1} = \alpha. \]

Now consider $\beta$. Since $[\alpha, \beta] = 1$, there exists some $s \in \alpha$ such that $vs$ is reducible and $[s, \beta] = 1$. We denote $v'$ to be the endpoint of $vs$. Since $vs$ is reducible, $v$ is reduced and $d_{\ell^1}(o,v) = n$, we conclude that $d_{\ell^1}(o,v') = n-1$. By induction assumption, this implies that $\pre{v'}{\sigma(g_n)} = \id$. Using property $par$ on the pair $s, \beta$, we compute
\[ \sigma_v(\beta) = \pre{v}{\sigma(g_n)}(\beta) = \pre{v'}{\sigma(g_n)}(\beta) = \beta. \]
We conclude that $\sigma_v$ is compatible with $v$. By assumption, this means that $\sigma_v$ appears at $v$ in $G_o$ and there exists a choice of $A_{v,\sigma_v} \in G_o$. We conclude that the elements in the product $\prod_{ d_{\ell^1}(o,v) = n} A_{v, \sigma_v}$ exist.

\subsubsection*{Step 3: Every $w$ with $d_{\ell^1}(o,w) = n+1$ is moved by at most one $A_{v, \sigma_v}$.}

Let $w \in \mathcal{L}_o$ such that $d_{\ell^1}(o,w) = n+1$. There exists $v \in \mathcal{L}_o$ with $d_{\ell^1}(o,v) = n$ and $s \in \Sigma_{0,v}$ such that $w \equiv vs$. (Note that $vs$ has to be reduced because $v$ is reduced, $d_{\ell^1}(o,v) = n$, and $d_{\ell^1}(o,vs) = d_{\ell^1}(o,w) = n+1$.) Suppose the pair $(v,s)$ is not unique and we find $v' \in \mathcal{L}_o \setminus \{ v \}$ with $d_{\ell^1}(o,v') = n$ and $s' \in \Sigma_{0,v'}$ such that $w \equiv v's'$. (Again, $v's'$ is reduced.) Note that $s \neq s'$, as there is only one outgoing edge at $w$ labeled $s^{-1}$ and it leads to $v$; so $s = s'$ implies $v = v'$, but we chose $v' \neq v$. Since both $vs$ and $v's'$ are reduced, the hyperplane that is crossed by $s'$ in the path $v's'$ has to be crossed by $vs$ at some point. As $s \neq s'$, it cannot be crossed by the edge $s$. Thus, this hyperplane has to be crossed by $v$. It follows that $vss'^{-1}$ is reducible and we find subwords $u_1, u_2$ of $v$ and some $\alpha' \in \Sigma_{u_1} \cup \{ \emptyset \}$ such that:
\[ \alpha' \cup \{ s' \} \in \Sigma_{u_1}, \quad vs = u_1 (\alpha' \cup \{ s' \} ) u_2 s, \quad [s', u_2 s] = 1. \]
Since $v = u_1 (\alpha' \cup \{ s' \}) u_2$ is reduced, we see from this equation that $d_{\ell^1}(o,u_1) \leq n-1$. As $g_n$ fixes all vertices with $d_{\ell^1}(o,\cdot) \leq n$ by induction assumption, we know that $\pre{u_1}{\sigma(g_n)} = \Id$. Applying property $par$ to $s'$ along the word $\alpha' u_2 s$ implies that
\[ s' = \pre{u_1}{\sigma(g_n)}(s') = \pre{u_1 \alpha' u_2 s}{\sigma(g_n)}(s') = \sigma_{v'}(s'), \]
where we used the fact that $v' \equiv vs s'^{-1} \equiv u_1 \alpha' u_2 s$ in the last equation. We conclude that $\sigma_{v'}(s') = s'$. Swapping $(v,s)$ and $(v',s')$ in the argument, we obtain $\sigma_{v}(s) = s$. We conclude that
\[ g_n(w) = g_n(v) \pre{v}{\sigma(g_n)}(s) = v \sigma_{v}(s) = vs = w \]
and for all $v, s$ such that $w = vs$
\[ A_{v, \sigma_v}(w) = A_{v, \sigma_v}(v) \sigma_v(s) = v s = w. \]

Now suppose $v' \in \mathcal{L}_o$ such that $d_{\ell^1}(o, v') = n$ and there exists no $s'$ such that $v's' \equiv w$. We claim that $w$ is of Type 1 relative to $(o,v')$ and thus $A_{v', \sigma_{v'}}(w) = w$. Indeed, suppose $w$ was of Type 2 or 3 relative to $(o,v')$. Then we would find a reduced word $w'$ over $\Sigma_0$ such that $w \equiv v'w'$ with $v'w'$ being reduced. However, since $d_{\ell^1}(o,w) = n+1$, any reduced word over $\Sigma_0$ equivalent to $w$ has to have edge-length $n+1$. Since $d_{\ell^1}(o,v') = n$, this implies that $w'$ has length $1$ and $(v',w')$ would be a pair $(v',s')$ such that $v's' \equiv w$, contradicting our assumption on $v'$. Thus, $w$ is of Type 1 relative to $(o,v')$. Since, by construction, $A_{v', \sigma_{v'}}$ fixes vertices that are of Type 1 relative to $(o,v')$, we obtain $A_{v', \sigma_{v'}}(w) = w$.

Combining the computations for the different cases above, we obtain in particular that, for every $w \in \mathcal{L}_o$ such that $d_{\ell^1}(o,w) = n+1$ and there is more than one pair $(v,s)$ such that $vs \equiv w$, we have
\begin{equation}\label{eq:Fixing.vertices.at.distance.n+1.part.one}
g_{n+1}(w) = \left( \prod_{d_{\ell^1}(v,o) = n} A_{v, \sigma_v}^{-1} \right) \circ g_n(w) = w.
\end{equation}
We conclude that, if $v, w \in \mathcal{L}_o$ such that $d_{\ell^1}(o,v) = n$ and $d_{\ell^1}(o,w) = n+1$, then $A_{v, \sigma_v}(w) = w$ unless there exists a unique pair $(v,s)$ such that $w \equiv vs$, in which case $v$ is the unique vertex with $d_{\ell^1}(o,v) = n$ such that $A_{v, \sigma_v}$ may not fix $w$. This proves Step 3.

\subsubsection*{Step 4: If $d_{\ell^1}(o,vs) = n+1$ and $A_{v,\sigma_v}(vs) \neq vs$, then for all $v' \neq v$ with $d_{\ell^1}(o,v') = n$, $A_{v', \sigma_{v'}}$ fixes the vertex $A_{v, \sigma_v}(vs)$.}

Let $w \in X^{(0)}$ such that $d_{\ell^1}(o,w) = n+1$ and $(v,s)$ a pair as in Step 3 such that $w \equiv vs$. Suppose $A_{v, \sigma_v}(vs) \neq vs$. By Step 3, this implies that $(v,s)$ is the unique pair such that $w \equiv vs$. Since $A_{v, \sigma_v}$ is bijective on vertices, this implies that
\[ A_{v, \sigma_v}(vs) \neq A_{v, \sigma_v} \circ A_{v, \sigma_v}(vs) = A_{v, \sigma_v}( v \sigma_v(s) ). \]
Using Step 3 again, this implies that $(v, \sigma_v(s))$ is the unique pair such that $v\sigma_v(s) \equiv A_{v, \sigma_v}(w)$ and for all $v' \neq v$ with $d_{\ell^1}(o,v') = n$, we have $A_{v', \sigma_{v'}}( v\sigma_v(s) ) = v\sigma_v(s)$. This proves Step 4 and, combining Step 3 and Step 4, we conclude that the restriction of the product
\[ \prod_{ d_{\ell^1}(o,v) = n } A_{v, \sigma_v} \]
to the set
\[ \{ w \in X^{(0)} \vert d_{\ell^1}(o,w) = n+1 \} \]
does not depend on the ordering in the product. Note that this set is in fact preserved under the product, since we are multiplying automorphisms that fix $o$. Therefore, we conclude that the restriction of the inverse of this product to the same set does not depend on the ordering either.

\subsubsection*{Step 5: Finishing the induction step.}
We are left to show that the map
\[ g_{n+1} = \left( \prod_{d_{\ell^1}(o,v) = n} A_{v, \sigma_v}^{-1} \right) \circ g_n \]
fixes all vertices in the $(n+1)$-ball around $o$ with respect to $d_{\ell^1}$. Suppose $d_{\ell^1}(o,w) \leq n$. By Remark \ref{rem:Basics.of.types}, any such $w$ is of Type 1 or 2 relative to $(o,v)$ for any $v \in \mathcal{L}_o$ that satisfies $d_{\ell^1}(o,v) = n$. Thus, $w$ is fixed by all $A_{v, \sigma_v}$ appearing in the product (and thus by $A_{v, \sigma_v}^{-1}$). Furthermore, $w$ is fixed by $g_n$ due to the induction assumption, which yields $g_{n+1}(w) = w$.

Now suppose $d_{\ell^1}(w) = n+1$. If there is more than one pair $(v,s)$ such that $w \equiv vs$, then $g_{n+1}(w) = w$ by equation (\ref{eq:Fixing.vertices.at.distance.n+1.part.one}). If there is a unique $(v,s)$ such that $w \equiv vs$, then
\[ g_n(w) = g_n(v) \pre{v}{\sigma(g_n)}(s) = v \sigma_v(s),\]
\[ A_{v, \sigma_v}(v s ) = A_{v, \sigma_v}(v) \pre{v}{A_{v, \sigma_v}}(s) = v \sigma_v(s),\]
\[ \forall v' \neq v : A_{v', \sigma_{v'}}( v s ) = v s \text{ by Step 3}, \]
\[ \forall v' \neq v : A_{v', \sigma_{v'}}( v \sigma(s) ) =  v \sigma_v(s) \text{ by Step 4}. \]
We conclude that 
\[ g_{n+1}(w) = \left( \prod_{d_{\ell^1}(o,v) = n} A_{v, \sigma_v}^{-1} \right) \circ g_n(w) = w. \]

This construction implies that, for any $g \in G_o$, we can use induction over $n$ to create a product of elements of the form $A_{v, \sigma_v}$ such that
\[ \left( \prod_{ d_{\ell^1}(o,v) \leq n } A_{v, \sigma_v}^{-1} \right) \circ g \]
fixes all vertices within the $(n+1)$-ball around $o$ with respect to $d_{\ell^1}$ and thus fixes the cubical subcomplex in $X$ spanned by these vertices. As $n$ tends to infinity, these compact subcomplexes cover more and more of $X$. Therefore, these products provide a sequence of elements in the subgroup generated by $\{ A_{v, \sigma} \}_{v, \sigma}$ that converges to $g$ in compact-open topology. Thus, the $A_{v,\sigma}$ generate a dense subgroup of $G_o$.
\end{proof}

\subsection{Generating a dense subgroup of $\Aut(X)$} \label{subsec:generating.a.dense.subgroup.of.Aut}

In this subsection, we prove Theorem \ref{thmintro:topological.finite.generation}. Our strategy is to conjugate elements $A_{v, \sigma}$ by elements of $\Gamma$ and show that these conjugates have useful properties. Recall from section \ref{sec:special.actions.and.edge.labelings} that $\Gamma$ acts on $X$ by translations. We start by introducing some terminology and establishing some properties of the interaction between translations and elements of $\{ A_{v, \sigma} \}_{v, \sigma}$.

\begin{definition} \label{def:samereductions}
Let $v$ be a reduced word in $\mathcal{L}_o^{cube}$. We define the {\it reductions of $v$} to be the elements of the set
\[ \reduc(v) := \{ \alpha \in \Sigma_v \vert v\alpha \text{ is reducible} \}. \]
Let $v, v'$ be two reduced words in $\mathcal{L}_o^{cube}$. We say $v$ and $v'$ {\it have the same reductions} if $\reduc(v) = \reduc(v')$ and we write $v \sim v'$. This defines an equivalence-relation on the set of reduced words in $\mathcal{L}_o^{cube}$.
\end{definition}

Note that the empty word $\epsilon$ forms its own, somewhat special equivalence class, which is why we remove it from some of the upcoming statements and treat it separately.

\begin{lemma} \label{lem:basics.of.compatibility}
    Suppose two reduced words $v, v'$ in $\mathcal{L}_o^{cube}$ have the same reductions, and $\sigma : \Sigma_{0,v} \rightarrow \Sigma_{0}$ is a label-morphism.
    \begin{enumerate}
        \item If $s, t \in \Sigma_{0,v}$ such that $vs$ and $vt$ are reducible, then $[s, t] = 1$.

        \item If $s \in \Sigma_{0,v}$ such that $vs$ is reducible and $s^{-1} \in \Sigma_{0,v}$, then $vs^{-1}$ is reduced.

        \item $\sigma$ is compatible with $v$ if and only if it is compatible with $v'$.
    \end{enumerate}
\end{lemma}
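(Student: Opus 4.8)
The plan is to prove the three statements in order, using the geometric meaning of reducibility (crossing a hyperplane twice) together with the link-condition description of commutation from Remark \ref{rem:commutation.and.links}.

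\emph{Statement (1).} Suppose $s,t \in \Sigma_{0,v}$ with $vs$ and $vt$ both reducible. Reducibility of $vs$ means the cube path for $vs$ crosses some hyperplane twice; since $v$ itself is reduced, the only hyperplane that can be crossed twice is the one dual to the $s$-edge at the endpoint of $v$, so $v$ already crosses $\widehat{h}(s)$, in the opposite direction. Similarly $v$ crosses $\widehat{h}(t)$ in the opposite direction. If $s = t^{-1}$ there is nothing to do once we note (by Example \ref{exam:trivial.examples}) that $[s,t]=1$ is being claimed; but in fact the interesting case is $s \ne t^{\pm 1}$. Write $v$ as a reduced edge path and locate, via Lemma \ref{lem:innermost.cancellations.commute.with.the.word.in.between} applied to $vs$ and to $vt$, the letters $s^{-1}$ and $t^{-1}$ occurring in $v$; the key point is that $\widehat{h}(s)$ and $\widehat{h}(t)$ must intersect transversally. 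Indeed, the four vertices $o$, the endpoint of $v$, and the two vertices obtained by pushing across $\widehat h(s)$ resp.\ $\widehat h(t)$ lie in four distinct components of $X \setminus (\widehat h(s) \cup \widehat h(t))$ — the same component-counting argument used in the proof of Lemma \ref{lem:innermost.cancellations.commute.with.the.word.in.between}. Transversality of the two hyperplanes at the endpoint of $v$ is exactly the statement $[s,t]=1$ (the edges labelled $s,t$ there span a square, by Remark \ref{rem:commutation.and.links}).

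\emph{Statement (2).} Here we do not even need $v \sim v'$. Suppose $s, s^{-1} \in \Sigma_{0,v}$ and $vs$ is reducible; as above, $v$ crosses $\widehat h(s)$ in the direction opposite to the $s$-edge at the endpoint of $v$. Then $vs^{-1}$ crosses $\widehat h(s)$ in the \emph{same} direction as $v$ does, so $vs^{-1}$ crosses $\widehat h(s)$ exactly once; and it crosses every other hyperplane exactly as often as $v$ does, namely once. Hence $vs^{-1}$ is reduced. (One should double-check that $s^{-1} \in \Sigma_{0,v}$ together with $vs$ reducible is consistent — it is, e.g.\ when the endpoint of $v$ lies in the carrier of $\widehat h(s)$ with edges on both sides — but the argument does not depend on this.)

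\emph{Statement (3).} This is where $v \sim v'$ is used. Unwinding Definition \ref{def:compatibility}: $\sigma$ is compatible with $v$ iff for every $\alpha \in \reduc(v)$ and every $\beta \in \Sigma_v$ with $[\alpha,\beta]=1$ we have $\sigma(\alpha)=\alpha$ and $\sigma(\beta)=\beta$. Since $v$ and $v'$ are vertices in the same link-isomorphism class — more precisely, they represent cube paths whose endpoints have the same label set, because $\Sigma_v = \Sigma \cap 2^{\Sigma_{0,v}}$ by Remark \ref{rem:commutation.and.links} and $v \sim v'$ forces $\Sigma_{0,v} = \Sigma_{0,v'}$ and hence $\Sigma_v = \Sigma_{v'}$ — and since by hypothesis $\reduc(v) = \reduc(v')$, the set of pairs $(\alpha,\beta)$ appearing in the compatibility condition for $v$ is literally the same as the one for $v'$. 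Therefore the two compatibility conditions are the same condition on $\sigma$, and the equivalence follows. (One small point to record: a label-morphism $\sigma : \Sigma_{0,v} \to \Sigma_0$ is automatically a label-morphism $\Sigma_{0,v'} \to \Sigma_0$ since the domains coincide, so it makes sense to speak of $\sigma$ being compatible with $v'$.)

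\emph{Main obstacle.} The only genuinely non-formal step is the transversality claim in (1); it is essentially the component-counting argument from Lemma \ref{lem:innermost.cancellations.commute.with.the.word.in.between}, so the work is in setting up the four vertices cleanly and invoking that the four hyperplane-complements are distinct. Statements (2) and (3) are bookkeeping once the definitions are unwound.
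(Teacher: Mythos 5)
Your arguments for parts (1) and (3) are essentially the paper's. Part (1) is proved by the same four-quadrant component-counting: $o$, an intermediate vertex on a reduced edge path for $v$, the endpoint of $v$, and a vertex across one of the hyperplanes lie in distinct components of $X\setminus(\widehat h(s)\cup\widehat h(t))$, forcing transversality. Two minor points. Your treatment of the case $s=t^{-1}$ is garbled: Example \ref{exam:trivial.examples} gives $[s,s^{-1}]\neq 1$, so if that case could occur the conclusion of (1) would be \emph{false}; the correct observation is that the $s$-edge and the $s^{-1}$-edge at the endpoint of $v$ are dual to \emph{distinct} hyperplanes, so the quadrant argument still runs and yields a contradiction — which is precisely the content of (2). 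And in (3), your parenthetical that $v\sim v'$ forces $\Sigma_{0,v}=\Sigma_{0,v'}$ does not follow from $\reduc(v)=\reduc(v')$ alone; it is a tacit standing hypothesis of the lemma (automatic in all its uses in the paper, via vertex-transitivity or $\Gamma$-invariant translations), so you should not try to derive it.

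Part (2) contains a genuine error. You assert that $vs^{-1}$ ``crosses $\widehat h(s)$ in the same direction as $v$ does'' and then that $vs^{-1}$ ``crosses every other hyperplane exactly as often as $v$ does, namely once.'' This treats the outgoing $s$-edge and the outgoing $s^{-1}$-edge at the endpoint of $v$ as though they were dual to the same hyperplane. They are not: by (A2) they are distinct edges at the same vertex, and in a $\CAT$ cube complex a vertex has at most one incident edge dual to any given hyperplane. So the $s^{-1}$-edge is dual to some $\widehat h' \neq \widehat h(s)$, and your count is silent on whether $v$ already crosses $\widehat h'$ — which is exactly what reducibility of $vs^{-1}$ would mean, and is the thing that needs to be ruled out. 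The paper closes this in one line: if $vs^{-1}$ were reducible, part (1) applied with $t=s^{-1}$ would give $[s,s^{-1}]=1$, contradicting Example \ref{exam:trivial.examples}. Deduce (2) from (1) rather than attempting a fresh direct count.
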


\begin{proof}

(1) Let $s \neq t \in \Sigma_0$ such that $vs$ and $vt$ are both reducible. The path $v$ thus crosses the hyperplanes $\hat{h}(s)$ and $\hat{h}(t)$. Let $w$ be a reduced edge path such that $v \equiv w$. Since $v$ and $w$ cross the same hyperplanes, $w$ crosses $\hat{h}(s)$ and $\hat{h}(t)$ as well. Since $w$ is an edge path, it crosses one of them before the other and there lies a vertex on $w$ between the two crossings. Without loss of generality, $w$ crosses $\hat{h}(s)$ first and let $u$ be a vertex on $w$ after crossing $\hat{h}(s)$ and before crossing $\hat{h}(t)$. The vertices $o, u, w, ws$ now lie in four different connected components of $X \setminus (\hat{h}(s) \cup \hat{h}(t))$. Thus $\hat{h}(s)$ and $\hat{h}(t)$ intersect transversally. Since $s, t \in \Sigma_{0,v}$, they appear at the same vertex and thus $[s,t] = 1$.\\

(2) Suppose $s \in \Sigma_0$ such that $vs$ is reducible. If $vs^{-1}$ is reducible, (1) implies that $[s, s^{-1}] = 1$, which is a contradiction to Example \ref{exam:trivial.examples}.\\

(3) Suppose $\sigma$ is compatible with $v$. Let $\alpha, \beta \in \Sigma_{v'}$ such that $v'\alpha$ is reducible and $[\alpha,\beta] = 1$. Since $v$ and $v'$ have the same reductions, we conclude that $\alpha, \beta \in \Sigma_v$ and $v\alpha$ is reducible. Because $\sigma$ is compatible with $v$, this implies that $\sigma(\alpha) = \alpha$ and $\sigma(\beta) = \beta$. Therefore, we obtain compatibility with $v'$.
\end{proof}

\begin{lemma} \label{lem:translations.and.type}
    Let $u, v \in \mathcal{L}_o \setminus \{ \epsilon \}$ such that $\mathcal{L}_o^{cube} = \mathcal{L}_u^{cube}$ and let $v'$ be a reduced word $\equiv_o$-equivalent to $uv$. Suppose $v$ and $v'$ have the same reductions. Then, for every $w \in \mathcal{L}_o$, we have that the type of $w$ relative to $(o, v)$ is the same as the type of $L_u(w)$ relative to $(o, v')$.
\end{lemma}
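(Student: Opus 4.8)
The plan is to read the statement off as an equivariance property of the translation automorphism $L_u$, once two auxiliary facts are in place: that $L_u$ preserves the edge labeling, and that the hypothesis $v \sim v'$ forces the basepoints $o$ and $u$ to lie on the same side of every hyperplane that meets an edge at the vertex $v'$.

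First I would check that $L_u$ preserves $\ell$. By Definition \ref{def:translations}, $L_u$ is the automorphism of $X$ whose action on vertices is $w \mapsto u \ast w$; for any vertex $p$ and any $s \in \Sigma_{0,p}$ the edge joining the vertices $p$ and $p \ast s$ carries the label $s$ when oriented away from $p$, and by associativity of the partial multiplication (Remark \ref{rem:normalization}) one has $u \ast (p \ast s) = (u \ast p) \ast s$, so $L_u$ carries the $s$-labeled edge at $p$ to the $s$-labeled edge at $u \ast p$. Hence $L_u$ sends the vertex $o$ to the vertex $u$, sends the vertex represented by $v$ to the vertex represented by $v'$ (both words represent $u \ast v$, since $v' \equiv_o uv$), and restricts to a label-preserving isomorphism of the link of $v$ onto the link of $v'$. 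Since $v \neq \epsilon$ and $v$ is reduced one has $\reduc(v) \neq \emptyset$, so $\reduc(v') = \reduc(v) \neq \emptyset$ forces $v' \neq \epsilon$, and both types in the statement are defined.

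Next I would work with the geometric description of the types equivalent to Definition \ref{def:types.relative.to.o.v}: for a vertex $p \neq o$, write $\mathcal{A}(o,p)$ for the set of hyperplanes that meet an edge incident to $p$ and separate $o$ from $p$, and for $\widehat{h} \in \mathcal{A}(o,p)$ let $h$ be the halfspace of $\widehat{h}$ containing $p$; then a vertex $q$ is of Type $1$ relative to $(o,p)$ iff $q \notin \bigcap_{\widehat{h} \in \mathcal{A}(o,p)} h$, of Type $2$ iff $q \in \bigcap h$ and $q$ lies on the carrier of some $\widehat{h} \in \mathcal{A}(o,p)$, and of Type $3$ otherwise. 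Applying the automorphism $L_u$ and using $L_u(o) = u$, $L_u(v) = v'$, one gets $L_u\big(\mathcal{A}(o,v)\big) = \mathcal{A}(u,v')$, and $L_u$ carries the intersection of halfspaces and the carriers occurring for $(o,v)$ onto those occurring for $(u,v')$; hence the type of $w$ relative to $(o,v)$ equals the type of $L_u(w)$ relative to $(u,v')$.

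It then remains to prove $\mathcal{A}(u,v') = \mathcal{A}(o,v')$, with the matching choice of halfspaces, for then the type of $L_u(w)$ relative to $(u,v')$ coincides with its type relative to $(o,v')$ and we are done. By (A2) the labeling identifies the set of hyperplanes meeting an edge at $v'$ with $\Sigma_{0,v'}$; since $v'$ is reduced, under this identification $\mathcal{A}(o,v')$ corresponds exactly to $\reduc(v') \cap \Sigma_0$, because a hyperplane meeting the $s$-edge at $v'$ separates $o$ from $v'$ precisely when the word $v's$ is reducible. Pulling $\mathcal{A}(u,v') = L_u\big(\mathcal{A}(o,v)\big)$ back through the label-preserving map $L_u$ and using the analogous identification at $v$, one sees that $\mathcal{A}(u,v')$ corresponds, under the identification at $v'$, to $\reduc(v) \cap \Sigma_0$. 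Since $v \sim v'$ gives $\reduc(v) = \reduc(v')$, the two collections of hyperplanes coincide, and for each of them the side containing $v'$ is intrinsic, so $\mathcal{A}(u,v') = \mathcal{A}(o,v')$. I expect the step needing the most care to be this last identification of $\mathcal{A}(o,v')$ with $\reduc(v') \cap \Sigma_0$: it uses that $v'$ is reduced (so it crosses each hyperplane at most once) and requires keeping track of the orientation convention relating an edge at $v'$ to the label in $\Sigma_{0,v'}$ indexing it when verifying the equivalence ``$\widehat{h}$ separates $o$ from $v'$'' $\Leftrightarrow$ ``$v's$ is reducible''.
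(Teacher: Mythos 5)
Your proof is correct, and it takes a genuinely different route from the paper's. The paper works directly with the combinatorial Definition \ref{def:types.relative.to.o.v}: it fixes a reduced $w'$ with $w \equiv vw'$, observes that the same $w'$ also serves for $L_u(w) \equiv v'w'$ (since $\mathcal{L}_v^{cube} = \mathcal{L}_{v'}^{cube}$), and then argues that $\reduc(v)=\reduc(v')$ makes $vw'$ reducible iff $v'w'$ is (Type~1) and makes the commutation condition defining Type~2 agree. You instead use the geometric characterization of types from the introduction (hyperplane collections $\mathcal{A}(o,v)$, halfspaces, carriers), factor the comparison through $L_u$-equivariance -- getting that the type of $w$ rel $(o,v)$ equals the type of $L_u(w)$ rel $(u,v')$ -- and then reduce the problem to the clean hyperplane identity $\mathcal{A}(u,v') = \mathcal{A}(o,v')$, which you verify by identifying both sides, via the labeling at $v'$, with $\reduc(v)\cap\Sigma_0 = \reduc(v')\cap\Sigma_0$. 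What your route buys is that the only place the hypothesis $v\sim v'$ is used is the transparent set-equality at the end; the paper's corresponding step, that $vw'$ is reduced iff $v'w'$ is for an arbitrary-length $w'$, actually needs an innermost-cancellation argument that the paper leaves implicit. What the paper's route buys is that it works entirely within Definition \ref{def:types.relative.to.o.v}, whereas you lean on the equivalence between the geometric definition of the introduction and the combinatorial one of Section~5, which the paper asserts but never formally proves; you should either prove that equivalence or rephrase the final identity $\mathcal{A}(u,v')=\mathcal{A}(o,v')$ in terms of Definition \ref{def:types.relative.to.o.v} directly.
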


\begin{proof}
    We write $w \equiv vw'$ such that $w'$ is reduced and obtain that $L_u(w) \equiv uvw' \equiv v'w'$. Since $v$ and $v'$ have the same reductions, $vw'$ is reduced if and only if $v'w'$ is reduced. This implies that $w$ is of Type 1 relative to $(o, v)$ if and only if $uw$ is of Type 1 relative to $(o, uv)$. Now suppose $vw'$ and $v'w'$ are reduced. We have that $w$ is of Type 2 if and only if there exists some $\alpha \in \Sigma$ such that $v\alpha$ is reducible and $[\alpha, w'] = 1$. Since $v$ and $v'$ have the same reductions, this is equivalent to the existence of $\alpha \in \Sigma$ such that $v'\alpha$ is reducible and $[\alpha, w'] = 1$. We conclude that $w$ is of Type 2 relative to $(o, v)$ if and only if $uw$ is of Type 2 relative to $(o, v')$. Thus, equivalence of Type 3 follows as well, which finishes the proof.
\end{proof}

\begin{lemma} \label{lem:fixed.points.when.conjugating.by.translations}
    Let $u, v \in \mathcal{L}_o \setminus \{ \epsilon \}$ such that $\mathcal{L}_o^{cube} = \mathcal{L}_u^{cube}$ and such that $v$ and $v' := L_u(v)$ have the same reductions. Suppose $A_{v, \sigma} \in G_o$ fixes all vertices of Type 1 and 2 relative to $(o, v)$. Then $L_u \circ A_{v, \sigma} \circ L_u^{-1}$ fixes all vertices of Type 1 and 2 relative to $(o, v')$. (In particular, $L_u \circ A_{v, \sigma} \circ L_u^{-1} \in G_o$.)

    Furthermore, we have
    \[ \pre{uv}{ \sigma( L_u \circ A_{v, \sigma} \circ L_u^{-1} ) } = \pre{v}{\sigma(A_{v,\sigma})}. \]
\end{lemma}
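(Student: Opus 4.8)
The plan is to verify the two claims separately, and both come down to unwinding the definition of the portrait together with the multiplication formula for portraits (equation \eqref{eq:multiplication.formula.of.portraits}) and the fact that $L_u$ is a translation, hence acts on $\mathcal{L}_o^{cube}$ by left-concatenation followed by normalization. First I would record the basic facts about $L_u$ that will be used: $L_u$ fixes $o$ (it sends $\epsilon \mapsto \mathrm{norm}(u)$... wait, no — here $u$ is a vertex and $L_u(\epsilon) = \mathrm{norm}(u\epsilon) = \mathrm{norm}(u)$, which is \emph{not} $o$ in general, so $L_u \notin G_o$). What is true is that $L_u \circ A_{v,\sigma} \circ L_u^{-1}$ fixes $o$ \emph{provided} it fixes $o$, which is exactly the Type~1/Type~2 statement applied to $o$: since $o$ is of Type~1 relative to $(o,v')$ by Remark \ref{rem:Basics.of.types}, the first claim will give $L_u \circ A_{v,\sigma} \circ L_u^{-1} \in G_o$ as stated in the parenthetical. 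So the logical order is: prove the Type~1/Type~2 statement first, deduce membership in $G_o$, then prove the portrait formula.

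For the first claim, let $w \in \mathcal{L}_o$ be of Type~1 or Type~2 relative to $(o,v')$. I want to show $(L_u \circ A_{v,\sigma} \circ L_u^{-1})(w) = w$, equivalently $A_{v,\sigma}(L_u^{-1}(w)) = L_u^{-1}(w)$. The key point is that $L_u^{-1} = L_{u^{-1}}$ sends $w$ to a vertex whose type relative to $(o,v)$ is controlled by Lemma \ref{lem:translations.and.type}: that lemma says the type of a vertex $z$ relative to $(o,v)$ equals the type of $L_u(z)$ relative to $(o,v')$, under the hypothesis that $v$ and $v'$ have the same reductions — which is exactly our standing assumption. Applying it with $z = L_u^{-1}(w)$ gives that $L_u^{-1}(w)$ has the same type relative to $(o,v)$ as $w$ has relative to $(o,v')$, namely Type~1 or Type~2. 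Since $A_{v,\sigma}$ fixes all vertices of Type~1 and~2 relative to $(o,v)$ by hypothesis, it fixes $L_u^{-1}(w)$, and we are done. (One small wrinkle: Lemma \ref{lem:translations.and.type} is stated for $w \in \mathcal{L}_o$ and the translation $L_u$ with $\mathcal{L}_o^{cube} = \mathcal{L}_u^{cube}$; I should note that the same hypothesis $\mathcal{L}_o^{cube} = \mathcal{L}_{u^{-1}}^{cube}$ holds since $\Gamma$ acts by translations and $L_{u^{-1}} = L_u^{-1}$, so the lemma applies equally with $u^{-1}$ in place of $u$ after checking $v'$ and $v = L_{u^{-1}}(v')$ have the same reductions, which is symmetric in our hypothesis.)

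For the portrait formula, I would apply the composition rule \eqref{eq:multiplication.formula.of.portraits} twice. Writing $g := L_u \circ A_{v,\sigma} \circ L_u^{-1}$, and using that all three factors act as rooted automorphisms of $T_o^{cube}$ (legitimate once we know each of $L_u$, $A_{v,\sigma}$, $L_u^{-1}$ induces a tree automorphism — $L_u$ does because it is an isometry sending cube paths to cube paths, though it does not fix the root; the composition formula in Remark \ref{rem:multiplication.formula.of.portraits} does not require fixing the root), we get
\[
\pre{uv}{\sigma(g)} = \pre{A_{v,\sigma}(L_u^{-1}(uv))}{\sigma(L_u)} \circ \pre{L_u^{-1}(uv)}{\sigma(A_{v,\sigma})} \circ \pre{uv}{\sigma(L_u^{-1})}.
\]
Now $L_u^{-1}(uv) = \mathrm{norm}(u^{-1}uv) = \mathrm{norm}(v) = v$, and $A_{v,\sigma}(v) = v$ since $v$ is of Type~2 relative to $(o,v)$ (Remark \ref{rem:Basics.of.types}), so the outer two factors become $\pre{v}{\sigma(L_u)} \circ \pre{v}{\sigma(A_{v,\sigma})} \circ \pre{uv}{\sigma(L_u^{-1})}$. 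The remaining task is to see that $\pre{v}{\sigma(L_u)}$ and $\pre{uv}{\sigma(L_u^{-1})}$ are inverse bijections of label sets that conjugate trivially, i.e. that $\pre{v}{\sigma(L_u)} \circ (\text{something})\circ \pre{uv}{\sigma(L_u^{-1})}$ collapses. This is where I expect the only real work: I must show $\pre{uv}{\sigma(L_u^{-1})} = (\pre{v}{\sigma(L_u)})^{-1}$ as maps $\Sigma_{uv} \to \Sigma_v$ and $\Sigma_v \to \Sigma_{uv}$, \emph{and} that $\pre{v}{\sigma(L_u)}$ is in fact the identity on the relevant labels. The latter should follow because $L_u$ is a $\Gamma$-translation and preserves the $\Gamma$-invariant labeling $\ell$: for any oriented diagonal $d$ at $v$, the diagonal $L_u(d)$ at $uv$ has the same $\ell$-label, so by the very definition of the portrait $\pre{v}{\sigma(L_u)}$ is the inclusion/identity on labels. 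Granting that, the formula collapses to $\pre{v}{\sigma(g)} = \mathrm{id} \circ \pre{v}{\sigma(A_{v,\sigma})} \circ \mathrm{id} = \pre{v}{\sigma(A_{v,\sigma})}$, as claimed. The main obstacle is thus the careful bookkeeping of which label sets these restricted portrait maps go between and verifying the identity-on-labels claim for translations rigorously from $\Gamma$-invariance of $\ell$ and Corollary \ref{cor:label.preserving.actions.are.by.translations}; everything else is a mechanical application of \eqref{eq:multiplication.formula.of.portraits} and Lemma \ref{lem:translations.and.type}.
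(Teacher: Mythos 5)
Your argument for the Type~1/Type~2 part is correct and matches the paper: apply Lemma~\ref{lem:translations.and.type} to conclude that $L_u^{-1}(w)$ has the same type relative to $(o,v)$ as $w$ has relative to $(o,v')$, so $A_{v,\sigma}$ fixes it and hence $L_u \circ A_{v,\sigma}\circ L_u^{-1}$ fixes $w$. (The paper simply instantiates that lemma at $z = L_u^{-1}(w)$; your detour through the version with $u^{-1}$ works but is unnecessary.) The observation that $o$ itself is of Type~1 relative to $(o,v')$, giving $L_u \circ A_{v,\sigma}\circ L_u^{-1} \in G_o$, is also right.

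For the portrait formula, however, your route has a genuine gap. The multiplication formula \eqref{eq:multiplication.formula.of.portraits} is stated and proved in the paper only for $\sigma,\tau \in \Aut(T)$, i.e.\ \emph{rooted} automorphisms of the path-language tree, and the portrait $\pre{v}{\sigma}$ is only defined for such maps (Remark~\ref{rem:portrait.recovers.automorphism}). You assert that ``the composition formula \dots does not require fixing the root,'' but the problem is more serious than merely not fixing the root: $L_u$ is not a graph automorphism of $T_o$ or of $T_o^{cube}$ at all. On $T_o^{cube}$ the translation acts by prepending $u$, which maps $T_o^{cube}$ into a proper subtree, not onto itself; on $T_o$ it acts by $w \mapsto \mathrm{norm}(uw)$, which does not preserve the depth (word length over $\Sigma$) and hence does not preserve tree adjacency. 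So neither $\pre{v}{\sigma(L_u)}$ nor $\pre{uv}{\sigma(L_u^{-1})}$ is well-defined in the paper's framework, and the three-fold decomposition you write down is not licensed. The two facts you rely on --- that $L_u^{-1}(uv) \equiv v$ and that $\Gamma$-translations preserve $\ell$ --- are indeed the substance of the matter, and the paper turns them directly into a one-line computation: for $\alpha \in \Sigma_v$,
\[
L_u \circ A_{v,\sigma}\circ L_u^{-1}(v'\alpha) \equiv L_u\circ A_{v,\sigma}(v\alpha) = L_u\bigl(v\,\pre{v}{\sigma(A_{v,\sigma})}(\alpha)\bigr) \equiv v'\,\pre{v}{\sigma(A_{v,\sigma})}(\alpha),
\]
which, read against Definition~\ref{def:portrait.of.stabilizing.element}, is precisely the claimed portrait identity. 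That direct computation is what you should substitute for the appeal to \eqref{eq:multiplication.formula.of.portraits}; no extended portrait calculus for non-root-fixing maps is needed or available.
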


\begin{proof}
    The last equation follows immediately from the fact that for every $\alpha \in \Sigma_v$, we have
    \begin{equation*}
        L_u \circ A_{v, \sigma} \circ L_u^{-1}( v'\alpha ) \equiv L_u \circ A_{v, \sigma}( v\alpha ) = L_u( v \pre{v}{\sigma(A_{v,\sigma})}(\alpha) ) \equiv v'\pre{v}{\sigma(A_{v,\sigma})}(\alpha).
    \end{equation*}
    We are left to show that $L_u \circ A_{v, \sigma} \circ L_u^{-1}$ fixes vertices that are of Type 1 and 2 relative to $(o, v')$. Let $w \in \mathcal{L}_o$ be of Type 1 or 2 relative to $(o, v')$. By Lemma \ref{lem:translations.and.type}, $L_u^{-1}(w)$ is of Type 1 or 2 relative to $(o, v)$. Thus
    \[ L_u \circ A_{v, \sigma} \circ L_u^{-1}(w) = L_u \circ L_u^{-1}(w) = w. \]
    This proves the Lemma.
\end{proof}

\begin{lemma} \label{lem:finitely.many.words.suffice}
    Suppose the action of $\Gamma$ on $X$ is cocompact and by translations. Then there exists a finite subset $V \subset \mathcal{L}_o \setminus \{ \epsilon \}$, such that for every $w \in \mathcal{L}_o \setminus \{ \epsilon \}$, there exists some $u \in \mathcal{L}_o$ and $v \in V$ such that:
    \begin{itemize}
        \item $\mathcal{L}_o^{cube} = \mathcal{L}_u^{cube}$,

        \item $L_u \in \Gamma$,

        \item $v$ and $L_u(v)$ have the same reductions,

        \item $uv \equiv w$.
    \end{itemize}
\end{lemma}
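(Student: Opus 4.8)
The plan is to build $V$ not from ``short'' words but from finitely many words indexed by a coarse invariant: the pair consisting of the $\Gamma$-orbit of the endpoint of the word and its set of reductions. Write $\mathrm{end}(v)\in X^{(0)}$ for the vertex represented by a word $v\in\mathcal{L}_o$, and consider the map
\[
\Phi:\mathcal{L}_o\setminus\{\epsilon\}\longrightarrow \bigl(X^{(0)}/\Gamma\bigr)\times 2^{\Sigma},\qquad v\mapsto\bigl(\Gamma\cdot\mathrm{end}(v),\ \reduc(v)\bigr).
\]
Since $\Gamma$ acts cocompactly on the locally finite complex $X$, the set $X^{(0)}/\Gamma$ of vertex orbits is finite, and since $\reduc(v)\subseteq\Sigma_v\subseteq\Sigma$ with $\Sigma$ finite, the image of $\Phi$ is finite. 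First I would fix, for each value in $\operatorname{im}\Phi$, one word realising it, and let $V$ be the resulting finite set of chosen words. Note that $\epsilon\notin V$ and that every $v\in V$ has $\reduc(v)\neq\emptyset$: if $v\neq\epsilon$ ends with the letter $\alpha$, then by (A1) each $s\in\alpha$ gives $s^{-1}\in\Sigma_{0,\mathrm{end}(v)}$ with $v s^{-1}$ reducible, so $\{s^{-1}\}\in\reduc(v)$. The same remark shows $\reduc(w)\neq\emptyset$ for every $w\in\mathcal{L}_o\setminus\{\epsilon\}$.

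Next, let $w\in\mathcal{L}_o\setminus\{\epsilon\}$ be given and set $p:=\mathrm{end}(w)$. The value $\Phi(w)=(\Gamma p,\reduc(w))$ lies in $\operatorname{im}\Phi$ (it is realised by $w$ itself), so there is $v\in V$ with $\Phi(v)=\Phi(w)$; write $p_v:=\mathrm{end}(v)$, so $p_v\in\Gamma p$ and $\reduc(v)=\reduc(w)$. Pick $g\in\Gamma$ with $g(p_v)=p$. Since $\Gamma$ acts by translations, $g=L_u$ where $u\in\mathcal{L}_o$ is the normal cube path from $o$ to $g(o)$ (it exists and is unique by Remark~\ref{rem:normalization}); in particular $L_u\in\Gamma$. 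Because $g$ preserves $\ell$ and $u$ has endpoint $g(o)$, we get $\mathcal{L}_o^{cube}=\mathcal{L}_{g(o)}^{cube}=\mathcal{L}_u^{cube}$, exactly as in the proof of Lemma~\ref{lem:translations.of.universal.coverings}. This settles the first, second and last requirements of the claim once we check $uv\equiv_o w$ and $L_u(v)=w$.

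For that, read off the cube path $uv$: it runs from $o$ to $g(o)$ along $u$, then follows the diagonal sequence of $v$ starting at $g(o)$. Since $g$ is a label-preserving automorphism sending $o$ to $g(o)$, this second portion is the image under $g$ of the diagonal sequence of $v$ based at $o$, hence it ends at $g(p_v)=p$. Therefore $uv$ and $w$ have the same endpoint, i.e.\ $uv\equiv_o w$, and since $w$ is normal we obtain $L_u(v)=\mathrm{norm}(uv)=w$. Consequently ``$v$ and $L_u(v)$ have the same reductions'' means precisely $\reduc(v)=\reduc(w)$, which holds by the choice of $v$; all four bullets are verified and $V$ is finite.

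I do not expect a serious obstacle here: the only real point is the (mildly counterintuitive) observation that the members of $V$ may be arbitrarily long, so one should bound their \emph{number} rather than their length, which is what the finite invariant $\Phi$ achieves. The genuinely necessary inputs are cocompactness (finiteness of $X^{(0)}/\Gamma$), finiteness of $\Sigma$, the fact that $\Gamma$ acts by translations (so that each $g\in\Gamma$ is an $L_u$ with $u\in\mathcal{L}_o$), and $\Gamma$-invariance of $\ell$ (so that $g$ transports the entire cube-path structure at $o$ to that at $g(o)$). The one thing to keep an eye on is that $w\neq\epsilon$ forces $\reduc(w)\neq\emptyset$, which is what guarantees the matching word $v\in V$ is itself $\neq\epsilon$.
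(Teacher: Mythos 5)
Your proof is correct and follows essentially the same route as the paper's: the paper defines an equivalence relation on $\mathcal{L}_o\setminus\{\epsilon\}$ by requiring $\reduc(w)=\reduc(w')$ and the existence of a translation $L_u\in\Gamma$ with $L_u(w)=w'$, which is exactly the fiber decomposition of your map $\Phi$ once one observes (as you do) that every $g\in\Gamma$ equals $L_u$ for the normal word $u$ from $o$ to $g(o)$ and that $L_u(w)=w'$ iff $\mathrm{end}(w')\in\Gamma\cdot\mathrm{end}(w)$. The only minor overkill is your closing remark about $\reduc(w)\neq\emptyset$: since $\Phi$ is declared on $\mathcal{L}_o\setminus\{\epsilon\}$, the chosen representatives are automatically nonempty words without that observation.
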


\begin{proof}
    To every $w \in \mathcal{L}_o \setminus \{ \epsilon \}$, we can associate the set $\reduc(w) \subset \Sigma$. We say two words $w, w' \in \mathcal{L}_o \setminus \{ \epsilon \}$ are equivalent, if $\reduc(w) = \reduc(w')$ and if there exists a translation $L_u \in \Gamma$ such that $L_u(w) = w'$. Since $\Sigma$ is a finite set and the action of $\Gamma$ on $X$ is cocompact, this equivalence relation has finitely many equivalence classes. Within each such equivalence class, we choose a representative. (In practice, it seems to be convenient to choose a representative of minimal length.) We denote the set of these representatives by $V$.

    We claim that this set $V$ satisfies the desired properties. Let $w \in \mathcal{L}_o$ and let $v \in V$ be its chosen representative. By definition of our equivalence relation, there exists some $u \in \mathcal{L}_o$ such that $L_u \in \Gamma$ and $L_u(v) = w$, that is $uv \equiv w$. Furthermore, since $v$ and $w$ are equivalent, they have the same reductions. This proves the Lemma.
\end{proof}

\begin{theorem} \label{thm:automorphism.group.is.topologically.finitely.generated}
    Suppose there exists a finitely generated subgroup $\Gamma < \Aut(X)$ such that $\Gamma$ acts cocompactly on $X$ and a $\Gamma$-invariant admissible labeling $\ell$. Let $V$ be a set as in Lemma \ref{lem:finitely.many.words.suffice} and suppose that for every $v \in V$, every $\sigma$ that is compatible with $v$ also appears at $v$ in $G_o$. Then there exists a finitely generated dense subgroup of $\Aut(X)$.
\end{theorem}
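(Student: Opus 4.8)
The strategy is to combine three ingredients already assembled in the excerpt: (i) Proposition~\ref{prop:generating.dense.subgroup.of.stabiliser}, which produces a topologically generating family $\{A_{v,\sigma}\}$ for the vertex stabilizer $G_o$; (ii) Lemma~\ref{lem:finitely.many.words.suffice}, which tells us that up to the $\Gamma$-action (via translations) and up to having the same reductions, there are only finitely many relevant words $v$; and (iii) the fact that $\Gamma$ is finitely generated together with $\Gamma$ acting cocompactly, so that $\Gamma$ and $G_o$ together generate a dense subgroup of $\Aut(X)$. The key conjugation identity is Lemma~\ref{lem:fixed.points.when.conjugating.by.translations}: if $v$ and $v'=L_u(v)$ have the same reductions, then $L_u A_{v,\sigma} L_u^{-1}$ still lies in $G_o$, fixes all vertices of Type~1 and 2 relative to $(o,v')$, and has the same local action at $uv$ as $A_{v,\sigma}$ has at $v$. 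This is exactly what is needed to recover, from the finitely many ``model'' generators attached to $v\in V$, all the generators $A_{w,\sigma}$ for arbitrary reduced $w$.

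First I would fix the finite set $V\subset\mathcal L_o\setminus\{\epsilon\}$ from Lemma~\ref{lem:finitely.many.words.suffice} and a finite generating set $S$ of $\Gamma$ with $S=S^{-1}$. For each $v\in V$ and each label-morphism $\sigma$ that is compatible with $v$ (there are finitely many, since $\Sigma$ is finite), the hypothesis guarantees $\sigma$ appears at $v$ in $G_o$, so a choice of $A_{v,\sigma}\in G_o$ exists; collect these into a finite set $\mathcal A_V$. Also include the finitely many generators $A_{\epsilon,\sigma}$ at the basepoint (Construction~\ref{con:generators.at.o}). Let $H$ be the subgroup of $\Aut(X)$ generated by the finite set $S\cup\mathcal A_V\cup\{A_{\epsilon,\sigma}\}$. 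The plan is to show $\overline H=\Aut(X)$.

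Next I would show $H$ contains a dense subgroup of $G_o$. Given any reduced $w\in\mathcal L_o\setminus\{\epsilon\}$ and any $\sigma$ compatible with $w$, apply Lemma~\ref{lem:finitely.many.words.suffice} to obtain $u$ with $L_u\in\Gamma$, $v\in V$ with $uv\equiv w$, and $v\sim L_u(v)=v'$ (note $v'\equiv w$, so ``$v$ and $v'$ have the same reductions'' is the same as saying $w$ has the same reductions as $v$). By Lemma~\ref{lem:basics.of.compatibility}(3), $\sigma$ is compatible with $v$ iff it is compatible with $v'$; pulling $\sigma$ back along $L_u$ (i.e.\ transporting via the label-isomorphism induced by the translation) gives a $\sigma_0$ compatible with $v$, so $A_{v,\sigma_0}\in\mathcal A_V$. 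Then Lemma~\ref{lem:fixed.points.when.conjugating.by.translations} shows $L_u A_{v,\sigma_0} L_u^{-1}\in G_o$ fixes all Type~1 and 2 vertices relative to $(o,w)$ and realizes $\sigma$ (up to the pullback bookkeeping) at $w$; since $L_u$ is a word in $S$ and $A_{v,\sigma_0}\in\mathcal A_V$, this conjugate lies in $H$. Hence $H$ contains, for every reduced $w$ and every compatible $\sigma$, some element playing the role of $A_{w,\sigma}$; combined with the $A_{\epsilon,\sigma}$, Proposition~\ref{prop:generating.dense.subgroup.of.stabiliser} (applied to the generating family built from these $H$-elements) yields $\overline{H\cap G_o}\supseteq G_o$, i.e.\ $\overline H\supseteq G_o$.

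Finally, $\overline H\supseteq G_o$ and $\overline H\supseteq\Gamma$. Since $\Gamma$ acts cocompactly on $X$, every $\Gamma$-orbit meets a compact (hence finite, as $X^{(0)}$ is discrete and locally finite) set of vertices, so $\Aut(X)=G_o\cdot(\text{finite set})\cdot\Gamma$ up to coset enumeration — more precisely, $\Gamma$ acts cocompactly and $G_o=\Stab(o)$, so $\Aut(X)$ is covered by finitely many double cosets $G_o g_i \Gamma$, and because $\Gamma\subset\overline H$ and $G_o\subset\overline H$ it suffices to note each representative $g_i$ can be taken in $\overline H$; in fact the standard argument is that $\overline H$ is open (it contains the open subgroup $G_o$) and contains the cocompact lattice $\Gamma$, hence has finite index, hence equals $\Aut(X)$ if it is also dense — but it is closed, so $\overline H=\Aut(X)$. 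Thus $H$ is a finitely generated dense subgroup of $\Aut(X)$. The main obstacle I anticipate is the bookkeeping in the conjugation step: verifying that transporting a label-morphism $\sigma$ compatible with $w$ back along the translation $L_u$ produces a label-morphism compatible with $v\in V$, and that the resulting conjugate $L_u A_{v,\sigma_0} L_u^{-1}$ genuinely has the portrait data required to feed into Proposition~\ref{prop:generating.dense.subgroup.of.stabiliser} — this is where Lemmas~\ref{lem:basics.of.compatibility}, \ref{lem:translations.and.type}, and \ref{lem:fixed.points.when.conjugating.by.translations} must be stitched together carefully.
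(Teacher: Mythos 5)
Your proof follows the paper's strategy for the bulk of the argument: it uses Lemma~\ref{lem:finitely.many.words.suffice} to get the finite set $V$, conjugates the model generators $A_{v,\sigma}$ (for $v\in V$) by translations $L_u\in\Gamma$ via Lemma~\ref{lem:fixed.points.when.conjugating.by.translations} to manufacture $A_{w,\sigma}$ for arbitrary $w$, and feeds these into Proposition~\ref{prop:generating.dense.subgroup.of.stabiliser} to get a dense subgroup of $G_o$. One small note: since translations preserve the labeling, $\sigma$ does not need to be ``pulled back''; the label-morphism is unchanged under conjugation by $L_u$, which is precisely what Lemma~\ref{lem:fixed.points.when.conjugating.by.translations} records. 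But that step is correct.

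The gap is in the final paragraph. Your generating set $S\cup\mathcal A_V\cup\{A_{\epsilon,\sigma}\}$ omits the finite set $T$ that the paper introduces: for each $\Gamma$-orbit representative $v_i$, an automorphism $g_i$ with $g_i(o)=v_i$ when one exists. This set $T$ is what witnesses the double coset decomposition $\Aut(X)=\Gamma\, T\, G_o$: given $g\in\Aut(X)$, one needs $g_i$ to pass from $g(o)$ back to $o$ after applying some $\gamma\in\Gamma$. Without $T$, your subgroup $H$ is contained in $\overline{\langle\Gamma,G_o\rangle}$, which in general is a \emph{proper} open finite-index subgroup of $\Aut(X)$. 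For example, take $X$ a $3$-regular tree and $\Gamma$ a lattice whose two vertex orbits coincide with the even/odd bipartition relative to $o$ (one can arrange a $\Gamma$-invariant admissible labeling with $|\Sigma_0|=6$). Both $\Gamma$ and $G_o$ preserve the bipartition, but $\Aut(X)$ does not, so $\overline{\langle\Gamma,G_o\rangle}$ has index $2$. Your ``standard argument'' at the end is also circular: you correctly deduce that $\overline H$ is open and, containing the cocompact $\Gamma$, has \emph{finite index} in $\Aut(X)$ --- but then conclude $\overline H=\Aut(X)$ by asserting it is ``also dense --- but it is closed,'' which presupposes what you are trying to prove; finite index does not imply equality. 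The fix is the paper's: choose, for each $\Gamma$-orbit representative $v_i$ reachable from $o$ by some automorphism, an element $g_i\in\Aut(X)$ with $g_i(o)=v_i$, add this finite set $T$ to your generators, and observe that any $g\in\Aut(X)$ lies in $\Gamma\, T\, G_o$.
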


\begin{proof}
    We start by introducing the elements of our finite set of generators. Let $S$ be a finite generating set of $\Gamma$ such that $S = S^{-1}$. Furthermore, choose a representative of every orbit of $\Gamma$ on the set of vertices. Since $\Gamma$ acts cocompactly, there are only finitely many orbits and this provides us with finitely many representatives $v_1, \dots, v_n$ (for the orbit of $o$, choose $o$ as its representative). For every $v_i$, if there exists an automorphism on $X$ that sends $o$ to $v_i$, choose one such automorphism and denote it by $g_i$. (Since $\Gamma$ acts transitively on each orbit and consists of automorphisms, the choice of representative does not matter with regard to the existence of such a $g_i$.) We denote the set of the chosen $g_i$ by $T$. (Since one $v_i$ is equal to $o$, $T$ contains an element $g_i \in G_o$. We choose this $g_i$ to be $\Id_X$.)
    
    Let $V \subset \mathcal{L}_o \setminus \{ \epsilon \}$ be a finite subset with the properties stated in Lemma \ref{lem:finitely.many.words.suffice} such that, for every $v \in V$, every $\sigma$ that is compatible with $v$ also appears at $v$ in $G_o$. For every label-morphism $\sigma : \Sigma_{0, o} \rightarrow \Sigma_{0,o}$ that appears at $o$ in $G_o$, choose an isometry $A_{\epsilon, \sigma}$ according to Construction \ref{con:generators.at.o}. For every $v \in V$ and every $\sigma$ compatible with $v$, choose an isometry $A_{v, \sigma}$ according to Construction \ref{con:generators.at.v}. For every $w \in \mathcal{L}_o \setminus (V \cup \{ \epsilon \})$, there exists some $u \in \mathcal{L}_o$ and some $v \in V$ such that $L_u(v) = w$, $v$ and $w$ have the same reductions, and $L_u \in \Gamma$. In particular, $\sigma$ is compatible with $w$ if and only if it is compatible with $v$. By Lemma \ref{lem:fixed.points.when.conjugating.by.translations}, we can define $A_{w, \sigma} := L_u \circ A_{v, \sigma} \circ L_u^{-1}$ to obtain a valid choice for $A_{w, \sigma}$ according to the requirements of Construction \ref{con:generators.at.v}. By Proposition \ref{prop:generating.dense.subgroup.of.stabiliser}, the set
    \[ \{ A_{w, \sigma} \vert w \in \mathcal{L}_o, \sigma \text{ appears at $w$ in $G_o$} \} \]
    with these choices of $A_{w, \sigma}$ generates a dense subgroup in $G_o$.\\

    Now let $g \in \Aut(X)$. The vertex $g(o)$ has to lie in one of the orbits $\Gamma \cdot v_i$, since we chose a representative for each orbit. Thus, there exists some $\gamma \in \Gamma$ such that $\gamma^{-1} g(o) = v_i$. According to our choice of $g_i$, this implies that $g_i^{-1} \gamma^{-1} g(o) = o$ and $g_i^{-1} \gamma^{-1} g \in G_o$. Thus, we see that $g \in \Gamma T G_o$.
    
    Since $\Gamma$ is generated by $S$ and $G_o$ contains a dense subgroup generated by the elements $A_{w, \sigma}$, we conclude that the set
    \[ T \cup S \cup \{ A_{w, \sigma} \vert w \in \mathcal{L}_o, \sigma \text{ appears at $w$ in $G_o$} \} \]
    generates a dense subgroup of $\Aut(X)$. By Lemma \ref{lem:fixed.points.when.conjugating.by.translations}, every $A_{w, \sigma}$ can be written as an element $A_{v, \sigma}$ with $v \in V \cup \{ \epsilon \}$ conjugated by a product of elements in $S \cup S^{-1} = S$. Therefore, the set
    \[ T \cup S \cup \{ A_{v, \sigma} \vert v \in V \cup \{ \epsilon \}, \sigma \text{ appears at $v$ in $G_o$} \}, \]
    generates a dense subgroup of $\Aut(X)$. Since $V$ is finite and there are only finitely many $\sigma$ appearing at any given vertex, this set is finite.
\end{proof}




\subsection{An explicit set of generators in the vertex-transitive case} \label{subsec:an.explicit.set.of.generators.in.the.vertex.transitive.case}

As before, consider a locally finite $\CAT$ cube complex $X$ together with a subgroup $\Gamma < \Aut(X)$ whose action on $X$ is cocompact and a $\Gamma$-invariant admissible edge-labeling $\ell$. In this section, we strengthen our assumptions by requiring that the action of $\Gamma$ is transitive on the vertices of $X$. The key consequence of this requirement is that, since $\ell$ is $\Gamma$-invariant, $\Sigma_0 = \Sigma_{0,v}$ and $\Sigma = \Sigma_v$ for every $v \in X^{(0)}$.

\begin{remark} \label{rem:vertex.transitive.means.RAAG}
    If $\Gamma$ acts vertex-transitively, then we find ourselves in the situation that $\Gamma$ is a right-angled Artin group, $X$ its standard cubulation and $\ell$ the labeling induced by a defining graph of $\Gamma$. Indeed, since $\Gamma$ acts vertex-transitively, the quotient $S := \faktor{X}{\Gamma}$ has exactly one vertex. Using the results of section \ref{sec:special.actions.and.edge.labelings}, we know that $\pi_1(S) = \Gamma$. Since $\ell$ is $\Gamma$-invariant, the labeling descends to a labeling in $S$, which gives every oriented edge a unique label (due to condition (A2) of admissible edge labelings). Given $S$ with this edge labeling, one can now build a graph that induces $\Gamma$, $X$, and $\ell$ as its right-angled Artin group, Salvetti complex, and $\ell$ edge labeling respectively.
\end{remark}

The goal of this subsection is to show the following improvements of the results from the previous two subsections:
\begin{itemize}
    \item Every label-isomorphism $\sigma : \Sigma_0 \rightarrow \Sigma_0$ appears at $o$ in $G_o$ (compare Construction \ref{con:generators.at.o}).
    
    \item For every $v \in \mathcal{L}_o \setminus \{ \epsilon \}$, every label-isomorphism $\sigma : \Sigma_0 \rightarrow \Sigma_0$ that is compatible with $v$ appears at $v$ in $G_o$ (compare Construction \ref{con:generators.at.v}).

    \item Whenever $\sigma$ is compatible with $v$ for some $v \in \mathcal{L}_o$, we can explicitly construct an element $A_{v, \sigma}$ with the properties required in Construction \ref{con:generators.at.v}.

    \item The set $V$ in Lemma \ref{lem:finitely.many.words.suffice} can be chosen to be the set $\Sigma \subset \mathcal{L}_o$ consisting of all words of length one.
\end{itemize}

We start with the explicit construction of the $A_{v, \sigma}$, which covers the first three points above. To present the construction, we need some preparation.

\begin{notation} \label{not:notation.to.define.generators}
    Let $w = \beta_1 \dots \beta_m \in \mathcal{L}_o$. We denote
    \begin{itemize}
        \item $w_- := \beta_1 \dots \beta_{m-1},$
        \item $w_{end} := \beta_m.$
    \end{itemize}
    Furthermore, given a label-isomorphism $\sigma : \Sigma_0 \rightarrow \Sigma_0$ and $\beta \in \Sigma$, we introduce the map
    \[ \tau(\sigma, \beta) : \Sigma_0 \rightarrow \Sigma_0, \]
    \[ \tau(\sigma, \beta) := \left( \prod_{s \in \beta} \left( \sigma(s^{-1}), \sigma(s)^{-1} \right) \right) \circ \sigma, \]
    where $(a,b)$ denotes the permuation that swaps the elements $a$ and $b$. Since $\beta \in \Sigma$, and $\sigma$ is a label-isomorphism, the sets $\{ \sigma(s^{-1}), \sigma(s)^{-1}\}$ are pairwise disjoint for distinct $s$ in $\beta$ (because if $[s,t] = 1$, then $\sigma(s^{-1})$, $\sigma(s)^{-1}$ commute with $\sigma(t^{-1})$, $\sigma(t)^{-1}$ and are thus not equal). Therefore, these permutations commute and we do not need to specify the order in the product. In addition, $\tau(\sigma, \beta)$ preserves commutation of elements in $\Sigma_0$, because $\sigma$ does and $\sigma(s^{-1})$, $\sigma(s)^{-1}$ commute with the same elements. Thus, $\tau(\sigma, \beta)$ is a label-isomorphism as well. We additionally define $\tau(\sigma, \emptyset) := \sigma$.
\end{notation}

\begin{remark} \label{rem:easy.formula.for.tau}
    One immediately verifies from the definition that, if for all $s \in \beta \cup \beta^{-1}$, $\sigma(s) = s$, then $\tau(\sigma,\beta) = \sigma$.
\end{remark}

We will make use of the following two formulae for $\tau(\sigma,\beta)$.

\begin{lemma} \label{lem:formula.zero.for.tau}
    Let $\sigma$ be a label-isomorphism, $\beta, \beta' \in \Sigma$. Suppose $\sigma(\beta') = \beta'$ and $\sigma(\beta'^{-1}) = \beta'^{-1}$. Then
    \[ \tau(\sigma, \beta)(\beta'^{\pm 1}) = \beta'^{\pm 1} = \sigma(\beta'^{\pm 1}). \]
\end{lemma}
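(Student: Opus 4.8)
The plan is to trace through the definition of $\tau(\sigma,\beta)$ applied to a letter $r \in \beta'$ (and to $r^{-1}$), and show that the swaps indexed by $s \in \beta$ have no effect on $\sigma(r)$ or $\sigma(r^{-1})$, so that $\tau(\sigma,\beta)$ agrees with $\sigma$ on all of $\beta'$ and $\beta'^{-1}$. Since $\sigma(\beta') = \beta'$ and $\sigma(\beta'^{-1}) = \beta'^{-1}$ by hypothesis, this will immediately give $\tau(\sigma,\beta)(\beta'^{\pm1}) = \sigma(\beta'^{\pm1}) = \beta'^{\pm1}$.

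First I would fix $r \in \beta'$ and compute $\tau(\sigma,\beta)(r) = \left(\prod_{s \in \beta}(\sigma(s^{-1}),\sigma(s)^{-1})\right)(\sigma(r))$. By hypothesis $\sigma(r) = r \in \beta'$, so it suffices to show that for every $s \in \beta$ the transposition $(\sigma(s^{-1}),\sigma(s)^{-1})$ fixes $r$, i.e. that $r \notin \{\sigma(s^{-1}), \sigma(s)^{-1}\}$. Suppose for contradiction that $r = \sigma(s^{-1})$ for some $s \in \beta$. Applying $\sigma^{-1}$ gives $\sigma^{-1}(r) = s^{-1}$; but $r \in \beta'$ and $\sigma$ fixes $\beta'$ pointwise, so $\sigma^{-1}(r) = r$, forcing $r = s^{-1} \in \beta^{-1}$. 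Then $r^{-1} = s \in \beta$, and since $\sigma$ fixes $\beta'^{-1}$ pointwise (and $r^{-1} \in \beta'^{-1}$), we would have $\sigma(s) = \sigma(r^{-1}) = r^{-1} = s$. Hence $\sigma(s^{-1}) = s^{-1}$ as well (e.g. because $\sigma$ fixes $\beta'^{-1} \ni r = s^{-1}$, or directly), so $r = \sigma(s^{-1}) = s^{-1}$, consistent, but then $\sigma(s)^{-1} = s^{-1} = r$ too, and in fact the transposition $(\sigma(s^{-1}),\sigma(s)^{-1})$ is the transposition $(s^{-1}, s^{-1})$, which is the identity; so it fixes $r$ anyway. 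The alternative case $r = \sigma(s)^{-1}$ is handled symmetrically: then $\sigma^{-1}(r^{-1}) = s$, and since $r^{-1} \in \beta'^{-1}$ is fixed by $\sigma$, $r^{-1} = s \in \beta$, and we again land in the degenerate situation where the relevant transposition is trivial on $r$. So in all cases each factor fixes $r$, hence $\tau(\sigma,\beta)(r) = r = \sigma(r)$.

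The computation for $r^{-1} \in \beta'^{-1}$ is entirely analogous, using that $\sigma(r^{-1}) = r^{-1}$: one checks $r^{-1} \notin \{\sigma(s^{-1}),\sigma(s)^{-1}\}$ for each $s \in \beta$, or that the offending transposition degenerates to the identity, by the same argument with the roles of $r$ and $r^{-1}$ exchanged. Running over all $r \in \beta'$ then yields $\tau(\sigma,\beta)(\beta') = \{\tau(\sigma,\beta)(r) : r \in \beta'\} = \{\sigma(r): r \in \beta'\} = \sigma(\beta') = \beta'$, and likewise $\tau(\sigma,\beta)(\beta'^{-1}) = \sigma(\beta'^{-1}) = \beta'^{-1}$, which is the claimed identity.

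The only mildly delicate point — the ``main obstacle'', such as it is — is bookkeeping the degenerate cases where $s$ or $s^{-1}$ happens to lie in $\beta'$ (so that $\beta$ and $\beta'$ overlap in a controlled way); there the transposition $(\sigma(s^{-1}),\sigma(s)^{-1})$ may coincide with $(s^{-1},s)$ restricted to elements $\sigma$ already fixes, and one must confirm it still acts trivially on $r$ and $r^{-1}$. I expect this to be a short case check rather than a real difficulty, and it may be cleaner to phrase the argument purely as: for each $s \in \beta$, the set $\{\sigma(s^{-1}),\sigma(s)^{-1}\}$ is disjoint from $\beta' \cup \beta'^{-1}$ unless both elements already lie in $\beta' \cup \beta'^{-1}$ and are fixed by $\sigma$, in which case the transposition either is the identity or swaps two points outside the $\sigma$-image of $r, r^{-1}$; either way $\tau(\sigma,\beta)$ and $\sigma$ agree on $\beta' \cup \beta'^{-1}$.
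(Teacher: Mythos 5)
Your proof is correct and follows essentially the same route as the paper: fix a letter in $\beta' \cup \beta'^{-1}$, use that $\sigma$ fixes $\beta'$ and $\beta'^{-1}$ pointwise to show that any transposition $(\sigma(s^{-1}),\sigma(s)^{-1})$ that could touch it degenerates to the identity. The paper phrases this slightly more compactly by treating $t \in \beta' \cup \beta'^{-1}$ uniformly rather than separating $r$ and $r^{-1}$, but the underlying argument and case split are the same.
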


\begin{proof}
    Let $t \in \beta' \cup \beta'^{-1}$ and thus $\sigma(t) = t$. Suppose $t = \sigma(s^{-1})$ for some $s \in \beta$. Then
    \[ s^{-1} = \sigma^{-1}(t) = t. \]
    Thus, $s \in \beta' \cup \beta'^{-1}$ and we conclude that
    \[ \sigma(s^{-1}) = s^{-1}, \text{ and } \sigma(s)^{-1} = s^{-1}. \]
    Suppose now that $t = \sigma(s)^{-1}$ for some $s \in \beta$. Then
    \[ s = \sigma^{-1}( t^{-1} ) = t^{-1}, \]
    as $t^{-1} \in \beta' \cup \beta'^{-1}$. Again, we conclude that $s \in \beta' \cup \beta'^{-1}$ and thus
    \[ \sigma(s^{-1}) = s^{-1}, \text{ and } \sigma(s)^{-1} = s^{-1}. \]
    It follows that
    \begin{equation*}
        \begin{split}
            \tau(\sigma, \beta)(t) & = \left( \prod_{s \in \beta} ( \sigma(s^{-1}), \sigma(s)^{-1}) \right) \circ \sigma(t)\\
            & = \left( \prod_{s \in \beta \cap (\beta' \cup \beta'^{-1})} (s^{-1}, s^{-1} ) \right) \circ \sigma(t) = t.
        \end{split}
    \end{equation*}
\end{proof}

\begin{lemma} \label{lem:formula.for.tau}
    Let $\sigma, \tilde{\sigma}$ be two label-isomorphisms, $\beta, \tilde{\beta} \in \Sigma \cup \{ \emptyset \}$, and $s \in \Sigma_0$ such that the following properties hold:
    \begin{itemize}
        \item $\tilde{\beta} = \beta$ or $\tilde{\beta} = \beta \cup \{ s \}$.
                
        \item $[s, \beta] = 1$ or $\beta = \emptyset$.

        \item For all $t \in \Sigma_0$ such that $[t, s] = 1$, we have $\sigma(t) = \tilde{\sigma}(t)$. (In particular, $\sigma(\beta) = \tilde{\sigma}(\beta)$.)
    \end{itemize}

    Then, $\tau(\sigma,\beta)(t) = \tau(\tilde{\sigma}, \tilde{\beta})(t)$ for every $t \in \Sigma_0$ that satisfies $[t, s] = 1$.
\end{lemma}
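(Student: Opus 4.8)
## Proof plan for Lemma \ref{lem:formula.for.tau}

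The plan is to unwind the definition of $\tau(\sigma,\beta)$ and track, for a fixed $t$ with $[t,s]=1$, exactly which transpositions in the two products $\prod_{r\in\beta}(\sigma(r^{-1}),\sigma(r)^{-1})$ and $\prod_{r\in\tilde\beta}(\tilde\sigma(r^{-1}),\tilde\sigma(r)^{-1})$ can possibly affect the value at $\sigma(t)=\tilde\sigma(t)$. The first reduction: since $\tilde\beta$ is either $\beta$ or $\beta\cup\{s\}$, the second product differs from the analogous product over $\beta$ (with $\tilde\sigma$ in place of $\sigma$) at most by the single extra transposition $(\tilde\sigma(s^{-1}),\tilde\sigma(s)^{-1})$. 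So I would prove two things: (a) the extra transposition $(\tilde\sigma(s^{-1}),\tilde\sigma(s)^{-1})$ fixes $\tau$-partial-images of $t$, and (b) for $r\in\beta$, the transpositions $(\sigma(r^{-1}),\sigma(r)^{-1})$ and $(\tilde\sigma(r^{-1}),\tilde\sigma(r)^{-1})$ agree and act the same way on the relevant element.

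For (b): if $r\in\beta$ then $[r,s]=1$ (using the hypothesis $[s,\beta]=1$, or vacuously if $\beta=\emptyset$), so by hypothesis $\sigma(r)=\tilde\sigma(r)$; and since $[r^{-1},s]=1$ as well (commutation extends to inverses, Example \ref{rem:commutation.extends.to.inverses}), also $\sigma(r^{-1})=\tilde\sigma(r^{-1})$. Hence the two transpositions indexed by $r\in\beta$ are literally the same permutation of $\Sigma_0$. Thus $\prod_{r\in\beta}(\sigma(r^{-1}),\sigma(r)^{-1})$ and $\prod_{r\in\beta}(\tilde\sigma(r^{-1}),\tilde\sigma(r)^{-1})$ coincide, and applied to $\sigma(t)=\tilde\sigma(t)$ they give the same result. (I should also recall, as in Notation \ref{not:notation.to.define.generators}, that the factors in each product commute, so the order is irrelevant — this lets me treat the two products as the same unordered composition.)

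For (a), the crux: I claim the extra transposition $(\tilde\sigma(s^{-1}),\tilde\sigma(s)^{-1})$ does not move the element $x:=\bigl(\prod_{r\in\beta}(\tilde\sigma(r^{-1}),\tilde\sigma(r)^{-1})\bigr)(\tilde\sigma(t))=\tau(\sigma,\beta)(t)$. Write $a=\tilde\sigma(s^{-1})$, $b=\tilde\sigma(s)^{-1}$. If $x=a$ then $x=\tilde\sigma(s^{-1})$, so $\tilde\sigma^{-1}(x)=s^{-1}$, and since $x$ is obtained from $\tilde\sigma(t)$ by transpositions whose swapped pairs $\{\tilde\sigma(r^{-1}),\tilde\sigma(r)^{-1}\}$ consist of labels commuting with $\tilde\sigma(s)$ (because $[r,s]=1$ implies $[\tilde\sigma(r),\tilde\sigma(s)]=1$ etc., using that $\tilde\sigma$ is a label-isomorphism, plus commutation-with-inverses), the element $x$ must commute with $\tilde\sigma(s)$; but then — this is where I need a little care — I want to conclude $\tilde\sigma(t)$ and $x$ lie in the same "$\tilde\sigma(s)$-commutation class," forcing $t$ to commute with $s$ contradicting... no: actually $[t,s]=1$ is the hypothesis, not a contradiction. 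Let me reorganize: the honest statement is that $x$ commutes with $\tilde\sigma(s)$ (the transpositions only permute among labels commuting with $\tilde\sigma(s)$, starting from $\tilde\sigma(t)$ which commutes with $\tilde\sigma(s)$ since $[t,s]=1$). On the other hand, $a=\tilde\sigma(s^{-1})$ does not commute with $\tilde\sigma(s)$ — because $s^{-1}$ does not commute with $s$ (Example \ref{exam:trivial.examples}) and $\tilde\sigma$ preserves non-commutation — and likewise $b=\tilde\sigma(s)^{-1}$ does not commute with $\tilde\sigma(s)$. Therefore $x\neq a$ and $x\neq b$, so the final transposition fixes $x$, giving $\tau(\tilde\sigma,\tilde\beta)(t)=x=\tau(\sigma,\beta)(t)$.

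The main obstacle I anticipate is the bookkeeping in step (a): verifying precisely that the partial product $\bigl(\prod_{r\in\beta}(\tilde\sigma(r^{-1}),\tilde\sigma(r)^{-1})\bigr)$ maps the commutation-class of $\tilde\sigma(s)$ to itself. This follows because each transposed pair $\{\tilde\sigma(r^{-1}),\tilde\sigma(r)^{-1}\}$ lies in that class (since $[r,s]=1\Rightarrow[r^{\pm1},s]=1\Rightarrow[\tilde\sigma(r^{\pm1}),\tilde\sigma(s)]=1$, and $\{\tilde\sigma(r^{-1})\}^{-1}=\tilde\sigma(r^{-1})^{-1}$ also commutes with $\tilde\sigma(s)$ by Example \ref{rem:commutation.extends.to.inverses}), so a transposition swapping two elements of the class and fixing everything else preserves the class; then induct on $|\beta|$ starting from $\tilde\sigma(t)$, which is in the class. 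Everything else is a short direct computation using that label-isomorphisms are bijections preserving commutation and that commutation is compatible with inverses (Examples \ref{exam:trivial.examples} and \ref{rem:commutation.extends.to.inverses}).
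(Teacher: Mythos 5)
Your proof is correct, and it uses the same decomposition the paper does: isolate the factor $(\tilde\sigma(s^{-1}),\tilde\sigma(s)^{-1})$ arising from the possible extra element $s\in\tilde\beta$, observe that the remaining factors and the base map agree for $\sigma$ and $\tilde\sigma$ on the relevant labels (your step (b)), and then show the extra transposition has no effect. The only real difference is in that last step. The paper factors the extra transposition out \emph{first}, so it acts directly on $\sigma(t)=\tilde\sigma(t)$; since the transpositions in $\tau$ have pairwise disjoint supports, the two expressions can differ only if $\tilde\sigma(t)\in\{\tilde\sigma(s^{-1}),\tilde\sigma(s)^{-1}\}$, which (using that $\tilde\sigma$ is a label-isomorphism and $[s,s^{-1}]\neq 1$) forces $[t,s]\neq 1$, contrary to hypothesis. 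You instead factor the extra transposition out \emph{last} and act on $x=\tau(\sigma,\beta)(t)$; this makes you argue that the partial product $P=\prod_{r\in\beta}(\tilde\sigma(r^{-1}),\tilde\sigma(r)^{-1})$ preserves the set of labels commuting with $\tilde\sigma(s)$, so $x$ still commutes with $\tilde\sigma(s)$ and therefore cannot be $\tilde\sigma(s^{-1})$ or $\tilde\sigma(s)^{-1}$. Both arguments are valid — they rest on the same two facts (disjointness of the transposition supports, commutation-preservation by label-isomorphisms) — but the paper's version is a bit shorter because it never has to track how $P$ moves $\tilde\sigma(t)$; the disjoint-support observation alone tells it that only one factor can possibly act on $\sigma(t)$. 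Your commutation-class bookkeeping is a slightly longer way to the same destination, but it closes the argument cleanly.
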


\begin{proof}
    We prove the Lemma in the case where $\tilde{\beta} = \beta \cup \{ s \}$ and observe at the end how the proof changes for $\tilde{\beta} = \beta$. Let $t \in \Sigma_0$. By definition, we have
    \[ \tau(\sigma, \beta)(t) = \prod_{ r \in \beta } ( \sigma(r^{-1}), \sigma(r)^{-1} )  \left( \sigma(t) \right), \]
    \[ \tau(\tilde{\sigma}, \tilde{\beta})(t) = \left( \prod_{ r \in \beta } ( \tilde{\sigma}(r^{-1}), \tilde{\sigma}(r)^{-1} ) \right) \circ ( \tilde{\sigma}(s^{-1}), \tilde{\sigma}(s)^{-1} ) \left( \tilde{\sigma}(t) \right), \]
    where we use the fact that, for all $r \in \tilde{\beta}$, the permutations $( \tilde{\sigma}(r^{-1}), \tilde{\sigma}(r)^{-1})$ commute with each other in order to place $(\tilde{\sigma}(s^{-1}), \tilde{\sigma}(s)^{-1})$ to the right of the product. Using again that all the permutations in the product and $(\tilde{\sigma}(s^{-1}), \tilde{\sigma}(s)^{-1})$ permute pairwise disjoint subsets of $\Sigma_0$, we see that both $\sigma(t)$ and $\tilde{\sigma}(t)$ are changed by at most one of the permutations.

    Let $t \in \Sigma_0$ such that $[t,s] = 1$. Since $\sigma(t) = \tilde{\sigma}(t)$ for all such $t$ and $[\beta, s] = 1$, we conclude that we can rewrite the second expression above as
    \[ \tau(\tilde{\sigma}, \tilde{\beta})(t) = \prod_{r \in \beta} ( \sigma(r^{-1}), \sigma(r)^{-1}) \circ ( \tilde{\sigma}(s^{-1}), \tilde{\sigma}(s)^{-1} ) \left( \sigma(t) \right). \]
    This expression differs from $\tau(\sigma, \beta)(t)$ if and only if $\sigma(t) \in \{ \tilde{\sigma}(s^{-1}), \tilde{\sigma}(s)^{-1} \}$. Since $\tilde{\sigma}$ is a label-isomorphism and $\sigma(t) = \tilde{\sigma}(t)$, this can only be the case if $[t, s ] \neq 1$. We conclude that, whenever $[t,s] = 1$, we have
    \[ \tau(\sigma, \beta)(t) = \tau(\tilde{\sigma}, \tilde{\beta})(t). \]

    In the case where $\tilde{\beta} = \beta$, the only thing that changes in the computations above is that the permutation $(\tilde{\sigma}(s^{-1}), \tilde{\sigma}(s)^{-1})$ vanishes. As we have seen, this term does not interact with any $t \in \Sigma_0$ that satisfies $[t, s ] = 1$. Thus the same argument goes through and we conclude that
    \[ \tau(\sigma, \beta)(t) = \tau(\tilde{\sigma}, \tilde{\beta})(t). \]
\end{proof}

\begin{construction}[Explicit generators at $o$] \label{con:explicit.generators.at.o}
    Let $\sigma : \Sigma_0 \rightarrow \Sigma_0$ be a label-isomorphism. In order to define an element $A_{\epsilon,\sigma} \in G_o$, we need to define for every $w = \beta_1 \dots \beta_m \in \mathcal{L}_o^{cube}$ a map $\pre{w}{A_{\epsilon,\sigma}}$ and show that these maps satisfy all the properties needed to define a rooted automorphism of $T_o^{cube}$ and, indeed, an element of $G_o$. We will only define $\pre{w}{A_{\epsilon, \sigma}}$ for $w \in \mathcal{L}_o$ and set $\pre{w'}{A_{\epsilon, \sigma}} := \pre{ \text{norm}(w') }{A_{\epsilon, \sigma}}$ for all other $w' \in \mathcal{L}_o^{cube}$ which is necessary to obtain an admissible portrait.

    We distinguish between two types that $w \in \mathcal{L}_o$ can fall into. Since any two vertices in $X$ can be connected by a reduced edge path, we can write
    \[ w \equiv_o v v^{-1} w \equiv_o v w', \]
    where $w'$ is a reduced word over $\Sigma_0$ such that $w' \equiv_v v^{-1}w'$. Note that $v$ is reduced since it corresponds to a normal cube path. In order to treat the $A_{\epsilon, \sigma}$ simultaneously with the $A_{v, \sigma}$ for $v \neq \epsilon$ later on, we introduce a notion of Type relative to $o$ which is a suitable adaptation of the notion of Type introduced in Definition \ref{def:types.relative.to.o.v} to the case $v = \epsilon$:

    \begin{enumerate}
        \item[Type 2:] $w = \epsilon$

        \item[Type 3:] $w \neq \epsilon$.
    \end{enumerate}

    We say that $w$ {\it is of Type 2 or 3 relative to $o$} respectively. Note that, if $w$ is of Type 3 relative to $o$, the word $w_-$ and the symbol $w_{end}$ are both well-defined. We can now define $\pre{w}{A_{\epsilon,\sigma}}$ for each $w$, using recursion over the length of $w$ when $w$ is of Type 3 relative to $o$. We define:
    \begin{enumerate}
        \item If $w$ is of Type 2, then $\pre{w}{A_{\epsilon,\sigma}} := \sigma$.

        \item If $w$ is of Type 3, then $\pre{w}{A_{\epsilon,\sigma}} := \tau\left( \pre{w_-}{A_{\epsilon,\sigma}},w_{end} \right)$.
    \end{enumerate}
    
\end{construction}

\begin{proposition} \label{prop:our.family.defines.an.automorphism.case.at.o}
    The family $\left( \pre{w}{A_{\epsilon,\sigma}} \right)_{w \in V(T_o^{cube})}$ defines an element in $G_o$.
\end{proposition}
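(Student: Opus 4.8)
The plan is to verify that the family $\left(\pre{w}{A_{\epsilon,\sigma}}\right)_{w\in V(T_o^{cube})}$ constructed in Construction \ref{con:explicit.generators.at.o} satisfies the hypotheses of Theorem \ref{thm:characterisation.of.stabilizing.isometries.as.tree.automorphisms}(1), namely that it is an admissible portrait, defines an automorphism on $T_o^{cube}$, and satisfies $par$ and $inv$. By that theorem, such a portrait is the portrait of a (unique) element $g\in G_o$, which is exactly what the proposition claims.

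\emph{Admissibility.} First I would check that each $\pre{w}{A_{\epsilon,\sigma}}$ is a label-morphism (in fact a label-isomorphism, since we are in the vertex-transitive case): $\sigma$ is one by hypothesis, and Notation \ref{not:notation.to.define.generators} already records that $\tau(\sigma',\beta)$ is a label-isomorphism whenever $\sigma'$ is, so an induction on word length handles all $w$ of Type 3. The canonical-extension requirement is automatic since we declared $\pre{w}{A_{\epsilon,\sigma}}$ as a map $\Sigma_0\to\Sigma_0$ and extend it canonically. The first admissibility condition — that $\pre{w}{A_{\epsilon,\sigma}}=\pre{w'}{A_{\epsilon,\sigma}}$ whenever $w\equiv_o w'$ — is built into the construction for general $w\in\mathcal L_o^{cube}$ by defining $\pre{w'}{A_{\epsilon,\sigma}}:=\pre{\mathrm{norm}(w')}{A_{\epsilon,\sigma}}$, but it still must be checked \emph{within} $\mathcal L_o$: if $v,v'\in\mathcal L_o$ are distinct normal cube paths with $v\equiv_o v'$ then $v=v'$ (normal cube paths are unique representatives, Remark \ref{rem:normalization}), so there is nothing to check there. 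The real content is that the recursive definition is well-posed: if $w$ of Type 3 has two decompositions as $w_-\beta$ — which cannot happen, since $w_-$ and $w_{end}=\beta_m$ are uniquely determined by the word $w=\beta_1\dots\beta_m$ — so well-posedness is immediate.

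\emph{Properties $par$ and $inv$.} This is where the auxiliary Lemmata \ref{lem:formula.zero.for.tau} and \ref{lem:formula.for.tau} do the work, and I expect this to be the main obstacle. For $par$: given $w\in\mathcal L_o$, a non-isolated $s\in\Sigma_{0,w}$, and $\beta\in\Sigma_w$ with $[\beta,s]=1$, I must show $\pre{w}{A_{\epsilon,\sigma}}(\beta)=\pre{ws}{A_{\epsilon,\sigma}}(\beta)$, where $ws$ on the left really means $\mathrm{norm}(ws)$. I would split into the four cases of Corollary \ref{cor:propertiesofnormalcubepaths} describing $w\ast s$ (length $n+1$, $n$, or $n-1$ in three sub-cases), and in each case compare the recursive formula for $\pre{\mathrm{norm}(ws)}{A_{\epsilon,\sigma}}$ with that for $\pre{w}{A_{\epsilon,\sigma}}$: Lemma \ref{lem:formula.for.tau} is precisely designed to say $\tau(\sigma_1,\beta_1)$ and $\tau(\sigma_2,\beta_2)$ agree on everything commuting with $s$ when $\sigma_1,\sigma_2$ agree on everything commuting with $s$ and the $\beta_i$ differ by at most $\{s\}$, which is exactly the discrepancy between the last letters of $w$ and $\mathrm{norm}(ws)$ in cases (1) and (4); cases (2) and (3) (a cancellation) are handled by Lemma \ref{lem:formula.zero.for.tau} together with the definition of $\tau(\cdot,\beta)$ as a product of transpositions $(\sigma(t^{-1}),\sigma(t)^{-1})$. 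For $inv$: given $\alpha_1\dots\alpha_n\in\mathcal L_o$ and $s\in\alpha_n$, I must show $\pre{\alpha_1\dots\alpha_n}{A_{\epsilon,\sigma}}(s^{-1})=\pre{\alpha_1\dots\tilde\alpha_n}{A_{\epsilon,\sigma}}(s)^{-1}$ with $\tilde\alpha_n=\alpha_n\setminus\{s\}$. Here the Type-3 recursion gives $\pre{\alpha_1\dots\alpha_n}{A_{\epsilon,\sigma}}=\tau(\pre{\alpha_1\dots\tilde\alpha_n}{A_{\epsilon,\sigma}},\alpha_n)$ — using that $\alpha_1\dots\tilde\alpha_n\equiv_o\alpha_1\dots\tilde\alpha_n$ is the word with last letter $\tilde\alpha_n$ (or $\alpha_{n-1}$ if $\tilde\alpha_n=\emptyset$), which may need a small normalization check — and then the defining transposition $(\sigma'(s^{-1}),\sigma'(s)^{-1})$ with $\sigma'=\pre{\alpha_1\dots\tilde\alpha_n}{A_{\epsilon,\sigma}}$ shows directly that $\tau(\sigma',\alpha_n)(s^{-1})=\sigma'(s)^{-1}$, which is the claim. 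The delicate point throughout is bookkeeping: whenever I write $ws$ I must remember the portrait was only \emph{defined} on normal cube paths and extended by normalization, so every comparison must be made after applying $\mathrm{norm}$, and Corollary \ref{cor:propertiesofnormalcubepaths} must be invoked to identify that normal form explicitly.

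\emph{Defining an automorphism on $T_o^{cube}$.} By the criterion recalled after Remark \ref{rem:portrait.recovers.automorphism}, it suffices to check inductively on the length of $v$ that $\pre{v}{A_{\epsilon,\sigma}}(\Sigma_v)\subset\Sigma_{\sigma(v)}$ (writing $\sigma$ here for the induced vertex map). Since $\pre{v}{A_{\epsilon,\sigma}}$ is a label-isomorphism and $\Sigma_{0,u}=\Sigma_0$, $\Sigma_u=\Sigma$ for every vertex $u$ by vertex-transitivity, each $\pre{v}{A_{\epsilon,\sigma}}$ maps $\Sigma=\Sigma_v$ bijectively to $\Sigma=\Sigma_{\sigma(v)}$, so this containment is automatic; the only thing to confirm is that the associated vertex map indeed lands in $\mathcal L_o^{cube}$, which follows from the same observation applied letter by letter. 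Once admissibility, $par$, $inv$, and the automorphism property are in hand, Theorem \ref{thm:characterisation.of.stabilizing.isometries.as.tree.automorphisms}(1) produces the desired $g\in G_o$, and by Lemma \ref{lem:stabilizer.elements.are.exactly.automorphisms.satisfying.label.par.inv} this $g$ is unique and has $\left(\pre{w}{A_{\epsilon,\sigma}}\right)_w$ as its portrait, completing the proof.
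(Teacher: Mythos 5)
Your overall plan matches the paper's: verify that the family from Construction \ref{con:explicit.generators.at.o} is an admissible portrait, defines an automorphism on $T_o^{cube}$, and satisfies $par$ and $inv$, then invoke Theorem \ref{thm:characterisation.of.stabilizing.isometries.as.tree.automorphisms}(1). The admissibility, automorphism-defining, and $par$ parts of your sketch are essentially correct; the paper handles $par$ by reducing without loss of generality to the case where $ws$ is reduced rather than by running through all four cases of Corollary \ref{cor:propertiesofnormalcubepaths}, but both organizations rest on the same use of Lemma \ref{lem:formula.for.tau}.

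Your $inv$ step, however, has a genuine gap. You write ``the Type-3 recursion gives $\pre{\alpha_1\dots\alpha_n}{A_{\epsilon,\sigma}}=\tau\bigl(\pre{\alpha_1\dots\tilde\alpha_n}{A_{\epsilon,\sigma}},\alpha_n\bigr)$'' with $\tilde\alpha_n=\alpha_n\setminus\{s\}$. That is not what Construction \ref{con:explicit.generators.at.o} gives: the recursion reads $\pre{w}{A_{\epsilon,\sigma}}=\tau\bigl(\pre{w_-}{A_{\epsilon,\sigma}},w_{end}\bigr)$, and for $w=\alpha_1\dots\alpha_n$ the prefix $w_-$ is $\alpha_1\dots\alpha_{n-1}$, with the last letter dropped entirely, not $\alpha_1\dots\tilde\alpha_n$. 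Your identity holds only in the degenerate case $|\alpha_n|=1$, when $\tilde\alpha_n=\emptyset$; for $|\alpha_n|\ge 2$ the label-isomorphisms $\pre{\alpha_1\dots\alpha_{n-1}}{A_{\epsilon,\sigma}}$ and $\pre{\alpha_1\dots\tilde\alpha_n}{A_{\epsilon,\sigma}}$ are in general different, so the ``small normalization check'' you flag is actually a required extra step that your argument does not supply. The fix, and what the paper does, is to first use the already-established $par$ to obtain $\pre{\alpha_1\dots\tilde\alpha_n}{A_{\epsilon,\sigma}}(s)=\pre{\alpha_1\dots\alpha_{n-1}}{A_{\epsilon,\sigma}}(s)$, valid because $[s,\tilde\alpha_n]=1$, and only then compute $\pre{w}{A_{\epsilon,\sigma}}(s^{-1})=\tau\bigl(\pre{w_-}{A_{\epsilon,\sigma}},\alpha_n\bigr)(s^{-1})=\pre{w_-}{A_{\epsilon,\sigma}}(s)^{-1}$ directly from the definition of $\tau$. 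Without that $par$ bridge, your verification of $inv$ does not close.
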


\begin{proof}
    According to Theorem \ref{thm:characterisation.of.stabilizing.isometries.as.tree.automorphisms}, we need to show that this portrait is admissible, defines an automorphism on $T_{o}^{cube}$, and satisfies the properties $par$ and $inv$. Admissibility is given since $\sigma$ is assumed to be a label-isomorphism and we have seen earlier that $\tau(\sigma, \beta)$ is a label-isomorphism. Furthermore, since we assume the action $\Gamma \curvearrowright X$ to be vertex-transitive, we know that $\Sigma_{0, v} = \Sigma_0$ for every $v \in X^{(0)}$ which implies that for every $w \in \mathcal{L}_o$, $\pre{w}{A_{\epsilon, \sigma}}( \Sigma_{0,w} ) \subset \Sigma_0 = \Sigma_{0, A_{\epsilon, \sigma}(w) }$. Therefore, this portrait is admissible and defines an automorphism on $T_o^{cube}$. We are left to show $par$ and $inv$.\\

    We start by proving $par$. Before we prove $par$ in the general case, we cover the special case where $w = \epsilon$ and $\tilde{w} = s$ for some non-isolated $s \in \Sigma_0$. Let $\beta' \in \Sigma$ such that $[s, \beta'] = 1$. We compute
    \[ \pre{w}{A_{\epsilon, v}}(\beta') = \sigma(\beta'), \]
    \[ \pre{\tilde{w}}{A_{\epsilon, v}}(\beta') = \tau( \sigma, s )(\beta'). \]
    Since $[\beta', s] = 1$, Lemma \ref{lem:formula.for.tau}, applied in the case $\beta = \emptyset$, implies that
    \[ \sigma(\beta') = \tau(\sigma, \emptyset)(\beta') = \tau(\sigma, s)(\beta') \]
    which proves $par$ for this case.\\

    Now let $w = \beta_1 \dots \beta_m$, $\tilde{w} = \beta_1 \dots \tilde{\beta}_k \dots \beta_m \in \mathcal{L}_o$, and $s \in \Sigma_0$ non-isolated such that $\tilde{w} \equiv w s$ and such that neither $w$ nor $\tilde{w}$ is the empty word. (Without loss of generality, $ws$ is reduced and thus $\tilde{\beta}_k = \beta_k \cup \{ s \}$.) Put $w_j := \beta_1 \dots \beta_j$ and $\tilde{w}_j := \beta_1 \dots \tilde{\beta}_k \dots \beta_j$ for $1 \leq j \leq m$ and set $w_0 = \tilde{w}_0 = \epsilon$.

    We use induction over $j$ to prove that $\pre{w_j}{A_{\epsilon,\sigma}}(\beta') = \pre{\tilde{w}_j}{A_{\epsilon, \sigma}}(\beta')$ for every $\beta' \in \Sigma$ that satisfies $[s, \beta'] = 1$. If $j < k$, then we have $w_j = \tilde{w}_j$ and thus the start of the induction is given by
    \[ \pre{w_j}{A_{\epsilon, v}}(\beta') = \pre{\tilde{w}_j}{A_{\epsilon,v}}(\beta') \]
    for all $j < k$, including $j = 0$.

    Now suppose that $\pre{w_j}{A_{\epsilon,v}}(\beta') = \pre{\tilde{w}_j}{A_{\epsilon,v}}(\beta')$ for some $j \geq k-1$ and every $\beta'$ that satisfies $[s, \beta'] = 1$. Since $j+1 \geq k \geq 1$ and neither $w$ nor $\tilde{w}$ is the empty word, we know that $w_{j+1}$ and $\tilde{w}_{j+1}$ are of Type 3. If $j+1 = k$, then we have
    \[ \pre{w_{k}}{A_{\epsilon, \sigma}}(\beta') = \tau\left( \pre{w_{k-1}}{A_{\epsilon, \sigma}}, \beta_{k} \right)(\beta'), \]
    \[ \pre{\tilde{w}_{k}}{A_{\epsilon, \sigma}}(\beta') = \tau\left( \pre{\tilde{w}_{k-1}}{A_{\epsilon,\sigma}}, \tilde{\beta}_{k} \right)(\beta'). \]
    Since $[s, \beta'] = 1$ and $\tilde{\beta}_k = \beta_k \cup \{ s \} \in \Sigma$, the induction assumption tells us that we can apply Lemma \ref{lem:formula.for.tau} and obtain
    \[ \pre{w_{k}}{A_{\epsilon, \sigma}}(\beta') = \pre{\tilde{w}_{k}}{A_{\epsilon, \sigma}}(\beta'). \]

    If $j+1 > k$, then
    \[ \pre{w_{j+1}}{A_{\epsilon, \sigma}}(\beta') = \tau\left( \pre{w_j}{A_{\epsilon, \sigma}}, \beta_{j+1} \right)(\beta'), \]
    \[ \pre{\tilde{w}_{j+1}}{A_{\epsilon, \sigma}}(\beta') = \tau\left( \pre{\tilde{w}_j}{A_{\epsilon,\sigma}}, \beta_{j+1} \right)(\beta'). \]
    Since $j+1 > k$, we know that $[s, \beta_{j+1}] = 1$ due to Corollary \ref{cor:propertiesofnormalcubepaths} (4). This, together with the induction-assumption and the fact that $[s, \beta'] = 1$, implies that we can use Lemma \ref{lem:formula.for.tau} to conclude that
    \[ \pre{w_{j+1}}{A_{\epsilon, \sigma}}(\beta') = \pre{\tilde{w}_{j+1}}{A_{\epsilon, \sigma}}(\beta'). \]
    We see that we can use induction for $0 \leq j \leq m$ to obtain
    \[ \pre{w}{A_{\epsilon,\sigma}}(\beta') = \pre{w_m}{A_{\epsilon, \sigma}}(\beta') = \pre{\tilde{w}_m}{A_{\epsilon, \sigma}}(\beta') = \pre{\tilde{w}}{A_{\epsilon,\sigma}}(\beta') \]
    for all $\beta' \in \Sigma$ that satisfy $[s, \beta'] = 1$. This proves $par$.\\

    We are left with proving $inv$. Let $w = \beta_1 \dots \beta_m \in \mathcal{L}_o$, $s \in \beta_m$ and $\tilde{w} = \beta_1 \dots \tilde{\beta}_m$ with $\tilde{\beta}_m := \beta_m \setminus \{ s \}$. We need to compare
    \[ \pre{\tilde{w}}{A_{\epsilon, \sigma}}(s)^{-1}, \quad \pre{w}{A_{\epsilon, \sigma}}(s^{-1}). \]
    Note that, by $par$, we have
    \[ \pre{\tilde{w}}{A_{\epsilon, \sigma}}(s) = \pre{w_-}{A_{\epsilon, \sigma}}(s), \]
    since $[s, \tilde{\beta}_m] = 1$. This allows us to work with the maps $\pre{w_-}{A_{\epsilon, \sigma}}$ and $\pre{w}{A_{\epsilon, \sigma}}$. We compute
    \[ \pre{w}{A_{\epsilon, \sigma}}(s^{-1}) = \tau\left( \pre{w_-}{A_{\epsilon, \sigma}}, \beta_m \right)(s^{-1}) = \pre{w_-}{A_{\epsilon, \sigma}}(s)^{-1}, \]
    where the last equation follows from the definition of $\tau$ and the fact that $s \in \beta_m$. This proves $inv$. By Theorem \ref{thm:characterisation.of.stabilizing.isometries.as.tree.automorphisms}, we conclude that this is the portrait of an automorphism in $G_o$.
\end{proof}

We denote the automorphism with portrait $(\pre{w}{A_{\epsilon, \sigma}})_{w \in V(T_o^{cube})}$ by $A_{\epsilon, \sigma}$. In particular, we conclude that every label-isomorphism appears at $o$ in $G_o$. Next, we construct $A_{v, \sigma}$ for $v \neq \epsilon$. For this, recall the definition of Type relative to $(o,v)$ given in Definition \ref{def:types.relative.to.o.v}. As before, we define $\pre{w}{A_{v, \sigma}}$ for $w \in \mathcal{L}_o$ and set $\pre{w'}{A_{v,\sigma}} := \pre{ \text{norm}(w')}{A_{v,\sigma}}$ for all other $w' \in \mathcal{L}_o^{cube}$.

\begin{construction}[Explicit generators at $v$] \label{con:explicit.generators.at.v}
    Let $v \in \mathcal{L}_o \setminus \{ \epsilon \}$ and $\sigma : \Sigma_0 \rightarrow \Sigma_0$ a label-isomorphism that is compatible with $v$. We define a portrait $(\pre{w}{A_{v,\sigma}})_{w \in V(T_o^{cube})}$ by defining the following for every $w \in \mathcal{L}_o$:
    \begin{enumerate}
        \item If $w$ is of Type 1 relative to $(o,v)$, then $\pre{w}{A_{v, \sigma}} := \Id$.

        \item If $w$ is of Type 2 relative to $(o,v)$, then $\pre{w}{A_{v, \sigma}} := \sigma$.

        \item If $w$ is of Type 3 relative to $(o,v)$, then $\pre{w}{A_{v, \sigma}} := \tau \left( \pre{w_-}{A_{v, \sigma}}, w_{end} \right)$.
    \end{enumerate}

\end{construction}

\begin{proposition} \label{prop:our.family.defines.an.isometry.case.away.from.o}
    The family $\left( \pre{w}{A_{v,\sigma}} \right)_{w \in V(T_o^{cube})}$ defines an element in $G_o$.
\end{proposition}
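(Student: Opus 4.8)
\textbf{Proof plan for Proposition \ref{prop:our.family.defines.an.isometry.case.away.from.o}.}

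The plan is to mirror the proof of Proposition \ref{prop:our.family.defines.an.automorphism.case.at.o}, invoking Theorem \ref{thm:characterisation.of.stabilizing.isometries.as.tree.automorphisms}: it suffices to check that the portrait $(\pre{w}{A_{v,\sigma}})_{w \in V(T_o^{cube})}$ is admissible, defines an automorphism on $T_o^{cube}$, and satisfies $par$ and $inv$. Admissibility is immediate: every map in the recursion is either $\Id$, the label-isomorphism $\sigma$, or $\tau(\cdot, w_{end})$ of a previously-constructed label-isomorphism, and $\tau(\sigma', \beta)$ was shown to be a label-isomorphism in Notation \ref{not:notation.to.define.generators}; since $\Gamma$ acts vertex-transitively, $\Sigma_{0,w} = \Sigma_0$ for all $w$, so $\pre{w}{A_{v,\sigma}}(\Sigma_{0,w}) \subset \Sigma_0 = \Sigma_{0, A_{v,\sigma}(w)}$ and the portrait defines an automorphism on $T_o^{cube}$. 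First I would also record the basic structural facts: by Remark \ref{rem:Basics.of.types}, $o$ and everything $\ell^1$-closer than $v$ is of Type 1, $v$ is of Type 2, and Type 3 vertices lie strictly beyond $v$; and if $w$ is of Type 3 then $w_-$ is of Type 1, 2, or 3 (it cannot ``jump back'' across types in a way that breaks the recursion), so the recursion in case (3) is well-founded on length.

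The substance is $par$ and $inv$, and here the argument splits according to which Types the relevant vertices fall into. For $inv$: given $w = \beta_1 \dots \beta_m \in \mathcal{L}_o$ and $s \in \beta_m$, set $\tilde w := \beta_1 \dots \tilde\beta_m$ with $\tilde\beta_m = \beta_m \setminus \{s\}$. If $w$ is of Type 1, both $\pre{w}{A_{v,\sigma}}$ and $\pre{\tilde w}{A_{v,\sigma}}$ are $\Id$ (note $\tilde w$ is then also Type 1 since it is $\ell^1$-closer or of the same ``reduced prefix''), and $inv$ is trivial. If $w$ is of Type 2, then $w = \epsilon$ only in the $v = \epsilon$ case which is excluded here; otherwise $w \equiv v$ up to reduced tail, and one checks compatibility of $\sigma$ with $v$ forces $\sigma$ to fix the relevant letters, so again $inv$ holds by Remark \ref{rem:easy.formula.for.tau}. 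If $w$ is of Type 3, I would first use $par$ (proved below, or proved in parallel by a joint induction) to replace $\pre{\tilde w}{A_{v,\sigma}}(s)$ by $\pre{w_-}{A_{v,\sigma}}(s)$ since $[s, \tilde\beta_m] = 1$, and then compute directly $\pre{w}{A_{v,\sigma}}(s^{-1}) = \tau(\pre{w_-}{A_{v,\sigma}}, \beta_m)(s^{-1}) = \pre{w_-}{A_{v,\sigma}}(s)^{-1}$ straight from the definition of $\tau$ and $s \in \beta_m$ — exactly as in Proposition \ref{prop:our.family.defines.an.automorphism.case.at.o}.

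For $par$, fix a non-isolated $s \in \Sigma_0$, a vertex $w$, and its neighbour $\tilde w \equiv ws$, and $\beta' \in \Sigma$ with $[s,\beta'] = 1$; the goal is $\pre{w}{A_{v,\sigma}}(\beta') = \pre{\tilde w}{A_{v,\sigma}}(\beta')$. The clean strategy is induction along a reduced edge-path $\epsilon = w_0, w_1, \dots, w_m = w$ and the parallel path $\tilde w_0, \dots, \tilde w_m = \tilde w$ (where the extra letter $s$ is inserted at the appropriate position $k$, as in the proof at $o$), using Lemma \ref{lem:formula.for.tau} at each Type 3 step and Lemma \ref{lem:formula.zero.for.tau} to handle the passage into/out of Type 1 and Type 2. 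The genuinely new points compared to the $v = \epsilon$ case are the transitions between Types: I must verify (i) if $w_j$ is Type 1 and $w_{j+1}$ is Type 2, then $\pre{w_j}{} = \Id$ and $\pre{w_{j+1}}{} = \sigma$ agree on $\beta'$ — this uses that such a transition only happens ``at $v$'', so $v\alpha$ reducible and $[\alpha, \cdot]$-commuting letters are fixed by $\sigma$ (compatibility), forcing $\sigma(\beta') = \beta'$ when relevant; (ii) if $w_j$ is Type 2 and $w_{j+1}$ is Type 3, the jump from $\sigma$ to $\tau(\sigma, (w_{j+1})_{end})$ respects $\beta'$ by Lemma \ref{lem:formula.for.tau} with $\beta = \emptyset$; and (iii) purely within Type 3 it is Lemma \ref{lem:formula.for.tau} exactly as before, using Corollary \ref{cor:propertiesofnormalcubepaths}(4) to know $[s, \beta_{j+1}] = 1$ past position $k$. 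The step I expect to be the main obstacle is a careful bookkeeping of the Type transitions along the two parallel paths — in particular ensuring that inserting the letter $s$ does not change the Type of any intermediate vertex in a way that invalidates the recursion, and that compatibility of $\sigma$ with $v$ is strong enough to bridge the Type 1 $\to$ Type 2 boundary; once those are pinned down, everything reduces to the two $\tau$-identities already established. Finally, having verified admissibility, the automorphism property, $par$, and $inv$, Theorem \ref{thm:characterisation.of.stabilizing.isometries.as.tree.automorphisms} yields the desired $g = A_{v,\sigma} \in G_o$, and by construction it fixes all Type 1 and Type 2 vertices, so $\sigma$ appears at $v$ in $G_o$.
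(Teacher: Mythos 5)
Your overall strategy is exactly the paper's: invoke Theorem \ref{thm:characterisation.of.stabilizing.isometries.as.tree.automorphisms}, dispatch admissibility and the automorphism property as in Proposition \ref{prop:our.family.defines.an.automorphism.case.at.o}, and then prove $par$ and $inv$ by tracking Type transitions with Lemma \ref{lem:formula.for.tau} and Lemma \ref{lem:formula.zero.for.tau}. The paper packages the ``bookkeeping of Type transitions'' you flag as the main obstacle into three preparatory lemmas (Lemmas \ref{lem:letters.that.get.fixed.by.sigma}, \ref{lem:typechangesnonisolated}, \ref{lem:change.of.type.ws}), which you would need to establish as well before the case analysis can close.

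There is, however, a concrete gap in your $inv$ argument in the Type-2 case. You compare $\pre{w}{A_{v,\sigma}}(s^{-1})$ with $\pre{\tilde{w}}{A_{v,\sigma}}(s)^{-1}$ directly and claim that compatibility of $\sigma$ with $v$ forces the relevant letters to be fixed, citing Remark \ref{rem:easy.formula.for.tau}. This fails when $\tilde{w}$ is of Type 3 (which is permitted when $w$ is of Type 2 by Lemma \ref{lem:change.of.type.ws}; one checks that in that situation $s$ does \emph{not} commute with any $\alpha$ with $v\alpha$ reducible, so compatibility says nothing about $\sigma(s)$). Then $\pre{\tilde{w}}{A_{v,\sigma}}$ is a $\tau$-expression built from $\pre{w_-}{A_{v,\sigma}}$, and Remark \ref{rem:easy.formula.for.tau} does not apply. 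The fix, which is what the paper does, is to apply $par$ \emph{first} (as you correctly do in your Type-3 case) to replace $\pre{\tilde{w}}{A_{v,\sigma}}(s)$ by $\pre{w_-}{A_{v,\sigma}}(s)$ throughout; then $w_-$ can only be of Type 1 or 2 (by Lemma \ref{lem:typechangesnonisolated}(4) applied contrapositively), and Lemma \ref{lem:typechangesnonisolated}(1)--(3) supply exactly the identities $\sigma(s) = s$ and $\sigma(s^{-1}) = s^{-1}$ needed to close the cases. So the route is right, but the $par$-reduction must be applied uniformly across all Type cases of the $inv$ argument, not only Type 3, and the identities on $\sigma$ come from the type-transition lemmas rather than directly from compatibility.
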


We have to show the same statements as in the proof of Proposition \ref{prop:our.family.defines.an.automorphism.case.at.o} in order to use Theorem \ref{thm:characterisation.of.stabilizing.isometries.as.tree.automorphisms} to conclude the proposition. Again, admissibility follows from our discussion of $\tau(\sigma, \beta)$ and vertex-transitivity implies that $\pre{w}{A_{v,\sigma}}(\Sigma_{0,w}) \subset \Sigma_0 = \Sigma_{0, A_{v, \sigma}(w)}$, which shows that this portrait defines an automorphism on $T_o^{cube}$. We are left to show $par$ and $inv$.

To show these two statements, we need to understand how the types of vertices that are connected by an edge or diagonal can differ. For this, we show a couple of Lemmata.

\begin{convention}
    Throughout Lemma \ref{lem:letters.that.get.fixed.by.sigma} -- \ref{lem:change.of.type.ws}, their proofs, and the proof of Proposition \ref{prop:our.family.defines.an.isometry.case.away.from.o}, we consider $v \in \mathcal{L}_o$ and the Type of a vertex is always relative to the pair $(o,v)$.
\end{convention}

\begin{lemma} \label{lem:letters.that.get.fixed.by.sigma}
    Let $w, \tilde{w} \in \mathcal{L}_o$, and $\beta \in \Sigma$ such that $\tilde{w} \equiv w\beta$, $w$ is of Type 1, and $\tilde{w}$ is not of Type 1. Then the following statements hold:
    \begin{enumerate}
        \item The word $v\beta^{-1}$ is reducible, and $\sigma(\beta^{-1}) = \beta^{-1}$.

        \item $\tilde{w}$ is of Type 2.

        \item For every $\beta' \in \Sigma$ such that $[\beta, \beta'] = 1$, $\sigma(\beta') = \beta'$.
    \end{enumerate}
\end{lemma}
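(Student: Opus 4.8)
The plan is to unpack the hypotheses: $w$ is of Type 1 relative to $(o,v)$ and $\tilde w \equiv w\beta$ is not, where $\beta \in \Sigma$. By Definition~\ref{def:types.relative.to.o.v}, being of Type 1 means that the reduced representative $vw'$ of $w$ (with $w \equiv vw'$, $w'$ reduced over $\Sigma_0$) is reducible, i.e.\ the concatenated cube path from $o$ crosses some hyperplane twice. Appending the diagonal $\beta$ gives a reduced representative of $\tilde w$, namely $vw'\beta$ up to $\equiv$. The key observation is that going from a reducible word to a reduced word by appending a \emph{single} diagonal $\beta$ forces very rigid combinatorics: the only hyperplanes that could have been crossed twice by $vw'$ are exactly those that are ``undone'' by $\beta$, and $\beta$ can undo at most one crossing per element of $\beta$, each via the cancellation mechanism of Lemma~\ref{lem:innermost.cancellations.commute.with.the.word.in.between}.

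\textbf{Step 1: Identifying where the reducibility of $vw'$ lives.} First I would argue that since $vw'$ is reducible but $vw'\beta$ is reduced, every hyperplane crossed twice by $vw'$ must be crossed (in the opposite direction) by $\beta$. Indeed, $vw'\beta$ crossing no hyperplane twice means in particular that $vw'$ restricted to the part before $\beta$ can only double-cross hyperplanes that $\beta$ then cancels. For each $s \in \beta$, the hyperplane $\hat h(s)$ is crossed by $\beta$; if $vw'$ also crosses it, that accounts for one double crossing, and these are the only ones. So for each element $s$ of $\beta$ that participates, $s^{-1}$ appears among the labels of $vw'$. More to the point, I want to show $v\beta^{-1}$ is reducible: the hyperplane $\hat h(s)$ is crossed by $v$ itself (not just by $w'$), because $w'$ is a reduced word and $w$ being Type~1 means the reducibility of $vw'$ involves a hyperplane crossed by $v$ — a hyperplane crossed only within the reduced segment $w'$ cannot produce a double crossing. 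This is where I'd invoke the ``innermost cancellation'' machinery (Remark~\ref{rem:existence.of.innermost.cancellations} and Lemma~\ref{lem:innermost.cancellations.commute.with.the.word.in.between}) carefully to pin down that the cancelling partner of (some $s \in$) $\beta$ lies inside $v$.

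\textbf{Step 2: Deducing $\sigma(\beta^{-1}) = \beta^{-1}$ and (3).} Once I know $v\beta^{-1}$ is reducible, compatibility of $\sigma$ with $v$ (Definition~\ref{def:compatibility}) applies directly with ``$\alpha$'' $= \beta^{-1}$: it says $\sigma(\beta^{-1}) = \beta^{-1}$, and moreover $\sigma(\gamma) = \gamma$ for every $\gamma \in \Sigma_v$ with $[\beta^{-1}, \gamma] = 1$. Since $[\beta,\beta'] = 1 \iff [\beta^{-1}, \beta'] = 1$ (commutation extends to inverses, Example~\ref{rem:commutation.extends.to.inverses}), this gives statement (3): $\sigma(\beta') = \beta'$ for all $\beta' \in \Sigma$ commuting with $\beta$. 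One small point to check is that such $\beta'$ lies in $\Sigma_v$; in the vertex-transitive setting $\Sigma_v = \Sigma$ for all $v$, so this is automatic, but I'd note it. For statement (2), that $\tilde w$ is of Type~2: $\tilde w \equiv v w' \beta$ is reduced (Type $\geq 2$), and I must exhibit an $\alpha$ with $v\alpha$ reducible and $[\alpha, w'\beta] = 1$. The natural candidate is $\alpha = \beta^{-1}$ (or the relevant sub-collection), which we've shown satisfies $v\alpha$ reducible; commutation $[\beta^{-1}, w']$ should follow from Lemma~\ref{lem:innermost.cancellations.commute.with.the.word.in.between} (the cancelling letter commutes with everything strictly between the two crossings, i.e.\ with all of $w'$), and $[\beta^{-1}, \beta] $ — wait, $\beta^{-1}$ need not commute with $\beta$; instead I should take $\alpha$ to be the part of $\beta^{-1}$ whose hyperplanes are double-crossed, and note it commutes with the rest. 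I'd restructure (2) so that the witnessing $\alpha$ is precisely $\{\,s^{-1} : s \in \beta,\ v\text{ crosses }\hat h(s)\,\}$, verifying $v\alpha$ reducible and $[\alpha, w'\beta]=1$ via the commutation conclusions of Step~1.

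\textbf{Main obstacle.} The delicate part is Step~1: rigorously showing that the double-crossing of $vw'$ can be entirely ``localized'' into $v$ and is cancelled precisely by $\beta$, rather than, say, involving $w'$ in an essential way or requiring $\beta$ to have size $\geq 2$. This requires a careful application of the innermost-cancellation analysis (Lemma~\ref{lem:innermost.cancellations.commute.with.the.word.in.between}) together with the fact that $w'$ is a reduced \emph{edge} path, so all its crossings are simple, and the observation that $vw'$ reduced-except-for-$\beta$-cancellable forces any double crossing to pair a crossing in $v$ with a crossing in $\beta$. I expect the bookkeeping about which hyperplanes $v$ versus $w'$ crosses, and matching them against the letters of $\beta$, to be the fiddly heart of the argument; everything after that is a direct appeal to the definition of compatibility of $\sigma$ with $v$.
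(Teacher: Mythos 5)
Your high-level plan is correct and uses the same tools as the paper — identify a hyperplane $\hat h$ crossed by both $v$ and $\beta$, deduce $v\beta^{-1}$ reducible, then invoke compatibility of $\sigma$ with $v$. But two of the steps you defer to ``fiddly bookkeeping'' are precisely where your proposed argument would not go through as written.

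First, the crucial subclaim — that the \emph{outgoing edge at the vertex} $v$ labeled $s$ crosses the same hyperplane $\hat h$ that the \emph{path} $v$ crosses — is not automatic and is not delivered by the path you propose to analyze. Knowing that $v$ crosses $\hat h$ somewhere and that $\hat h$ corresponds to the label $s^{-1} \in \beta$ does not tell you that the $s$-labeled edge emanating from the terminal vertex of $v$ crosses $\hat h$: distinct edges with label $s$ generally cross distinct hyperplanes. The paper gets this from Lemma~\ref{lem:innermost.cancellations.commute.with.the.word.in.between} applied to the path $v\tilde w' s\,\beta_0^{-1}$ (with $\tilde w'$ the reduced representative of $v^{-1}\tilde w$ and $\beta_0 = \beta \setminus \{s^{-1}\}$), which goes from $o$ \emph{through $\tilde w$} back towards $w$. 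That path crosses $\hat h$ exactly twice — once in $v$, once in the final edge $s$ — so the innermost cancellation cleanly pairs a letter of $v$ with $s$, and the lemma's proof gives that the $s$-edge at every intermediate vertex (in particular at the endpoint of $v$) crosses $\hat h$ and that $[s,\tilde w'] = 1$. Your sketch applies the innermost cancellation analysis to $vw'\beta$ instead, but that path crosses $\hat h$ \emph{three} times (once each in $v$, $w'$, $\beta$), and there the innermost cancellation of $\hat h$ pairs the crossing in $w'$ with the one in $\beta$, not $v$ with $\beta$. So Remark~\ref{rem:existence.of.innermost.cancellations} and Lemma~\ref{lem:innermost.cancellations.commute.with.the.word.in.between} on $vw'\beta$ do not localize the cancelling partner inside $v$ as you want; you need the detour through $\tilde w$.

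Second, for statement (2) you propose to verify $[\alpha, w'\beta]=1$ with $\alpha = \{s^{-1} : s\in\beta,\ v \text{ crosses } \hat h(s)\}$. This cannot hold: $\alpha$ contains $s^{-1}$ for some $s\in\beta$, and $[s^{-1}, s] \neq 1$ (Example~\ref{exam:trivial.examples}), so $[\alpha,\beta]\neq 1$. Passing from $\alpha = \beta^{-1}$ to a subset does not cure this, since the subset still consists of inverses of elements of $\beta$. What one actually needs, per Definition~\ref{def:types.relative.to.o.v}, is $[\alpha, \tilde w'] = 1$ for the \emph{reduced} representative $\tilde w'$ — and this is what the innermost cancellation on $v\tilde w' s$ gives (the offending letter $s^{-1}$ of $\beta$ is precisely the one that cancels against $w'$ and hence disappears in $\tilde w'$). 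So the commutation check must be against $\tilde w'$, not against the unreduced $w'\beta$. Once this is fixed, taking $\alpha = s$ (a single label, as the paper does) is simpler than tracking the whole set; both choices are admissible as witnesses in Definition~\ref{def:compatibility}.
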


\begin{proof}
    We find reduced words $w'$ and $\tilde{w}'$ such that
    \[ w \equiv vw', \quad \tilde{w} \equiv v\tilde{w}'. \]
    We identify the words $vw'$, $v\tilde{w}'$ with the corresponding paths in $D(X)_o$ and $v$, $w'$, $\tilde{w}'$ with the corresponding path segments in $vw'$ and $v\tilde{w}'$. Since $\tilde{w}$ is not of Type 1, the path $v\tilde{w}'$ is reduced, and since $w$ is of Type 1, the path $vw'$ is reducible. Therefore, there has to be a hyperplane $\hat{h}$ crossed by both $v$ and $w'$. (Since $v$ and $w'$ are reduced, they both cross $\hat{h}$ exactly once.) Since $v\tilde{w}'$ is reduced and $v$ crosses $\hat{h}$, $\tilde{w}'$ cannot cross $\hat{h}$ and $v\tilde{w}'$ cannot lie on the same side of $\hat{h}$ as $w \equiv vw'$. On the other hand, $\tilde{w} \equiv_o w\beta$ by assumption and thus $w'\beta \equiv_v \tilde{w}'$. Therefore, $\beta$ has to cross $\hat{h}$.

    Let $s \in \beta^{-1}$ be the label corresponding to $\hat{h}$ and set $\beta_0 := \beta \setminus \{ s^{-1} \}$. We can rewrite $\beta^{-1} \equiv_{\tilde{w}} s \beta_0^{-1}$, which yields
    \[ v \tilde{w}' \beta^{-1} \equiv_o v \tilde{w}' s \beta_0^{-1}. \]
    Since $\tilde{w}'$ is reduced and does not cross $\hat{h}$, we conclude that $s$ is part of an innermost cancellation with some letter in $v$. By Lemma \ref{lem:innermost.cancellations.commute.with.the.word.in.between} (see also Figure \ref{fig:label.word.commutation}), we conclude that $s$ commutes with every letter in $\tilde{w}'$, i.e.\,$[s, \tilde{w}'] = 1$, and the outgoing edge at $v$ labeled $s$ crosses $\hat{h}$. This implies that $vs$ is reducible and it follows that $\tilde{w}$ is of Type 2.

    \begin{figure}
    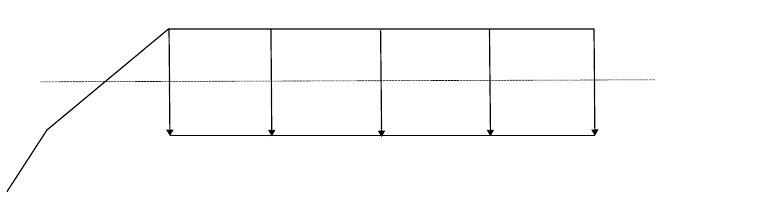
    \caption{Since the path $v \tilde{w}' \beta^{-1}$ crosses $\hat{h}$ in $v$ and $\beta^{-1}$, but not in $\tilde{w}'$, the letter $s$ has to commute with $\tilde{w}'$ and $vs$ crosses $\hat{h}$ twice (cf.\,Figure \ref{fig:innermost.cancellation} and Lemma \ref{lem:innermost.cancellations.commute.with.the.word.in.between}).}
    \label{fig:label.word.commutation}
    \end{figure}

    Since we assume that $\sigma$ is compatible with $v$, this implies that $\sigma(\beta^{-1}) = \beta^{-1}$. Furthermore, if $\beta' \in \Sigma$ such that $[\beta, \beta'] = 1$, then $[s, \beta'] = 1$ and compatibility of $\sigma$ with $v$ implies that $\sigma(\beta') = \beta'$. This proves the Lemma.
\end{proof}

Our second lemma is about the relation between the types of $w_-$ and $w$.

\begin{lemma} \label{lem:typechangesnonisolated}
    Let $w \in \mathcal{L}_o$. The words $w_-$ and $w$ satisfy the following:
    \begin{enumerate}

        \item If $w_-$ is of Type 1, then $w$ is of Type 1 or 2. Furthermore, if $w$ is of Type 2, then $v w_{end}^{-1}$ is reducible and $\sigma(w_{end}^{-1}) = w_{end}^{-1}$.

        \item If $w_-$ is of Type 2 and $w$ is of Type 1, then $v w_{end}$ is reducible and $\sigma(w_{end}) = w_{end}$.

        \item If $w_-$ and $w$ are both of Type 2, then $\sigma(w_{end}) = w_{end}$ and $\sigma(w_{end}^{-1}) = w_{end}^{-1}$. Furthermore, $\tau(\sigma, w_{end}) = \tau(\sigma, w_{end}^{-1}) = \sigma$.

        \item If $w_-$ is of Type 3, then $w$ is of Type 3.
    \end{enumerate}
\end{lemma}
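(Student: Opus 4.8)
## Proof proposal for Lemma \ref{lem:typechangesnonisolated}

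The plan is to analyze, for each case, how the word $w$ and its predecessor $w_-$ decompose relative to the cube path $v$, using the characterization of Type 1, 2, 3 in terms of reducibility of $vw'$ and commutation with reductions of $v$ (Definition \ref{def:types.relative.to.o.v}), together with the behaviour of normal cube paths under adding a single letter (Corollary \ref{cor:propertiesofnormalcubepaths} and Lemma \ref{lem:technical.result.on.normal.cube.paths}). Throughout, I write $w_- = \beta_1 \dots \beta_{m-1}$, $w_{end} = \beta_m$, and choose reduced words $w'_-, w'$ with $w_- \equiv v w'_-$ and $w \equiv v w'$; since $w = w_- \beta_m$, I can arrange $w' \equiv_v w'_- \beta_m$ (or a reduction thereof).

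For (1): if $w_-$ is of Type 1, then $v w'_-$ is reducible, hence $v w'$ is reducible unless adding $\beta_m$ cancels the double-crossing. In the first sub-case $w$ is again Type 1. In the second sub-case, the cancellation forced by $\beta_m$ means exactly that $v w_{end}^{-1}$ is reducible (the letter $\beta_m$ crosses a hyperplane that $v$ crosses); then Lemma \ref{lem:letters.that.get.fixed.by.sigma} (applied with $w_-, w, \beta_m$ in the roles of $w, \tilde{w}, \beta$) gives both that $w$ is of Type 2 and that $\sigma(w_{end}^{-1}) = w_{end}^{-1}$, as claimed. (Here I use compatibility of $\sigma$ with $v$, which is a standing hypothesis on $A_{v,\sigma}$.) For (2): if $w_-$ is Type 2 and $w$ is Type 1, then $v w'_-$ is reduced but $v w'$ is reducible; the only new crossing is through $\beta_m$, so $\beta_m$ must cross a hyperplane that $v$ crosses, i.e.\ $v w_{end}$ is reducible, and then compatibility of $\sigma$ with $v$ gives $\sigma(w_{end}) = w_{end}$. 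For (3): if both $w_-$ and $w$ are Type 2, then $v w'_-$ and $v w'$ are both reduced; since $w_-$ is Type 2, there is $\alpha$ with $v\alpha$ reducible and $[\alpha, w'_-] = 1$, and similarly for $w$. I will argue that $w_{end}$ commutes with the reduction $\alpha$ realizing Type 2 of $w$ (because $w' \equiv_v w'_- \beta_m$ stays reduced and commutes with $\alpha$), hence compatibility of $\sigma$ with $v$ forces $\sigma(w_{end}) = w_{end}$; the analogous argument with $w_{end}^{-1}$ — using that $v w_{end}^{-1}$ is also reducible here, which needs a short argument (either $v w_{end}$ or $v w_{end}^{-1}$ reducible would make $w$ or $w_-$ Type 1 or change the situation) — gives $\sigma(w_{end}^{-1}) = w_{end}^{-1}$. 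Once $\sigma$ fixes both $w_{end}$ and $w_{end}^{-1}$ pointwise, Remark \ref{rem:easy.formula.for.tau} yields $\tau(\sigma, w_{end}) = \tau(\sigma, w_{end}^{-1}) = \sigma$ immediately.

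For (4): if $w_-$ is of Type 3, then $v w'_-$ is reduced and every $\alpha \in \Sigma$ with $v\alpha$ reducible fails to commute with $w'_-$. Since $w \equiv v w'$ with $w' \equiv_v w'_- \beta_m$ (and $v w'$ is still reduced because Type 3 already forces $d_{\ell^1}(o, w) > d_{\ell^1}(o, w_-) \geq d_{\ell^1}(o,v)$, with $w' \neq \epsilon$), any reduction $\alpha$ of $v$ failing to commute with $w'_-$ a fortiori fails to commute with $w'_- \beta_m$, so $w$ is again Type 3. The main obstacle I anticipate is case (3): establishing that $v w_{end}^{-1}$ is reducible (so that the compatibility hypothesis can be applied to $w_{end}^{-1}$), since the symmetric role of $w_{end}$ and $w_{end}^{-1}$ is not obvious — I expect to need the double-crossing analysis of Lemma \ref{lem:technical.result.on.normal.cube.paths} together with Lemma \ref{lem:basics.of.compatibility}(1)–(2) to pin down which hyperplanes $\beta_m$ and $\beta_m^{-1}$ can cross given that both $v w'_-$ and $v w'$ are reduced while $w_-$, $w$ are both Type 2.
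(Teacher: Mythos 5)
Your outline for parts (1), (2), and (4) is essentially the paper's route: (1) and (2) really are the direct applications of Lemma~\ref{lem:letters.that.get.fixed.by.sigma} that you describe, and for (3) and (4) the paper first justifies that one can take $w' = w'_- w_{end}$ — not via an $\ell^1$-distance argument as you gesture at, but by observing that $vw'_-$ and $w_-$ are $\equiv$-equivalent reduced words, so $w_- w_{end}$ reduced forces $vw'_- w_{end}$ reduced. Once that preliminary is in place, your argument for (4) is the paper's argument.

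For (3), however, the ``main obstacle'' you flag is a red herring, and the route you sketch to resolve it does not work. You want to deduce $\sigma(w_{end}^{-1}) = w_{end}^{-1}$ by showing that $vw_{end}^{-1}$ is reducible and then invoking the first clause of compatibility. But nothing in the hypothesis ``$w_-$ and $w$ both of Type 2'' forces $vw_{end}^{-1}$ to be reducible — reducibility of $vw_{end}^{-1}$ is a constraint on $v$ having nothing to do with the types of $w_-$ and $w$, and it will typically fail. The actual fix is much simpler and sidesteps the issue entirely: you already have $\alpha$ with $v\alpha$ reducible and $[\alpha, w_{end}] = 1$. By Example~\ref{rem:commutation.extends.to.inverses}, commutation is closed under inverses, so $[\alpha, w_{end}^{-1}] = 1$ as well, and the \emph{second} clause of compatibility applied with this same $\alpha$ and $\beta = w_{end}^{-1}$ gives $\sigma(w_{end}^{-1}) = w_{end}^{-1}$ directly. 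With both $\sigma(w_{end}^{\pm 1}) = w_{end}^{\pm 1}$ in hand, Remark~\ref{rem:easy.formula.for.tau} finishes as you note.
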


\begin{proof}
The first and second statement are simply special cases of Lemma \ref{lem:letters.that.get.fixed.by.sigma}.

Suppose $w_-$ and $w$ are of Type 2 or 3. (They don't need to have the same type.) We can write
\[ w_- \equiv vw'_-, \qquad w \equiv vw' \]
for reduced words $w'_-, w'$. We claim that we can choose $w' = w'_- w_{end}$. Indeed, since $w_-$ and $w = w_- w_{end}$ are normal cube paths, they are both reduced. Since $w_{-}$ is not of Type 1, $v w'_-$ is reduced. Since $vw'_- \equiv w_-$ are both reduced and $w_- w_{end}$ is reduced, we conclude that $v w'_- w_{end}$ is reduced which implies that $w'_- w_{end}$ is reduced. Thus, we can choose $w' = w'_- w_{end}$.

If $w_-$ and $w$ are both of Type 2, then there exists some $\alpha \in \Sigma$ such that $v\alpha$ is reducible and $[\alpha, w'_- w_{end}] = 1$. Therefore, $[\alpha, w_{end}] = 1$ and, since we assume that $\sigma$ is compatible with $v$, we conclude that $\sigma(w_{end}) = w_{end}$ and $\sigma(w_{end}^{-1}) = w_{end}^{-1}$. Remark \ref{rem:easy.formula.for.tau} now implies that $\tau(\sigma, w_{end}^{\pm 1}) = \sigma$, which concludes the proof of statement three.

If $w_-$ is of Type 3, then Lemma \ref{lem:letters.that.get.fixed.by.sigma} (2) implies that $w$ cannot be of Type 1. Thus, the previous discussion applies and we can assume that $w' = w'_- w_{end}$. If $w_-$ is of Type 3, then we know that for all $\alpha \in \Sigma$ such that $v\alpha$ is reducible, $[\alpha, w'_-] \neq 1$. This implies that $[\alpha, w'_{-} w_{end}] \neq 1$ as well. Since $w'_- w_{end}$ is reduced, we conclude that $w$ is of Type 3, which proves statement four.
\end{proof}

We highlight the following consequences of the previous two lemmata.

\begin{lemma} \label{lem:change.of.type.ws}
    Let $w, \tilde{w} \in \mathcal{L}_o$ and $s \in \Sigma_0$ non-isolated such that $\tilde{w} \equiv ws$. Then the following hold:
    \begin{enumerate}
        \item If $w$ is of Type 1, then $\tilde{w}$ is of Type 1 or 2. Furthermore, if $\tilde{w}$ is of Type 2, then $vs^{-1}$ is reducible.

        \item If $w$ is of Type 3, then $\tilde{w}$ is of Type 2 or 3.
    \end{enumerate}
\end{lemma}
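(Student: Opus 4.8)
The plan is to deduce both statements from Lemma \ref{lem:letters.that.get.fixed.by.sigma}, Lemma \ref{lem:typechangesnonisolated}, and Remark \ref{rem:Basics.of.types} by treating the single edge $\{s\}$ as the connecting diagonal $\beta$ in those lemmata. The key observation is that $\tilde w \equiv ws$ means $\tilde w$ is obtained from $w$ by concatenating one edge, so all the machinery about how the Type can change across an edge is already available; I only need to organize it.

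\emph{Proof of (1).} Assume $w$ is of Type 1 relative to $(o,v)$. If $\tilde w$ is also of Type 1 we are done, so suppose $\tilde w$ is not of Type 1. I would then apply Lemma \ref{lem:letters.that.get.fixed.by.sigma} with $\beta = \{s\}$: since $w$ is of Type 1, $\tilde w$ is not of Type 1, and $\tilde w \equiv w\beta$, that lemma yields that $\tilde w$ is of Type 2 and that $v\beta^{-1} = v s^{-1}$ is reducible. (Recall $\beta^{-1} = \{s^{-1}\}$ and $vs^{-1}$ reducible is exactly what is asserted.) This gives the full statement (1).

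\emph{Proof of (2).} Assume $w$ is of Type 3 relative to $(o,v)$. First I rule out Type 1 for $\tilde w$: if $\tilde w$ were of Type 1, then applying part (1) with the roles of $w$ and $\tilde w$ reversed — using that $w \equiv \tilde w s^{-1}$ and $s^{-1}$ is non-isolated (a label is isolated iff its inverse is, since commutation extends to inverses by Example \ref{rem:commutation.extends.to.inverses}) — would force $w$ to be of Type 1 or 2, contradicting $w$ of Type 3. Hence $\tilde w$ is of Type 2 or 3, which is the claim. Alternatively, and perhaps more cleanly, I can argue directly: write $w \equiv v w'$ with $w'$ reduced; since $w$ is of Type 3, $vw'$ is reduced. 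If $ws$ is reducible, then $s^{-1}$ participates in an innermost cancellation with a letter of $w$, forcing (via Lemma \ref{lem:innermost.cancellations.commute.with.the.word.in.between}) that $s$ commutes with a suitable tail and that some $v\alpha$ with $[\alpha, \cdot]$ appropriate is reducible, pushing $\tilde w$ toward Type 1 or 2; combined with the Type-3 hypothesis on $w$ one sees the only surviving possibility is Type 2. If instead $ws$ is reduced, then $vw's$ is reduced and for every $\alpha \in \Sigma$ with $v\alpha$ reducible we have $[\alpha, w'] \neq 1$ (as $w$ is Type 3), so either $[\alpha, w's] \neq 1$, giving Type 3, or $[\alpha, w'] \neq 1$ fails to extend and one lands in Type 2. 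Either way $\tilde w$ is of Type 2 or 3.

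\emph{Main obstacle.} The genuinely delicate point is part (2) in the case $ws$ reduced: one must verify that no new "reducibility-witnessing" letter $\alpha \in \Sigma$ can suddenly commute with the extended word $w's$ when it did not commute with $w'$, i.e. that appending the single edge $s$ cannot drop $\tilde w$ all the way to Type 1. This is exactly where the reversed application of part (1) (or a direct appeal to Lemma \ref{lem:typechangesnonisolated}(4) after reducing to the $w_-$/$w$ situation) does the work, and I would present the reversed-part-(1) argument since it is the shortest and reuses what has just been proved.
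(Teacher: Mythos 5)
Your proposal matches the paper's proof: part (1) is deduced by specializing Lemma \ref{lem:letters.that.get.fixed.by.sigma} to $\beta = \{s\}$, and part (2) by applying part (1) with the roles of $w$ and $\tilde w$ swapped to rule out Type 1 for $\tilde w$. You even supply a small detail the paper leaves implicit, namely that $s^{-1}$ is non-isolated whenever $s$ is, which is needed for the reversed application of (1) to be legitimate.
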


\begin{proof}
    The first statement is a special case of Lemma \ref{lem:letters.that.get.fixed.by.sigma} (1). Now suppose $w$ is of Type 3 and suppose $\tilde{w}$ was of Type 1. Applying the first statement, but with $w$ and $\tilde{w}$ swapped, implies that $w$ has to be of Type 2, which is a contradiction. It follows that $\tilde{w}$ is of Type 2 or 3.
\end{proof}

\begin{proof}[Proof of Proposition \ref{prop:our.family.defines.an.isometry.case.away.from.o}]
    As discussed earlier, we are left to show $par$ and $inv$. We start by showing $par$. Let $w = \beta_1 \dots \beta_m$, $\tilde{w} \in \mathcal{L}_o$, and $s \in \Sigma_0$ such that $\tilde{w} \equiv ws$. Swapping $w$ and $\tilde{w}$ and replacing $s$ by $s^{-1}$ if necessary, we may assume without loss of generality that $ws$ is a reduced word, which tells us that we are in case (1) or (4) of Corollary \ref{cor:propertiesofnormalcubepaths}. Let $\beta \in \Sigma$ such that $[s, \beta] = 1$. We distinguish between the two cases (1) and (4) of Corollary \ref{cor:propertiesofnormalcubepaths}, that is we distinguish between the case where $\tilde{w} = \beta_1 \dots \beta_m s$ and the case where $\tilde{w} = \beta_1 \dots \tilde{\beta}_k \dots \beta_m$ for some $1 \leq k \leq m$ with $\tilde{\beta}_k = \beta_k \cup \{ s \}$.\\

    {\it Case 1:} Suppose $\tilde{w} = \beta_1 \dots \beta_m s$. If $w$ and $\tilde{w}$ are both of Type 1, or both of Type 2, then, trivially, $\pre{w}{A_{v,\sigma}}(\beta) = \pre{\tilde{w}}{A_{v,\sigma}}(\beta)$. If $w$ is of Type 1 and $\tilde{w}$ is of Type 2, then Lemma \ref{lem:typechangesnonisolated} (1) tells us that $vs^{-1}$ is reducible. Since $\sigma$ is compatible with $v$ and $[s, \beta] = 1$, this implies that $\sigma(\beta) = \beta$. The case where $w$ is of Type 2 and $\tilde{w}$ is of Type 1 is analogous using Lemma \ref{lem:typechangesnonisolated} (2).

    Now suppose $w$ is of Type 2 and $\tilde{w}$ is of Type 3. We compute
    \[ \pre{\tilde{w}}{A_{v,\sigma}}(\beta) = \tau(\sigma, s)(\beta) = \sigma(\beta) = \pre{w}{A_{v,\sigma}}(\beta) \]
    where we use Lemma \ref{lem:formula.for.tau} to obtain the second equality.

    Finally, suppose $w$ and $\tilde{w}$ are both of Type 3 (by Lemma \ref{lem:typechangesnonisolated}, this is the last possible constellation of types). Then
    \[ \pre{\tilde{w}}{A_{v,\sigma}}(\beta) = \tau( \pre{w}{A_{v,\sigma}}, s)(\beta) = \pre{w}{A_{v,\sigma}}(\beta), \]
    again, because of Lemma \ref{lem:formula.for.tau}. We conclude that $par$ holds in this case.\\

    {\it Case 2:} Suppose $\tilde{w} = \beta_1 \dots \tilde{\beta}_k \dots \beta_m$ for some $1 \leq k \leq m$ and $\tilde{\beta}_k = \beta_k \cup \{ s \}$. Let $w_j := \beta_1 \dots \beta_j$ and $\tilde{w}_j := \beta_1 \dots \tilde{\beta_k} \dots \beta_j$ for $1 \leq j \leq m$; put $\tilde{\beta}_j := \beta_j$ for all $j > k$ and set $w_0 = \tilde{w}_0 = \epsilon$. We will show by induction over $j$ that $\pre{w_j}{A_{v, \sigma}}(\beta) = \pre{\tilde{w}_j}{A_{v, \sigma}}(\beta)$ for all $0 \leq j \leq m$ and for all $\beta \in \Sigma$ that satisfy $[s, \beta] = 1$.

    For $j < k$, this is obvious which, in particular, gives us the start of the induction for $j = 0$. Now suppose $\pre{w_{j-1}}{A_{v, \sigma}}(\beta) = \pre{\tilde{w}_{j-1}}{A_{v, \sigma}}(\beta)$ for every $\beta$ that satisfies $[s, \beta] = 1$ and suppose $j \geq k$. We go through the different possible types that $w_j$ and $\tilde{w}_j$ can have.

    Suppose $w_j$ and $\tilde{w}_j$ are both of Type 1 or both of Type 2. Then $\pre{w_j}{A_{v,\sigma}} = \pre{\tilde{w}_j}{A_{v,\sigma}}$ and we are done. Suppose one is of Type 1 and the other is of Type 2 (w.l.o.g.\,$w_j$ is of Type 1). Since $j \geq k$, we know that $\tilde{w}_j \equiv w_j s$. Lemma \ref{lem:change.of.type.ws} (1) implies that $vs^{-1}$ is reducible. Since $[s, \beta] = 1$, Lemma \ref{lem:letters.that.get.fixed.by.sigma} (3) implies that $\sigma(\beta) = \beta$. Therefore,
    \[ \pre{w_j}{A_{v,\sigma}}(\beta) = \beta = \sigma(\beta) = \pre{\tilde{w}_j}{A_{v,\sigma}}(\beta). \]

    Now suppose $w_j$ is of Type 2 and $\tilde{w}_j$ is of Type 3. Suppose additionally that $w_{j-1}$ is of Type 1. Since $\tilde{w}_{j-1} \equiv w_{j-1} s$ or $\tilde{w}_{j-1} \equiv w_{j-1}$, Lemma \ref{lem:change.of.type.ws} (1) implies that $\tilde{w}_{j-1}$ is of Type 2. (It cannot be of Type 1, as $\tilde{w}_j$ is of Type 3.) In particular, since the two words $w_{j-1}$ and $\tilde{w}_{j-1}$ have different type, they cannot be equal and we conclude that $\tilde{w}_{j-1} \equiv w_{j-1} s$. This in turn implies that $j-1 \geq k$ and thus the last letter of $\tilde{w}_j$ is equal to $\beta_j$. (In particular, the last letter of $\tilde{w}_j$ is not $\tilde{\beta}_k$.)
    
    Since $w_{j-1}$ is of Type 1 and $\tilde{w}_{j-1} \equiv w_{j-1} s$ is of Type 2, Lemma \ref{lem:change.of.type.ws} (1) implies that $vs^{-1}$ is reducible. Since $\sigma$ is compatible with $v$ and $[s, \beta^{\pm 1}] = 1$, we obtain $\sigma(\beta^{\pm 1}) = \beta^{\pm 1}$. By Lemma \ref{lem:formula.zero.for.tau}, this implies that
    \begin{equation*}
        \begin{split}
            \pre{\tilde{w}_j}{A_{v, \sigma}}(\beta) & = \tau( \pre{ \tilde{w}_{j-1}}{A_{v, \sigma}}, \beta_j )(\beta)\\
            & = \tau( \sigma, \beta_j )(\beta)\\
            & = \sigma(\beta)\\
            & = \pre{w_j}{A_{v,\sigma}}(\beta).
        \end{split}
    \end{equation*}
    The case where $\tilde{w}_j$ is of Type 2, $w_j$ is of Type 3, and $\tilde{w}_{j-1}$ is of Type 1 is analogous.

    Now suppose $w_j$ is of Type 2, $\tilde{w}_j$ is of Type 3 and $w_{j-1}$ is of Type 2. By Lemma \ref{lem:typechangesnonisolated} (3), we conclude that $\sigma( \beta_j ) = \beta_j$, $\sigma( \beta_j^{-1} ) = \beta_j^{-1}$ and $\tau(\sigma, \beta_j) = \sigma$. We compute
    \begin{equation*}
        \begin{split}
            \pre{ \tilde{w}_j }{A_{v,\sigma}}(\beta) & = \tau\left( \pre{\tilde{w}_{j-1}}{A_{v,\sigma}}, \tilde{\beta}_j \right)(\beta)
        \end{split}
    \end{equation*}
    \[ \pre{ w_j }{A_{v,\sigma}}(\beta) = \sigma(\beta) = \tau(\sigma, \beta_j)(\beta) = \tau\left( \pre{w_{j-1}}{A_{v,\sigma}}, \beta_j \right)(\beta). \]

    Since, by induction assumption, $\pre{w_{j-1}}{A_{v,\sigma}}(t) = \pre{\tilde{w}_{j-1}}{A_{v,\sigma}}(t)$ for all $t$ with $[s,t] = 1$, $[s, \beta] = 1$, and $[s, \beta_j] = 1$ because $j \geq k$, we can apply Lemma \ref{lem:formula.for.tau} to obtain
    \[ \pre{\tilde{w_j}}{A_{v,\sigma}}(\beta) = \tau\left( \pre{\tilde{w}_{j-1}}{A_{v,\sigma}}, \tilde{\beta}_j \right)(\beta) = \tau\left( \pre{w_{j-1}}{A_{v,\sigma}}, \beta_j \right)(\beta) = \pre{ w_j }{A_{v,\sigma}}(\beta). \]
    The case where $\tilde{w}_j$ is of Type 2, $w_j$ is of Type 3, and $\tilde{w}_{j-1}$ is of Type 2 is analogous.

    Finally, suppose $w_j$ and $\tilde{w}_j$ are both of Type 3. By definition of $\pre{w}{A_{v,\sigma}}$, this implies that
    \[ \pre{w_j}{A_{v,\sigma}}(\beta) = \tau \left( \pre{w_{j-1}}{A_{v,\sigma}}, \beta_j \right)(\beta), \]
    \[ \pre{\tilde{w}_j}{A_{v,\sigma}}(\beta) = \tau \left( \pre{\tilde{w}_{j-1}}{A_{v,\sigma}}, \tilde{\beta}_j \right)(\beta). \]
    Since $j \geq k$, we know that $[s, \beta_j] = 1$ according to Corollary \ref{cor:propertiesofnormalcubepaths} (4). Together with the induction assumption, this allows us to apply Lemma \ref{lem:formula.for.tau} and we conclude that these two expressions are equal.

    Having covered all possible type constellations, we can use induction to conclude that $\pre{ w_m}{A_{v,\sigma}}(\beta) = \pre{\tilde{w}_m}{A_{v,\sigma}}(\beta)$ for all $\beta \in \Sigma$ that satisfy $[s, \beta] = 1$. Since $w = w_m$ and $\tilde{w} = \tilde{w}_m$, this proves $par$.\\

    We are left with proving $inv$. Let $w = \beta_1 \dots \beta_m \in \mathcal{L}_o$, $s \in \beta_m$, and $\tilde{w} = \beta_1 \dots \tilde{\beta}_m$ with $\tilde{\beta}_m := \beta_m \setminus \{ s \}$. We need to compare
    \[ \pre{\tilde{w}}{A_{v,\sigma}}(s)^{-1}, \quad \pre{w}{A_{v,\sigma}}(s^{-1}). \]
    Note that, by $par$, we have
    \[ \pre{\tilde{w}}{A_{v,\sigma}}(s) = \pre{w_-}{A_{v, \sigma}}(s), \]
    as $[s, \tilde{\beta}_m] = 1$. This allows us to work with the maps $\pre{w_-}{A_{v,\sigma}}$ and $\pre{w}{A_{v,\sigma}}$.
    
    If both $w$ and $w_-$ are of Type 1, then equality is obvious. Suppose $w_-$ is of Type 1 and $w$ of Type 2. By Lemma \ref{lem:typechangesnonisolated} (1), $v \beta_m^{-1}$ is reducible and $\sigma(\beta_m^{-1}) = \beta_m^{-1}$. In particular, $\sigma(s^{-1}) = s^{-1}$ and therefore
    \[ \pre{w}{A_{v,\sigma}}(s^{-1}) = \sigma(s^{-1}) = s^{-1} = \pre{w_-}{A_{v,\sigma}}(s)^{-1} = \pre{\tilde{w}}{A_{v,\sigma}}(s)^{-1}. \]
    
    Suppose $w_-$ is of Type 2 and $w$ is of Type 1. By Lemma \ref{lem:typechangesnonisolated} (2), $v \beta_m$ is reducible and $\sigma(s) = s$. We compute
    \[ \pre{\tilde{w}}{A_{v,\sigma}}(s)^{-1} = \pre{w_-}{A_{v,\sigma}}(s)^{-1} = \sigma(s)^{-1} = s^{-1} = \pre{w}{A_{v,\sigma}}(s^{-1}). \]
        
    Now suppose $w$ and $w_-$ are both of Type 2. By Lemma \ref{lem:typechangesnonisolated} (3), we conclude that $\sigma(s) = s$ and $\sigma(s^{-1}) = s^{-1}$. Therefore,
    \[ \pre{w}{A_{v,\sigma}}(s^{-1}) = \sigma(s^{-1}) = s^{-1} = \sigma(s)^{-1} = \pre{w_-}{A_{v,\sigma}}(s)^{-1}. \]

    Finally, suppose $w$ is of Type 3 and $w_-$ is of Type 2 or 3. Since $s \in \beta_m$, we compute
    \[ \pre{w}{A_{v,\sigma}}(s^{-1}) = \tau\left( \pre{w_-}{A_{v,\sigma}}, \beta_m \right)(s^{-1}) = \pre{w_-}{A_{v,\sigma}}(s)^{-1}. \]
    This proves $inv$.
\end{proof}

We have proven that the family $\left( \pre{w}{A_{v,\sigma}} \right)_{w \in V(T_o^{cube})}$ defines an isometry on $X$ that fixes $o$. We denote this isometry by $A_{v,\sigma}$. Looking back at Construction \ref{con:generators.at.v}, we see that this $A_{v, \sigma}$ is an appropriate choice for the choice made in Construction \ref{con:generators.at.v}, provided that $\pre{v}{A_{v,\sigma}} = \sigma$ and provided that it fixes all vertices that are of Type 1 or 2 relative to $(o,v)$. The equality $\pre{v}{A_{v,\sigma}} = \sigma$ is true by definition of $A_{v, \sigma}$. The remainder is the statement of our next Lemma.

\begin{lemma}
    Let $v, w \in \mathcal{L}_o$ such that $v \neq \epsilon$ and $w = \beta_1 \dots \beta_m$ is of Type 1 or 2 relative to $(o,v)$. Let $\sigma : \Sigma_{0} \rightarrow \Sigma_{0}$ be a label-isomorphism that is compatible with $v$. Then the following holds:
    \begin{enumerate}
        \item Every prefix of $w$ is of Type 1 or 2 relative to $(o,v)$.

        \item For all $1 \leq k \leq m$, we have $\pre{\beta_1 \dots \beta_{k-1}}{A_{v, \sigma}}(\beta_k) = \beta_k$. (We set $\beta_1 \dots \beta_0 := \epsilon$.)

        \item $A_{v, \sigma}(w) = w$.
    \end{enumerate}
\end{lemma}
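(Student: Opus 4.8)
The plan is to prove the three statements together by induction on the length $m$ of $w$, since they are tightly coupled: statement (1) is what lets us talk about the recursive definition of $A_{v,\sigma}$ along prefixes of $w$, statement (2) gives the local triviality needed to deduce statement (3), and statement (3) for shorter words feeds back into the type analysis for longer words. First I would set up the induction with base case $w = \epsilon$, which is of Type 2 relative to $(o,v)$ (by Remark~\ref{rem:Basics.of.types}), and then handle the inductive step by writing $w = w_- \beta_m$ with $w_-$ of length $m-1$.

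For statement (1): by the inductive hypothesis every prefix of $w_-$ is of Type 1 or 2, so it suffices to show $w$ itself is of Type 1 or 2 given that $w_-$ is. This is exactly the combinatorial content isolated in Lemma~\ref{lem:typechangesnonisolated}: if $w_-$ is of Type 1 then $w$ is of Type 1 or 2 by part (1) of that lemma; if $w_-$ is of Type 2 then $w$ is of Type 1 or 2 because part (4) tells us a Type-2 word cannot have a Type-3 word as a one-diagonal extension in the relevant sense — more precisely, if $w$ were of Type 3, then $w_-$ would be of Type 3 by the contrapositive of Lemma~\ref{lem:typechangesnonisolated}(4). So statement (1) follows purely from the already-proven lemmas about how types change along an edge/diagonal.

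For statement (2): fix $k$ and consider the prefix $u := \beta_1 \dots \beta_{k-1}$ and $u\beta_k = \beta_1 \dots \beta_k$, both of Type 1 or 2 by statement (1). I would do a case analysis on the types of $u$ and $u\beta_k$. If both are of Type 1, then $\pre{u}{A_{v,\sigma}} = \Id$ by Construction~\ref{con:explicit.generators.at.v}(1), so $\pre{u}{A_{v,\sigma}}(\beta_k) = \beta_k$. If both are of Type 2, then $\pre{u}{A_{v,\sigma}} = \sigma$, and by Lemma~\ref{lem:typechangesnonisolated}(3) we get $\sigma(\beta_k) = \beta_k$. If $u$ is of Type 1 and $u\beta_k$ is of Type 2, then $\pre{u}{A_{v,\sigma}} = \Id$, done. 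If $u$ is of Type 2 and $u\beta_k$ is of Type 1, then $\pre{u}{A_{v,\sigma}} = \sigma$ and Lemma~\ref{lem:typechangesnonisolated}(2) gives $\sigma(\beta_k) = \beta_k$ (since $v\beta_k$ is reducible and $\sigma$ is compatible with $v$). In every case $\pre{u}{A_{v,\sigma}}(\beta_k) = \beta_k$. I expect this case analysis to be the main obstacle, mostly because one has to be careful about the direction conventions (reduced word, Type relative to $(o,v)$, which of $u$, $u\beta_k$ plays which role in the lemmas) and because one must confirm that Construction~\ref{con:explicit.generators.at.v} applies to $w' = \mathrm{norm}(\cdot)$ representatives consistently — but all the substantive work is already packaged in Lemma~\ref{lem:typechangesnonisolated}.

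For statement (3): using the portrait formula (Remark~\ref{rem:portrait.recovers.g} together with Definition~\ref{def:portrait.of.stabilizing.element}), we have
\[ A_{v,\sigma}(w) = A_{v,\sigma}(\beta_1 \dots \beta_m) = \pre{\epsilon}{A_{v,\sigma}}(\beta_1) \, \pre{\beta_1}{A_{v,\sigma}}(\beta_2) \cdots \pre{\beta_1 \dots \beta_{m-1}}{A_{v,\sigma}}(\beta_m) = \beta_1 \dots \beta_m = w, \]
where the middle equality is exactly statement (2) applied to each $k$. This finishes the induction and hence the lemma. Throughout, I would be careful that $w$ is given as an element of $\mathcal{L}_o$ (a normal cube path word), so that $w_-$ and $w_{end}$ are well-defined and the recursive clauses of Construction~\ref{con:explicit.generators.at.v} are the ones that apply.
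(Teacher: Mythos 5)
The structure of your proof follows the paper closely for statements (2) and (3), but your treatment of statement (1) has the induction running in the wrong direction, and this leads you to assert a false implication.

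You try to argue: ``if $w_-$ is of Type 1 or 2, then $w$ is of Type 1 or 2.'' That statement is \emph{false}: Lemma~\ref{lem:typechangesnonisolated}~(4) says $w_-$ Type 3 $\Rightarrow$ $w$ Type 3, but a Type-2 word can absolutely have a Type-3 extension — indeed the proof of Proposition~\ref{prop:our.family.defines.an.isometry.case.away.from.o} handles precisely the case ``$w_j$ is of Type 2 and $\tilde w_j$ is of Type 3,'' so such transitions do occur. The specific justification you give, ``if $w$ were of Type 3, then $w_-$ would be of Type 3 by the contrapositive of Lemma~\ref{lem:typechangesnonisolated}~(4),'' is not the contrapositive of part~(4) but its \emph{converse}, which is not implied. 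Moreover, you invoke the inductive hypothesis for $w_-$ before you have any information about the type of $w_-$, so the inductive hypothesis is not available yet.

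What you actually want is the contrapositive of part~(4) used in the \emph{opposite} direction: since $w$ is of Type 1 or 2 (this is the hypothesis of the lemma), $w_-$ cannot be of Type 3 (else $w$ would be of Type 3 by part~(4)), hence $w_-$ is of Type 1 or 2. Iterating this down the prefixes (or, if you insist on an induction on length, applying the inductive hypothesis \emph{after} this observation) gives statement (1). This is exactly what the paper does when it says statement~(1) ``is an immediate consequence of Lemma~\ref{lem:typechangesnonisolated}~(4).'' Once (1) is in place, your case analysis for (2) — splitting on the types of $u$ and $u\beta_k$ and appealing to parts (2) and (3) of Lemma~\ref{lem:typechangesnonisolated} — and your derivation of (3) from the portrait formula both match the paper's argument (the paper's (2) is slightly leaner, splitting only on the type of the prefix, but the content is the same).
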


\begin{proof}
    Statement (1) is an immediate consequence of Lemma \ref{lem:typechangesnonisolated} (4). For statement (2), we make the following case distinctions: If $\beta_1 \dots \beta_{k-1}$ is of Type 1 relative to $(o,v)$, then $\pre{\beta_1 \dots \beta_{k-1}}{A_{v,\sigma}} = \Id$ and we are done. If $\beta_1 \dots \beta_{k-1}$ is of Type 2, then $\beta_1 \dots \beta_k$ is of Type 1 or 2 according to statement (1) and Lemma \ref{lem:typechangesnonisolated} (2) and (3) imply that
    \[ \pre{\beta_1 \dots \beta_{k-1}}{A_{v,\sigma}}(\beta_k) = \sigma(\beta_k) = \beta_k. \]
    Statement (2) follows.

    Statement (3) is an immediate consequence of statement (2), as
    \begin{equation*}
        \begin{split}
            A_{v, \sigma}(w) & = \pre{\epsilon}{A_{v, \sigma}}(\beta_1) \dots \pre{\beta_1 \dots \beta_{k-1}}{A_{v,\sigma}}(\beta_k) \dots \pre{\beta_1 \dots \beta_{m-1}}{A_{v,\sigma}}(\beta_m)\\
            & = \beta_1 \dots \beta_m = w.
        \end{split}
    \end{equation*}
\end{proof}

We see from this Lemma that the $A_{v, \sigma} \in G_o$ constructed in Construction \ref{con:explicit.generators.at.v} are indeed automorphisms with the properties required in Construction \ref{con:generators.at.v}. We may thus pick them as the automorphisms chosen in Construction \ref{con:generators.at.v}. In particular, we conclude that every $\sigma$ that is compatible with $v$ appears at $v$ in $G_o$.\\

Constructions \ref{con:explicit.generators.at.o} and \ref{con:explicit.generators.at.v} provide us with a set
\[ \{ A_{v, \sigma} \vert v \in \mathcal{L}_o, \sigma \text{ appears at $v$ in $G_o$} \} \subset G_o \]
that generates a dense subgroup of $G_o$ according to Proposition \ref{prop:generating.dense.subgroup.of.stabiliser}. We can now use the same procedure as in the proof of Corollary \ref{thm:automorphism.group.is.topologically.finitely.generated} to produce a finite set that generates a dense subgroup of $\Aut(X)$. (Note that we do not need the set $T$ that was used in the proof of Corollary \ref{thm:automorphism.group.is.topologically.finitely.generated}, because the action of $\Gamma$ is vertex-transitive.) However, we can be even more explicit with our set of generators by exploiting the following result.

\begin{lemma} \label{lem:conjugation.of.stabilisers}
    For every $v \in \mathcal{L}_o \setminus \{ \epsilon \}$, there exists $\alpha_v \in \Sigma$ such that $v$ has the same reductions as $\alpha_v$.
\end{lemma}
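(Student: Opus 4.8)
The plan is to extract from a reduced word $v \in \mathcal{L}_o \setminus \{\epsilon\}$ a single diagonal-label $\alpha_v \in \Sigma$ built out of exactly those edge-labels that "immediately cancel back into $v$", and then to show this $\alpha_v$ has the same set of reductions as $v$. Concretely, set
\[
\alpha_v := \{\, s \in \Sigma_0 \mid vs \text{ is reducible} \,\}.
\]
First I would check that $\alpha_v$ is a legitimate element of $\Sigma$, i.e.\ that it is nonempty and pairwise-commuting: nonemptiness is clear since $v \neq \epsilon$ (the last letter of $v$ contains some $s$, and then $vs$ crosses the hyperplane $\hat h(s)$ twice, so $s \in \alpha_v$); pairwise-commutation is exactly Lemma \ref{lem:basics.of.compatibility}(1) applied with $v' = v$ (two distinct edge-labels $s, t$ with $vs$, $vt$ both reducible must satisfy $[s,t]=1$). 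Moreover $\alpha_v \in \Sigma_v$ because all its elements lie in $\Sigma_{0,v}$ and they pairwise commute, so by Remark \ref{rem:commutation.and.links} (the link condition) they span a common cube at the endpoint of $v$; hence $\alpha_v$ is the label of a genuine outgoing diagonal at $v$, and in particular $\alpha_v \in \mathcal{L}_o$ after normalization (it is a one-letter word).

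Next I would compute $\reduc(\alpha_v)$ directly. Recall $\reduc(\alpha_v) = \{\gamma \in \Sigma_{\alpha_v} \mid \alpha_v \gamma \text{ is reducible}\}$, where $\Sigma_{\alpha_v}$ denotes the labels of outgoing diagonals at the endpoint of $\alpha_v$. The word $\alpha_v\gamma$ is reducible precisely when its corresponding cube path crosses some hyperplane twice; since $\alpha_v$ is reduced (one diagonal) and $\gamma$ is reduced, this happens iff $\alpha_v$ and $\gamma$ cross a common hyperplane, i.e.\ iff some $t \in \alpha_v$ has $t^{-1} \in \gamma$. On the other hand $v$ is a normal cube path ending with a specific last letter, but the relevant description of $\reduc(v)$ is: $v\gamma$ is reducible iff $v$ crosses a hyperplane also crossed by $\gamma$, i.e.\ iff there is $t \in \gamma$ with $vt^{-1}$ reducible, i.e.\ iff $t^{-1} \in \alpha_v$ for some $t \in \gamma$ — which is the same condition. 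The one point requiring care is that $\Sigma_{\alpha_v}$ and $\Sigma_v$ need not literally be the same set of labels (the endpoints of $\alpha_v$ and $v$ may have different links in the non-vertex-transitive case); here I would invoke the running hypothesis that this lemma is stated in the vertex-transitive regime (Section \ref{subsec:an.explicit.set.of.generators.in.the.vertex.transitive.case}, where $\Sigma_w = \Sigma$ for every vertex $w$), so $\Sigma_{\alpha_v} = \Sigma = \Sigma_v$ and the two descriptions of $\reduc$ are comparisons of subsets of the same set $\Sigma$.

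Putting this together, for every $\gamma \in \Sigma$ we have $\gamma \in \reduc(v) \iff \exists\, t \in \gamma : t^{-1} \in \alpha_v \iff \gamma \in \reduc(\alpha_v)$, hence $\reduc(v) = \reduc(\alpha_v)$, i.e.\ $v \sim \alpha_v$ in the sense of Definition \ref{def:samereductions}. The main obstacle I anticipate is bookkeeping the equivalence "$w$ is reducible $\iff$ $w$'s path double-crosses a hyperplane $\iff$ a combinatorial inversion condition on the letters of $w$": this is exactly the content already packaged in Definition \ref{def:commuting.words.and.reducibility} and Lemma \ref{lem:innermost.cancellations.commute.with.the.word.in.between}, so I would cite those rather than re-derive the characterization, and then the proof reduces to the short symmetric computation above. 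A secondary subtlety is checking that the hyperplanes crossed by $v$ are in bijection with the pairs $\{t, t^{-1}\}$ with $t \in \alpha_v$ — this is because $v$ is reduced, so it crosses each such hyperplane exactly once, and any hyperplane $v$ crosses yields, by chasing back along $v$ to where that hyperplane is crossed (Lemma \ref{lem:technical.result.on.normal.cube.paths} / the innermost-cancellation argument), an outgoing edge-label $s$ at the endpoint of $v$ with $vs$ reducible, i.e.\ $s \in \alpha_v$.
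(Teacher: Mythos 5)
Your strategy — build $\alpha_v$ out of the edge-labels that immediately cancel against $v$, then compare $\reduc(v)$ with $\reduc(\alpha_v)$ — is exactly the paper's, but you have a sign error in the definition of $\alpha_v$ that propagates into a false step in the comparison. The paper defines $\alpha_v := \{s^{-1} : vs\text{ reducible}\}$, the set of \emph{inverses} of the immediately-cancelling labels; you defined $\alpha_v := \{s : vs\text{ reducible}\}$. These two sets are inverses of each other inside $\Sigma_0$, and they do not have the same reductions: already for $X$ a tree and $v = a$, your $\alpha_v = \{a^{-1}\}$ gives $\reduc(\alpha_v) = \{\{a\}\}$, whereas $\reduc(v) = \{\{a^{-1}\}\}$.

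The error surfaces twice. First, your nonemptiness argument, ``$vs$ crosses $\hat h(s)$ twice for $s$ in the last letter of $v$,'' is false: for $s$ in the last letter $\alpha_n$, the $s$-edge at the endpoint of $v$ crosses a \emph{new} hyperplane, not one crossed by the diagonal $\alpha_n$; it is $vs^{-1}$, not $vs$, that double-crosses (indeed, by Lemma \ref{lem:basics.of.compatibility}(2), $vs$ and $vs^{-1}$ cannot both be reducible). Second, the key step ``$v\gamma$ is reducible iff there is $t \in \gamma$ with $vt^{-1}$ reducible'' is wrong: since $v$ is reduced, $v\gamma$ double-crosses iff $v$ crosses the hyperplane of one of the $\gamma$-edges at the endpoint of $v$, which is precisely ``$\exists\, t \in \gamma : vt$ reducible,'' not $vt^{-1}$. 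You correctly computed $\reduc(\alpha_v) = \{\gamma : \exists\, t\in\gamma,\ t^{-1}\in\alpha_v\}$; with your definition this unpacks to $\{\gamma : \exists\, t \in \gamma,\ vt^{-1}\text{ reducible}\}$, which does not coincide with $\reduc(v) = \{\gamma : \exists\, t\in\gamma,\ vt\text{ reducible}\}$. Replacing your definition by the paper's $\alpha_v := \{s^{-1} : vs\text{ reducible}\}$ repairs both points, and the remainder of your argument (pairwise commutation via Lemma \ref{lem:basics.of.compatibility}(1), vertex-transitivity to ensure $\alpha_v\in\Sigma$, and the reducibility computation) then matches the paper's proof.
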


\begin{proof}
    Let $v = \alpha_1 \dots \alpha_n \in \mathcal{L}_o$. Consider the set
    \[ \alpha_v := \{ s^{-1} \in \Sigma_0 \vert vs \text{ reducible} \}. \]
    By Lemma \ref{lem:basics.of.compatibility} (1) $\&$ (2), all elements of this set commute and none of them are inverse to each other. Furthermore, since $\Gamma$ acts vertex-transitively, all labels appear at every vertex and every set of pairwise commuting elements of $\Sigma_0$ forms a letter in $\Sigma$. Thus, $\alpha_v$ is a letter in $\Sigma$.
    
    We show that $v$ and $\alpha_v$ have the same reductions. Let $\alpha \in \Sigma$ such that $v\alpha$ is reducible. Then there has to exist some $s \in \alpha$ which causes the reduction and we conclude that $vs$ is reducible. Thus, $s^{-1} \in \alpha_v$ and $\alpha_v s$ is reducible as well. Therefore, $\alpha_v \alpha$ is reducible. Conversely, if $\alpha_v \alpha$ is reducible, we find $s \in \alpha$ such that $\alpha_v s$ is reducible. Therefore, $s^{-1} \in \alpha_v$ which implies that $v s$ and $v\alpha$ are reducible. We conclude that $v$ and $\alpha_v$ have the same reductions.
\end{proof}

\begin{theorem} \label{thm:finitetopologicalgeneration}
Let $S$ be a finite generating set of $\Gamma$ such that $S^{-1} = S$. The finite set
\[ S \cup \{ A_{o, \sigma} \vert \sigma \text{ a label-isomorphism} \} \cup \{ A_{\alpha, \sigma} \vert \alpha \in \Sigma, \sigma \text{ compatible with } \alpha \} \]
generates a dense subgroup of $\Aut(X)$.
\end{theorem}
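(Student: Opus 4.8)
The plan is to deduce Theorem~\ref{thm:finitetopologicalgeneration} from the two main ingredients already assembled in this section: Proposition~\ref{prop:generating.dense.subgroup.of.stabiliser}, which produces a (possibly infinite) generating set of a dense subgroup of $G_o$ out of the family $\{A_{w,\sigma}\}_{w,\sigma}$, and the observation that in the vertex-transitive case every $\sigma$ compatible with $w$ actually appears at $w$ in $G_o$ (Propositions~\ref{prop:our.family.defines.an.automorphism.case.at.o} and~\ref{prop:our.family.defines.an.isometry.case.away.from.o} together with the subsequent Lemma). The job is then to cut this infinite family down to the finite set in the statement, exactly as in the proof of Theorem~\ref{thm:automorphism.group.is.topologically.finitely.generated}, but using Lemma~\ref{lem:conjugation.of.stabilisers} in place of the general Lemma~\ref{lem:finitely.many.words.suffice} so that the finite index set of words becomes literally $\Sigma \cup \{\epsilon\}$.

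First I would record that, since $\Gamma$ acts vertex-transitively, $\Aut(X) = \Gamma \cdot G_o$: every $g \in \Aut(X)$ sends $o$ to some vertex, and by vertex-transitivity there is $\gamma \in \Gamma$ with $\gamma^{-1}g \in G_o$. Hence a set generates a dense subgroup of $\Aut(X)$ as soon as it generates $\Gamma$ together with a dense subgroup of $G_o$. By Proposition~\ref{prop:generating.dense.subgroup.of.stabiliser} (whose hypothesis holds here, as every compatible $\sigma$ appears) the family $\{A_{w,\sigma} : w \in \mathcal{L}_o,\ \sigma \text{ appears at } w \text{ in } G_o\}$ generates a dense subgroup of $G_o$, so $S \cup \{A_{w,\sigma}\}_{w,\sigma}$ generates a dense subgroup of $\Aut(X)$. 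It remains to express each $A_{w,\sigma}$ in terms of the finite set in the statement. For $w = \epsilon$ this is immediate. For $w \neq \epsilon$, Lemma~\ref{lem:conjugation.of.stabilisers} gives $\alpha_w \in \Sigma$ with $\operatorname{red}(w) = \operatorname{red}(\alpha_w)$, i.e.\ $w \sim \alpha_w$; since $\Gamma$ acts by translations (Corollary~\ref{cor:label.preserving.actions.are.by.translations}) and transitively on vertices, there is a translation $L_u \in \Gamma$ with $L_u(\alpha_w) = w$, and moreover $\mathcal{L}_o^{cube} = \mathcal{L}_u^{cube}$ because all letters appear at every vertex. Then by Lemma~\ref{lem:basics.of.compatibility}(3) a label-isomorphism $\sigma$ is compatible with $w$ iff it is compatible with $\alpha_w$, and by Lemma~\ref{lem:fixed.points.when.conjugating.by.translations} the element $L_u \circ A_{\alpha_w,\sigma} \circ L_u^{-1}$ is a legitimate choice for $A_{w,\sigma}$. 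Thus every $A_{w,\sigma}$ lies in the subgroup generated by $\Gamma$ and $\{A_{\alpha,\sigma} : \alpha \in \Sigma,\ \sigma \text{ compatible with } \alpha\} \cup \{A_{o,\sigma} : \sigma \text{ a label-isomorphism}\}$.

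Putting this together, the subgroup generated by $S \cup \{A_{o,\sigma}\}_\sigma \cup \{A_{\alpha,\sigma}\}_{\alpha \in \Sigma, \sigma}$ contains all the $A_{w,\sigma}$ (conjugating by a product of elements of $S = S^{-1}$, which generates $\Gamma$), hence contains a dense subgroup of $G_o$, and since it also contains $\Gamma = \langle S \rangle$ it is dense in $\Aut(X)$ by the remark that $\Aut(X) = \Gamma G_o$. Finiteness is clear: $S$ is finite, $\Sigma$ is finite, and for each $v \in \{\epsilon\} \cup \Sigma$ there are only finitely many label-morphisms $\Sigma_0 \to \Sigma_0$ (indeed $\Sigma_0$ is finite), so only finitely many $A_{v,\sigma}$. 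The only real subtlety — and the step I expect to require the most care in writing out — is checking that $L_u \circ A_{\alpha_w,\sigma} \circ L_u^{-1}$ really satisfies the defining requirements of $A_{w,\sigma}$ from Construction~\ref{con:generators.at.v}, namely $\pre{w}{\sigma(L_u A_{\alpha_w,\sigma} L_u^{-1})} = \sigma$ and the fixing of all vertices of Type~1 and~2 relative to $(o,w)$; but both of these are exactly the content of Lemma~\ref{lem:fixed.points.when.conjugating.by.translations}, applied with $v \rightsquigarrow \alpha_w$, $v' \rightsquigarrow w$, once one notes $L_u(\alpha_w) = w$ and $\alpha_w \sim w$, so the argument is short.
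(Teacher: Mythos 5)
Your proposal is correct and follows the same route as the paper: reduce to generating a dense subgroup of $G_o$ via Proposition~\ref{prop:generating.dense.subgroup.of.stabiliser}, then use Lemma~\ref{lem:conjugation.of.stabilisers} (reductions match a single letter $\alpha_w$) and Lemma~\ref{lem:fixed.points.when.conjugating.by.translations} to realize each $A_{w,\sigma}$ as a $\Gamma$-conjugate of some $A_{\alpha,\sigma}$ with $\alpha \in \Sigma$. The paper simply compresses this by invoking Theorem~\ref{thm:automorphism.group.is.topologically.finitely.generated} with $T=\{\id\}$ and $V=\Sigma$, whereas you unfold the argument explicitly; the content is the same.
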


\begin{proof}
In the proof of Corollary \ref{thm:automorphism.group.is.topologically.finitely.generated}, we have constructed a finite union
\[ T \cup S \cup \{ A_{v, \sigma} \vert v \in V \cup \epsilon, \sigma \text{ appears at $v$ in $G_o$} \} \]
that generates a dense subgroup of $\Aut(X)$. Since $\Gamma$ acts transitively on vertices, we can choose $T$ to consist only of the identity. Lemma \ref{lem:conjugation.of.stabilisers} implies that we can choose $V = \Sigma$. Furthermore, Constructions \ref{con:explicit.generators.at.o} and \ref{con:explicit.generators.at.v} show that every label-isomorphism $\sigma$ appears at $o$ in $G_o$ and every $\sigma$ that is compatible with $v$ appears at $v$ in $G_o$. Applying Theorem \ref{thm:automorphism.group.is.topologically.finitely.generated} and its proof in the context of this Lemma thus implies that
\[ S \cup \{ A_{o, \sigma} \vert \sigma \text{ a label-isomorphism} \} \cup \{ A_{\alpha, \sigma} \vert \alpha \in \Sigma, \sigma \text{ compatible with } \alpha \} \]
is a finite set that generates a dense subgroup of $\Aut(X)$.
\end{proof}




\section{Size of stabilizer subgroups and $\Aut(X)$} \label{sec:size.of.stabilizer.subgroups.and.Aut}

So far, we have established a family of elements in $\Aut(X)$ which can be used to topologically generate stabilizers and the full automorphism group respectively. These generating sets can give us some information about the size of both the stabilizer subgroups and the full automorphism group. Before discussing these results, we recall from Section \ref{sec:Introduction} that $\Aut(X)$ is second countable and locally compact. Since $G_o$ is an open and closed subgroup (because $X^{(0)}$ is discrete in $X$), it inherits both second countability and local compactness. We also recall from section \ref{subsecintro:topological.generators.and.size.of.aut} that second countable, locally compact groups are uncountable if and only if they are non-discrete. In this section, we prove the following two results.

\begin{theorem} \label{thm:dichotomy.discrete.nondiscrete}
    Let $X$ be a locally finite $\CAT$ cube complex, $\Gamma < \Aut(X)$ such that the action $\Gamma \curvearrowright X$ is cocompact, and $\ell$ a $\Gamma$-invariant admissible edge labeling. Suppose there exists an $o \in X^{(0)}$ such that for all $v \in \mathcal{L}_o \setminus \{ \epsilon \}$, every $\sigma$ that is compatible with $v$ also appears at $v$ in $G_o$. We have the following dichotomy:
    \begin{enumerate}
        \item $\Aut(X)$ is finitely generated and all its vertex-stabilizers are finite.

        \item $\Aut(X)$ is non-discrete.
    \end{enumerate}
    In particular, the vertex-stabilizers are either all uncountable or all finite.
\end{theorem}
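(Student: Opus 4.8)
The plan is to recognize that the asserted dichotomy is nothing but the dichotomy ``$\Aut(X)$ is discrete'' versus ``$\Aut(X)$ is non-discrete'', and to handle the two horns with, respectively, Theorem~\ref{thm:automorphism.group.is.topologically.finitely.generated} and a Baire--category argument about locally compact groups.

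The first step is to record two elementary facts about vertex stabilizers. Recall from the introduction that $\Aut(X)$, with the compact--open topology, is second countable, locally compact and totally disconnected. For each vertex $v$ the stabilizer $G_v$ is \emph{open} --- if $U$ is an open neighbourhood of $v$ in $X$ containing no other vertex, then $G_v = \{\, g \in \Aut(X) \mid g(v) \in U \,\}$ --- and \emph{compact}, because local finiteness of $X$ yields an injective continuous map $G_v \to \prod_{n \geq 0} \mathrm{Sym}\!\big(B_n(v) \cap X^{(0)}\big)$ which is a homeomorphism onto a closed subgroup of a compact (Tychonoff) product of finite groups. Since an open subgroup of a topological group is discrete if and only if the ambient group is, we obtain: $\Aut(X)$ is discrete $\iff$ some, equivalently every, $G_v$ is discrete. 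I would also note here that $\Gamma$ is finitely generated: by Section~\ref{sec:special.actions.and.edge.labelings} the action $\Gamma \curvearrowright X$ is free, so it is free, proper and cocompact on the geodesic space $X$, whence the \v{S}varc--Milnor lemma applies.

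Next I would split into the two cases. If $\Aut(X)$ is non-discrete, that is conclusion~(2); and then for \emph{every} vertex $v$ the open subgroup $G_v$ is non-discrete (by the equivalence above), locally compact and Hausdorff, hence a Baire space, and a countable Baire topological group is discrete (some singleton fails to be nowhere dense, so one --- hence, by homogeneity, every --- point is isolated); thus every $G_v$ is uncountable. If instead $\Aut(X)$ is discrete, then Theorem~\ref{thm:automorphism.group.is.topologically.finitely.generated} applies --- its hypotheses hold, since $\Gamma$ is finitely generated and acts cocompactly, $\ell$ is a $\Gamma$-invariant admissible labeling, and the hypothesis on compatible $\sigma$ is assumed for all $v \in \mathcal{L}_o \setminus \{\epsilon\}$, in particular for the finite set $V$ of Lemma~\ref{lem:finitely.many.words.suffice} --- so $\Aut(X)$ contains a finitely generated dense subgroup; a dense subgroup of a discrete group is everything, so $\Aut(X)$ is finitely generated, and each $G_v$ is compact and discrete, hence finite. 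That is conclusion~(1). Finally, (1) and (2) exclude each other: if all $G_v$ were finite they would be discrete (finite and Hausdorff) and open, forcing $\Aut(X)$ to be discrete; and the closing sentence of the theorem is then immediate, every $G_v$ being finite in case~(1) and uncountable in case~(2).

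I do not expect a genuine obstacle: the substantive work of building generators is already packaged in Theorem~\ref{thm:automorphism.group.is.topologically.finitely.generated}. The only points that require a little care are the compactness of $G_v$ out of local finiteness, the verification that the hypotheses of Theorem~\ref{thm:automorphism.group.is.topologically.finitely.generated} are met (notably the finite generation of $\Gamma$, which is not assumed verbatim in the present statement), and the Baire-category step that a non-discrete locally compact Hausdorff group is uncountable.
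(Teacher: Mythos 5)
Your proposal is correct, and it takes a genuinely different route from the paper. The paper's proof works combinatorially: it first shows, via a coset‐counting argument, that finiteness and uncountability of vertex stabilizers are properties shared by all vertices; then it splits on whether finitely or infinitely many $v \in \mathcal{L}_o$ satisfy $\mathcal{S}_o(v) \neq \{\id\}$, invoking Lemmas \ref{lem:finitely.generated.stabilizers.are.finite} and \ref{lem:finite.stabilizers.imply.finite.generation} in the finite case and Lemma \ref{lem:infinitely.generated.stabilizers.are.uncountable.and.nondiscrete} (which explicitly constructs a sequence $A_{v_i,\sigma_i} \to \Id_X$) in the infinite case. You instead split directly on discreteness of $\Aut(X)$, handle the transfer between vertex stabilizers through the standard open‐subgroup argument, settle the non‐discrete horn by Baire category, and settle the discrete horn by invoking Theorem \ref{thm:automorphism.group.is.topologically.finitely.generated} as a black box together with the observation that a dense subgroup of a discrete group is the whole group; you also correctly note that finite generation of $\Gamma$, which that theorem requires but the present statement does not list explicitly, follows from \v{S}varc--Milnor once the action is known to be free (Lemma \ref{lem:label.preserving.actions.are.free}). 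Your approach is more modular and topological and is arguably cleaner for proving the dichotomy as stated; the paper's approach is more intrinsic to the cube‐complex machinery and, along the way, locates exactly where the dividing line sits in terms of the invariant $\#\{v : \mathcal{S}_o(v) \neq \{\id\}\}$, which is what makes the effective characterisation of Theorem \ref{thm:characterizing.uncountable.automorphism.groups.in.vertex.transitive.case} possible.
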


\begin{theorem} \label{thm:characterizing.uncountable.automorphism.groups.in.vertex.transitive.case}
    Let $X$ be a locally finite $\CAT$ cube complex, $\Gamma < \Aut(X)$ such that the action $\Gamma \curvearrowright X$ is vertex-transitive, and $\ell$ a $\Gamma$-invariant admissible edge labeling. Then the following are equivalent:
    \begin{enumerate}
        \item The automorphism-group $\Aut(X)$ is non-discrete.
        
        \item There exist some distinct $s, t \in \Sigma_0$ such that $s \neq t^{-1}$ and $[s, t] \neq 1$.

        \item $\Gamma$ is not free abelian.
    \end{enumerate}
\end{theorem}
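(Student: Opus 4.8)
The plan is to prove Theorem~\ref{thm:characterizing.uncountable.automorphism.groups.in.vertex.transitive.case} by establishing the cycle of implications $(3) \Rightarrow (2) \Rightarrow (1) \Rightarrow (3)$, treating the non-trivial content as the step $(2) \Rightarrow (1)$, while the other two implications are essentially formal.

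\textbf{Step 1: $(3) \Rightarrow (2)$, by contraposition.} Suppose no such pair $s,t$ exists. Since the action of $\Gamma$ is vertex-transitive, Remark~\ref{rem:vertex.transitive.means.RAAG} identifies $\Gamma$ with a right-angled Artin group on a defining graph $L$, with $\Sigma_0$ the standard generators and their inverses, and $[s,t]=1$ precisely when the corresponding vertices of $L$ are adjacent. The hypothesis ``for all distinct $s,t \in \Sigma_0$ with $s \neq t^{-1}$ we have $[s,t]=1$'' says that $L$ is a complete graph, hence $\Gamma$ is free abelian of rank $|L|$. This is the contrapositive of $(3) \Rightarrow (2)$.

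\textbf{Step 2: $(1) \Rightarrow (3)$, also by contraposition.} If $\Gamma$ is free abelian, then $X$ is a standard Euclidean cube complex (a product of lines), so $L$ is complete. In this case one checks directly that an automorphism of $X$ fixing a vertex $o$ and all its neighbours must be the identity: by Lemma~\ref{lem:portraits.define.automorphisms} such a $g$ has $\pre{o}{\sigma(g)} = \Id$, and since every label commutes with every other label, property $(par)$ (Definition~\ref{def:par.inv}) propagates $\pre{v}{\sigma(g)} = \Id$ along any edge, hence to every $v$; thus $g = \Id_X$ by Theorem~\ref{thm:characterisation.of.stabilizing.isometries.as.tree.automorphisms}. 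Therefore the pointwise stabilizer of the closed ball of radius $1$ around $o$ is trivial, which is an identity neighbourhood in $\Aut(X)$; so $\Aut(X)$ is discrete. (Alternatively one invokes Theorem~\ref{thm:dichotomy.discrete.nondiscrete}, noting that the hypothesis ``every $\sigma$ compatible with $v$ appears at $v$'' holds in the vertex-transitive case by Section~\ref{subsec:an.explicit.set.of.generators.in.the.vertex.transitive.case}, and that discreteness forces finite stabilizers.) This contradicts $(1)$, giving $(1) \Rightarrow (3)$.

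\textbf{Step 3: $(2) \Rightarrow (1)$, the main step.} Given distinct $s,t \in \Sigma_0$ with $s \neq t^{-1}$ and $[s,t] \neq 1$, the goal is to build infinitely many distinct elements of $G_o$, so that (by second countability, via Baire's theorem as recalled in Section~\ref{subsecintro:topological.generators.and.size.of.aut}) $\Aut(X)$ is non-discrete. Since $s$ is non-isolated (it commutes with $t$)\footnote{Actually one must be slightly careful: $[s,t]\neq 1$ is the hypothesis, so we instead use that $s$ may still be non-isolated or isolated; the construction below only needs $s \neq t^{-1}$, $[s,t] \neq 1$.} --- more precisely, using that $t \neq s^{-1}$ and $[s,t] \neq 1$ we have that $st$ is a reduced word of length $2$ in $\mathcal{L}_o$, and $ss^{-1} \notin \mathcal{L}_o$-type cancellations do not interfere --- I would consider the reduced word $v = s \in \Sigma \subset \mathcal{L}_o$ of length one and construct, for each $n \geq 1$, a label-isomorphism $\sigma_n$ compatible with a suitable reduced word $v_n$ (for instance $v_n = s t s t \cdots$ of length $n$, or $v_n = s^n$-analogues built from $s$ and a commuting partner) such that $\sigma_n$ moves some letter, and then invoke Construction~\ref{con:explicit.generators.at.v} (valid here because $\Gamma$ acts vertex-transitively, via Proposition~\ref{prop:our.family.defines.an.isometry.case.away.from.o}) to produce $A_{v_n, \sigma_n} \in G_o$. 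Concretely: pick a label-isomorphism $\sigma$ of $\Sigma_0$ that fixes $s$, fixes $s^{-1}$, and swaps $t$ with $t^{-1}$ (or some other non-trivial permutation fixing $s^{\pm 1}$ and preserving commutation); one must check such $\sigma$ exists and is a genuine label-isomorphism, which reduces to a statement about the automorphism group of the link $\lk(X)$. Because $\sigma$ fixes $s^{\pm 1}$, it is compatible with the word $v = s$ (the only reduction of $s$ involves $s^{-1}$ itself, Definition~\ref{def:compatibility}), so $A_{s, \sigma} \in G_o$ is defined and non-trivial (it acts as $\sigma \neq \Id$ at the vertex $s$). Composing translations $L_u$ with $u$ a reduced word built from $s$ and conjugating $A_{s,\sigma}$, as in Lemma~\ref{lem:fixed.points.when.conjugating.by.translations} and Theorem~\ref{thm:automorphism.group.is.topologically.finitely.generated}, produces infinitely many distinct elements of $G_o$ supported deeper and deeper in $T_o^{cube}$; alternatively, one directly observes that $A_{s^n, \sigma}$ for varying $n$ are pairwise distinct. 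Hence $G_o$ is infinite, so $\Aut(X)$ is non-discrete.

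\textbf{Main obstacle.} The delicate point is Step 3: verifying that a non-trivial label-isomorphism $\sigma$ of $\Sigma_0$ exists which \emph{fixes $s$ and $s^{-1}$} yet moves something, \emph{and} that the resulting $A_{v,\sigma}$ genuinely depend on $v$ (i.e.\ give rise to infinitely many isometries rather than collapsing). The existence of $\sigma$ is where the hypothesis $[s,t]\neq 1$, $s\neq t^{-1}$ is used: it guarantees the link is not a single simplex, so its automorphism group (respecting the involution $\cdot^{-1}$ and commutation) is non-trivial even after fixing one vertex-pair. Once $\sigma$ is in hand, infinitude follows from the self-similar, non-trivially-branching structure of $T_o^{cube}$ together with the explicit Construction~\ref{con:explicit.generators.at.v}: the elements $A_{v_n,\sigma}$ act by the identity on the growing Type-$1$/Type-$2$ regions but non-trivially at $v_n$, so they are pairwise distinct. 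Everything else is bookkeeping with the RAAG dictionary of Remark~\ref{rem:vertex.transitive.means.RAAG} and the characterisation of stabilizer elements in Theorem~\ref{thm:characterisation.of.stabilizing.isometries.as.tree.automorphisms}.
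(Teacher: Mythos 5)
Your proposal takes essentially the same route as the paper: for $(2)\Rightarrow(1)$ you pick a swap $(t,t^{-1})$ (the paper symmetrically uses $(s,s^{-1})$) and check compatibility with a length-one word using $[s,t]\neq 1$, then exploit the fact that $s^n$ (resp.\ $t^n$) has the same reductions as the length-one word to produce infinitely many nontrivial elements of $G_o$ accumulating at the identity; and for the converse you reduce to one-letter words and show every compatible label-isomorphism is trivial. The only place where your write-up is a little loose is Step~2: property $(par)$ alone does not propagate $\pre{v}{\sigma(g)}=\Id$ across an edge labeled $s$, since $(par)$ is silent on $s$ and $s^{-1}$ (and says nothing at all when $s$ is isolated, e.g.\ when $\Gamma=\Z$); you also need $(inv)$ for $s^{-1}$ and bijectivity of the restricted portrait for $s$ itself. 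The paper sidesteps this by instead using Lemma~\ref{lem:conjugation.of.stabilisers} to reduce to one-letter words and then invoking Lemmas~\ref{lem:finitely.generated.stabilizers.are.finite} and~\ref{lem:finite.stabilizers.imply.finite.generation}, which matches your parenthetical alternative via Theorem~\ref{thm:dichotomy.discrete.nondiscrete}. One more small care: in Step~3, if $\sigma=(t,t^{-1})$ is your chosen swap, the words $v_n$ must end in a letter whose reductions avoid $t^{\pm 1}$ (e.g.\ $v_n=s^n$ or $v_n$ alternating and ending in $s$); the family $stst\cdots$ ending in $t$ would instead require the paper's swap $(s,s^{-1})$ to be compatible.
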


\begin{remark}
    We point out that, if $\Aut(X)$ is discrete, then any open subgroup has to be discrete, and if $\Aut(X)$ is non-discrete, then any open subgroup has to be non-discrete, as it has to contain an open neighbourhood of the identity, which will be non-discrete. In particular, any open subgroup is discrete if and only if the ambient group is. Since all vertex-stabilizers are open subgroups in compact-open topology, the dichotomy in the first theorem implies that all vertex-stabilizers are either uncountable or finite, while the second theorem implicitly characterize when one (and thus all) vertex-stabilizers are non-discrete and uncountable.
\end{remark}

Key to the proof of both theorems is the following question:

\begin{question*}
    Let $o, v \in X^{(0)}$. Is there a non-trivial label-isomorphism $\sigma : \Sigma_{0,v} \rightarrow \Sigma_{0,v}$ that appears at $v$ in $G_o$?
\end{question*}

As we will see, there are many cases where a non-trivial label-isomorphism $\sigma$ exists for every pair $o, v \in X^{(0)}$. However, the following example illustrates that this can fail.

\begin{example}
    Let $X$ be the order-5 square tiling of the hyperbolic plane, that is the tiling of the plane by squares such that every vertex has exactly five incident squares. This is the cubulation of a surface group and the universal covering of a compact special cube complex. Thus, we can find a suitable labeling on this cubulation to describe the group $G_o$ for some vertex $o \in X^{(0)}$. However, one easily checks that for any $o$, $G_o \cong D_5$, where $D_5$ denotes the fifth dihedral group, as the stabilizer subgroup of $o$ is exactly the group of isometries on the boundary of the union of the five squares incident to $o$.
    
    The issue here is that, whenever we choose a vertex $v \in X^{(0)} \setminus \{ o \}$, the only permutation that is compatible with $v$ is the identity, regardless of the choice of admissible labeling. As such, we conclude that, if an element $g \in G_o$ fixes the five edges incident to $o$, then $g$ has to be the identity on all of $X$. This implies that every $g \in G_o$ is determined by its action on the edges incident to $o$ and there are only finitely many permutations of these edges. For the generating family we produced in Constructions \ref{con:explicit.generators.at.o} and \ref{con:explicit.generators.at.v}, this phenomenon materializes in that there are only finitely many $A_{v, \sigma}$ that are non-trivial, specifically there are ten such elements, all of the form $A_{\epsilon, \sigma}$ for some $\sigma$.

    We emphasize that this is not a counterexample to Theorem \ref{thm:characterizing.uncountable.automorphism.groups.in.vertex.transitive.case}, because $X$ does not satisfy the assumptions of the theorem despite the fact that its automorphism group acts vertex-transitively. The issue is that no vertex-transitive action $\Gamma \curvearrowright X$ admits a $\Gamma$-invariant admissible labeling.
\end{example}

The question and example above motivate the following terminology.

\begin{notation}
    Let $o \in X^{(0)}$. For every $v \in \mathcal{L}_o$, we define the set
    \[ \mathcal{S}_o(v) := \{ \sigma : \Sigma_{0,v} \rightarrow \Sigma_{0,v} \text{ label-morphism } \vert \sigma \text{ appears at $v$ in $G_o$} \}. \]
    Note that for all $o$ and $v$, the set $\mathcal{S}_o(v)$ contains the identity on $\Sigma_{0,v}$, as $\Id_X \in G_o$.
\end{notation}

\begin{lemma} \label{lem:finitely.generated.stabilizers.are.finite}
    Let $o \in X^{(0)}$ such that for every $v \in \mathcal{L}_o$, every $\sigma$ that is compatible with $v$ also appears at $v$ in $G_o$. Suppose there are only finitely many $v$ such that $\mathcal{S}_o(v) \neq \{ \id \}$. Then $G_o$ is finite.
\end{lemma}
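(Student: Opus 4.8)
The plan is to show that every $g\in G_o$ is determined by its action on a single finite ball about $o$, so that $G_o$ embeds into a finite symmetric group. Write $B_m(o):=\{w\in X^{(0)}:d_{\ell^1}(o,w)\le m\}$, which is finite by local finiteness of $X$. Set $F:=\{v\in\mathcal L_o:\mathcal S_o(v)\neq\{\id\}\}$; this is finite by hypothesis. Let $N:=\max_{v\in F}d_{\ell^1}(o,v)$ (with $N:=0$ if $F=\emptyset$). The key claim to prove is: \emph{if $h\in G_o$ fixes $B_{N+1}(o)$ pointwise, then $h=\Id_X$.}

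First I would prove this claim by induction on $m\ge N+1$, showing that $h$ fixes $B_m(o)$ pointwise; the base case $m=N+1$ is the hypothesis. For the inductive step, observe that since $h$ fixes $B_m(o)$ and every edge emanating from a vertex $v'$ with $d_{\ell^1}(o,v')<m$ lands in $B_m(o)$, we get $\pre{v'}{\sigma(h)}=\id$ for all such $v'$. Then, exactly as in Step~2 of the proof of Proposition~\ref{prop:generating.dense.subgroup.of.stabiliser} — whose argument uses only admissibility of the portrait, properties $par$ and $inv$ (Lemmata~\ref{lem:portraits.define.automorphisms} and~\ref{lem:stabilizer.elements.satisfy.label.par.inv}), Lemma~\ref{lem:Shorteningtheprefix}, and precisely the vanishing of portrait components on shorter prefixes just noted — it follows that for every $v$ with $d_{\ell^1}(o,v)=m$ the map $\pre{v}{\sigma(h)}$ is a label-morphism compatible with $v$; moreover $h(v)=v$, so by Lemma~\ref{lem:portraits.define.automorphisms} it maps $\Sigma_{0,v}$ into $\Sigma_{0,v}$. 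Since $m>N$ we have $v\notin F$, hence $\mathcal S_o(v)=\{\id\}$; by the standing hypothesis every label-morphism compatible with $v$ appears at $v$ in $G_o$ and therefore lies in $\mathcal S_o(v)$, so $\pre{v}{\sigma(h)}=\id$. Finally, any $w$ with $d_{\ell^1}(o,w)=m+1$ can be written $w\equiv_o vs$ with $d_{\ell^1}(o,v)=m$ and $s\in\Sigma_{0,v}$, whence $h(w)=h(v)\,\pre{v}{\sigma(h)}(s)=vs=w$; so $h$ fixes $B_{m+1}(o)$. The induction gives that $h$ fixes $X^{(0)}$, and since an automorphism of the $\CAT$ cube complex $X$ is determined by its action on vertices, $h=\Id_X$.

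Second, I would conclude: any $g\in G_o$ is a cubical isometry fixing $o$, hence preserves $d_{\ell^1}(o,\cdot)$ and restricts to a permutation of the finite set $V_0:=B_{N+1}(o)$, giving a homomorphism $\rho:G_o\to\mathrm{Sym}(V_0)$. The claim above says exactly that $\ker\rho=\{\Id_X\}$, so $\rho$ is injective and $G_o$ is finite.

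The only non-routine point — the expected main obstacle — is verifying that the Step~2 argument of Proposition~\ref{prop:generating.dense.subgroup.of.stabiliser} transfers verbatim from the auxiliary maps $g_n$ constructed there to an arbitrary $h\in G_o$ that fixes $B_m(o)$. This is genuinely straightforward, since that argument only invokes the portrait properties listed above together with the fact that the relevant portrait components vanish on prefixes of length $<m$, all of which hold for $h$; but it must be stated carefully to be rigorous. One should also flag the small bookkeeping fact (immediate from Lemma~\ref{lem:portraits.define.automorphisms}, using $h(v)=v$) that a label-morphism compatible with $v$ and appearing at $v$ in $G_o$ lands in $\Sigma_{0,v}$, so it genuinely belongs to $\mathcal S_o(v)$.
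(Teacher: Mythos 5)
Your proof is correct and reaches the conclusion by a somewhat cleaner route than the paper. The paper decomposes an arbitrary $g \in G_o$ as a finite product $\prod_{v \in V} A_{v,\sigma_v}$ times a remainder $\tilde g$ that fixes a large ball (via the iterative construction of Proposition~\ref{prop:generating.dense.subgroup.of.stabiliser}), shows $\tilde g = \Id_X$ by a minimal-counterexample argument, and then counts the finitely many possible products; you skip the decomposition entirely and show directly that the restriction map $G_o \to \mathrm{Sym}\bigl(B_{N+1}(o)\bigr)$ is injective, which is exactly your Claim. Both proofs rest on the same technical core: if $h\in G_o$ has trivial portrait components at all vertices closer to $o$ than $v$, then $\pre{v}{\sigma(h)}$ is compatible with $v$ (via $par$, $inv$, and Lemma~\ref{lem:Shorteningtheprefix}), so for $v$ outside the finite set $F$ the standing hypothesis forces $\pre{v}{\sigma(h)}\in\mathcal S_o(v)=\{\id\}$, and this propagates outward (by induction on the radius in your proof, by a minimal counterexample in the paper's). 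Your route is more economical in that it sidesteps the implicit bookkeeping point in the paper that the factors $A_{v,\sigma_v}$ of Proposition~\ref{prop:generating.dense.subgroup.of.stabiliser} at vertices outside $V$ can be chosen to be $\Id_X$, which is needed to reduce that product to one indexed by $V$; the paper's route, in exchange, produces an explicit finite generating set. You also correctly flag and resolve the one delicate step, namely that $\pre{v}{\sigma(h)}$ must map $\Sigma_{0,v}$ to itself in order to genuinely lie in $\mathcal S_o(v)$, which follows from $h(v)=v$ together with Lemma~\ref{lem:portraits.define.automorphisms}.
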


\begin{proof}
    Let $V$ be the finite set of vertices such that $\mathcal{S}_o(v) \neq \{ \id \}$ (possibly including the vertex $o$) and let $N$ be sufficiently large, such that for all $v \in V$, $d_{\ell^1}(o, v) < N$. Consider a choice of elements
    \[ \{ A_{v, \sigma} \vert v \in V, \sigma \text{ appears at $v$ in $G_o$} \}. \]
    as obtained in Constructions \ref{con:generators.at.o} and \ref{con:generators.at.v}. Let $g \in G_o$. According to the proof of Proposition \ref{prop:generating.dense.subgroup.of.stabiliser}, we find elements $(A_{v, \sigma_v})_{v \in V}$, such that
    \[ \tilde{g} := \prod_{v \in V} A_{v, \sigma_v}^{-1} \circ g \]
    fixes all vertices in the $\ell^1$-ball of radius $N$ around $o$. Since $d_{\ell^1}(o,v) < N$ for all $v \in V$, we conclude that $\pre{v}{\tilde{g}} = \id$ for every $v \in V$.

    We claim that $\tilde{g} = \Id_X$. Suppose it was not. Then there exists some $w \in X^{(0)} \setminus V$ such that $\pre{w}{\sigma(\tilde{g})} \neq \id$ and $d_{\ell^1}(o,w) \geq N$. Choose $w$ to be a vertex with this property that has minimal $\ell^1$-distance from $o$. We claim that $\pre{w}{\sigma(\tilde{g})}$ is compatible with $w$. Indeed, if $s \in \Sigma_0$ such that $ws$ is reducible, then $d_{\ell^1}(o, ws) = d_{\ell^1}(o,w) - 1$ and we conclude that $\pre{ws}{\sigma(\tilde{g})} = \id$. By $inv$, we conclude that
    \[ \pre{w}{\sigma(\tilde{g})}(s) = \pre{ws}{\sigma(\tilde{g})}(s^{-1})^{-1} = s. \]
    Now let $s, t \in \Sigma_{0,w}$ such that $ws$ is reducible and $[s, t] = 1$. This implies that $t \in \Sigma_{0, ws}$ and by $par$, we have that
    \[ \pre{ w }{\sigma(\tilde{g})}(t) = \pre{ ws }{\sigma(\tilde{g})}(t) = \id(t) = t. \]
    We conclude that $\pre{w}{\tilde{g}}$ is compatible with $w$. By assumption, every $\sigma$ that is compatible with $w$ also appears at $w$ in $G_o$ and thus $\pre{w}{\sigma(\tilde{g})} \in \mathcal{S}_o(w)$. However, since $w \notin V$, we know that $\mathcal{S}_o(w) = \{ \id \}$ and thus $\pre{w}{\tilde{g}} = \id$, a contradiction. We conclude that $\tilde{g} = \Id_X$.

    This implies that every $g \in G_o$ can be written as a product
    \[ \prod_{v \in V} A_{v, \sigma}. \]
    Since $V$ is finite, $\mathcal{S}_o(v)$ is finite for every $v \in V$, and the number of orderings in the product is finite, there are only finitely many such products and thus $G_o$ is finite.
\end{proof}

\begin{lemma} \label{lem:finite.stabilizers.imply.finite.generation}
    Suppose there exists a vertex $o \in X^{(0)}$ such that $G_o$ is finite and suppose that for all $v \in \mathcal{L}_o$, every $\sigma$ that is compatible with $v$ also appears at $v$ in $G_o$. Then $\Aut(X)$ is finitely generated and discrete.
\end{lemma}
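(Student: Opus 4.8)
The plan is to deduce discreteness of $\Aut(X)$ first, and then obtain finite generation from a Milnor--\v{S}varc-type argument. The hypothesis that every $\sigma$ compatible with $v$ appears at $v$ in $G_o$ does not enter the argument directly; it is used only through the preceding lemma and its proof, to guarantee that ``$G_o$ finite'' genuinely controls $\Aut(X)$ rather than being vacuous.

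First I would establish discreteness. Recall from Section~\ref{sec:Introduction} that $\Aut(X)$ is locally compact and Hausdorff, and that $G_o=\Stab_{\Aut(X)}(o)$ is open in $\Aut(X)$ because $X^{(0)}$ is discrete in $X$. A finite Hausdorff topological group is discrete, so $\{\Id\}$ is open in $G_o$; since $G_o$ is open in $\Aut(X)$, the singleton $\{\Id\}$ is open in $\Aut(X)$, i.e.\ $\Aut(X)$ is discrete.

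Next I would check that \emph{every} vertex stabilizer $\Stab_{\Aut(X)}(w)$, $w\in X^{(0)}$, is finite. It is open (again because $X^{(0)}$ is discrete), hence discrete; and it is compact, since local finiteness of $X$ realizes it as a closed subgroup of an inverse limit of finite groups, namely the groups of permutations of the finite balls around $w$ that respect the cube structure. A compact discrete group is finite. Consequently $\Aut(X)$ acts properly discontinuously on $X$: a compact set $K\subseteq X$ meets only finitely many vertices $v_1,\dots,v_k$, and $\{g\in\Aut(X): gK\cap K\neq\emptyset\}$ is contained in the finite union $\bigcup_{i,j}\{g: gv_i=v_j\}$ of (possibly empty) cosets of finite vertex stabilizers.

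Finally, since $\Gamma\le\Aut(X)$ acts cocompactly on $X$, so does $\Aut(X)$, and $X$ is a connected $\CAT$ space, in particular a connected geodesic metric space. By the Milnor--\v{S}varc lemma, a group acting properly discontinuously and cocompactly by isometries on such a space is finitely generated; hence $\Aut(X)$ is finitely generated, which together with discreteness proves the lemma. I expect the one step requiring a little care to be the compactness of vertex stabilizers (equivalently, properness of the $\Aut(X)$-action) from local finiteness of $X$; the rest is formal. As an alternative to this last paragraph, more in keeping with the machinery of the paper, one can note that the same properness argument makes $\Gamma$ finitely generated---it acts freely by Lemma~\ref{lem:label.preserving.actions.are.free}, properly, and cocompactly---and then apply Theorem~\ref{thm:automorphism.group.is.topologically.finitely.generated}: the resulting topologically generating set is finite, and by discreteness it generates $\Aut(X)$ outright.
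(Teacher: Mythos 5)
Your argument is correct, and it runs in the opposite direction from the paper's. The paper takes finite generation first and deduces discreteness from it: the proof of Theorem~\ref{thm:automorphism.group.is.topologically.finitely.generated} establishes the decomposition $\Aut(X) = \Gamma \cdot T \cdot G_o$ with $T$ a finite set of orbit representatives, so $T \cup S \cup G_o$ generates $\Aut(X)$; since $G_o$ is finite this is a finite generating set, and then ``finitely generated $\Rightarrow$ countable $\Rightarrow$ (by Baire and second countability) discrete.'' You instead prove discreteness directly from topology---$G_o$ is open and finite, hence $\{\Id\}$ is open---and then recover finite generation from Milnor--\v{S}varc after checking that all vertex stabilizers are compact-and-open, hence finite, so that $\Aut(X)$ acts properly. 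Both are valid. Your route is more elementary and external (it needs only that $G_o$ is finite, $X$ is proper geodesic, and the action is cocompact, with no appeal to the portrait machinery); the paper's route is more internal and self-contained within its own constructions, avoiding Milnor--\v{S}varc and the detour through properness. Your observation that the compatibility-and-appearance hypothesis is not directly used is accurate; it is carried in the statement because this lemma is applied in tandem with Lemma~\ref{lem:finitely.generated.stabilizers.are.finite}, where it is genuinely needed, and because the paper cites Theorem~\ref{thm:automorphism.group.is.topologically.finitely.generated} wholesale even though only the $\Aut(X) = \Gamma T G_o$ decomposition (which needs only cocompactness) is actually invoked. Your closing alternative is essentially the paper's own argument, minus the countability shortcut.
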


\begin{proof}
    In the proof of Theorem \ref{thm:automorphism.group.is.topologically.finitely.generated}, we have shown that there exist finite sets $T$ and $S$ such that $T \cup S \cup G_o$ generates $\Aut(X)$. Since $G_o$ is finite, this is a finite generating set. Since finitely generated groups are countable, it follows that $\Aut(X)$ is discrete.
\end{proof}

\begin{lemma} \label{lem:infinitely.generated.stabilizers.are.uncountable.and.nondiscrete}
    Let $o \in X^{(0)}$ such that there are infinitely many $v \in \mathcal{L}_o$ that satisfy $\mathcal{S}_o(v) \neq \{ \id \}$. Then $G_o$ is uncountable and non-discrete.
\end{lemma}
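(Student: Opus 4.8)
The plan is to show that $G_o$ is non-discrete; since $G_o$ is an open and closed subgroup of $\Aut(X)$ it is second countable and locally compact, and for such groups non-discreteness is equivalent to uncountability (as recalled at the start of this section), so both conclusions follow at once. To see non-discreteness I would work with an explicit neighbourhood basis of the identity. Because $X$ is locally finite and every automorphism is determined by its action on the discrete vertex set, the pointwise stabilisers $\Fix_{G_o}(B_N(o))$ of the combinatorial balls $B_N(o) := \{\, w \in X^{(0)} \mid d_{\ell^1}(o,w) \le N \,\}$, $N \ge 0$, form a neighbourhood basis at $\Id_X$ (the balls are finite, hence a cofinal family of compact subsets). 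Thus $G_o$ is discrete if and only if $\Fix_{G_o}(B_N(o)) = \{\Id_X\}$ for some $N$, and it suffices to exhibit, for every $N$, a non-trivial element of $G_o$ fixing $B_N(o)$ pointwise.

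\textbf{Key steps.} Fix $N$. Since $X$ is locally finite, only finitely many vertices lie within $\ell^1$-distance $N$ of $o$, whereas by hypothesis there are infinitely many $v \in \mathcal{L}_o$ with $\mathcal{S}_o(v) \neq \{\id\}$; hence I may choose such a $v$ with $d_{\ell^1}(o,v) > N$. By definition of $\mathcal{S}_o(v)$ there is a non-identity label-morphism $\sigma : \Sigma_{0,v} \to \Sigma_{0,v}$ appearing at $v$ in $G_o$, i.e.\ an element $g \in G_o$ with $\pre{v}{\sigma(g)} = \sigma$ that fixes every vertex of Type 1 and of Type 2 relative to $(o,v)$. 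By the third bullet of Remark \ref{rem:Basics.of.types}, any vertex of Type 3 relative to $(o,v)$ has $\ell^1$-distance from $o$ strictly greater than $d_{\ell^1}(o,v) > N$; therefore every $w \in B_N(o)$ is of Type 1 or 2, and so $g$ fixes $B_N(o)$ pointwise. Finally, $\sigma \neq \id$ forces $g$ to carry some outgoing edge at $v$ to a different edge, so $g \neq \Id_X$. Hence $\Fix_{G_o}(B_N(o)) \neq \{\Id_X\}$, and as $N$ was arbitrary, $G_o$ is non-discrete and therefore uncountable.

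\textbf{Main obstacle.} There is no deep difficulty here: the argument is essentially a cofinality argument feeding off the combinatorial content already established. The one point that must be handled carefully is the passage from the \emph{a priori} information ``$g$ fixes all vertices of Type 1 and Type 2 relative to $(o,v)$'' to the topological statement ``$g$ fixes $B_N(o)$ pointwise'', which is exactly what the distance estimate in Remark \ref{rem:Basics.of.types} provides once $v$ is chosen far enough from $o$; and one should record the (routine) observation that the elements so produced are genuinely non-trivial because $\sigma$ is non-trivial.
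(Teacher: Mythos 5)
Your proof is correct and follows essentially the same route as the paper's: both arguments exploit Remark \ref{rem:Basics.of.types} to show that the halfspace-fixing elements witnessing $\sigma \in \mathcal{S}_o(v) \setminus \{\id\}$ fix pointwise an $\ell^1$-ball around $o$ whose radius grows with $d_{\ell^1}(o,v)$, yielding non-trivial elements arbitrarily close to the identity. The only cosmetic difference is that the paper phrases this as a sequence $A_{v_i,\sigma_i} \to \Id_X$ while you phrase it via the neighbourhood basis $\Fix_{G_o}(B_N(o))$; the content is identical.
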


\begin{proof}
    We produce a sequence of elements in $G_o$ that converges to the identity to prove non-discreteness. Let $V$ be the set of vertices for which $\mathcal{S}_o(v) \neq \{ \id \}$. Since $V$ is infinite, there exists a diverging sequence $(v_i)_i$ of vertices in $V$ such that $d_{\ell^1}(o, v_{i+1}) > d_{\ell^1}(o, v_i) + 1$. For every $v_i$, choose some $\sigma_i \in \mathcal{S}_o(v_i) \setminus \{ \id \}$.
    
    We claim that $A_{v_i, \sigma_i} \xrightarrow{i \rightarrow \infty} \Id_X$ in compact-open topology and $A_{v_i, \sigma_i} \neq \Id_X$ for all $i$. Since $\pre{v_i}{\sigma(A_{v_i,\sigma_i})} \neq \id$, we have that $A_{v_i,\sigma_i} \neq \Id_X$. To see convergence, recall from earlier that every $w \in \mathcal{L}_o$ such that $d_{\ell^1}(o,w) < d_{\ell^1}(o,v_i)$ has Type 1 relative to $(o,v_i)$. Therefore, $A_{v_i, \sigma_i}$ fixes all vertices in the ball of $\ell^1$-radius $d_{\ell^1}(o, v_i) - 1$. As $i$ tends to infinity, the radii of these balls tend to infinity and we obtain convergence to the identity in compact-open topology. It follows that $G_o$ is non-discrete. Since it is also second countable, it has to be uncountable.
\end{proof}

\begin{proof}[Proof of Theorem \ref{thm:dichotomy.discrete.nondiscrete}]
    Let $o, o' \in X^{(0)}$. We first show that $G_o$ is uncountable if and only if $G_{o'}$ is uncountable and $G_o$ is finite if and only if $G_{o'}$ is finite.

    Suppose $G_o$ is uncountable. Since $X$ is locally finite and all elements in $G_o$ fix $o$, the orbit of $o'$ under $G_o$ is finite. Therefore, there exists an uncountable subset $U \subset G_o$ such that for all $g, g' \in U$, $g(o') = g'(o')$. Choosing some $g_0 \in U$, we conclude that $g_0^{-1} U \subset G_o \cap G_{o'}$ is uncountable. Thus, $G_{o'}$ contains an uncountable set and has to be uncountable itself.

    Now suppose $G_o$ is finite, but $G_{o'}$ is infinite. Analogous to before, there exists an infinite subset $U \subset G_{o'}$ such that for all $g, g' \in U$, $g(o) = g'(o)$. Choosing some $g_0 \in U$, we conclude that $g_0^{-1} U \subset G_o \cap G_{o'}$ is an infinite set, contradicting the finiteness of $G_o$. We conclude that $G_{o'}$ has to be finite.\\
    
    Now let $o \in X^{(0)}$ be a vertex such that for every $v \in \mathcal{L}_o$, every $\sigma$ that is compatible with $v$ appears at $v$ in $G_o$. Suppose there are infinitely many $v \in \mathcal{L}_o$ such that $\mathcal{S}_o(v) \neq \{ \id \}$. By Lemma \ref{lem:infinitely.generated.stabilizers.are.uncountable.and.nondiscrete}, we know that $G_o$ is uncountable and non-discrete. Since $G_o < \Aut(X)$, we conclude that $\Aut(X)$ is uncountable and non-discrete.

    Otherwise, there are only finitely many $v \in \mathcal{L}_o$ satisfying $\mathcal{S}_o(v) \neq \{ \id \}$ and, by Lemma \ref{lem:finitely.generated.stabilizers.are.finite}, $G_o$ is finite. Lemma \ref{lem:finite.stabilizers.imply.finite.generation} then implies that $\Aut(X)$ is finitely generated and discrete.
\end{proof}

The proof of Theorem \ref{thm:dichotomy.discrete.nondiscrete} leads us to the question whether we can determine if, for a vertex $o$, there are infinitely many $v \in \mathcal{L}_o$ such that $\mathcal{S}_o(v) \neq \{ \id \}$. In the vertex-transitive case, this can be described rather explicitly, since every $\sigma$ that is compatible with $v$ also appears at $v$ in $G_o$. Thus, we only need to understand whether there are infinitely many $v$ that admit non-trivial label-isomorphisms compatible with $v$. This can be decided, which leads us to the proof of Theorem \ref{thm:characterizing.uncountable.automorphism.groups.in.vertex.transitive.case}.

\begin{proof}[Proof of Theorem \ref{thm:characterizing.uncountable.automorphism.groups.in.vertex.transitive.case}]
    Equivalence of (2) and (3) is a basic observation about right-angled Artin groups, their Salvetti complexes and admissible edge labelings. We only need to show equivalence of (1) and (2).

    $(2) \Rightarrow (1)$: Let $s, t \in \Sigma_0$, such that $s \neq t^{-1}$ and $[s, t] \neq 1$. Set $\alpha := \{ t \}$ and consider the permutation $\sigma = (s, s^{-1})$, which fixes all of $\Sigma_0$ except for $s, s^{-1}$, which get swapped. This is a label-isomorphism, as $s$ and $s^{-1}$ commute with the same elements of $\Sigma_0$. It is also compatible with $\alpha$, as $\sigma$ fixes $t^{-1}$ and all elements that commute with $t^{-1}$, since $[s, t] \neq 1$. We conclude that $A_{\alpha, \sigma}$ is well-defined.

    Since $\alpha = \{ t \}$ has the same reductions as the path $t^n$ for any $n \in \mathbb{N}^{+}$, we conclude that there are infinitely many vertices for which $\sigma \in \mathcal{S}_o(v)$ and thus $\mathcal{S}_{o}(v) \neq \{ \id \}$. By Lemma \ref{lem:infinitely.generated.stabilizers.are.uncountable.and.nondiscrete} and Theorem \ref{thm:dichotomy.discrete.nondiscrete}, this implies that all vertex-stabilizers and $\Aut(X)$ are non-discrete.\\

    $(1) \Rightarrow (2)$: We show that the negation of (2) implies the negation of (1). Suppose that for all distinct $s, t \in \Sigma_0$, either $s = t^{-1}$ or $[s,t] = 1$. Let $v \in \mathcal{L}_o$ and $\sigma : \Sigma_0 \rightarrow \Sigma_0$ a label-isomorphism compatible with $v$. By Lemma \ref{lem:conjugation.of.stabilisers}, we find some $\alpha_v \in \Sigma$ such that $\alpha_v$ and $v$ have the same reductions and thus $\sigma$ is compatible with $\alpha_v$. Let $t \in \Sigma_0$. If $t \in \alpha_v^{-1}$, then $\sigma(t) = t$ since $\sigma$ is compatible with $\alpha_v$. If $t \notin \alpha_v^{-1}$, choose $s \in \alpha_v^{-1}$. By assumption, either $s = t^{-1}$ or $[s,t] = 1$. If $[s,t] = 1$, compatibility with $v$ implies that $\sigma(t) = t$. We are left to show that $\sigma$ fixes $t$ when $t = s^{-1} \in \alpha_v$. If $\alpha_v$ consists of exactly one element, then the observations made so far show that $\sigma$ fixes all of $\Sigma_0$ except for the one element in $\alpha_v$ and thus it has to fix $\alpha_v$ as well, since $\sigma$ is bijective. If $\alpha_v$ consists of at least two elements, consider $s_1, s_2 \in \alpha_v$. Since $\alpha_v \in \Sigma$, we know that $[s_1^{\pm 1}, s_2^{\pm 1}] = 1$ and since $\sigma$ fixes $\alpha_v^{-1}$, it fixes $s_1^{-1}$ and $s_2^{-1}$. As $\sigma_v$ is compatible with $v$, this implies that, $\sigma(s_1) = s_1$ and $\sigma(s_2) = s_2$. Thus $\sigma$ fixes $\alpha_v$. It follows that $\sigma = \id$ and thus $\mathcal{S}_{o}(v) = \mathcal{S}_o(\alpha_v) = \{ \id \}$ for every $v \in \mathcal{L}_o \setminus \{ \epsilon \}$. By Lemma \ref{lem:finitely.generated.stabilizers.are.finite} and \ref{lem:finite.stabilizers.imply.finite.generation}, this implies that $\Aut(X)$ is finitely generated and discrete.
\end{proof}




\section{The subgroup $\Aut^{+}(X)$} \label{sec:AUT+}

We need to impose some additional assumptions on the cube complex $X$ for most of this section. Specifically, we frequently require that $X$ is irreducible and essential and that $\Aut(X)$ is non-elementary. See Section \ref{subsecintro:aut+.and.simplicity} for the definitions of these terms.

\subsection{$\Aut^{+}(X)$ and simplicity} \label{subsec:AUT+.and.simplicity}

Let $X$ be a locally finite, irreducible, essential $\CAT$ cube complex such that $\Aut(X)$ is non-elementary, $\Gamma < \Aut(X)$ a subgroup whose action on $X$ is vertex-transitive, and $\ell : \vec{\mathcal{E}}(X) \rightarrow \Sigma_0$ a $\Gamma$-invariant admissible edge labeling.

\begin{notation}[\cite{Lazarovich18}]
    We define $\Aut^{+}(X)$ to be the subgroup generated by elements that fix a halfspace, that is
    \[ \Aut^{+}(X) := \langle \{ g \in \Aut(X) \vert g \text{ fixes a halfspace } h \text{ pointwise} \} \rangle. \]
    Furthermore, given a vertex $v$ in a graph $L$, we denote the {\it star} $\st(v)$ of $v$ to be the full subgraph spanned by $v$ and all its adjacent vertices.
\end{notation}

\begin{remark}
    The notion of $\Aut^{+}(X)$ goes back to at least \cite{Tits70} where, for a tree $T$, the group $\Aut^{+}(T)$ was defined to be the group generated by edge-fixators. This group has been generalised both to more general buildings and to $\CAT$ cube complexes (cf.\,\cite{HaglundPaulin98, Lazarovich18, Caprace14}). We emphasize that the generalisations of $\Aut^{+}(X)$ to buildings and to $\CAT$ cube complexes are independent from each other and may have different properties. For example, the definition of $\Aut^{+}(X)$ for buildings yields a closed subgroup, while it is not clear at all from the definition above that $\Aut^{+}(X)$ is closed.
\end{remark}

In this section, we will show sufficient conditions for $\Aut^{+}(X)$ to be simple, non-discrete, and locally compact and approach the question whether it has finite index in $\Aut(X)$.

Before we state our result, we recall that commutation of labels is related to the existence of edges between vertices in links of $X$ (see Remark \ref{rem:commutation.and.links}). In fact, since we assume that $\Gamma$ acts vertex-transitively on $X$ throughout this section, the $1$-skeleton of the unique link in $X$ fully describes commutation of labels. One can easily translate the assumption made in Theorem \ref{thm:aut+.is.simple.nondiscrete.tdlc} below into a property of the link of $X$.

\begin{theorem} \label{thm:aut+.is.simple.nondiscrete.tdlc}
    Suppose that for every pair $s, t \in \Sigma_0$ such that $[s,t] = 1$, there exists some $r \in \Sigma_0 \setminus \{ s^{-1}, t^{-1} \}$ that commutes with neither $s$ nor $t$. Then $\Aut^{+}(X)$ is a non-discrete, simple, totally disconnected group.
\end{theorem}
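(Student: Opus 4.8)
## Proof strategy for Theorem \ref{thm:aut+.is.simple.nondiscrete.tdlc}

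The plan is to deduce the three conclusions separately, using the structural results already developed together with Lazarovich's simplicity criterion from \cite[Appendix A]{Lazarovich18}.

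\medskip

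\textbf{Non-discreteness.} First I would observe that the commutation hypothesis in the theorem is strictly stronger than condition (2) of Theorem \ref{thm:characterizing.uncountable.automorphism.groups.in.vertex.transitive.case}: since $X$ is assumed irreducible and $\Aut(X)$ non-elementary, $X$ is not a point, so $\Sigma_0 \neq \emptyset$; and if $\Sigma_0$ contained \emph{no} commuting pair, then the hypothesis would be vacuously satisfied but $\Gamma$ would be free abelian, contradicting non-elementarity together with irreducibility (a free abelian RAAG has reducible Salvetti complex unless it is $\mathbb{Z}$, and $\Aut$ of the line is elementary). So there is a commuting pair $s,t$, hence a witness $r$ with $[s,r]\neq 1$, and in particular $s \neq r^{-1}$ and $[s,r]\neq 1$, which is exactly condition (2). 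By Theorem \ref{thm:characterizing.uncountable.automorphism.groups.in.vertex.transitive.case}, $\Aut(X)$ is non-discrete. It remains to promote this to non-discreteness of $\Aut^{+}(X)$: I would exhibit explicit halfspace-fixing automorphisms converging to the identity. Concretely, take the label-isomorphism $\sigma = (r, r^{-1})$ swapping $r$ and $r^{-1}$ (a label-isomorphism since $r, r^{-1}$ commute with the same labels), which is compatible with the one-letter word $\alpha = \{t\}$ because $\sigma$ fixes $t^{\pm 1}$ and everything commuting with them (as $[s,r]\neq 1$ and $[t,r]\neq1$, wait — one must check $\sigma$ fixes $t$ and all its commuting partners; since $[t,r]\neq 1$, $r$ does not commute with $t$, so $\sigma$ moves no label commuting with $t$). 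Then, as in the proof of $(2)\Rightarrow(1)$ in Theorem \ref{thm:characterizing.uncountable.automorphism.groups.in.vertex.transitive.case}, the automorphisms $A_{t^n, \sigma}$ are nontrivial, fix larger and larger balls around $o$, and — being supported ``behind'' the vertex $t^n$ — fix a halfspace pointwise, so they lie in $\Aut^{+}(X)$. Hence $\Aut^{+}(X)$ is non-discrete. (Here one should double-check the choice of witness pair so that the relevant $A_{\alpha,\sigma}$ genuinely fixes a halfspace; this is where the hypothesis $r \notin \{s^{-1}, t^{-1}\}$ and the non-commutation are used.)

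\medskip

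\textbf{Total disconnectedness.} This is immediate: $\Aut^{+}(X) \leq \Aut(X)$, and $\Aut(X)$ is totally disconnected (every automorphism permutes the discrete vertex set $X^{(0)}$, as recorded in Section \ref{subsecintro:topological.generators.and.size.of.aut}), so any subgroup is totally disconnected in the subspace topology.

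\medskip

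\textbf{Simplicity.} This is the heart of the matter and the step I expect to be the main obstacle. The idea is to invoke Lazarovich's criterion \cite[Appendix A]{Lazarovich18}, which says that $\Aut^{+}(X)$ is simple provided $X$ is irreducible, essential, $\Aut(X)$ acts non-elementarily, \emph{and} a suitable ``fixing pairs of halfspaces'' / transitivity-type hypothesis holds — the role of the combinatorial condition on $\Sigma_0$ is precisely to verify that remaining hypothesis. Specifically, Lazarovich's argument (following Tits' original method via the simplicity criterion for groups acting on CAT(0) spaces with the ``independence property'') requires that $\Aut^{+}(X)$ act on $X$ in a way that is ``locally rich'' near hyperplanes; in RAAG/link terms this becomes: for every edge $e = \{s,t\}$ in the link $L$ there is a vertex $r$ adjacent to neither endpoint of $e$ (cf. the remark after Theorem \ref{thmintro:Aut+.is.simple.nondiscrete.td}), which is exactly our hypothesis that for commuting $s,t$ there is $r \notin \{s^{-1},t^{-1}\}$ with $[s,r]\neq[t,r]\neq 1$. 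So the plan is: (i) translate the $\Sigma_0$-condition into the link condition of Lazarovich using Remark \ref{rem:commutation.and.links}; (ii) check that $\Aut(X)$ acts faithfully on $X$ — here one needs the condition on commuting pairs, as noted in the remark after Theorem \ref{thmintro:non.cocompact.AUT+}, to rule out automorphisms acting trivially on $X$; actually faithfulness of $\Aut(X)$ on $X$ is automatic by definition, but what is needed is that $\Aut^{+}(X)$ is ``non-trivial enough'' / acts without a global fixed point at infinity, which follows from non-ellementarity; (iii) quote \cite[Corollary A.4 and Claim A.8]{Lazarovich18} to conclude simplicity of $\Aut^{+}(X)$. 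The delicate point, and the main obstacle, is matching the precise hypotheses of Lazarovich's appendix (which is phrased for regular cube complexes, i.e. all links isomorphic — satisfied here since $\Gamma$ acts vertex-transitively) with our setup, and verifying that the combinatorial condition does guarantee the ``Tits independence property'' for the halfspace-fixators; I would spell this out by showing that for any hyperplane $\hat h$ the pointwise stabilizers of the two complementary halfspaces generate, together, enough elements to realize the required permutations of the link across $\hat h$, using the witness label $r$ to move edges within a link while fixing a halfspace. Once that is in place, combining with the non-discreteness and total disconnectedness established above completes the proof.
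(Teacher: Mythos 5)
Your overall strategy matches the paper's: total disconnectedness is trivial, non-discreteness is obtained by exhibiting halfspace-fixing automorphisms $A_{v,\sigma}$ with $v \neq \epsilon$ that converge to the identity, and simplicity is deduced from Lazarovich's Corollary A.4 and Claim A.8. The non-discreteness and total-disconnectedness parts of your proposal are essentially correct; in particular you correctly identify that the witness label $r$ supplies a non-commuting pair $(t,r)$ with $t \neq r^{-1}$, and that the elements $A_{t^n,\sigma}$ fix all vertices of Type~1 relative to $(o,t^n)$ — in particular the entire halfspace on the $o$-side of any hyperplane separating $o$ from $t^n$ — so they lie in $\Aut^{+}(X)$. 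This is exactly the role Proposition~\ref{prop:generators.of.aut+} plays in the paper.

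However, the simplicity part contains a genuine misconception about what the combinatorial hypothesis on $\Sigma_0$ is used for. You describe it as verifying a ``Tits independence property'' or a ``locally rich'' action near hyperplanes, suggesting you would need to show that halfspace-fixators realize enough permutations of links across hyperplanes. That is \emph{not} what Claim A.8 of \cite{Lazarovich18} requires, and it is not how the hypothesis is used. What Claim A.8 actually needs is a purely geometric statement about the poset of halfspaces: for any two intersecting halfspaces $h_1, h_2$, there is a halfspace $k \subset h_1 \cap h_2$. If one of $h_1, h_2$ contains the other this is immediate. In the genuinely transversal case, the pair of bounding hyperplanes gives a pair of commuting labels $s_1, s_2$; the hypothesis supplies an $r \notin \{ s_1^{-1}, s_2^{-1} \}$ commuting with neither, and reading off the $r$-labeled edge at any vertex inside $h_1 \cap h_2$ produces a hyperplane $\hat{k}$ disjoint from $\hat{h}_1$ and $\hat{h}_2$, hence a halfspace $k \subset h_1 \cap h_2$. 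This nested-halfspaces condition implies $\Aut(X)$ acts faithfully on $\partial X$ (Claim A.8), which in turn implies simplicity of $\Aut^{+}(X)$ (Corollary A.4); the ``independence'' and the action of halfspace-fixators are handled internally by Lazarovich's appendix and require no further input from you. So your plan to ``realize the required permutations of the link across $\hat{h}$'' is aimed at the wrong target, and would send you down an unnecessarily hard road — the correct verification is a short combinatorial argument about hyperplanes, not a dynamical richness argument.
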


We require the following two results for the proof.

\begin{proposition} \label{prop:simplicity.of.aut+}
    Suppose that for every pair $s, t \in \Sigma_0$ such that $[s,t] = 1$, there exists some $r \in \Sigma_0 \setminus \{ s^{-1}, t^{-1} \}$ that commutes with neither $s$ nor $t$. Then $\Aut^{+}(X)$ is simple.
\end{proposition}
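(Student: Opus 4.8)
The natural strategy is to invoke Lazarovich's simplicity criterion from \cite[Appendix A]{Lazarovich18}, which gives a sufficient condition for $\Aut^{+}(X)$ to be simple in terms of the action of $\Aut(X)$ on the $\CAT$ cube complex $X$. That criterion typically requires three ingredients: (i) $X$ is irreducible and essential; (ii) $\Aut(X)$ acts non-elementarily on $X$; and (iii) the action satisfies a transitivity/primitivity-type condition on halfspaces or on the combinatorial structure of $X$ that guarantees $\Aut^{+}(X)$ acts "minimally" and without a global fixed structure that could produce a proper normal subgroup. The first two are already part of our standing hypotheses in this subsection, so the work is to extract (iii) from the link-theoretic assumption, namely that for every commuting pair $s,t \in \Sigma_0$ there is some $r \in \Sigma_0 \setminus \{s^{-1}, t^{-1}\}$ commuting with neither. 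The plan is therefore: first restate Lazarovich's criterion precisely in the form we need, then verify its hypotheses one at a time, with the combinatorial condition on $\Sigma_0$ doing the work for the last one.

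First I would recall that $X$ is locally finite, irreducible, essential, and $\Aut(X)$ is non-elementary by the standing assumptions of Section~\ref{subsec:AUT+.and.simplicity}. Next I would translate the hypothesis on $\Sigma_0$ into a statement about the link $\lk(X)$ using Remark~\ref{rem:commutation.and.links}: two labels $s,t$ commute iff the corresponding vertices of $\lk(X)$ are joined by an edge, so the hypothesis says precisely that for every edge of $\lk(X)$ with endpoints $s,t$ there is a vertex $r$ (other than the "opposite" vertices $s^{-1},t^{-1}$, which under our labeling correspond to the reversed orientations) adjacent to neither endpoint. This is exactly the kind of "the link has no dominating simplex / enough independent vertices" condition that Lazarovich's hypothesis asks for. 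The key step is then to show that this link condition forces $\Aut^{+}(X)$ to act on $X$ in the way Lazarovich requires — concretely, that halfspace-fixators generate a subgroup acting non-trivially and "richly enough" on every halfspace, which one checks by building, for a given halfspace $h$, an element of $\Aut(X)$ fixing $h$ pointwise but moving points on the other side; here the generators $A_{v,\sigma}$ of Construction~\ref{con:explicit.generators.at.v} and the non-discreteness arguments of Section~\ref{sec:size.of.stabilizer.subgroups.and.Aut} give the needed supply of halfspace-fixators, and the condition on $r$ ensures these can be chosen non-trivial.

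Concretely, the key steps in order are: (1) state Lazarovich's simplicity criterion \cite[Corollary A.4]{Lazarovich18} (or the relevant statement) with its full list of hypotheses; (2) verify irreducibility, essentiality, and non-elementarity from the standing assumptions; (3) use Remark~\ref{rem:commutation.and.links} to reformulate the $\Sigma_0$-condition as a property of $\lk(X)$, and verify that this property is exactly (or implies) the remaining hypothesis of the criterion — this is where one must be careful about what "fixes a halfspace pointwise" translates to on the level of labels and label-morphisms; (4) conclude simplicity of $\Aut^{+}(X)$. The main obstacle I expect is step (3): matching our combinatorial hypothesis on $\Sigma_0$ to the precise form of Lazarovich's condition, since his criterion is phrased for general $\CAT$ cube complexes while ours is phrased in the language of the edge labeling, so the translation requires showing that the existence of the auxiliary label $r$ not commuting with $s$ or $t$ produces, for every halfspace, a non-trivial automorphism fixing it pointwise — which in turn relies on the explicit generator constructions and the characterization of stabilizer elements in Theorem~\ref{thm:characterisation.of.stabilizing.isometries.as.tree.automorphisms}. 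Everything else is either a standing hypothesis or a direct citation.
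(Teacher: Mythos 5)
You correctly identify the high-level plan — invoke Lazarovich's simplicity criterion (Corollary A.4 of \cite{Lazarovich18}) and use the standing hypotheses (irreducible, essential, non-elementary) plus the $\Sigma_0$-condition to verify it — but you misidentify what that criterion actually asks for, and this would derail the proof. Lazarovich's Corollary A.4 reduces simplicity of $\Aut^{+}(X)$ to the statement that $\Aut(X)$ acts \emph{faithfully} on the visual boundary $\partial X$, and his Claim A.8 reduces faithfulness to a purely combinatorial property of the cube complex, not of its automorphisms: for any two transversally intersecting halfspaces $h_1, h_2$, there should exist a halfspace $k \subset h_1 \cap h_2$. Your step (3) — ``showing that the existence of the auxiliary label $r$ \ldots produces, for every halfspace, a non-trivial automorphism fixing it pointwise'' — is answering a different question (non-triviality or non-discreteness of $\Aut^{+}(X)$, which is handled separately in the proof of Theorem \ref{thm:aut+.is.simple.nondiscrete.tdlc}), and none of the machinery you invoke ($A_{v,\sigma}$, Theorem \ref{thm:characterisation.of.stabilizing.isometries.as.tree.automorphisms}, the non-discreteness arguments) is needed here.

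The actual verification is short and direct once you have the right target. Given intersecting halfspaces $h_1, h_2$ with boundary hyperplanes labeled $s_1, s_2$, there are two cases. If one contains the other (say $h_1 \subsetneq h_2$), any label at a vertex in $h_1$ already gives a halfspace $k \subsetneq h_1 \cap h_2 = h_1$. If neither contains the other, the hyperplanes cross, so $[s_1,s_2]=1$; the hypothesis then provides $r \in \Sigma_0 \setminus \{s_1^{-1}, s_2^{-1}\}$ with $[r,s_1]\neq 1$ and $[r,s_2]\neq 1$, and the hyperplane through any vertex of $h_1 \cap h_2$ crossed by the edge labeled $r$ cannot cross $\hat{h}_1$ or $\hat{h}_2$, hence bounds a halfspace $k \subset h_1 \cap h_2$. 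That is the whole argument; your translation of the $\Sigma_0$-condition via Remark \ref{rem:commutation.and.links} is the right instinct, but it should be aimed at this halfspace condition rather than at building automorphisms.
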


\begin{proposition} \label{prop:generators.of.aut+}
    For all $o \in X^{(0)}$, all $v \in \mathcal{L}_o \setminus \{ \epsilon \}$, all $\sigma$ compatible with $v$, and all $A_{v, \sigma}$ with the properties from Construction \ref{con:generators.at.v}, $\Aut^{+}(X)$ contains $A_{v, \sigma}$.
\end{proposition}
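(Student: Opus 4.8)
The plan is to show that each $A_{v,\sigma}$ can be written as a product of elements that fix a halfspace pointwise, so it lies in $\Aut^+(X)$ by definition. The key geometric observation is that $A_{v,\sigma}$ fixes all vertices of Type 1 and 2 relative to $(o,v)$ and only acts nontrivially "behind" the collection of hyperplanes $\hat h_1,\dots,\hat h_n$ that are incident to an edge at $v$ and separate $o$ from $v$. Concretely, if $\hat h_i$ is one of these hyperplanes with associated halfspace $h_i$ (the one containing $v$), then since every vertex in the carrier of $\hat h_i$ lying in $h_1\cap\dots\cap h_n$ is of Type 2, the element $A_{v,\sigma}$ fixes the carrier of each $\hat h_i$ pointwise. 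In particular it fixes the opposite halfspace $h_i^c$ pointwise: a vertex in $h_i^c$ is not in $h_1\cap\dots\cap h_n$, hence is of Type 1, hence fixed. So $A_{v,\sigma}$ already fixes each of the halfspaces $h_1^c,\dots,h_n^c$ pointwise, and therefore $A_{v,\sigma}$ is itself a generator of $\Aut^+(X)$ — at least as long as $n\geq 1$, which holds because $v\neq\epsilon$.

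First I would make precise the identification between the sets $\{h_1,\dots,h_n\}$ of Definition of Type~1/2/3 and the hyperplanes crossed by an edge at $v$; this is exactly the data of $\reduc(v)$ under the edge-labeling, via Lemma~\ref{lem:basics.of.compatibility}. Then I would check the two inclusions: (a) every vertex in $h_i^c$ is of Type 1 relative to $(o,v)$ — this follows directly from the definition, since such a vertex is not contained in $h_1\cap\dots\cap h_n$; (b) consequently $A_{v,\sigma}$ fixes $h_i^c$ pointwise, since $A_{v,\sigma}$ fixes all Type 1 vertices and an automorphism fixing all vertices of a halfspace fixes the halfspace (the halfspace is the cubical convex hull of its vertices). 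Since $v\neq\epsilon$ we have $d_{\ell^1}(o,v)\geq 1$, so $\reduc$-data is nonempty and such an $h_i^c$ exists. Hence $A_{v,\sigma}$ fixes a halfspace pointwise and lies in $\Aut^+(X)$ by definition.

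The step that requires a little care is (b): one must be sure that "$A_{v,\sigma}$ fixes every Type 1 vertex" upgrades to "$A_{v,\sigma}$ fixes the halfspace $h_i^c$ pointwise as a subcomplex," i.e. that it fixes the higher-dimensional cells, not just the vertices. But this is automatic in the present setup: by Lemma~\ref{lem:stabilizer.acting.on.trees} and the surrounding discussion, an automorphism of $X$ is completely determined by its action on $X^{(0)}$, and a cubical automorphism that fixes every vertex of a convex subcomplex fixes that subcomplex pointwise (each cube is the convex hull of its vertices). The halfspace $h_i^c$ is convex and spanned by its vertex set, all of which lie in the Type 1 set, so $A_{v,\sigma}|_{h_i^c}=\id$. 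I do not anticipate any genuine obstacle here; the only thing to be careful about is making the bookkeeping between "halfspace $h_i$ containing $v$" versus "the opposite halfspace $h_i^c$" consistent with the Type~1 condition, which says a vertex fails to be in the intersection of the $h_i$'s — so it is the complementary halfspaces $h_i^c$ that get fixed, and it is these that witness membership in $\Aut^+(X)$.
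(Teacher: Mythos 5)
Your proof is correct and follows essentially the same route as the paper's: pick a hyperplane $\hat h$ separating $o$ from $v$ (possible since $v\neq\epsilon$), observe that every vertex on the $o$-side halfspace is of Type~1 relative to $(o,v)$ and hence fixed by $A_{v,\sigma}$, and conclude $A_{v,\sigma}$ is a halfspace-fixator. The paper uses any such separating hyperplane rather than singling out the $\hat h_i$ incident to $v$, and skips the carrier detour, but these are cosmetic differences; your remark that fixing all vertices of the halfspace implies fixing it pointwise as a subcomplex is a correct and harmless addition.
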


\begin{proof}[Proof of Proposition \ref{prop:simplicity.of.aut+}]
    By Corollary A.4 in \cite{Lazarovich18}, simplicity of $\Aut^{+}(X)$ follows if we can show that $\Aut(X)$ acts faithfully on $\partial X$. To show this, we use Claim A.8 in \cite{Lazarovich18} which states that $\Aut(X)$ acts faithfully on $\partial X$, if for any two intersecting halfspaces $h_1, h_2$, there exists a halfspace $k \subset h_1 \cap h_2$. (The results we use from \cite{Lazarovich18} require that $X$ is irreducible and essential and that $\Aut(X)$ is non-elementary. This is how these assumptions come into play.)
    
    Let $h_1, h_2$ be a pair of intersecting halfspaces. Such a pair gives rise to two labels $s_1, s_2 \in \Sigma_0$. If $h_1 \subset h_2$, then the label $s_1$ (at any vertex lying in $h_1$) gives rise to a halfspace $k \subsetneq h_1 \cap h_2$. Similarly, if $h_2 \subset h_1$. Now suppose $h_1 \cap h_2 \neq \emptyset$ and neither halfspace contains the other. Then $[s_1, s_2] = 1$ and the assumption made in the proposition implies the existence of some $t$ that commutes with neither $s_1$ nor $s_2$ and is not inverse to either. The label $t$ at any vertex inside $h_1 \cap h_2$ provides us a with a hyperplane $\hat{k}$ that does not intersect $\hat{h}_1$ and $\hat{h}_2$. Thus, $\hat{k}$ bounds a halfspace $k \subset h_1 \cap h_2$. It follows that $\Aut(X)$ acts faithfully on $\partial X$ and thus, $\Aut^{+}(X)$ is simple.
\end{proof}

\begin{proof}[Proof of Proposition \ref{prop:generators.of.aut+}]
    Let $o \in X^{(0)}$, $v \in \mathcal{L}_o \setminus \{ \epsilon \}$, and $\sigma$ a label-isomorphism compatible with $v$. Let $A_{v, \sigma}$ be an element of $G_o$ such that $\pre{v}{A_{v,\sigma}} = \sigma$ and $A_{v, \sigma}$ fixes all vertices that are of Type 1 or 2 relative to $(o,v)$. Let $\hat{h}$ be a hyperplane separating $o$ from $v$ and $h$ the halfspace associated to $\hat{h}$ containing $o$. (This exists, because $v \neq \epsilon$.) Then all vertices in $h$ are of Type 1 relative to $(o,v)$ and are thus fixed by $A_{v, \sigma}$. It follows that $A_{v, \sigma}$ fixes $h$ and thus lies in $\Aut^{+}(X)$.
\end{proof}

\begin{proof}[Proof of Theorem \ref{thm:aut+.is.simple.nondiscrete.tdlc}]
    It is a well-known fact that $\Aut(X)$ is totally disconnected for any cube complex. This is inherited by any subgroup, in particular by $\Aut^{+}(X)$. Simplicity follows from Proposition \ref{prop:simplicity.of.aut+}.

    We are left to show that $\Aut^{+}(X)$ is non-discrete. By Proposition \ref{prop:generators.of.aut+}, $\Aut^{+}(X)$ contains all $A_{v, \sigma}$ for any $o \in X^{(0)}$ and all $v \in \mathcal{L}_o \setminus \{ \epsilon \}$. Fix some $o$ and consider the set $\{ A_{v, \sigma} \vert v \in \mathcal{L}_o \setminus \{ \epsilon \}, \sigma \text{ compatible with $v$} \}$ for this $o$. We will show that the subgroup of $\Aut^{+}(X)$ generated by these elements is non-discrete.

    We start by showing that there are infinitely many words $v \in \mathcal{L}_o \setminus \{ \epsilon \}$ for which $\mathcal{S}_o(v) \neq \{ \id \}$. By Theorem \ref{thm:characterizing.uncountable.automorphism.groups.in.vertex.transitive.case}, this is the case if and only if there exist two elements $s, t \in \Sigma_0$ such that $t \neq s^{-1}$ and $[s,t] \neq 1$. Let $s, t \in \Sigma_0$ such that $t \neq s^{-1}$. (Since $X$ is assumed to be non-elementary, such $s, t$ exist.) If they do not commute, we are done. If they do commute, then there exists some $r \in \Sigma_0 \setminus \{ s^{-1}, t^{-1} \}$ which commutes with neither $s$ nor $t$. Both the pair $s, r$ and the pair $t, r$ now provides us with a non-commuting pair of labels that are not inverse to each other. We conclude that there are infinitely many words $v \in \mathcal{L}_o$ for which there exists a non-trivial label-isomorphism compatible with $v$.

    The rest of the proof is identical to the proof of Lemma \ref{lem:infinitely.generated.stabilizers.are.uncountable.and.nondiscrete}. We choose a sequence of vertices $v_i \in X^{(0)} \setminus \{ o \}$ such that $\mathcal{S}_o(v_i) \neq \{ \id \}$ for all $i$ and such that $d_{\ell^1}(o,v_i) \rightarrow \infty$. For every $i$, we choose $\sigma_i \in \mathcal{S}_o(v_i) \neq \{ \id \}$. By Proposition \ref{prop:generators.of.aut+} and because $v_i \neq o$, the elements $A_{v_i, \sigma_i}$ all lie in $\Aut^{+}(X)$. They are distinct from $\Id_X$ because $\sigma_i \neq \id$ and the sequence $A_{v_i, \sigma_i}$ converges to $\Id_X$ in compact-open topology. Therefore, $\Aut^{+}(X)$ is non-discrete.
\end{proof}

The groups $\Aut(X)$ and $\Aut^{+}(X)$ are of interest in particular in connection with the study of non-discrete, totally disconnected, locally compact groups. While $\Aut(X)$ is tdlc whenever $X$ is locally finite, it is not clear whether $\Aut^{+}(X)$ is locally compact in general. The following is a sufficient condition for $\Aut^{+}(X)$ to be locally compact.

\begin{proposition} \label{prop:local.compactness.of.aut+}
    Suppose there exists some $s \in \Sigma_0$ such that for all $t, r \in \Sigma_0$ such that $[s,t] = [s,r] = 1$, we have that either $t = r^{\pm 1}$, or $[t,r] = 1$. Then $\Aut^{+}(X)$ is locally compact.
\end{proposition}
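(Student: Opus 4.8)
The goal is to show that $\Aut^{+}(X)$ is locally compact under the hypothesis that there is a label $s\in\Sigma_0$ such that any two labels $t,r$ commuting with $s$ either are inverse to each other or commute. Since $\Aut(X)$ is already tdlc and $\Aut^{+}(X)$ is a (not necessarily closed) subgroup, local compactness of $\Aut^{+}(X)$ will follow if I can produce a compact open subgroup of $\Aut^{+}(X)$, or — what amounts to the same thing in a topological group — an open subgroup of $\Aut^{+}(X)$ that is contained in a compact subset of $\Aut(X)$ (equivalently, that has compact closure and the closure lies inside $\Aut^{+}(X)$). The natural candidate is a vertex stabilizer intersected with $\Aut^{+}(X)$: fix $o\in X^{(0)}$ and consider $G_o^{+}:=G_o\cap\Aut^{+}(X)$. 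Since $G_o$ is compact open in $\Aut(X)$, the subgroup $G_o^{+}$ is open in $\Aut^{+}(X)$; so the whole problem reduces to showing that $G_o^{+}$ is closed in $G_o$ (hence compact). That is the crux.

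First I would identify concretely which elements of $G_o$ lie in $\Aut^{+}(X)$. By Proposition \ref{prop:generators.of.aut+}, every $A_{v,\sigma}$ with $v\neq\epsilon$ lies in $\Aut^{+}(X)$, and by the density statement (Proposition \ref{prop:generating.dense.subgroup.of.stabiliser}, or rather the explicit Constructions \ref{con:explicit.generators.at.o} and \ref{con:explicit.generators.at.v}) every element $g\in G_o$ can be written as a (convergent infinite) product of the $A_{v,\sigma_v}$'s, one at each distance level, times a single $A_{\epsilon,\sigma_\epsilon}$ coming from the action on the edges at $o$. So $g\in\Aut^{+}(X)$ is controlled, up to elements already known to be in $\Aut^{+}(X)$, by the label-isomorphism $\sigma_\epsilon=\pre{\epsilon}{\sigma(g)}$ on $\Sigma_0$: more precisely, $g\in\Aut^{+}(X)$ if and only if $A_{\epsilon,\sigma_\epsilon}\in\Aut^{+}(X)$, i.e.\ if and only if the portrait at $o$ of $g$ lies in the finite subgroup $H:=\{\sigma:A_{\epsilon,\sigma}\in\Aut^{+}(X)\}$ of label-isomorphisms. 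Granting this, $G_o^{+}$ is the preimage under the continuous homomorphism $G_o\to\mathrm{Sym}(\Sigma_0)$, $g\mapsto\pre{\epsilon}{\sigma(g)}$, of a subset of the finite group $\mathrm{Sym}(\Sigma_0)$; since that map is continuous and the target discrete, $G_o^{+}$ is open and closed in $G_o$, hence compact. Then $G_o^{+}$ is a compact open subgroup of $\Aut^{+}(X)$, and $\Aut^{+}(X)$ is locally compact.

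The place where the hypothesis on $s$ actually enters is in establishing the claim that $g\in\Aut^{+}(X)$ is detected purely by $\pre{\epsilon}{\sigma(g)}$ — equivalently, that $\Aut^{+}(X)\cap G_o$ is a normal-form-closed set and, crucially, that the "tail" generators $A_{v,\sigma}$ for $v\neq\epsilon$ already generate a subgroup whose image modulo halfspace-fixators is trivial, while there can still be elements $A_{\epsilon,\sigma}$ \emph{not} in $\Aut^{+}(X)$. The hypothesis is what guarantees that $\Aut^{+}(X)$ does not already contain all of $G_o$ in some degenerate way; more to the point, it is what one uses to show that the subgroup of $G_o$ generated by the $A_{v,\sigma}$ with $v\neq\epsilon$ together with $\Gamma\cap G_o$ has finite index in $\Aut^{+}(X)\cap G_o$ with the index controlled by a finite group of label-isomorphisms. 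Concretely: the hypothesis forces the link of $X$ to have a vertex $s$ whose neighbourhood induces a complete graph (modulo the involution), and one shows that a halfspace-fixator can always be "pushed" so as not to affect the $s$-colored edge at $o$ unless it acts trivially there; iterating, one gets that the restriction of $\Aut^{+}(X)\cap G_o$ to the edges at $o$ lands in a fixed proper subgroup of label-isomorphisms. This is the main obstacle: making precise the interaction between the halfspace-fixator generators of $\Aut^{+}(X)$ and the portrait at $o$, and verifying that the hypothesis on $s$ is exactly what makes the "detection by $\pre{\epsilon}{\sigma(g)}$" clean. Once that combinatorial lemma is in hand, the topological conclusion — $G_o^{+}$ compact open, hence $\Aut^{+}(X)$ locally compact — is immediate from continuity of $g\mapsto\pre{\epsilon}{\sigma(g)}$ and discreteness of $\mathrm{Sym}(\Sigma_0)$.
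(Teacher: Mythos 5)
Your topological framing is sound but leaves the actual mathematical content unproven. The claim you call "granting this" — that $g\in G_o$ lies in $\Aut^{+}(X)$ if and only if $\pre{\epsilon}{\sigma(g)}$ belongs to a fixed finite set $H$ of label-isomorphisms — is not a reduction of the problem; it \emph{is} the problem. Unwrapping it, your claim is equivalent to the assertion that every $g\in G_o$ fixing the $1$-ball around $o$ pointwise lies in $\Aut^{+}(X)$. You cannot get this from the density statement, because density only tells you $g$ is a limit of products of the $A_{v,\sigma}$'s, and $\Aut^{+}(X)$ is not known to be closed (indeed, the paper is careful never to assume this). So the assertion must be proved directly, and that is precisely where the hypothesis on $s$ has to do real work.

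Your sketch of how to prove it does not describe a working argument. You suggest showing that a halfspace-fixator can be "pushed" off the $s$-colored edge at $o$ and that the restriction of $\Aut^{+}(X)\cap G_o$ to edges at $o$ lands in a proper subgroup of label-isomorphisms; this reverses the logic and does not connect to the hypothesis. What the paper actually proves, and what you need, is the following finite-approximation statement: set $C(s):=\{t\in\Sigma_0\mid [s,t]=1\}$ and $K:=\{o,s\}\cup C(s)\subset X^{(0)}$ (a subset of the $1$-ball); then every $g\in\mathrm{Fix}(K)$ fixes the full carrier of the hyperplane $\hat h(s)$. The hypothesis on $s$ enters exactly here: it guarantees that $C(s)\cup\{s\}$ is stable under the maps $\pre{v}{\sigma(g)}$ along $C(s)$-paths (via $par$, $inv$, bijectivity, and the fact that any two labels commuting with $s$ either commute or are inverse), so fixedness at $o$ propagates along the entire carrier of $\hat h(s)$. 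Once $g$ fixes that carrier, it visibly factors as a product of two halfspace-fixators and so lies in $\Aut^{+}(X)$. Hence $\mathrm{Fix}(K)$ is a compact open subgroup of $\Aut(X)$ contained in $\Aut^{+}(X)$, which already gives local compactness — and also makes your clopenness of $G_o^{+}$ follow by the level-map argument. But without the propagation lemma, your argument is circular: the gap is the entirety of the proof.
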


\begin{remark}
    This proposition does not require $X$ to be irreducible or essential and we do not require $\Aut(X)$ to be non-elementary.
\end{remark}

\begin{proof}
    In order to show that $\Aut^{+}(X)$ is locally compact, we have to find a compact neighbourhood of $\Id_X \in \Aut^{+}(X)$. Given any subset $K \subset X^{(0)}$, we denote
    \[ \mathrm{Fix}(K) := \{ g \in \Aut(X) \vert \forall v \in K : g(v) = v \}. \]
    If $K$ is a finite set, then $\mathrm{Fix}(K)$ is an open (and hence closed) subgroup of $\Aut(X)$. Furthermore, since $X$ is locally finite, the Arzela-Ascoli theorem implies that $K$ is compact in $\Aut(X)$. We will find a finite set $K$ such that $\mathrm{Fix}(K) \subset \Aut^{+}(X)$, providing a compact neighbourhood of $\Id_X$ in $\Aut^{+}(X)$.

    Let $s \in \Sigma_0$ such that for all $t, r \in \Sigma_0$ that satisfy $[s,t] = [s,r] = 1$, we have either $t = r^{\pm 1}$ or $[t,r] = 1$. We define
    \[ C(s) := \{ t \in \Sigma_0 \vert [s,t] = 1 \}. \]
    We fix a basepoint $o$ and, identifying elements of $\mathcal{L}_o^{cube}$ with their endpoints, define the set of vertices
    \[ K := \{ o, s \} \cup C(s). \]
    Let $\hat{h}(s)$ denote the hyperplane crossed by the edge-path $s \in \mathcal{L}_o^{cube}$.

    \begin{claim*}
        If $g \in \mathrm{Fix}(K)$, then $g$ fixes all edges crossing the hyperplane $\hat{h}(s)$ and thus the carrier of $\hat{h}(s)$.
    \end{claim*}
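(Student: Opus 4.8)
The plan is to prove more than stated, namely that every $g\in\mathrm{Fix}(K)$ fixes the whole carrier $N(\widehat h(s))$ pointwise; since $N(\widehat h(s))$ contains every edge dual to $\widehat h(s)$, this gives the claim. Given $g\in\mathrm{Fix}(K)$, I would first record that, as $o,os\in K$, $g$ fixes both endpoints of the edge $e_0$ at $o$ labelled $s$, hence $e_0$ pointwise, and similarly (since the endpoint $ot$ of the $t$-edge lies in $K$ for all $t\in C(s)$) it fixes every $C(s)$-edge at $o$ pointwise. Because $g$ fixes the dual edge $e_0$, it maps $\widehat h(s)$ to itself, so it preserves $N(\widehat h(s))\cong\widehat h(s)\times[0,1]$ and, not exchanging the endpoints of $e_0$, preserves its two sides; writing $o=(x_0,0)$, the restriction of $g$ is $g'\times\mathrm{id}_{[0,1]}$ for some $g'\in\Aut(\widehat h(s))$ fixing $x_0$, and it suffices to prove $g'=\mathrm{id}$. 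Moreover the squares of $X$ through $e_0$ that are crossed by $\widehat h(s)$ correspond bijectively both to the edges of $\widehat h(s)$ at $x_0$ and (being spanned by $e_0$ and an edge at $o$ commuting with $s$, using (A2)) to the elements of $C(s)$; as $g$ fixes $o$ together with the endpoints of all these edges, it fixes each such square, so $g'$ fixes $x_0$, every neighbour of $x_0$ in $\widehat h(s)$ and every edge at $x_0$. Since links of CAT(0) cube complexes are (flag) simplicial complexes, $g'$ then fixes $\mathrm{lk}_{\widehat h(s)}(x_0)$, hence the closed star $\mathrm{st}_{\widehat h(s)}(x_0)$, pointwise.

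The key step will be to use the hypothesis on $s$ to identify all the vertex links of $\widehat h(s)$. The set $C(s)=\{t\in\Sigma_0 : [s,t]=1\}$ is closed under $t\mapsto t^{-1}$ (Example \ref{rem:commutation.extends.to.inverses}), contains no commuting pair $t,t^{-1}$ (Example \ref{exam:trivial.examples}), and by hypothesis any two of its non-inverse elements commute; so $C(s)=\{t_1^{\pm1},\dots,t_k^{\pm1}\}$ with the $t_i$ pairwise commuting (if $C(s)=\emptyset$ the carrier is the single edge $e_0$ and there is nothing to prove). For any vertex $\bar v$ of $\widehat h(s)$, choosing an endpoint $v$ of the corresponding dual edge, condition (A3) shows that the edges of $\widehat h(s)$ at $\bar v$ are in label-preserving bijection with $C(s)$ (for each $t\in C(s)$ the $t$-edge at $v$ spans a square with the dual edge), and two of them with labels $t,t'$ span a square in $\widehat h(s)$ if and only if $[t,t']=1$. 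Hence $\mathrm{lk}_{\widehat h(s)}(\bar v)$ is the flag complex on $C(s)$ for this adjacency, which by the description of $C(s)$ is exactly the boundary complex $\partial\diamond^k$ of the $k$-dimensional cross-polytope; equivalently, $\widehat h(s)$ is the standard cubulation of $\mathbb R^k$.

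The final step is a rigidity argument: an automorphism $g'$ of a connected CAT(0) cube complex all of whose vertex links are cross-polytope boundaries that fixes $\mathrm{st}(x_0)$ pointwise must be the identity. Indeed, for a neighbour $q$ of $x_0$, $g'$ fixes $q$, the edge $[q,x_0]$, and every square through it (these lie in $\mathrm{st}(x_0)$), hence every edge at $q$ spanning a square with $[q,x_0]$; in $\mathrm{lk}(q)\cong\partial\diamond^k$ the vertex $[q,x_0]$ is non-adjacent only to its antipode, so $g'$ fixes at least $2k-1$ of the $2k$ vertices of $\mathrm{lk}(q)$ and therefore all of them, so it fixes $\mathrm{st}(q)$; iterating and using connectedness of $\widehat h(s)$ gives $g'=\mathrm{id}$. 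Then $g$ fixes $N(\widehat h(s))$ pointwise, in particular every edge crossing $\widehat h(s)$ and the whole carrier. I expect the main obstacle to be the middle paragraph — extracting from the hypothesis on $s$, via (A3) and the elementary commutation facts, that each link of $\widehat h(s)$ is a cross-polytope boundary (so that $\widehat h(s)$ is a flat cubical $\mathbb R^k$). A purely combinatorial attack through the portrait calculus instead stalls exactly at propagating the value $\pre{v}{\sigma(g)}(t_0)$ across an edge labelled $t_0$ — a label that does not commute with itself, so neither $par$ nor $inv$ applies — and this is precisely the configuration that the hypothesis on $s$, and the cross-polytope structure it forces, is designed to handle.
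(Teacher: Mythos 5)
Your proof is correct and takes a genuinely different route from the paper's. The paper stays entirely within the portrait calculus: $\pre{\epsilon}{\sigma(g)}$ fixes $C(s)\cup\{s\}$ because $g\in\mathrm{Fix}(K)$, and it propagates this across each $t\in C(s)$ by a three-case argument — property $par$ for $r$ commuting with $t$, property $inv$ for $r=t^{-1}$, and a counting/injectivity argument for $r=t$ itself: $\pre{t}{\sigma(g)}(t)$ commutes with $\pre{t}{\sigma(g)}(s)=s$, hence lies in $C(s)$, and since $\pre{t}{\sigma(g)}$ is injective and already fixes $C(s)\setminus\{t\}$, it must fix $t$ too. Induction over words in $C(s)^*$ then reaches the starting vertex of every dual edge of $\hat{h}(s)$, giving the claim. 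Your version instead decomposes $g|_{N(\hat{h}(s))}$ as $g'\times\mathrm{id}$, shows $g'$ fixes $\mathrm{st}_{\hat{h}(s)}(x_0)$ pointwise, proves via (A3) and the hypothesis on $s$ that every vertex link of $\hat{h}(s)$ is the cross-polytope boundary $\partial\diamond^k$ (so $\hat{h}(s)$ is a cubical flat), and closes with a rigidity lemma for such complexes: an automorphism fixing one closed star is the identity. Both arguments are correct; yours buys a clean geometric picture of the hyperplane at the cost of the extra rigidity lemma, while the paper's remains local and combinatorial in the spirit of the rest of the paper. One small correction to your closing remark: the portrait approach does not in fact stall at propagating $\pre{v}{\sigma(g)}(t_0)$ across a $t_0$-labelled edge — the paper handles exactly that configuration via the injectivity argument sketched above, which is precisely where the hypothesis on $s$ enters the combinatorial proof.
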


    \begin{proof}[Proof of Claim]
        Let $g \in \mathrm{Fix}(K)$. We can write the portrait of $g$ with respect to the basepoint $o$. Since $g$ fixes $K$, we obtain that
    \[ \forall t \in C(s) \cup \{ s \} : \quad \pre{\epsilon}{\sigma(g)}(t) = t. \]
    We first show that, for all $t \in C(s)$ and all $r \in C(s) \cup \{ s \}$, we have
    \[ \pre{t}{\sigma(g)}(r) = r. \]
    If $t \neq r^{\pm 1}$, this follows immediately from $par$. In particular, $\pre{t}{\sigma(g)}(s) = s$. If $t = r^{-1}$, this follows from $inv$. Now suppose $t = r$. Since $\pre{t}{\sigma(g)}$ is a label-isomorphism, we know that $[\pre{t}{\sigma(g)}(t), s] = [\pre{t}{\sigma(g)}(t), \pre{t}{\sigma(g)}(s)] = [t,s] = 1$. Therefore, $\pre{t}{\sigma(g)}(t) \in C(s)$. Since $\pre{t}{\sigma(g)}$ is bijective and we already know that it fixes all elements of $C(s)$ except for $t$, it also has to fix $t$. Thus, $\pre{t}{\sigma(g)}$ fixes $C(s) \cup \{ s \}$ pointwise.

    It follows by induction that, for every word $v \in C(s)^*$, we have that $\pre{v}{\sigma(g)}$ fixes $C(s) \cup \{ s \}$ pointwise. Now let $e$ be an oriented edge crossing $\hat{h}(s)$ such that $\ell(e) = s$. There exists a word $v \in C(s)^*$ representing an edge-path from $o$ to the starting vertex of $e$. Denoting $v = s_1 \dots s_N$, we conclude that
    \[ g(v) = \pre{\epsilon}{\sigma(g)}(s_1) \dots \pre{s_1 \dots s_i}{\sigma(g)}(s_{i+1}) \dots \pre{ s_1 \dots s_{N-1}}{\sigma(g)}(s_N) = s_1 \dots s_N = v, \]
    \[ g(vs) = g(v) \pre{v}{\sigma(g)}(s) = v s. \]
    Therefore, $g$ fixes the edge $e$. We conclude that $g$ fixes all edges that cross $\hat{h}(s)$ and thus the entire carrier of $\hat{h}(s)$.
    \end{proof}

    Let $g \in \mathrm{Fix}(K)$. Since $g$ now fixes the carrier of a hyperplane, $g$ decomposes into a product $g = g_0 \circ g_1$, where $g_0$ fixes one of the halfspaces bounded by $\hat{h}(s)$, while $g_1$ fixes the other. We conclude that $g \in \Aut^{+}(X)$. Therefore, $\mathrm{Fix}(K) \cap \Aut^{+}(X) = \mathrm{Fix}(K)$ is a compact neighbourhood of $\Id_X$ in $\Aut^{+}(X)$. We conclude that $\Aut^{+}(X)$ is locally compact.
\end{proof}

Combining Theorem \ref{thm:aut+.is.simple.nondiscrete.tdlc} and Proposition \ref{prop:local.compactness.of.aut+} with the assumption that $\Gamma$ acts vertex-transitively on $X$, we can translate the conditions of our two results into the language of right-angled Artin groups (cf.\,Remark \ref{rem:vertex.transitive.means.RAAG}). This yields the following Corollary.

\begin{corollary}
    Let $L$ be a finite graph that is not bi-partite and has at least two vertices, $\Gamma$ its induced right-angled Artin group, and $X$ the corresponding Salvetti-complex.
    \begin{itemize}
        \item If for every edge $e$ in $L$ there exists a vertex that is not adjacent to either endpoint of $e$, then $\Aut^{+}(X)$ is simple and non-discrete.
        
        \item If there exists a vertex $v$ in $L$, such that all vertices adjacent to $v$ are adjacent to each other, then $\Aut^{+}(X)$ is locally compact.

        \item In particular, if both conditions are satisfied, then $\Aut^{+}(X)$ is a simple, non-discrete tdlc group.
    \end{itemize}
\end{corollary}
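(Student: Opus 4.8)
## Proof proposal for the Corollary

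The plan is to derive the three bullets directly from Theorem~\ref{thm:aut+.is.simple.nondiscrete.tdlc}, Proposition~\ref{prop:local.compactness.of.aut+}, and Remark~\ref{rem:vertex.transitive.means.RAAG}, by translating each hypothesis about the defining graph $L$ into the corresponding hypothesis about the edge-label set $\Sigma_0$. Recall from Remark~\ref{rem:vertex.transitive.means.RAAG} that if $\Gamma$ acts vertex-transitively on $X$ then $\Gamma$ is the right-angled Artin group of a finite graph, $X$ is the associated Salvetti complex, and $\Sigma_0$ consists of the standard generators together with their inverses; two labels $s,t \in \Sigma_0$ commute (in the sense of Definition~\ref{def:commutinglabels}) precisely when the underlying vertices of $L$ are adjacent, or equal, or one is the inverse of the other in which case they do \emph{not} commute by Example~\ref{exam:trivial.examples}. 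Conversely, a finite graph $L$ together with its RAAG and Salvetti complex gives exactly such a vertex-transitive setup, so the Corollary is a restatement of the two theorems after unwinding the dictionary between $L$ and $(\Sigma_0, [\cdot,\cdot])$.

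First I would address irreducibility, essentiality, and non-elementariness of $X$, since Theorem~\ref{thm:aut+.is.simple.nondiscrete.tdlc} assumes these. The Salvetti complex of a RAAG is always essential (every hyperplane is dual to a standard generator, which has infinite order, so both halfspaces are unbounded); it is irreducible precisely when $L$ is not a nontrivial join, and $\Aut(X)$ is non-elementary precisely when $X$ is not a single line, i.e.\ $L$ is not a single vertex. The hypothesis ``for every edge $e$ in $L$ there is a vertex adjacent to neither endpoint of $e$'' already forces $L$ to have no universal vertex and to be non-complete once it has an edge, and combined with ``at least two vertices'' and ``not bipartite'' one checks $L$ cannot be a join of two nonempty subgraphs covering it and cannot be a point; I would spell out this short graph-theoretic check. (The ``not bipartite'' hypothesis is exactly what lets us invoke Theorem~\ref{thm:characterizing.uncountable.automorphism.groups.in.vertex.transitive.case} to know $\Aut(X)$ is non-discrete, equivalently $\Gamma$ is not free abelian, equivalently there is a non-commuting pair $s,t \in \Sigma_0$ with $t \neq s^{-1}$; but in fact Theorem~\ref{thm:aut+.is.simple.nondiscrete.tdlc} already gives non-discreteness under its own hypothesis, so this is a consistency remark rather than a needed input.)

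The core translation is then routine. For the first bullet: the condition ``for every edge $e = \{a,b\}$ of $L$ there is a vertex $c$ adjacent to neither $a$ nor $b$'' says precisely that for every commuting pair $s,t \in \Sigma_0$ (coming from such an edge) there is $r \in \Sigma_0$, with $r \notin \{s^{-1}, t^{-1}\}$ since $c$ is a genuine third vertex, such that $[s,r]\neq 1$ and $[t,r]\neq 1$; this is exactly the hypothesis of Theorem~\ref{thm:aut+.is.simple.nondiscrete.tdlc}, whose conclusion is that $\Aut^{+}(X)$ is simple, non-discrete, and totally disconnected. For the second bullet: ``there is a vertex $v$ all of whose neighbours are pairwise adjacent'' says there is $s \in \Sigma_0$ such that whenever $[s,t] = [s,r] = 1$ with $t \neq r^{\pm 1}$ we have $[t,r] = 1$ --- the hypothesis of Proposition~\ref{prop:local.compactness.of.aut+}, whose conclusion is local compactness. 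The third bullet is the conjunction, yielding a simple, non-discrete, totally disconnected, locally compact group.

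The main obstacle, such as it is, is purely bookkeeping: making sure the hypotheses on $L$ really do imply the ambient structural assumptions (irreducible, essential, non-elementary) needed by Theorem~\ref{thm:aut+.is.simple.nondiscrete.tdlc}, and checking the edge-cases in the dictionary (e.g.\ that the third vertex $c$ in the first bullet is never one of the inverse labels $s^{-1}, t^{-1}$, which holds because $c$ is a vertex of $L$ distinct from $a$ and $b$, hence corresponds to a label not equal to $s^{\pm1}$ or $t^{\pm1}$). There is no new mathematics; the proof is a careful application of the two previously proved results through Remark~\ref{rem:vertex.transitive.means.RAAG}.
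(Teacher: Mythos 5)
Your proposal is correct and takes the same route the paper does: the paper does not write out a separate proof of this Corollary, it simply notes that the statement is obtained by translating the hypotheses of Theorem~\ref{thm:aut+.is.simple.nondiscrete.tdlc} and Proposition~\ref{prop:local.compactness.of.aut+} into conditions on the defining graph $L$ via Remark~\ref{rem:vertex.transitive.means.RAAG}. Your additional care in checking the standing assumptions of Section~\ref{sec:AUT+} (essentiality of the Salvetti complex is automatic; irreducibility follows because the first bullet's hypothesis rules out $L$ being a nontrivial join; non-elementariness follows from $L$ having at least two vertices so $X$ is not a line) fills in exactly the ``short graph-theoretic check'' the paper leaves implicit, and your observation that the third vertex $c$ of $L$ automatically yields a label $r \notin \{s^{-1},t^{-1}\}$ is the right way to match the hypotheses precisely.
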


Note that the condition for local compactness is trivially satisfied if $L$ contains a vertex of degree zero or one. The assumptions that $L$ is not bi-partite and contains at least two vertices are required to make $X$ irreducile and $\Aut(X)$ non-elementary.




\subsection{Index of $\overline{ \Aut^{+}(X) }$ in $\Aut(X)$} \label{subsec:index.of.Aut+}

Let $X$ be a locally finite, irreducible, essential $\CAT$ cube complex such that $\Aut(X)$ is non-elementary, $\Gamma < \Aut(X)$ a subgroup whose action on $X$ is vertex-transitive, and $\ell : \vec{\mathcal{E}}(X) \rightarrow \Sigma_0$ a $\Gamma$-invariant admissible edge labeling. A question that arises from the results of the previous subsection is whether $\Aut^{+}(X)$ has finite index in $\Aut(X)$. It has been shown in \cite{Lazarovich18} that this is sometimes the case. To discuss this question, we need the following definition.

\begin{definition} \label{def:flexible.vertex.transitivity}
    Let $L$ be a graph ($L$ for link). We say $L$ is flexibly vertex-transitive, if for all vertices $v, w$ in $L$, there exists a vertex $u$ and a graph-automorphism $\varphi \in \Aut(L)$ such that $\varphi$ fixes $\st(u)$ and $\varphi(v) = w$.
\end{definition}

\begin{remark}
    The notion of flexible vertex-transitivity is a significant strengthening of the property used in \cite{BerlaiFerov23} to characterise non-discreteness of the automorphism group of the Cayley-graph of Coxeter groups. They show that, if a Coxeter group is defined by a finite graph $L$, then the automorphism group of its Cayley-graph is non-discrete if and only if there exists a non-trivial automorphism on $L$ that fixes the star of some vertex in $L$ pointwise.
\end{remark}

Note that, if $\lk(X)$ is flexibly vertex-transitive, then the condition of Theorem \ref{thm:aut+.is.simple.nondiscrete.tdlc} is satisfied.

\begin{notation}
    We denote the $1$-skeleton of the link of $X$ (which is the same at every vertex by vertex-transitivity) by $L(X)$.
\end{notation}

\begin{theorem} \label{thm:finite.index.of.aut+}
    Suppose $L(X)$ is flexibly vertex-transitive. Then $\overline{\Aut^{+}(X)}$ has finite index in $\Aut(X)$.
\end{theorem}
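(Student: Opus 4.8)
The plan is to produce, for every $g \in \Aut(X)$, an element $g' \in \overline{\Aut^{+}(X)}$ such that $g'g$ stabilizes a vertex and acts on the link of that vertex by a \emph{bounded} permutation — more precisely, so that $g'g$ lies in a fixed compact subgroup of $\Aut(X)$. Since $\overline{\Aut^{+}(X)}$ is a closed normal subgroup of $\Aut(X)$ and $\Aut(X)$ is tdlc, it suffices to show that the quotient $\Aut(X)/\overline{\Aut^{+}(X)}$ is covered by finitely many cosets; equivalently, that a fixed compact open subgroup $G_o = \Stab_{\Aut(X)}(o)$ surjects onto that quotient and that $G_o \cap \overline{\Aut^{+}(X)}$ has finite index in $G_o$. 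The first reduction is easy: $\Gamma \subset \Aut(X)$ acts vertex-transitively and, being an action by translations (Corollary \ref{cor:label.preserving.actions.are.by.translations}), every translation $L_v$ with $v \neq \epsilon$ fixes the halfspace of a hyperplane separating $o$ from $v$, hence lies in $\Aut^{+}(X)$; so $\Gamma \subset \Aut^{+}(X)$ and $\Aut(X) = \Gamma \cdot G_o = \Aut^{+}(X) \cdot G_o$. Thus $\Aut(X)/\overline{\Aut^{+}(X)}$ is a quotient of $G_o/(G_o \cap \overline{\Aut^{+}(X)})$, and it remains to bound this index.

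To bound $[G_o : G_o \cap \overline{\Aut^{+}(X)}]$, I would use Proposition \ref{prop:generators.of.aut+}: every generator $A_{v,\sigma}$ with $v \in \mathcal{L}_o \setminus \{\epsilon\}$ already lies in $\Aut^{+}(X)$. By Proposition \ref{prop:generating.dense.subgroup.of.stabiliser} (whose hypothesis holds in the vertex-transitive case, via Constructions \ref{con:explicit.generators.at.o} and \ref{con:explicit.generators.at.v}), the family $\{A_{v,\sigma}\}$ topologically generates $G_o$, so $\overline{\Aut^{+}(X)} \cap G_o$ contains the closed subgroup generated by all $A_{v,\sigma}$ with $v \neq \epsilon$. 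Consequently $G_o/(G_o \cap \overline{\Aut^{+}(X)})$ is a quotient of the group of "germs at $o$," i.e. of the image of $G_o$ in $\mathrm{Sym}(\lk(o))$ under $g \mapsto \pre{\epsilon}{\sigma(g)}$; this image is a subgroup of the finite group of label-isomorphisms of $\Sigma_0$, hence finite. This already gives finite index — but only \emph{if} we know that the $A_{o,\sigma}$ (the remaining generators, at the root) together with the $A_{v,\sigma}$, $v\neq\epsilon$, generate a dense subgroup whose image modulo $\overline{\Aut^+(X)}$ is this finite germ group. The flexible vertex-transitivity hypothesis is exactly what is needed to handle the root: for each generator $A_{o,\sigma}$ we must exhibit it, modulo $\overline{\Aut^{+}(X)}$, as something controlled. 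Here is where flexibility enters: given a label-isomorphism $\sigma$ coming from a graph automorphism $\varphi$ of $L(X)$ that fixes $\st(u)$ pointwise for some vertex $u$ of $L(X)$, the automorphism $A_{o,\sigma}$ differs from an element fixing a halfspace (the one "behind the edge $u$" at $o$) by an element of $\Aut^{+}(X)$, so its class modulo $\overline{\Aut^{+}(X)}$ is trivial; flexible vertex-transitivity guarantees these $\varphi$'s move any vertex of $L(X)$ to any other, hence the classes of the $A_{o,\sigma}$ together with the (trivial) classes of the $A_{v,\sigma}$, $v\neq\epsilon$, generate a subgroup of $\Aut(X)/\overline{\Aut^+(X)}$ of size at most $|\mathrm{Stab}_{\Aut(L(X))}(\text{a vertex and its star})|$, which is finite. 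Thus $[\Aut(X) : \overline{\Aut^{+}(X)}] < \infty$.

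The technical heart — and the step I expect to be the main obstacle — is the precise statement "$A_{o,\sigma}$ with $\sigma$ induced by a star-fixing graph automorphism $\varphi$ of $L(X)$ lies in $\overline{\Aut^{+}(X)}$." The subtlety is that $A_{o,\sigma}$ fixes $o$ but need not fix any halfspace, so one must write it as a (possibly infinite) product/limit of halfspace-fixators. The idea is: pick the vertex $u$ of $L(X) = \lk(o)$ with $\varphi|_{\st(u)} = \id$; let $\hat h$ be the hyperplane dual to the edge at $o$ in direction $u$, and let $h$ be the halfspace containing $o$. Since $\varphi$ fixes $\st(u)$, the portrait of $A_{o,\sigma}$ can be chosen (using the freedom in Constructions \ref{con:explicit.generators.at.o}–\ref{con:explicit.generators.at.v}, or by a fresh construction mimicking them) so that $A_{o,\sigma}$ fixes the carrier of $\hat h$ pointwise — the argument is the inductive "commutation propagates along $C(s)$" computation of the Claim in the proof of Proposition \ref{prop:local.compactness.of.aut+}, applied with $s$ the label of $u$. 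Then $A_{o,\sigma}$ splits as a product of two automorphisms each fixing one of the halfspaces bounded by $\hat h$, so $A_{o,\sigma} \in \Aut^{+}(X)$ outright. Carrying this out requires (i) checking that a star-fixing $\varphi$ really does let one propagate the triviality of $\pre{v}{\sigma}$ across all of $C(s)$ and inductively to the whole carrier — this is where flexible vertex-transitivity is genuinely used, not just its weaker consequence — and (ii) being careful that the $A_{o,\sigma}$'s whose germs are \emph{not} of this star-fixing form are finite in number, which is automatic since $\Aut(L(X))$ is finite (as $L(X)$ is a finite graph). Once (i) is nailed down, the index bound $[\Aut(X):\overline{\Aut^{+}(X)}] \le |\Aut(L(X))|$ follows and the theorem is proved.
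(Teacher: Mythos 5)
Your first reduction is incorrect, and this is a genuine gap. You claim that a nontrivial translation $L_v$ ``fixes the halfspace of a hyperplane separating $o$ from $v$, hence lies in $\Aut^{+}(X)$.'' This is false: by Lemma \ref{lem:label.preserving.actions.are.free}, $\Gamma$ acts freely on $X^{(0)}$, so a nontrivial translation fixes no vertex and a fortiori cannot fix any halfspace pointwise. (You appear to be conflating ``preserves $h$ as a set'' with ``fixes $h$ pointwise''; only the latter puts an element in the generating set of $\Aut^{+}(X)$.) Consequently $\Gamma \not\subset \Aut^{+}(X)$, and the identity $\Aut(X) = \Aut^{+}(X)\cdot G_o$ on which your whole structure rests does not follow. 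The failure is not cosmetic: your argument for $\Gamma\subset\Aut^{+}(X)$ makes no use of flexible vertex-transitivity, so if it were correct it would prove $\overline{\Aut^{+}(X)}$ cocompact for \emph{every} vertex-transitive $X$ in the class under consideration, contradicting Theorem \ref{thm:non.cocompact.AUT+} (the Croke--Kleiner example).

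The paper's route diverges exactly here. Your computation that $G_o/(G_o\cap\overline{\Aut^{+}(X)})$ is a quotient of the finite germ group at $o$ is correct and agrees with the paper's Step 1, and it does not need flexibility. But to pass from finite index in a vertex stabilizer to finite index in $\Aut(X)$, the paper does \emph{not} decompose $\Aut(X)$ as $\Aut^{+}(X)\cdot G_o$; it instead invokes Lemma 5.1 of \cite{Lazarovich18}, whose other hypothesis is that $X^{(0)}/\overline{\Aut^{+}(X)}$ be finite. That orbit count is where flexible vertex-transitivity genuinely enters: for $v,w\in X^{(0)}$ at distance $2$ joined by an edge-path spelling $st$, one chooses a graph automorphism $\varphi$ of $L(X)$ with $\varphi(s^{-1})=t$ fixing $\st(r)$ for some label $r$; the corresponding $A_{r^{-1},\sigma}$ (based at a suitable vertex) lies in $\Aut^{+}(X)$ by Proposition \ref{prop:generators.of.aut+} and moves $v$ to $w$, so $\Aut^{+}(X)$ has at most two vertex-orbits. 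Your alternative use of flexibility --- showing $A_{o,\sigma}\in\Aut^{+}(X)$ for star-fixing $\sigma$ by running the Claim in the proof of Proposition \ref{prop:local.compactness.of.aut+} --- would in any case require that proposition's stronger hypothesis (pairwise commutation among labels in $C(s)$), which is neither assumed here nor implied by flexible vertex-transitivity; and even if it held, it would not repair the missing coset decomposition.
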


Before we prove Theorem \ref{thm:finite.index.of.aut+}, we recall a result from \cite{Lazarovich18}

\begin{lemma}[Lemma 5.1 of \cite{Lazarovich18}]
    Let $G < \Aut(X)$ act transitively on a set $S$ and let $H \triangleleft G$. If $\faktor{S}{H}$ is finite and there is some $o \in S$ such that $\Stab_H(o)$ has finite index in $\Stab_G(o)$, then $H$ has finite index in $G$.
\end{lemma}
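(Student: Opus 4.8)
The plan is to prove this as a purely group-theoretic statement; the two finiteness hypotheses are exactly what is needed to make the standard tower-of-indices identity do all the work. Write $G_o := \Stab_G(o)$ and $H_o := \Stab_H(o)$, and note that $H_o = H \cap G_o$ because an element of $H$ fixes $o$ if and only if it fixes $o$ as an element of $G$. Since $H \triangleleft G$, the product $HG_o = G_oH$ is a subgroup of $G$, and it contains $H$; hence the index identity
\[ [G:H] \;=\; [G : HG_o]\cdot[HG_o : H] \]
is valid, and it reduces the claim to proving that each factor on the right is finite.

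For the factor $[HG_o : H]$ I would invoke the second isomorphism theorem: the inclusion $G_o \hookrightarrow HG_o$ induces a bijection between the coset space $G_o/(H\cap G_o)$ and $HG_o/H$, so $[HG_o:H] = [G_o : H\cap G_o] = [\Stab_G(o):\Stab_H(o)]$, which is finite by hypothesis. (One should check this bijection is between coset spaces on the correct side, but this is routine.)

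For the factor $[G:HG_o]$ I would track the $H$-orbits on $S$. Since $H$ is normal, the formula $g\cdot(Hs) := H(gs)$ gives a well-defined action of $G$ on the set $\faktor{S}{H}$ of $H$-orbits, and this action is transitive because the $G$-action on $S$ is. The stabilizer in $G$ of the orbit $Ho$ is
\[ \{\, g \in G : H(go) = Ho \,\} \;=\; \{\, g \in G : go \in Ho \,\} \;=\; HG_o, \]
so orbit–stabilizer gives a $G$-equivariant bijection $\faktor{S}{H} \cong G/HG_o$. As $\faktor{S}{H}$ is finite by hypothesis, $[G:HG_o]$ is finite.

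Plugging both conclusions into the tower identity yields $[G:H] < \infty$, which is the assertion. There is no real obstacle here: the only place demanding a moment of care is making sure that $HG_o$ is genuinely a subgroup (using normality) and that the second-isomorphism-theorem bijection is set up on the correct side; everything else is formal.
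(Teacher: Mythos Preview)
Your proof is correct. The paper does not give its own proof of this lemma; it simply cites it as Lemma~5.1 of \cite{Lazarovich18} and uses it as a black box in the proof of Theorem~\ref{thm:finite.index.of.aut+}. Your argument is the standard one: using normality to form the subgroup $HG_o$, computing $[HG_o:H]$ via the second isomorphism theorem, and identifying $[G:HG_o]$ with $|S/H|$ via orbit--stabilizer for the induced $G$-action on $H$-orbits. All steps are sound, including the verification that the stabilizer of $Ho$ equals $HG_o$.
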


\begin{proof}[Proof of Theorem \ref{thm:finite.index.of.aut+}]
    We intend to apply Lemma 5.1 of \cite{Lazarovich18} with $G = \Aut(X)$, $H = \overline{\Aut^{+}(X)}$ and $S = X^{(0)}$. One easily verifies that $\Aut^{+}(X)$ is normal in $\Aut(X)$. Since group multiplication is continuous, it follows that $\overline{\Aut^{+}(X)}$ is normal in $\Aut(X)$ as well.\\

    Fix $o \in X^{(0)}$. Setting $G_o^{+} := \Stab_{\overline{\Aut^{+}(X)}}(o)$, we now need to bound the index of $G_o^{+}$ in $G_o$. Let
    \[ R := \{ A_{v, \sigma} \vert v \in \mathcal{L}_o, \sigma \text{ compatible with $v$} \} \]
    be the family of elements constructed in Constructions \ref{con:explicit.generators.at.o} and \ref{con:explicit.generators.at.v}. By Proposition \ref{prop:generating.dense.subgroup.of.stabiliser}, this family generates a dense subgroup of $G_o$ and, by Proposition \ref{prop:generators.of.aut+}, the subfamily
    \[ R^{+} := \{ A_{v, \sigma} \vert v \in \mathcal{L}_o \setminus \{ \epsilon \}, \sigma \text{ compatible with $v$} \} \]
    is contained in $\Aut^{+}(X)$. In particular, $\overline{\Aut^{+}(X)} \cap G_o$ contains the group $\overline{ \langle R^{+} \rangle }$, which consists of exactly the elements $g \in G_o$ that satisfy $\pre{\epsilon}{\sigma(g)} = \Id$. It follows that the left cosets of $G_o^{+}$ in $G_o$ all have a representative in the finite set
    \[ \{ A_{\epsilon, \sigma} \vert \sigma \text{ a label-isomorphism} \}. \]
    Thus, $G_o^{+}$ has finite index in $G_o$.\\

    We are left to show that $\faktor{X^{(0)}}{ \overline{\Aut^{+}(X)} }$ is finite. Let $v, w \in X^{(0)}$ such that $d_{\ell^1}(v,w) = 2$ and let $s, t \in \Sigma_0$ such that there is an edge-path from $v$ to $w$ that spells $st$. (It might happen that $s = t$, but $s \neq t^{-1}$.) Since $L(X)$ is flexibly transitive, there exists some $r \in \Sigma_0$ and a graph-automorphism $\sigma \in \Aut( L(X) )$ such that $\sigma(s^{-1}) = t$ and $\sigma$ fixes $\st(r)$ pointwise. Note that $\sigma$ is a label-isomorphism since any graph-automorphism on $L(X)$ preserves commutation of elements in $\Sigma_0$. Furthermore, note that $\sigma$ is compatible with the one-letter word $r^{-1}$ since it fixes $\st(r)$ pointwise.

    Let $o$ be the endpoint of the path that starts at $v$ and spells $sr$. Then the automorphism $A_{r^{-1}, \sigma} \in G_o$ is well-defined, because $\sigma$ is compatible with $r^{-1}$, and has the following properties:
    \begin{itemize}
        \item $A_{r^{-1}, \sigma}(v) = \sigma(v) = w$.

        \item $A_{r^{-1}, \sigma} \in \Aut^{+}(X)$ according to Proposition \ref{prop:generators.of.aut+}.
    \end{itemize}
    We conclude that $v$ and $w$ lie in the same orbit of $\Aut^{+}(X)$.\\

    It follows that $\Aut^{+}(X)$ has exactly two orbits in $X^{(0)}$. Indeed, consider any two vertices $v, w \in X^{(0)}$. If $d_{\ell^1}(v,w)$ is even, then repeated application of the argument above along an edge-path from $v$ to $w$ implies that $v$ and $w$ lie in the same orbit of $\Aut^{+}(X)$. If $d_{\ell^1}(v,w)$ is odd, then $v$ lies in the same orbit as a vertex $v'$ that satisfies $d_{\ell^1}(v', w)$ (and any two vertices at distance $1$ of $w$ have even distance to each other and thus lie in the same orbit). It follows that $\faktor{X^{(0)}}{\overline{\Aut^{+}(X)}}$ has exactly two elements. We can thus use Lemma 5.1 of \cite{Lazarovich18} and conclude that $\overline{\Aut^{+}(X)}$ has finite index in $\Aut(X)$.
\end{proof}

\subsection{An example, where $\Aut^{+}(X)$ is not cocompact in $\Aut(X)$}

As we have seen in the previous subsection, we only have quite restrictive sufficient conditions even for the closure $\overline{\Aut^{+}(X)}$ to have finite index in $\Aut(X)$. In this section, we move in the other direction and show that there are some very natural examples, in which the quotient $\faktor{\Aut(X)}{\overline{\Aut^{+}(X)}}$ is not even compact. This answers a question of Haglund. Our example is as follows:\\

Let $\Gamma_{CK}$ be the right-angled Artin group defined by
\[ \Gamma_{CK} := \langle a, b, c, d \vert [a,b] = [b,c] = [c,d] = 1 \rangle. \]
Let $X_{CK}$ be the Salvetti-complex of $\Gamma_{CK}$.

\begin{theorem} \label{thm:non.cocompact.AUT+}
    The quotient $\faktor{ \Aut(X_{CK}) }{\overline{ \Aut^{+}(X_{CK}) }}$ is not compact.
\end{theorem}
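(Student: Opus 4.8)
The plan is first to observe that the quotient in question is discrete, so that ``non\-compact'' becomes ``infinite''. Indeed, from the Remark following Theorem~\ref{thmintro:non.cocompact.AUT+} (see also Section~\ref{subsec:AUT+.and.simplicity}) together with the proof of Proposition~\ref{prop:local.compactness.of.aut+}, $\Aut^{+}(X_{CK})$ is simple, non-discrete and tdlc; moreover that proof exhibits a \emph{finite} vertex set $K$ with $\mathrm{Fix}(K)\subseteq\Aut^{+}(X_{CK})$, so $\Aut^{+}(X_{CK})$ contains an open subgroup and is itself open in $\Aut(X_{CK})$. Hence $\overline{\Aut^{+}(X_{CK})}=\Aut^{+}(X_{CK})$, the quotient $\Aut(X_{CK})/\Aut^{+}(X_{CK})$ is discrete, and it is non-compact if and only if $\Aut^{+}(X_{CK})$ has infinite index in $\Aut(X_{CK})$. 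So everything reduces to proving $[\Aut(X_{CK}):\Aut^{+}(X_{CK})]=\infty$.

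\textbf{Producing infinitely many cosets.}
To this end I would use the translation $b\in\Gamma_{CK}<\Aut(X_{CK})$. Its axis $\ell_{0}:=\{b^{j}o:j\in\Z\}$ (for any basepoint $o$; recall $\Gamma_{CK}$ acts freely) is a \emph{$b$-line}: a bi-infinite geodesic all of whose oriented edges carry the label $b$, lying in a unique $ab$-flat $F$ (a lift of the torus $T_{ab}$) and a unique $bc$-flat. The element $b^{n}$ stabilises $\ell_{0}$ and acts on it by the translation $j\mapsto j+n$. The claim I want is:
\[\text{if }n\neq 0,\text{ then }b^{n}\notin\Aut^{+}(X_{CK}).\]
Granting this, the cosets $b^{n}\Aut^{+}(X_{CK})$ for $n\in\Z$ are pairwise distinct (else $b^{n-m}\in\Aut^{+}(X_{CK})$ for some $n\neq m$), so $\Aut^{+}(X_{CK})$ has infinite index and $\Aut(X_{CK})/\overline{\Aut^{+}(X_{CK})}$ is not compact.

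\textbf{Proving the claim; the main obstacle.}
The mechanism is that $\Aut^{+}(X_{CK})$ cannot ``translate along a $b$-line''. Each halfspace-fixator $g$ fixes a halfspace $h$ pointwise, hence also the carrier of $\widehat h$; and restricted to any flat plane $P$ of $X_{CK}$ that it stabilises, $g|_{P}$ is either the identity or a reflection across a singular line of $P$ — the only way $P$ can meet the moved side of $h$ and still be stabilised is for $g$ to fix pointwise the singular line along which $P$ attaches to the fixed region, which pins $g|_{P}$ down to such a reflection; in particular $g|_{P}$ is never a nontrivial translation, and when $g(P)\neq P$ the isometry $P\to g(P)$ has the same ``non-translating'' shape. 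Feeding this structural information into the portrait calculus of Theorem~\ref{thm:characterisation.of.stabilizing.isometries.as.tree.automorphisms} — using $par$ and $inv$ to control the behaviour along $\ell_{0}$ and at the two flats through $\ell_{0}$ — one should conclude that \emph{any} $\phi\in\Aut^{+}(X_{CK})$ stabilising $\ell_{0}$ acts on $\ell_{0}$ with a fixed point (or fixing the midpoint of an edge), hence not by a nontrivial translation; since $b^{n}|_{\ell_{0}}$ is the translation by $n$, this yields $b^{n}\notin\Aut^{+}(X_{CK})$ for $n\neq0$. The hard part is exactly the passage from ``every single halfspace-fixator is non-translating along $\ell_{0}$'' to ``no product of them translates along $\ell_{0}$'', since a product of reflections can become a translation. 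I expect this to need a local, inductive argument along the tree of flats around $\ell_{0}$ in which one tracks the $b$-coordinate: the reflections produced by halfspace-fixators are always across singular lines transverse to the $b$-direction (or fix $\ell_{0}$ pointwise outright), so they act trivially on the $b$-coordinate of $\ell_{0}$ and of every $b$-line they touch, and ``acting trivially on the $b$-coordinate'' is closed under composition.

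\textbf{An orthogonal remark.}
It is worth noting separately, and is much easier, that $\Aut^{+}(X_{CK})\subsetneq\Aut(X_{CK})$ in any case: the ``letter swap'' $a\leftrightarrow d$, $b\leftrightarrow c$ of the defining path graph is realised by an automorphism of $X_{CK}$, but (arguing as in the proof of Proposition~\ref{prop:generators.of.aut+} via connectedness of the tree of flats) every halfspace-fixator preserves the hyperplane types, so this swap is not in $\Aut^{+}(X_{CK})$. On its own this only shows the index is $\geq 2$; it is the $b$-translations above that make it infinite.
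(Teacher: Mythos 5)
Your reduction is correct and is actually cleaner than the paper's: since $X_{CK}$ satisfies the hypothesis of Proposition~\ref{prop:local.compactness.of.aut+} (take $s=a$), its proof exhibits a finite $K$ with $\mathrm{Fix}(K)\subseteq\Aut^{+}(X_{CK})$, so $\Aut^{+}(X_{CK})$ contains an open subgroup, hence is open, hence closed; the quotient is then a discrete group and non-compactness is equivalent to infinite index. The paper instead reduces non-compactness to non-coboundedness of the $\Aut^{+}$-action on $X^{(0)}$ (Steps~1--2), which amounts to the same thing but does not record the openness. Your ``orthogonal remark'' about the $a\leftrightarrow d$, $b\leftrightarrow c$ swap is also correct, but the cleaner justification is the one the paper uses in Step~3 rather than the tree-of-flats heuristic you cite: every halfspace-fixator $\varphi$, and hence every product of them, has all its local portrait maps $\pre{v}{\sigma(\varphi)}$ equal to a composition of the sign-swaps $(a,a^{-1}),(b,b^{-1}),(c,c^{-1}),(d,d^{-1})$, and the letter-swap is not of that form.

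The genuine gap is precisely the one you flag yourself: the passage from ``each halfspace-fixator is non-translating along $\ell_{0}$'' to ``no product of them is.'' Your sketch of a $b$-coordinate invariant does not close it, for two reasons. First, the structural picture it rests on is inaccurate: a halfspace-fixator $\varphi\in\Fix(h)$ does stabilise every flat $P$ that meets $\widehat h$, and in that case actually fixes $P$ \emph{pointwise} (it fixes the half-plane $P\cap h$, hence all of $P$); but when $P$ lies entirely inside $h^{c}$, nothing forces $\varphi$ to stabilise $P$ at all, let alone to act on it as a reflection. So the tidy ``identity or reflection across a singular line'' dichotomy you invoke is not available. Second, even granting the sign-flip rigidity (the paper's Step~3), a product of sign-flipping automorphisms can change the ``$b$-coordinate'' of a vertex, because each factor can reverse the local $b$-direction on the region it moves; ``acting trivially on the $b$-coordinate'' is not obviously closed under composition. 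A proof along your line would need to define the invariant precisely, establish that each generator preserves it, and that the property is stable under products --- none of which is done.

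For comparison, the paper avoids the $b$-direction entirely: it works with the non-commuting pair $(a,d)$, shows that every halfspace-fixator is sign-flipping, and encodes the accumulated sign-flips as an $(a,d)$-palindrome pattern appended to the path $(ad)^{n}$. The combinatorial $*_n$-decomposition is then preserved both under composition with halfspace-fixators and under word reduction, yielding $d_{\ell^{1}}(o,\varphi(v_n))\geq 2n$ for every $\varphi\in\Aut^{+}(X_{CK})$, so the action is not cobounded. The reason the paper's argument goes through where yours stalls is that the $(a,d)$ pair does not commute with $b$ or $c$ or each other, so the palindromic tails contributed by halfspace-fixators cannot cancel the rigid alternating $a,d$ spine, whereas the $b$-coordinate is exactly the direction along which the local sign-ambiguity is hardest to control.
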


The group $\Gamma_{CK}$ is also known as the Croke-Kleiner example and its geometry has been studied and described in many places (see for example \cite{CrokeKleiner00}). We briefly recall some basic properties of $X_{CK}$. It is the universal covering of the following complex: We start with one vertex and attach four edges to this vertex. Choosing an orientation for each edge, we label them by $a, b, c, d$ (and the reverse orientations by $a^{-1}, b^{-1}, c^{-1}, d^{-1}$). We then glue three squares along these edges according to the relators $aba^{-1}b^{-1}, bcb^{-1}c^{-1}, cdc^{-1}d^{-1}$.

\begin{figure}
   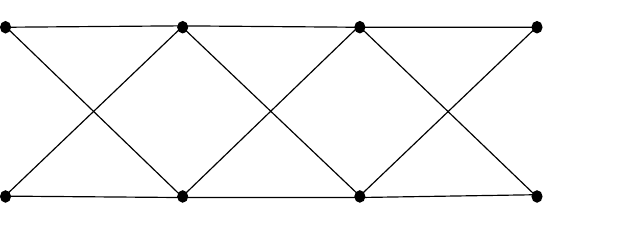
    \caption{The link of $X_{CK}$. Label-isomorphism of the standard labeling on $X_{CK}$ correspond to graph-automorphisms on this graph. Any label-isomorphism that fixes a label can only swap labels with their inverses.}
     \label{fig:CK.link}
\end{figure}

One easily checks that the unique link $\lk(X_{CK})$ of $X_{CK}$ ($\Gamma_{CK}$ acts vertex-transitively) is given by the graph in Figure \ref{fig:CK.link}. We highlight the following fact about this link which can be checked by brute force: Any automorphism of $\lk(X_{CK})$ that fixes a vertex or sends a vertex to its `inverse vertex' sends every vertex to itself or to its `inverse vertex'. In other words, any automorphism of $\lk(X_{CK})$ that fixes a vertex or sends a vertex to its inverse is a finite product of the permutations $(a, a^{-1}), (b, b^{-1}), (c, c^{-1}), (d, d^{-1})$.\\

We see that the canonical admissible edge-labeling on $X_{CK}$ uses the set of labels $\Sigma_0 = \{ a^{\pm 1}, b^{\pm 1}, c^{\pm 1}, d^{\pm 1} \}$. The following is some technical terminology regarding words over $\Sigma_0$ that we need for the proof. 

\begin{definition}
    A word $s_1 \dots s_N$ over the letters $a^{\pm 1}, d^{\pm 1}$ has a {\it palindrome-pattern}, if there exists a fixpoint free involution $\iota : \{ 1, \dots, N \} \rightarrow \{ 1, \dots, N \}$ with the following properties:
    \begin{itemize}
        \item For all $i : s_{\iota(i)} \in \{ s_i^{\pm 1} \}$.
        \item For all $i, j \in \{1, \dots, N \}$, we have that, if $j$ lies between $i$ and $\iota(i)$, then the index $\iota(j)$ also lies between $i$ and $\iota(i)$.

    \end{itemize}
    Given an index $i$, we call $\iota(i)$ the {\it mirror} of $i$ and $s_i$ the {\it mirror} of $s_{\iota(i)}$. We call $\iota$ a {\it palindrome-pattern on $u$}.

    Let $u = s_1 \dots s_N$ be a word over the letters $a^{\pm 1}, b^{\pm 1}, c^{\pm 1}, d^{\pm 1}$. We define $\overline{u}$ to be the word obtained by removing all letters $b^{\pm 1}, c^{\pm 1}$ from $u$. We say $u$ has an {\it $(a,d)$-palindrome-pattern}, if $\overline{u}$ has a palindrome-pattern.
\end{definition}

We set the convention that the empty word has a palindrome-pattern. In particular, if $u$ contains only the letters $b^{\pm 1}, c^{\pm 1}$, then $\overline{u} = \epsilon$ and $u$ has an $(a,d)$-palindrome-pattern.

\begin{lemma} \label{lem:palindrome.pattern.properties}
Palindrome-patterns satisfy the following properties:
    \begin{enumerate}
        \item Let $u, u'$ be two words that admit an $(a,d)$-palindrome-pattern. Then the concatenation $uu'$ admits an $(a,d)$-palindrome-pattern.

        \item Let $u = s_1 \dots s_N$ be a word that admits an $(a,d)$-palindrome-pattern with involution $\iota$ and let $i$ be the first index such that $s_i \in \{ a^{\pm 1}, d^{\pm 1} \}$. Then $s_1 \dots \hat{s_i} \dots s_{\iota(i)-1}$ and $s_{\iota(i)+1} \dots s_N$ both admit an $(a,d)$-palindrome-pattern, where $\hat{s_i}$ indicates that the letter $s_i$ has been removed.

        \item Let $u = s_1 \dots s_N$ be a word that admits an $(a,d)$-palindrome-pattern with involution $\iota$ and let $i_0 < j_0$ be two indices such that $s_{j_0} \in \{ s_{i_0}^{\pm 1} \}$ and no letter between $s_{i_0}$ and $s_{j_0}$ lies in $\{ a^{\pm 1}, d^{\pm 1} \}$. Then we can define an involution $\iota_{new}$ which is identical to $\iota$ except that it pairs $i_0$ with $j_0$ and $\iota(i_0)$ with $\iota(j_0)$. The map $\iota_{new}$ is an $(a,d)$-palindrome-pattern on $u$.
    \end{enumerate}
\end{lemma}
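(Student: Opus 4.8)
The plan is to recast everything in terms of the reduced word $\overline u$ and its palindrome-pattern, which we view as a fixpoint-free involution $\iota$ on the set $P(u) := \{\,k : s_k \in \{a^{\pm1},d^{\pm1}\}\,\}$ of $a/d$-positions of $u$ (identified with the positions of $\overline u$). The two conditions defining a palindrome-pattern say exactly that $\iota$ is a \emph{non-crossing} perfect matching of $P(u)$ whose matched positions carry letters equal up to inversion; forgetting the letters, this is just a balanced parenthesization of $\overline u$. All three claims then become elementary facts about such matchings, and we carry them out in the order (1), (2), (3).

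For (1): since $\overline{uu'}=\overline u\,\overline u'$, I take the involution that equals the given $\iota$ on the first block $P(u)$ and the given $\iota'$ (with indices shifted) on the second block. It is fixpoint-free and sign-compatible by construction, and it is non-crossing because no matched pair straddles the two blocks, so any index lying between a pair and its image lies in the same block, where the non-crossing property of $\iota$ or of $\iota'$ applies.

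For (2): I apply the non-crossing condition to the \emph{outermost} pair $\{i,\iota(i)\}$, where $i$ is the first position of $u$ carrying an $a/d$-letter. Minimality of $i$ in $P(u)$ together with non-crossing forces $\iota$ to preserve both $P(u)\cap(i,\iota(i))$ and $P(u)\cap(\iota(i),N]$ (there is nothing in $P(u)$ to the left of $i$, and nothing of $\iota(k)$ can escape the outer interval if $k$ is inside it). Restricting $\iota$ to $P(u)\cap(i,\iota(i))$ gives a palindrome-pattern of $s_1\cdots\widehat{s_i}\cdots s_{\iota(i)-1}$, and restricting to $P(u)\cap(\iota(i),N]$ gives one of $s_{\iota(i)+1}\cdots s_N$; non-crossing and sign-compatibility pass to restrictions to $\iota$-invariant subsets with no extra work.

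For (3): here $s_{i_0}$ (hence $s_{j_0}$) must be an $a/d$-letter for the statement to make sense, so $i_0,j_0\in P(u)$, and the hypothesis that no $a/d$-letter occurs strictly between them says precisely that $i_0,j_0$ are \emph{consecutive} elements of $P(u)$, i.e.\ adjacent symbols in the parenthesization of $\overline u$. We may assume $\iota(i_0)\neq j_0$, since otherwise $\iota_{new}=\iota$. Two adjacent symbols can be of bracket-type $((\,$, $))\,$, $)(\,$, or $()\,$, and the last one is exactly the excluded case $\iota(i_0)=j_0$; the three remaining types force, respectively, $i_0<j_0<\iota(j_0)<\iota(i_0)$, or $\iota(j_0)<\iota(i_0)<i_0<j_0$, or $\iota(i_0)<i_0<j_0<\iota(j_0)$. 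In every case the passage to $\iota_{new}$ only changes the two symbols at $i_0,j_0$ and the two symbols at $\iota(i_0),\iota(j_0)$, and I would check directly that this local rewrite sends a balanced string to a balanced string — the key point being that the symbols strictly between $\iota(i_0)$ and $\iota(j_0)$ were already matched among themselves (they form a balanced sub-block, being the complement of one balanced block inside another), so re-bracketing the two boundary symbols disturbs nothing outside the span of the four indices $i_0,j_0,\iota(i_0),\iota(j_0)$, which is itself left balanced. Sign-compatibility of the new pairs is immediate from $s_{j_0}\in\{s_{i_0}^{\pm1}\}$ and $s_{\iota(i_0)}\in\{s_{i_0}^{\pm1}\}$. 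I expect this last verification — running through the three bracket configurations and confirming balancedness is preserved — to be the only real obstacle; it is routine but needs to be done with care.
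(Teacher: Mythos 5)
Your approach mirrors the paper's: you identify the same three non-trivial configurations of the four indices $i_0, j_0, \iota(i_0), \iota(j_0)$ (up to symmetry, the cases $(($, $))$, $)($, which are exactly the paper's Cases 1--3) and argue the rewrite preserves the matching. Recasting the palindrome-pattern as a non-crossing matching, i.e.\ a balanced parenthesization of $\overline{u}$, is a clean and equivalent way of expressing the paper's two conditions, and your handling of parts (1) and (2) matches the paper's.

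The one place where your sketch goes astray is the central justification offered for part (3). You claim as the ``key point'' that \emph{the symbols strictly between $\iota(i_0)$ and $\iota(j_0)$ were already matched among themselves, forming a balanced sub-block as the complement of one balanced block inside another.} This is correct for the $(($ and $))$ configurations (the paper's Cases 1 and 2), where $\{i_0,j_0\}$ lies outside that middle interval. It is false in the $)($ configuration $\iota(i_0) < i_0 < j_0 < \iota(j_0)$ (the paper's Case 3): there $i_0$ and $j_0$ lie strictly between $\iota(i_0)$ and $\iota(j_0)$, and they are matched to $\iota(i_0), \iota(j_0)$ lying outside, so the middle interval is neither $\iota$-invariant nor balanced. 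The rewrite is still correct in this case --- the string has the shape $X\,(A)\,(B)\,Y$ with $A$, $B$, and $XY$ balanced (using that $(i_0,j_0)\cap P(u)=\emptyset$), and after the rewrite it becomes $X\,\bigl(A\,(\,)\,B\bigr)\,Y$, which is still balanced --- but the $)($ case requires this separate computation rather than being covered by the ``balanced complement'' heuristic. Since you yourself flagged the case-check as the remaining obstacle, note that the uniform justification you had in mind does not actually apply to all three cases, even though the final verification succeeds in each.
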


\begin{proof}
    We first observe that any property of $(a,d)$-palindrome-patterns of $u$ and $u'$ depends entirely on the properties of palindrome-patterns of $\overline{u}$ and $\overline{u'}$. We may thus assume without loss of generality that $u$ and $u'$ are words that consist only of the letters $a^{\pm 1}, d^{\pm 1}$ and that admit a palindrome-pattern.\\
    
    (1): Let $u, u'$ be words with palindrome-patterns $\iota$ and $\iota'$. Then we can define an involution on the indices of $uu'$ which coincides with $\iota$ on the indices of $u$ and with $\iota'$ on the indices of $u'$. This involution is a palindrome-pattern on $uu'$.\\

    (2): Let $u = s_1 \dots s_N$ be a word and $\iota$ a palindrome-pattern on $u$. Since we assume without loss of generality that $u$ consists of letters in $\{ a^{\pm 1}, d^{\pm 1} \}$, the first letter of $u$ that is from the set $\{ a^{\pm 1}, d^{\pm 1} \}$ is $s_1$. Since $\iota$ has no fixpoint, we know that $\iota(1) > 1$. One easily checks that $\iota$ is still a palindrome-pattern when restricted to $s_2 \dots s_{\iota(1)-1}$ and $s_{\iota(1) + 1} \dots s_N$ respectively. (The key is to observe that $\iota$ preserves the set $\{ 2, \dots, \iota(1) - 1 \}$.)\\

    (3): Let $u = s_1 \dots s_N$ be a word with a palindrome-pattern $\iota$ and let $i_0, j_0, \iota_{new}$ be as stated in the Lemma. Since we assume that the letters $b^{\pm 1}, c^{\pm 1}$ do not appear in $u$, $i_0$ and $j_0$ are right next to each other.
    
    Clearly, $s_{\iota_{new}(i)} \in \{ s_i^{\pm 1} \}$ for every $k$, since $\iota$ has this property and $s_{i_0}$, $s_{j_0}$, $s_{\iota(i_0)}$, $s_{\iota(j_0)}$ are pairwise equal or inverse to each other.
    
    We are left to prove that $\iota_{new}$ satisfies the second property of palindrome-patterns. We distinguish between three cases that are determined by the ordering of $i_0, j_0, \iota(i_0), \iota(j_0)$. (Recall that we assume $i_0 < j_0$.)\\

    Case 1: Suppose $i_0 < j_0 < \iota(j_0) < \iota(i_0)$. Let $i \notin \{ i_0, j_0, \iota(i_0), \iota(j_0) \}$. By definition, $\iota_{new}(i) = \iota(i)$. Let $j \notin \{ i_0, j_0, \iota(i_0), \iota(j_0) \}$ be an index that lies between $i$ and $\iota(i)$. Then $\iota_{new}(j) = \iota(j)$ and lies between $i$ and $\iota(i)$ since $\iota$ is a palindrome-pattern on $u$. Suppose now that $i_0$, $j_0$, $\iota(i_0)$, or $\iota(j_0)$ lies between $i$ and $\iota(i)$. Then all four of these indices have to lie between $i$ and $\iota(i)$ by assumption on $\iota$. (This uses that $i_0$ and $j_0$ are next to each other.) We conclude that the second property is satisfied for $i \notin \{ i_0, j_0, \iota(i_0), \iota(j_0) \}$.

    Now suppose $i \in \{ i_0, j_0, \iota(i_0), \iota(j_0) \}$ and $j$ an index that lies between $i$ and $\iota_{new}(i)$. Since $\iota_{new}(i_0) = j_0$ and $i_0$ and $j_0$ are right next to each other, there can be no index $j$ in between them. Thus, we can assume without loss of generality that $i = \iota(j_0)$ and $i < j < \iota_{new}(i)$. According to the arrangement of $i_0 < j_0 < \iota(j_0) < \iota(i_0)$, this implies that $j$ lies between $i_0$ and $\iota(i_0)$. Thus, $\iota_{new}(j) = \iota(j)$ lies between $i_0$ and $\iota(i_0)$. Since $i_0$ and $j_0$ are right next to each other, this implies that $j_0 < \iota(j)$. If $\iota(j) < \iota(j_0)$, then $\iota(j)$ lies between $j_0$ and $\iota(j_0)$, while $j$ does not, which contradicts our assumption on $\iota$. Thus $\iota(j_0) < \iota(j)$ and we conclude that the index $\iota_{new}(j) = \iota(j)$ lies between $i = \iota(j_0)$ and $\iota_{new}(i) = \iota(i_0)$. We conclude that $\iota_{new}$ is a palindrome-pattern in this case.\\

    Case 2: Suppose $\iota(j_0) < \iota(i_0) < i_0 < j_0$. This case is analogous to Case 1.\\

    Case 3: Suppose $\iota(i_0) < i_0 < j_0 < \iota(j_0)$. Let $i \notin \{ i_0, j_0, \iota(i_0), \iota(j_0) \}$ and $j \notin \{ i_0, j_0, \iota(i_0), \iota(j_0) \}$ such that $j$ lies between $i$ and $\iota(i)$. Then $\iota_{new}(j) = \iota(j)$ and lies between $i$ and $\iota(i)$ since $\iota$ is a palindrome-pattern on $u$. Suppose now that $i_0$, $j_0$, $\iota(i_0)$, or $\iota(j_0)$ lies between $i$ and $\iota(i)$. Then all four of them have to lie between $i$ and $\iota(i)$ (this exploits that $i_0$ and $j_0$ are next to each other). We conclude that the second property is satisfied for $i \notin \{ i_0, j_0, \iota(i_0), \iota(j_0) \}$.

    Now suppose $i \in \{ i_0, j_0, \iota(i_0), \iota(j_0) \}$ and $j$ an index that lies between $i$ and $\iota_{new}(i)$. As before, $i \notin \{ i_0, j_0 \}$ as these are each others mirrors with respect to $\iota_{new}$ and they are right next to each other. We may thus assume without loss of generality that $i = \iota(i_0)$ and $j$ lies between $\iota(i_0)$ and $\iota(j_0)$. Therefore, $j$ lies either between $\iota(i_0)$ and $i_0$ or between $j_0$ and $\iota(j_0)$. Thus, the index $\iota_{new}(j) = \iota(j)$ lies either between $\iota(i_0)$ and $i_0$ or between $j_0$ and $\iota(j_0)$. In either case, $\iota(j)$ lies between $\iota(j_0)$ and $\iota(i_0)$. We conclude that $\iota_{new}$ is a palindrome-pattern in this case as well.\\

    Since we assume that $i_0 < j_0$ and $\iota$ satisfies the properties of a palindrome-pattern, this covers all cases that can occur and we conclude that $\iota_{new}$ is a palindrome-pattern on $u$.
\end{proof}

\begin{proof}[Proof of Theorem \ref{thm:non.cocompact.AUT+}]
    Fix some vertex $o \in X^{(0)}$ and consider the vertices $v_n := (ad)^n \cdot o$, where $\cdot$ denotes the isometric action of $\Gamma_{CK}$ on $X_{CK}$. Throughout this proof, we will only work with edge-paths, since it is more convenient to have all our words be over $\Sigma_0$.

    \subsubsection*{Step 1: It is sufficient to show that the action of $\overline{ \Aut^{+}(X_{CK}) }$ in $X^{(0)}$ is not cobounded.}
    
    Since $\Aut(X_{CK})$ acts cubically on $X$, we can consider the quotient $\overline{ \Aut^{+}(X_{CK}) } \diagdown X_{CK}$. Since $\Aut( X_{CK} )$ acts continuously and vertex-transitively on $X_{CK}^{(0)}$, the action $\faktor{ \Aut( X_{CK} ) }{ \overline{ \Aut^{+}( X_{CK} ) } } \curvearrowright \overline{ \Aut^{+}( X_{CK} ) } \diagdown X_{CK}^{(0)}$ is continuous and vertex-transitive. If $\faktor{ \Aut(X_{CK}) }{ \overline{ \Aut^{+}(X_{CK} ) } }$ is compact, then the orbits of its action have to be compact and thus, the set of vertices in $\overline{ \Aut^{+}( X_{CK} ) } \diagdown X_{CK}^{(0)}$ has to be bounded. We conclude that, if the action of $\overline{ \Aut^{+}( X_{CK} ) }$ on $X^{(0)}$ is not cobounded, this contradicts compactness of the quotient $\faktor{ \Aut( X_{CK} ) }{ \overline{ \Aut^{+}( X_{CK} ) } }$.

    \subsubsection*{Step 2: It is sufficient to show that the action of $\Aut^{+}(X_{CK})$ on $X^{(0)}$ is not cobounded.}
    
    If the action of $\Aut^{+}(X_{CK} )$ is not cobounded, there exist some vertices $o, w_n \in X^{(0)}$ such that
    \[ \inf \left\{ d(o, \varphi( w_n ) ) \vert \varphi \in \Aut^{+}(X_{CK} ) \right\} \xrightarrow{ n \rightarrow \infty} \infty. \] 
    Since convergence in compact-open topology implies point-wise convergence, we have
    \[ \inf \left\{ d(o, \varphi(w_n) ) \vert \varphi \in \overline{\Aut^{+}(X_{CK}) } \right\} = \inf \left\{ d(o, \varphi(w_n) ) \vert \varphi \in \Aut^{+}(X_{CK} ) \right\}. \]
    Thus, coboundedness of $\Aut^{+}(X_{CK}) \curvearrowright X^{(0)}$ is equivalent to coboundedness of $\overline{ \Aut^{+}(X_{CK}) } \curvearrowright X^{(0)}$.

    \subsubsection*{Step 3: For every $\varphi \in \Aut^{+}(X_{CK})$, there exists a word $w = s_1 \dots s_N \in \mathcal{L}_o^{cube} \cap \Sigma_0^*$ that corresponds to an edge path from $o$ to $\varphi(v_n)$ and which admits the following decomposition:}
    
    There exist indices $1 \leq x_1 < y_1 < x_2 < \dots x_n < y_n \leq N$ and words $u_1 \dots u_{n}, u'_0, \dots u'_{n} \in \Sigma_0^*$ with the following properties:
    \begin{itemize}
        \item $w = u'_0 s_{x_1} u_1 s_{y_1} u'_1 s_{x_2} \dots u'_{n-1} s_{x_n} u_n s_{y_n} u'_{n}.$
        
        \item For all $i : s_{x_i} \in \{ a^{\pm 1} \}$ and $s_{y_i} \in \{ d^{\pm 1} \}$.

        \item For all $1 \leq i \leq n$, $u_i$ has an $(a,d)$-palindrome-pattern.
        
        \item For all $0 \leq i \leq n$, $u'_i$ has an $(a,d)$-palindrome-pattern.
    \end{itemize}
    In other words, the word $w$ is of the form
    \[ u'_0 a^{\delta_1} u_1 d^{\delta'_1} u'_1 a^{\delta_2} \dots u'_{n-1} a^{\delta_n} u_n d^{\delta'_n} u'_{n}, \]
    where $\delta_i, \delta'_i \in \{ \pm 1 \}$ and $u_i$ and $u'_i$ have an $(a,d)$-palindrome-pattern for every $i$. If $w$ can be written like this, we say that {\it $w$ admits a $*_n$-decomposition}. An example for a word that admits a $*_n$-decomposition is the word $(ad)^n$ (set $u_i = u'_i = \epsilon$ for all $i$), which corresponds exactly to the unique reduced edge path from $o$ to $v_n$.\\

    We show Step 3 by showing that, if $w$ admits a $*_n$-decomposition and $\varphi \in \Fix(h)$ for some halfspace $h$, then there exists a path from $o$ to $\varphi(w)$ that admits a $*_n$-decomposition. Since $(ad)^n$ admits a $*_n$-decomposition, this shows that every vertex in the $\Aut^{+}(X)$-orbit of $(ad)^n \cdot o$ has a representative in $\mathcal{L}_o^{cube}$ that admits a $*_n$-decomposition.
    
    Let $\varphi \in \Fix(h)$ and choose a vertex $o'$ in $h$. Then $\varphi \in G_{o'}$ and we can identify $\varphi$ with its portrait $(\pre{v}{\sigma(\varphi)})_{v \in \mathcal{L}_{o'}^{cube}}$. Let $w' := t_1 \dots t_M \in \mathcal{L}_{o'}^{cube} \cap \Sigma_0^*$ be a reduced word corresponding to an edge path from $o'$ to the endpoint of $w$. (We emphasize that $w$ represents an edge path starting at $o$, while $w'$ represents an edge path starting at $o'$. So, despite the fact that they both end at the same vertex, they are not equivalent unless $o = o'$.) We conclude that $w w'^{-1} \varphi(w') \in \mathcal{L}_o^{cube}$ corresponds to an edge path from $o$ to $\varphi(w)$. We now use the portrait of $\varphi$ to describe the word $\varphi(w')$.\\

    We claim that for every $v \in \mathcal{L}_{o'}^{cube}$, $\pre{v}{\sigma(\varphi)}$ is a finite product of some of the permutations $(a, a^{-1})$, $(b, b^{-1})$, $(c, c^{-1})$, $(d, d^{-1})$. Indeed, since we chose $o' \in h$ and $\varphi$ fixes $h$ pointwise, we know that $\pre{\epsilon}{\sigma(\varphi)} = \id$. On the other hand, if $v \in \mathcal{L}_{o'}$ and $\pre{v_-}{\sigma(\varphi)}$ sends all elements of $\Sigma_0$ to themselves or their inverses, then, by $inv$, $\pre{v}{\sigma(\varphi)}$ has to send one element of $\Sigma_0$ to itself or its inverse. From our discussion on $X_{CK}$ before the proof, this would imply that $\pre{v}{\sigma(\varphi)}$ sends all elements of $\Sigma_0$ to themselves or their inverses. Induction over the length of $v$ with induction start $\pre{\epsilon}{\sigma(\varphi)} = \id$ implies that, for all $v \in \mathcal{L}_{o'}^{cube}$, $\pre{v}{\sigma(\varphi)}$ is a finite product of some of the permutations $(a, a^{-1})$, $(b, b^{-1})$, $(c, c^{-1})$, $(d, d^{-1})$.\\

    Using this, we can denote $w'_i := t_1 \dots t_i$ and write
    \[ \varphi(w') = \pre{\epsilon}{\sigma(\varphi)}(t_1) \dots \pre{w'_i}{\sigma(\varphi)}(t_{i+1}) \dots \pre{w'_{M-1}}{\sigma(\varphi)}(t_M). \]
    Since $\pre{v}{\sigma(\varphi)}$ can only fix labels, or send them to their inverse, we conclude that
    \[ w'^{-1} \varphi(w') = t_N^{-1} \dots t_1^{-1} t_1^{\epsilon_1} \dots t_N^{\epsilon_N} \]
    for some choice of $\epsilon_i \in \{ \pm 1\}$. We immediately see that $w'^{-1} \varphi(w')$ has an $(a,d)$-palindrome-pattern that is induced by the involution that sends $i \mapsto 2N+1-i$.

    By assumption, $w$ admits a $*_n$-decomposition, which allows us to write
    \[ w = u'_0 a^{\delta_1} \dots u_n d^{\delta'_n} u'_{n}. \]
    The word $w w'^{-1} \varphi(w')$ thus has a decomposition
    \[ w w'^{-1} \varphi(w') = u'_0 a^{\delta_1} \dots u_n d^{\delta'_n} u'_{n, new}, \]
    where $u'_{n, new} := u'_{n} w' \varphi(w')$. By assumption, $u'_n$ has an $(a,d)$-palindrome-pattern. As seen above, $w'^{-1} \varphi(w')$ has an $(a,d)$-palindrome-pattern. By Lemma \ref{lem:palindrome.pattern.properties} (1), their product has an $(a,d)$-palindrome pattern. All other conditions for a $*_n$-decomposition already hold because we started with a $*_n$-decomposition of $w$. Thus, we have found a word corresponding to an edge path from $o$ to $\varphi(w)$ that admits $*_n$-decomposition.

    \subsubsection*{Step 4: If $w$ admits a $*_n$-decomposition, then there exists a reduced word $w_{red}$ such that $w \equiv_o w_{red}$ and $w_{red}$ admits a $*_n$-decomposition.}

    Fix $n \in \mathbb{N}^{+}$ and suppose $w = s_1 \dots s_N$ admits a $*_n$-decomposition. Suppose $w = s_1 \dots s_N$ is not reduced. Then there exist two indices $1 \leq z_1 < z_2 \leq N$ such that $s_{z_1}, s_{z_2}$ contain an innermost cancellation. Set
    \[ w_{new} := s_1 \dots \hat{s}_{z_1} \dots \hat{s}_{z_2} \dots s_N, \]
    where $\hat{s}_{z_i}$ indicates that this letter is removed. By Lemma \ref{lem:innermost.cancellations.commute.with.the.word.in.between}, we know that $w \equiv_o w_{new}$, $s_{z_1}^{-1} = s_{z_2}$, and $s_{z_1}, s_{z_2}$ both commute with all $s_k$ for $z_1 < k < z_2$. Furthermore, $w_{new}$ is two letters shorter than $w$. We need to show that $w_{new}$ admits a $*_n$-decomposition. For this, we distinguish between several cases.\\

    Case 1: The indices $z_1, z_2$ lie strictly between $x_i, y_i$ or strictly between $y_i, x_{i+1}$ for some $i$. In this case, there is some word $u_i$ or $u'_i$ from which we remove a pair of letters that are inverse to each other. If the pair is $(b, b^{-1})$ or $(c, c^{-1})$, then $u_i$, or $u'_i$ trivially keeps having an $(a,d)$-palindrome-pattern after removing the letters. Suppose $s_{z_1}, s_{z_2}$ are either the pair $(a, a^{-1})$ or $(d, d^{-1})$. Since $s_{z_1}, s_{z_2}$ are an innermost cancellation, they have to commute with all $s_j$ for $z_1 < j < z_2$ (see Lemma \ref{lem:innermost.cancellations.commute.with.the.word.in.between}). Therefore, $s_j \in \{ b^{\pm 1}, c^{\pm 1} \}$ for all $z_1 < j < z_2$. There are now two possibilities. Let $\iota$ be an $(a,d)$-palindrome-pattern on $u_i$ or $u'_i$. If $\iota(z_1) = z_2$, then the word obtained by removing $s_{z_1}$ and $s_{z_2}$ still has an $(a,d)$-palindrome-pattern that is obtained by restricting $\iota$ to the indices other than $z_1$ and $z_2$. If $\iota(z_1) \neq \iota(z_2)$, we define $\iota_{new}$ to be identical to $\iota$ except that it is not defined on $z_1$ and $z_2$ and it maps the mirrors of $z_1$ and $z_2$ to each other. By Lemma \ref{lem:palindrome.pattern.properties} (3), $\iota_{new}$ is an $(a,d)$-palindrome-pattern on the word obtained by removing the letters $s_{z_1}, s_{z_2}$ from $u_i$.\\

    Case 2: There is exactly one index $x_i$ or $y_i$ strictly between $z_1$ and $z_2$. Suppose there is some $x_i$ between $z_1$ and $z_2$. As we have shown before starting the case-distinction, this implies that $[s_{z_1}, s_{x_i}] = [s_{z_2}, s_{x_i}] = 1$. Since $s_{x_i} = a^{\pm 1}$, we conclude that the pair $s_{z_1}, s_{z_2}$ consists of the letters $b, b^{-1}$. Since the existence of $(a,d)$-palindrome-patterns does not depend on letters in $\{b^{ \pm 1}, c^{\pm 1} \}$, removing $s_{z_1}$ and $s_{z_2}$ does not change whether $u_i$ admits an $(a,d)$-palindrome-pattern. Thus, $w_{new}$ still admits a $*_n$-decomposition. The case where some $y_i$ lies between $z_1$ and $z_2$ is analogous but with $s_{z_1}, s_{z_2} \in \{ c^{\pm 1} \}$.\\

    Case 3: There are at least two indices $x_i, y_i$ strictly between $z_1$ and $z_2$. Since the indices $x_i$, $y_i$ are ordered so that the $x_i$ and $y_i$ alternative, this implies that either $z_1 < x_i < y_i < z_2$ or $z_1 < y_{i-1} < x_i < z_2$ for some $i$. In both cases, this implies that $s_{z_1}$ has to commute with $a^{\pm 1}$ and $d^{\pm 1}$, which is not possible. Thus, this case cannot occur.\\

    Case 4: We have $x_i = z_1$ or $x_i = z_2$. We show the case where $x_i = z_1$; the other case is analogous. We conclude that $s_{z_1}, s_{z_2} \in  \{ a^{\pm 1} \}$. Since $s_{z_1}, s_{z_2}$ contain an innermost cancellation, $s_{z_1}$ has to commute with all $s_k$ for $z_1 < k < z_2$. Since $s_{z_2} \in \{ a^{ \pm 1} \}$, which does not commute with $d^{\pm 1}$, we conclude that $z_2 < y_i$. In fact, since the cancellation is innermost, $z_2$ denotes the index of the first element of $\{ a^{\pm 1} \}$ appearing in the word $u_i$. Let $\iota$ be an $(a,d)$-palindrome-pattern on $u_i$. We obtain an index $x'_i := \iota(z_2)$, which satisfies $x_i < x'_i < y_i$. Since $z_2$ is the first index in $u_i$ whose letter is in $\{ a^{\pm 1} \}$, we conclude that $z_2 < x'_i$.

    We now replace $x_i$ by $x'_i$, replace the subword $u'_{i-1}$ by the subword $u'_{i-1,new}$ which ranges from the index $y_{i-1} + 1$ to $x'_i-1$ (skipping the indices $z_1, z_2$), and replace $u_i$ by the subword $u_{i,new}$ which ranges from the index $x'_i + 1$ to $y_i - 1$.

    We need to show that $u'_{i-1, new}$ and $u_{i,new}$ still admit an $(a,d)$-palindrome pattern. This follows from Lemma \ref{lem:palindrome.pattern.properties} (1) and (2). Thus, we have found a $*_n$-decomposition of $w_{new}$. If $x_i = z_2$, we have to replace $x_i$ by $x'_i := \iota(z_1)$, which lies between $y_{i-1}$ and $z_1$. An analogue of \ref{lem:palindrome.pattern.properties} (2) guarantees that the words $u'_{i-1,new}$ and $u_{i,new}$ still have an $(a,d)$-palindrome-pattern.\\

    Case 5: We have $y_i = z_1$ or $y_i = z_2$. Again, the two cases are analogous to each other and we only present the case $y_i = z_1$. In this case, $s_{z_1}, s_{z_2} \in \{ d^{\pm 1} \}$. Since $[a,d] \neq 1$, we conclude that $z_2 < x_{i+1}$ as $s_{x_{i+1}} \in \{ a^{\pm 1} \}$. Since the cancellation is innermost, $z_2$ is in fact the index of the first letter in $u'_i$ that is from the set $\{ d^{\pm 1} \}$. Let $\iota$ be an $(a,d)$-palindrome-pattern on $u'_i$ and set $y'_i := \iota(z_2)$. As before, $z_2 < y'_i < x_{i+1}$.

    We replace $y_i$ by $y'_i$, replace $u_i$ by the subword $u_{i,new}$ ranging from the index $x_i+1$ to $y'_i -1$ (skipping the indices $z_1, z_2$), and replace $u'_i$ by the subword $u'_{i,new}$ ranging from the index $y'_i + 1$ to $x_{i+1} -1$. By Lemma \ref{lem:palindrome.pattern.properties} (1) and (2), these two words again admit an $(a,d)$-palindrome-pattern and thus we have found a $*_n$-decomposition of $w_{new}$.\\

    We conclude that any reducible word $w$ with a $*_n$-decomposition can be reduced to a strictly shorter, $\equiv_o$-equivalent word that still has a $*_n$-decomposition. After finitely many steps, we have replaced $w$ by a reduced word $w_{red}$ such that $w \equiv_o w_{red}$ and $w_{red}$ admits a $*_n$-decomposition.

    \subsubsection*{Step 5: The endpoints of a word that admits a $*_n$-decomposition have distance at least $2n$.}

    Suppose $w$ admits a $*_n$-decomposition. There exists a reduced word $w_{red}$ with the same endpoints that also admits a $*_n$-decomposition. Since $w_{red}$ is reduced and contains at least $n$ many appearances of $a^{\pm 1}$ and $d^{\pm 1}$, we conclude that $w_{red}$ has length at least $2n$. Since $w_{red}$ is reduced and has the same endpoints as $w$, this implies that $d_{\ell^1}(o, w) = d_{\ell^1}(o, w_{red}) \geq 2n$.\\

    \subsubsection*{Step 6: Conclude the proof.}
    For every $n \geq 1$, the word $(ad)^n$ admits a $*_n$-decomposition and corresponds to a path from $o$ to $v_n$. We conclude from Step 3 that, for all $\varphi \in \Aut^{+}(X_{CK})$, there exists a path from $o$ to $\varphi(v_n)$ that admits a $*_n$-decomposition. By Step 5, this implies that $d_{\ell^1}(o, \varphi(v_n)) \geq 2n$ for all $n \geq 1$ and all $\varphi \in \Aut^{+}(X)$. We conclude that the action of $\Aut^{+}(X_{CK})$ on $X^{(0)}$ is unbounded and thus, by Steps 1 and 2, $\faktor{ \Aut(X_{CK}) }{ \overline{ \Aut^{+}(X_{CK}) } }$ is not compact.    
\end{proof}

\begin{remark}
    It seems likely that this non-compactness phenomenon can be reproduced in many other examples. Notably, the Salvetti complex of the right-angled Artin group
    \[ \Gamma := \langle a, b, c \vert [a,b] = 1 \rangle, \]
    a cube complex that is known as the `tree of flats', likely also has non-cocompact $\Aut^{+}(X)$. The structure of $X_{CK}$ is particularly convenient to produce a formal proof, because any automorphism in $X_{CK}$ that fixes a halfspace can only fix edge labels, or send them to their inverses. However, the geometric phenomena exploited in the proof are likely to persist in many other examples.
\end{remark}

\bibliography{mybib}

\begin{thebibliography}{CRW17b}

\bibitem[AFW15]{AschenbrennerFriedlWilton15}
Matthias Aschenbrenner, Stefan Friedl, and Henry Wilton.
\newblock {\em 3-manifold groups}.
\newblock EMS Ser. Lect. Math. Z{\"u}rich: European Mathematical Society (EMS),
  2015.

\bibitem[Ago13]{Agol13}
Ian Agol.
\newblock The virtual {H}aken conjecture.
\newblock {\em Doc. Math.}, 18:1045--1087, 2013.
\newblock With an appendix by Agol, Daniel Groves, and Jason Manning.

\bibitem[BDM21]{BossaertMedts21}
Jens Bossaert and Tom De~Medts.
\newblock Topological and algebraic properties of universal groups for
  right-angled buildings.
\newblock {\em Forum Math.}, 33(4):867--888, 2021.

\bibitem[BDM23]{BossaertMedts23}
Jens Bossaert and Tom De~Medts.
\newblock Restricted universal groups for right-angled buildings.
\newblock {\em Innov. Incidence Geom.}, 20(2-3):177--208, 2023.

\bibitem[BF23]{BerlaiFerov23}
Federico Berlai and Michal Ferov.
\newblock On the non-discreteness of automorphism groups of cayley graphs of
  coxeter groups.
\newblock arXiv:2302.04444, 2023.

\bibitem[BW12]{BergeronWise12}
Nicolas Bergeron and Daniel~T. Wise.
\newblock A boundary criterion for cubulation.
\newblock {\em Amer. J. Math.}, 134(3):843--859, 2012.

\bibitem[Cap14]{Caprace14}
Pierre-Emmanuel Caprace.
\newblock Automorphism groups of right-angled buildings: simplicity and local
  splittings.
\newblock {\em Fundam. Math.}, 224(1):17--51, 2014.

\bibitem[CK00]{CrokeKleiner00}
Christopher~B. Croke and Bruce Kleiner.
\newblock Spaces with nonpositive curvature and their ideal boundaries.
\newblock {\em Topology}, 39(3):549--556, 2000.

\bibitem[CRW17a]{CapraceReidWillis17a}
Pierre-Emmanuel Caprace, Colin~D. Reid, and George~A. Willis.
\newblock Locally normal subgroups of totally disconnected groups. {I}:
  {General} theory.
\newblock {\em Forum Math. Sigma}, 5:76, 2017.
\newblock Id/No e11.

\bibitem[CRW17b]{CapraceReidWillis17b}
Pierre-Emmanuel Caprace, Colin~D. Reid, and George~A. Willis.
\newblock Locally normal subgroups of totally disconnected groups. {II}:
  {Compactly} generated simple groups.
\newblock {\em Forum Math. Sigma}, 5:89, 2017.
\newblock Id/No e12.

\bibitem[CW04]{CrispWiest04}
John Crisp and Bert Wiest.
\newblock Embeddings of graph braid and surface groups in right-angled {Artin}
  groups and braid groups.
\newblock {\em Algebr. Geom. Topol.}, 4:439--472, 2004.

\bibitem[DMS19]{MedtsSilva19}
Tom De~Medts and Ana~C. Silva.
\newblock Open subgroups of the automorphism group of a right-angled building.
\newblock {\em Geom. Dedicata}, 203:1--23, 2019.

\bibitem[DMSS18]{MedtsSilvaStruyve18}
Tom De~Medts, Ana~C. Silva, and Koen Struyve.
\newblock Universal groups for right-angled buildings.
\newblock {\em Groups Geom. Dyn.}, 12(1):231--287, 2018.

\bibitem[HIM23]{HartnickMedici23a}
Tobias Hartnick and Merlin Incerti-Medici.
\newblock Automorphisms of self-similar trees.
\newblock {\em preprint}, 2023.

\bibitem[HP98]{HaglundPaulin98}
Fr\'ed\'eric Haglund and Fr\'ed\'eric Paulin.
\newblock Simplicit\'e de groupes d'automorphismes d'espaces \`a courbure
  n\'egative.
\newblock {\em The Epstein Birthday Schrift}, 1998.

\bibitem[HW07]{HaglundWise07}
Fr\'ed\'eric Haglund and Daniel~T. Wise.
\newblock Special cube complexes.
\newblock {\em Geometric and Functional Analysis}, 17(5):1551--1620, 2007.

\bibitem[Laz18]{Lazarovich18}
Nir Lazarovich.
\newblock On regular cat(0) cube complexes and the simplicity of automorphism
  groups of rank-one cat(0) cube complexes.
\newblock {\em Commentarii Mathematici Helvetici}, 93(1):33--54, 2018.

\bibitem[MV12]{MollerVonk12}
R{\"o}gnvaldur~G. M{\"o}ller and Jan Vonk.
\newblock Normal subgroups of groups acting on trees and automorphism groups of
  graphs.
\newblock {\em J. Group Theory}, 15(6):831--850, 2012.

\bibitem[NR98]{NibloReeves98}
G.A. Niblo and L.D. Reeves.
\newblock The geometry of cube complexes and the complexity of their
  fundamental groups.
\newblock {\em Topology}, 37(3):621--633, 1998.

\bibitem[SW05]{SageevWise05}
Michah Sageev and Daniel~T. Wise.
\newblock The tits alternative for cat(0) cubical complexes.
\newblock {\em Bulletin of the London Mathematical Society}, 37(5):706--710,
  2005.

\bibitem[Tay17]{Taylor17}
Thomas Taylor.
\newblock Automorphisms of cayley graphs for right-angled artin groups.
\newblock Honour Thesis, University of Newcastle, Australia, 2017.

\bibitem[Tit70]{Tits70}
Jacques Tits.
\newblock Sur le groupe des automorphismes d'un arbre.
\newblock {\em Essays on Topology and Related Topics}, pages 188--211, 1970.

\bibitem[Wis21]{Wise21}
Daniel~T. Wise.
\newblock {\em The structure of groups with a quasiconvex hierarchy}.
\newblock Princeton University Press, 2021.

\bibitem[Wri12]{Wright12}
Nick Wright.
\newblock Finite asymptotic dimension for cat(0) cube complexes.
\newblock {\em Geometry $\&$ Topology}, 16(1):527--554, 2012.

\end{thebibliography}
\bibliographystyle{alpha}

\end{document}